\documentclass[12pt]{book}

\usepackage{amsthm,amsmath,amssymb}
\usepackage{graphicx}
\usepackage{mystyle}
\usepackage{graphicx}
\usepackage[margin=1.5in]{geometry} 

\usepackage[toc,page]{appendix}




\title{Functorial Approach to Graph and Hypergraph Theory}
\author{Martin Schmidt}

\begin{document}
\maketitle
\tableofcontents
\chapter*{}
\textbf{Abstract}\\\\
In this thesis we provide a new approach to categorical graph and hypergraph theory by using categorical syntax and semantics. For each monoid $M$ and action on a set $X$, there is an associated presheaf topos of $(X,M)$-graphs where each object can be interpreted as a generalized uniform hypergraph where each edge has cardinality $\#X$ incident vertices (including multiplicity) and where the monoid informs what type of cohesivity the edges possess. One distinguishing feature of $(X,M)$-graphs is the presence of unfixed edges.  We prove that unfixed edges are a necessary feature of a category of graphs or uniform hypergraphs if one wants exponentials and effective equivalence relations to exist in the category. The main advantage of separating syntax (the $(X,M)$-graph theories) from semantics (the categories of $(X,M)$-graphs) is the ability to interpret the theory in any cocomplete category. This interpetation functor then yields a nerve-realization adjunction and allows us to transfer structure between the category of $(X,M)$-graphs and the receptive cocomplete category.  In particular, each morphism of $(X,M)$-graph theories induces an essential geometric morphism between the categories of $(X,M)$-graphs. Simple graphs and labeled/colored graphs are easily constructed by taking the separated objects under the double negation topology and slice categories respectively. Thus the categories of simple $(X,M)$-graphs are Grothendieck quasi-toposes and the categories of labeled/colored $(X,M)$-graphs are Grothendeick toposes. The classically defined categories of hypergraphs, uniform hypergraphs, and graphs are often cocomplete allowing obvious interpretations to take place in these categories. We show the existence or non-existence of structure in each of these classically constructed categories using the transfer of structure under the adjunctions. This framework then gives us a way to describe any type of category whose objects consist of a set of vertices and a set of edges. 

\chapter*{Introduction}
\addcontentsline{toc}{chapter}{Introduction}
\label{intro}
Graphs, hypergraphs and uniform hypergraphs are general mathematical objects which have a wide array of applications. The structure of the various categories of graphs, hypergraphs and uniform hypergraphs have been examined in \cite{rBS}\cite{rB}\cite{wD}\cite{wL}\cite{pH}\cite{cJ}\cite{sL}\cite{dP}\cite{kW}. More recently, categorical methods in graph and hypergraph theory have been employed category theory foundations \cite{mE}, graph and hypergraph transformations \cite{mB}, control systems \cite{sMG}, and graph theory itself \cite{pB}\cite{jF}\cite{jFcT}\cite{wG}\cite{aD}. Thus it is imperative to clarify the issue regarding the categorical structures involved and provide a general framework for graph, uniform hypergraph, and hypergraph theory which is intuitive, robust, and useful. 

Previous attempts (\cite{dP}\cite{cJ}\cite{pH}\cite{wG},\cite{wD}) to describe the categories of graphs assume the objects are classically defined.  We instead adopt the philosophy of Grothendieck and define "nice" categories with a few "bad" objects in order to have available the categorical constructions available in topos theory. Thus our aim is to provide a framework for categorical graph theory using the categories of $(X,M)$-graphs and reflexive $(X,M)$-graphs, which are categories of presheaves\footnote{Recall a category of presheaves is one which is equivalent to a functor category $[\C C^{op},\mathbf{Set}]$ for some small category $\C C$.} on two-object categories (Definition \ref{D:XMGraph}) which can be thought of as generic containers for the set of vertices and set of arcs and where a monoid $M$ informs the type of coherence involved. 

The categories of (reflexive) $(X,M)$-graphs should be thought of as categories of generalized $k$-uniform hypergraphs where $k$ is the cardinality of $X$. In particular, the categories of (reflexive) $(X,M)$-graphs have incidence in multisets which have been examined recently in \cite{kI}\cite{hH}. The other distinguishing feature of (reflexive) $(X,M)$-graphs is the presence of unfixed edges. In the case $X$ is a two-elements set, the only unfixed edges are $2$-loops, which are called bands in \cite{rBS}\cite{jS}. We prove in Part \ref{S:MainResults} that unfixed edges are necessary for the construction of exponentials in conventional categories of graphs (Corollaries \ref{C:HyperExp}, \ref{C:Exponentials}, \ref{C:NoExpR}). 

One motivation for defining the categories of $(X,M)$-graphs is to address the problem that the category of $k$-uniform hypergraphs (as defined in \cite{wD}) lacks connected colimits, exponentials and does not continuously embed into the category of hypergraphs.  However, there is a continuous embedding of the category of $k$-uniform hypergraphs in a category of $(X,M)$-graphs (Proposition \ref{P:HyperGSG}) which preserves any relevant categorical structures (e.g., colimits, exponentials, injectives, projectives). Therefore, working in a category of (reflexive) $(X,M)$-graphs would provide a better categorical environment for constructions on uniform hypergraphs.
 
In Chapter \ref{S:XMG}, we define (reflexive) $(X,M)$-graph theories and $(X,M)$-graph categories and examine their categorical structures. In particular, an equivalence between the category of monoid actions, the category of $(X,M)$-graph theories, and the (meta)category of $(X,M)$-graph toposes is shown (Proposition \ref{P:Equiv}). Thus statements about monoid and group actions may be rephrased into graphical and topos theoretic statements (see Examples \ref{E:FFC}(\ref{E:OrbitStab},\ref{E:Lagrange}) Example \ref{E:CG}(\ref{E:CayleyGraph})). This opens the door to new crossovers of research where a broader view may be taken and more general statements about graphs, uniform hypergraphs, and monoid actions can be proven. Also, by separating syntax ($(X,M)$-graph theories) from semantics ($(X,M)$-graph categories), functorial constructions between $(X,M)$-graph categories are often induced by taking the obvious morphisms between theories. In particular, since a morphism between monoid actions  induces an essential geometric morphism between categories of $(X,M)$-graphs, bridges of structure via adjunctions can be constructed between the different categories of (reflexive) $(X,M)$-graphs in a general way. In particular, the adjunctions between the usual categories of graphs and hypergraphs (Corollary \ref{C:Essential}), evaluation at components (Examples \ref{E:FFC}(\ref{E:3}), cf, \cite{wG}) and constructions of injective hulls and projective covers (Chapter \ref{S:InjProj}) can all be obtained by using the obvious morphisms of monoid actions as well as obvious interpretations of $(X,M)$-graph theories (Part \ref{S:MainResults}). 

In Chapter \ref{S:XGraphs} we show how the categories of (reflexive) $(X,M)$-graphs introduced in this paper are able to describe the types of incidence in the various definitions of graphs and hypergraphs, e.g., oriented\footnote{This is not to be confused with oriented hypergraphs as defined in \cite{lR} (see Example \ref{E:Labeled}(\ref{E:Oriented}))}, unoriented \cite{aB}, directed \cite{gG}, and hereditary \cite{dM}, by taking the monoid $M$ to be a submonoid of endomaps on a set $X$. Thus, when $X$ is a two-element set, the categories of $(X,M)$-graphs generalize the various categories of graphs and undirected graphs found in \cite{rB} \cite{rBS} \cite{wL}. Therefore, any results proven for (reflexive) $(X,M)$-graphs in general hold for these categories as well. By taking a two set partition of a set $X$ and taking submonoids of the endomaps on $X$ which restrict to these subsets give us a way to define directed uniform hypergraphs (\cite{gG}).

In Chapter \ref{S:ComparisonFunctors}, we show how the natural inclusions of submonoids endomaps which induce essential geometric morphisms between the categories of $X$-graphs is functorial on the category of sets with injective maps. This extends the constructions in \cite{rB} and exemplifies the difference between modifications of structure as opposed to modification of data. Many concrete examples of these essential geometric morphisms are then given. 

In Chapter \ref{S:Colored}, we show that the categories of labeled $(X,M)$-graphs are generalizations of labeled and colored graphs and obtained by taking appropriate slice categories of $(X,M)$-graphs. Therefore labeled and colored $(X,M)$-graphs are also categories of presheaves. Thus by embedding $k$-uniform hypergraphs in the appropriate category of $(X,M)$-graphs, we can describe the categories of colored and labeled $k$-uniform hypergraph as slice categories which is otherwise unavailable in the classical definition. This allows us to formulate Hedetniemi's  conjecture in functoral terminology (Example \ref{E:Labelings}(\ref{E:HC})) in the general case. 

Categories of hybrid and mixed $(X,M)$-graphs are discussed in Chapter \ref{S:HyM}. In particular, the category of bipartite graphs is a hybrid $(X,M)$-graph (Example \ref{E:Bipart}). The well-known functor from the category of hypergraphs to bipartites graphs is shown to  respect the dualization of data given (Example \ref{E:Hybrid}(\ref{E:DualBi})) and characterizes the usual dualization of hypergraphs. However, the construction of a hypergraph from a bipartite graph is shown to not extend to a functor. Using hybrid $(X,M)$-graphs also allows us to generalize constructions of intersection graphs (also known as line graphs) of hypergraphs which in turn are used for constructions of Kneser hypergraphs. Each of these cases supports the idea that the natural categorical context in which to study these constructions should be in the category of bipartite graphs rather than the category of hypergraphs.

In Chapter \ref{S:ToposStructure} we look at the topos structures of the categories of (reflexive) $(X,M)$-graphs. The categories of (reflexive) $(X,M)$-graphs are instances of categories of cohesive or variable sets as described in \cite{wLrR} where the sets of vertices are binded by arcs and the monoid $M$ informs the type of cohesivity between vertices. We generalize the result concerning the construction of exponentials in \cite{jS} and \cite{rBS}, provide a set construction of the arc set (Chapter \ref{S:Exponential}), and show the importance of maintaining a distinction between unfixed loops (also called bands in the case $X$ is a two element set) and fixed loops in the construction of exponentials (Examples \ref{E:Exponentials}(\ref{E:XLoops}) and \ref{E:Exponentials}(\ref{E:1Loops}), Corollaries \ref{C:Exponentials} and \ref{C:NoExpR}). The topos theoretic properties (e.g., connected, \'etendue, etc.) for a (reflexive) $(X,M)$-graph are considered (Chapter \ref{S:PointsPieces}).  This generalizes \cite{wL} and exhibits the difference between the categories of reflexive and non-reflexive $(X,M)$-graphs.

Simple $(X,M)$-graphs are defined as the categories of separated presheaves under the double negation topology in Chapter \ref{S:Simple} and generalize the case of simple graphs. This implies that simple $(X,M)$-graphs are Grothendieck quasi-toposes which are reflective subcategories of $(X,M)$-graphs which preserve exponentials (Proposition \ref{P:SimpleNegNeg}). Moreover, we show in Example \ref{E:Hybrid}(\ref{E:Bipart}) that the functor from hypergraphs to bipartite graphs factors through the category of simple bipartite graphs and thus better exemplifies the difference between hypergraph and bipartite graph morphisms (cf \cite{tW}).

The characterization injective and projective objects in a category of graphs in \cite{kW} for undirected graphs and more recently in \cite{wG} for directed graphs is improved upon by using the more abstract framework of (reflexive) $(X,M)$-graphs in Chapter \ref{S:InjProj}. We show there are natural functorial injective and projective refinements of objects (Chapter \ref{S:InjProj}). This also allows us to construct the injective hulls and projective covers for (reflexive) $(X,M)$-graphs.  

In the Part \ref{S:MainResults}, we model $(X,M)$-graph theories in categories of $F$-graphs \cite{cJ} and reflexive $F$-graphs which generalizes \cite{dP}. In particular, we focus on models of (reflexive) $(X,M)$-graphs in the category of hypergraphs (Chapter \ref{S:PGraphs}) and generalizations of categories of undirected graphs to show how a bridge of structure between uniform hypergraphs, hypergraphs, and graphs can be made via nerve-realization adjunctions induced by the obvious interpretations (Propositions \ref{P:HyperGSG}, \ref{P:EENerve} and \ref{P:EVNerve}). 

\ignore{
A hypergraph $H=(H(V),H(E), \varphi)$ consists of a set of vertices $H(V)$, a set of edges $H(E)$ and an incidence map $\varphi\colon H(E)\to \mathcal P(H(V))$ where $\mathcal P\colon \mathbf{Set}\to \mathbf{Set}$ is the covariant power-set functor. Given hypergraphs $H=(H(V),H(E),\varphi)$ and $H'=(H'(V),H'(E),\varphi')$ a morphism $f\colon H\to H'$ consists of a set map $f_V\colon H(V)\to H'(V)$ and a set map $f_E\colon H(E)\to H'(E)$ such that $\varphi'\circ f_E=\mathcal P(f_V)\circ \varphi$. In other words, the following diagram commutes.
\[
\xymatrix{ H(E) \ar[d]_{\varphi} \ar[r]^{f_E} & H'(E) \ar[d]^{\varphi'} \\ \mathcal P(H(V)) \ar[r]^-{\mathcal P(f_V)} & \mathcal P(H'(V))}
\]
Thus the category of hypergraphs $\C H$ is the comma category $\mathbf{Set}\ls \mathcal P$ (\cite{eR}, p 22). The full subcategory of $\C H$ consisting of $k$-uniform hypergraphs $\kH$ is not well-behaved. It lacks connected colimits, exponentials, and even though it is complete, the inclusion functor $j\colon \kH\to \C H$ does not preserve limits. We address this issue by showing the inclusion functor $j\colon \kH\to \C H$ factors through a category of presheaves $\widehat{\DD G}_{(X,M)}$, called a category of $(X,M)$-graphs,
\[
\xymatrix@!=.3em{ & \widehat{\DD G}_{(X,M)} \ar[dr]^{h} & \\ \kH \ar@{>->}[ur]^{i} \ar@{>->}[rr]^j && \C H}
\]
such that $i\colon \kH\hookrightarrow \widehat{\DD G}_{(X,M)}$ is a continuous embedding which preserves any exponentials which exist and $h$ is a left adjoint (Proposition \ref{P:HyperGSG}). 
}

In Chapter \ref{C:FGraphs} we use the functorial semantics introduced in \cite{hA} to show which categorical structures exist and do not exist in the categories of (reflexive) $F$-graphs. In particular, we show that the generalized categories of undirected graphs and reflexive undirected graphs are $\infty$-positive geometric categories with a natural numbers object and subobject classifier (Proposition \ref{P:InftyGeo}).  This exemplifies the difference between the categories of (reflexive) $(X,M)$-graphs and more categories defined based on the set-theoretical approach. Using Giraud's theorem, we can capture this difference by either of the following conceptual equations:
\begin{align*}
	\text{Graphs}+\text{Exponentials}&=(X,M)\text{-Graphs}\\
	\text{Graphs}+\text{Effective Equivalence Relations}&=(X,M)\text{-Graphs}.
\end{align*}
where Graphs is the corresponding category of uniform hypergraph classically defined. 

 This thesis is a step toward a "universal graph theory" where general constructions and results based in various specialized areas of graph theory can be formulated. Our approach differs from previous attempts at describing categorical graph theory by being able to describe structures relevant to graph theorists. In this paper you will find $(X,M)$-graph generalizations and categorical reformulations for each of the following:
 \begin{enumerate}
 	\item Cayley and Schreier Graphs, Examples \ref{E:CG}(\ref{E:CayleyGraph}, \ref{E:SchreierGraph})
 	\item Non-rainbow and Non-monochromatic hypergraphs, Examples \ref{E:Labelings}(\ref{E:HC}, \ref{E:NRainbow})
 	\item Intersection Graphs, Example \ref{E:Hybrid}(\ref{E:Int})
 	\item Pultr Functors, Example \ref{E:NRAdj}(\ref{E:Pultr})
 	\item Hypergraph Dualization, Example \ref{E:OSG}(\ref{StructDual})
 	\item Bipartite Graph Double Covers, Example \ref{E:Exponentials}(\ref{E:BPDC})
 	\item Regular Graphs, Example \ref{S:HyM}(\ref{E:RegularGraphs})
 	\item Reconstruction Conjectures, Example \ref{E:TNN}(\ref{E:DR})
 	\item Hedetniemi's Conjecture, Example \ref{E:Labelings}(\ref{E:HC})
 	\item Kneser Hypergraphs, Example \ref{E:TNN}(\ref{E:Kneser})
 	\item The Various Products of Graphs, Section \ref{S:PPP}
 \end{enumerate}
 We want to stress that this is not mere pedantry for the sake of reformulation of graph theory into category theory. Rather, it is an illumination of the structures involved and a capturing of a changed perspective of what is meant by "graphical structure". Our methodology exemplifies the intimate connection between monoid actions and "graphs", and how we can study them via transfer of structure along adjoint situations induced by interpretation functors.

\ignore{In the final part, we characterize the various categories of hypergraphs found in the literature by isolating five main factors: (1) Does it have strict morphisms? (2) Do the objects allows multiplicities of vertices in the incidence of edges? (3) Do the objects have non-empty set of incidence vertices? (4) Do the objects have oriented incidence of edges? (5) Are the incidence of edges of objects bounded by some fixed cardinality? Then we show the existence of adjunctions between them which connects to the categories of $(X,M)$-graphs given in the first two parts. This provides a complete picture of the various types of categories used in combinatorics along with the adjunctions connecting them. }
\part{Categories of $(X,M)$-Graphs}

\chapter{A General Framework}\label{S:XMG}

\section{$(X,M)$-Graphs}
We begin with a definition.

\begin{definition}\label{D:XMGraph} \mbox{}
	\begin{enumerate}
		\item Let $M$ be a monoid and $X$ a right $M$-set. 
		The theory for $(X,M)$-graphs, $\DD G_{(X,M)}$, is the category with two objects $V$ and $A$ and homsets given by
		\begin{align*}
		   & \DD G_{(X,M)}(V,A) \defeq X, \\
		  & \DD G_{(X,M)}(A,V) \defeq \empset, \\
		  & \DD G_{(X,M)}(V,V)\defeq \{\Id_V\},\\
		 & \DD G_{(X,M)}(A,A)\defeq M.
		\end{align*}
		Composition is defined as  $m\circ x=x.m$ (the right-action via $M$), $m\circ m'=m'm$ (monoid operation of $M$).
		\item Let $M$ be a monoid such that $\Fix(M)\defeq \setm{m'\in M}{\forall m\in M, m'm=m'}$ is non-empty. Let $X\defeq \setm{x_{m'}}{m'\in \Fix(M)}$ be the right $M$-set with right-action $x_{m}.m'\defeq x_{mm'}$ for each $m\in M$ and $x_{m'}\in X$. The theory for reflexive $(X,M)$-graphs, $\rG_{(X,M)}$ is the same as for $\DD G_{(X,M)}$ but with 
		\begin{align*}
		\rG_{(X,M)}(A,V)\defeq \{\ell\},
		\end{align*}
		and composition $\ell\circ m=\ell$, $\ell\circ x_{m'}=\Id_V$, and $x\circ \ell=x$ for each $m\in M$ and $m'\in \Fix(M)$.
	\end{enumerate}
	The category of $(X,M)$-graphs (resp. reflexive $(X,M)$-graphs) is defined to be the category of presheaves $\widehat{\DD G}_{(X,M)}\defeq [\DD G_{(X,M)}^{op},\mathbf{Set}]$ (resp. $\widehat{\rG}_{(X,M)}\defeq [\rG_{(X,M)}^{op},\mathbf{Set}].$) 
\end{definition}

We represent the theory for $(X,M)$-graphs and and reflexive $M$-graphs as follows.  
\[\xymatrix{ V  \ar[r]|-{X} & A \ar@(ur,dr)^M}\qquad \qquad
\xymatrix{ V \ar@<+.3em>[r]|-X \ar@{<-}@<-.3em>[r]|-{\ell} & A \ar@(ur,dr)^M }
\] By definition, an $(X,M)$-graph $G\colon \DD G^{op}_{(X,M)}\to \mathbf{Set}$ has a set of vertices $G(V)$ and a set of arcs $G(A)$ along with right-actions for each morphism in $\DD G_{(X,M)}$. For example, $x\colon V\to A$ in $\DD G_{(X,M)}$ yields a set map $G(x)\colon G(A)\to G(V)$ which takes an arc $\alpha\in G(A)$ to $\alpha.x\defeq G(x)(\alpha)$ which we think of as its $x$-incidence.\footnote{Note that we use the categorical notation of evaluation of a presheaf as a functor for the set of vertices $G(V)$ and set of arcs $G(A)$ rather than the conventional graph theoretic $V(G)$ and $E(G)$ for the vertex set and edge set.} For an element $m$ in the monoid $M$, the corresponding morphism $m\colon A\to A$ in $\DD G_{(X,M)}$ yields a right-action $\alpha.m\defeq G(m)(\alpha)$ which we think of as the $m$-associated partner of $\alpha$.  If $G$ is a reflexive $(X,M)$-graph, the $\ell$-action can be thought of as the extraction of a loop from a vertex. We call a loop equal to $x.\ell$ a distinguished loop for vertex $x$. It can be thought of as the arc-proxy for the vertex. This will allow us to map arcs to vertices, or more precisely, arcs to distinguished loops.

Each $(X,M)$-graph $G$ induces a set map $\partial_G\colon G(A)\to G(V)^X$ such that $\partial_G(\alpha)\colon X\to G(V)$ is the parametrized incidence of $\alpha$, i.e., $\alpha.x=\partial_G(\alpha)(x)$. The $x$-incidence can be recovered from a parametrized incidence by precomposition of the map $\named x\colon 1\to X$ which names the element $x$ in $X$. Observe that the $m$-associated partner of an arc $\alpha$ in $G$ has the parametrized incidence such that the following commutes
\[
\xymatrix@C=3em{ X \ar@/_1em/[rrr]_{\partial_{G}(\alpha.m)} \ar[r]^-{\langle \Id_X, \named m \rangle} & X\x M \ar[r]^-{\text{\tiny action}} & X \ar[r]^-{\partial_{G}(\alpha)} & G(V).}
\]
If $G$ is a reflexive graph, $\partial_{G}(x.\ell)=\named x\circ !_X$  where $!_X$ is the terminal set map. 

\section{Characterization Results}
By definition, the category of $(X,M)$-graphs is a presheaf topos and thus we have the following characterization for an abstract category to be a (reflexive) category of $(X,M)$-graphs.\footnote{Compare this to Section 4 in \cite{dP} where an elementary theory of the category of graphs is given.} We follow the terminology in \cite{pJ}.

\newpage
\begin{proposition}\mbox{}
	\begin{enumerate}
	\item A category $\C A$ is equivalent to a category of $(X,M)$-graphs iff the following conditions hold:
		\begin{enumerate}
			\item $\C A$ is effective regular\footnote{also called \textit{Barr-exact}} and extensive,
			\item There is a subterminal object $\underline V$  which is regular projective and connected.
			\item There is a regular projective and connected object $\underline A$ which does not admit a morphism to $\underline V$ such that the set $\{\underline V,\underline A \}$ is a separating set of smallest cardinality.
		\end{enumerate}
		\item 	A category $\C B$ is equivalent to a category of reflexive $(X,M)$-graphs iff the following conditions hold.
		\begin{enumerate}
			\item $\C B$ is effective regular and extensive.
			\item There is a regular projective, connected, non-terminal separator $\underline A$.
		\end{enumerate}
		\end{enumerate}
\end{proposition}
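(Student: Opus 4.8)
The plan is to recognize each category as a presheaf topos and then read its indexing category off from its connected projective objects, which in a presheaf topos are exactly the retracts of representables. For the forward implication of (1), recall that $\widehat{\DD G}_{(X,M)}$ is a Grothendieck topos, hence effective regular and extensive, giving (a). Writing $R_V \defeq \DD G_{(X,M)}(-,V)$ and $R_A \defeq \DD G_{(X,M)}(-,A)$ for the two representables, I would set $\underline V = R_V$ and $\underline A = R_A$. These are connected and regular projective because $\mathrm{Hom}(R_V,-)=(-)(V)$ and $\mathrm{Hom}(R_A,-)=(-)(A)$ are computed pointwise and so preserve coproducts and regular epimorphisms, and they form a separating set by Yoneda. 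Since $\DD G_{(X,M)}(V,V)=\{\Id_V\}$ and $\DD G_{(X,M)}(A,V)=\empset$, the presheaf $R_V$ is a singleton at $V$ and empty at $A$, so $R_V\to 1$ is monic and $R_V$ is subterminal, giving (b); and $\mathrm{Hom}(R_A,R_V)=R_V(A)=\DD G_{(X,M)}(A,V)=\empset$, so $\underline A$ admits no morphism to $\underline V$, which is the first half of (c).

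The minimality clause of (c) I would handle by noting that any connected projective is a retract of $R_V$ or of $R_A$, and that since $\mathrm{Hom}(R_A,R_V)=\empset$ neither representable retracts onto the other; hence no single atom can detect morphisms out of both, and two is the least cardinality of a separating set of atoms. For the converse of (1), assume (a)–(c). Giraud's theorem applied to (a) together with the small separating set $\{\underline V,\underline A\}$ exhibits $\C A$ as a Grothendieck topos, and the hypotheses make $\underline V,\underline A$ connected regular projectives, i.e. atoms. Invoking the recognition theorem for presheaf toposes of \cite{pJ} (a Grothendieck topos with a separating set of atoms is equivalent to presheaves on the full subcategory they span) yields $\C A\simeq[\C C^{op},\mathbf{Set}]$, where $\C C$ is the full subcategory of $\C A$ on $\{\underline V,\underline A\}$.

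It then remains to identify $\C C$ with $\DD G_{(X,M)}$. I would put $M\defeq\mathrm{End}_{\C A}(\underline A)$ and $X\defeq\mathrm{Hom}_{\C A}(\underline V,\underline A)$, the latter a right $M$-set under $x.m\defeq m\circ x$. Subterminality of $\underline V$ forces $\mathrm{Hom}(\underline V,\underline V)=\{\Id\}$ (any endomorphism of a subobject of $1$ is the identity), and (c) gives $\mathrm{Hom}(\underline A,\underline V)=\empset$; matching these four hom-sets against Definition \ref{D:XMGraph}, and checking that the categorical composition in $\C C$ agrees with the conventions there after accounting for the opposite-monoid convention in the definition of $\DD G_{(X,M)}(A,A)$, gives $\C C\cong\DD G_{(X,M)}$ and hence $\C A\simeq\widehat{\DD G}_{(X,M)}$.

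The reflexive case (2) runs along the same lines but with a single atom: Giraud plus the recognition theorem turn (a) and the connected projective separator $\underline A$ into an equivalence $\C B\simeq\widehat{M}$ for $M=\mathrm{End}_{\C B}(\underline A)$, and conversely in $\widehat{\rG}_{(X,M)}$ the relation $\ell\circ x_{m'}=\Id_V$ exhibits $\underline V$ as a retract of $\underline A$, so the one-object subcategory on $\underline A$ is Cauchy-dense and $\widehat{\rG}_{(X,M)}\simeq\widehat{M}$ with $\mathrm{End}(\underline A)=M$. The step I expect to be the main obstacle is the precise identification of the recovered monoid with one that actually supports the reflexive theory: one must extract from the reflexive structure an element of $\Fix(M)$, equivalently the idempotent $x_{m'}\circ\ell$ splitting off $\underline V$ as a distinguished-loop retract of $\underline A$, and then recover $X$ as $\{x_{m'}\mid m'\in\Fix(M)\}$ with the composition relations $\ell\circ m=\ell$, $\ell\circ x_{m'}=\Id_V$ matched exactly. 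This matching of the retraction $\ell$ and its fixed idempotent to the data $(\Fix(M),X)$ is where the reflexivity hypothesis does its real work; the remaining verifications that the comparison functors are equivalences are then routine given the recognition theorem.
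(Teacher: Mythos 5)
Your proposal follows essentially the same route as the paper: recognize $\C A$ (resp.\ $\C B$) as presheaves on the full subcategory spanned by the connected regular projective separators and then read off $X\defeq\C A(\underline V,\underline A)$ and $M\defeq\C A(\underline A,\underline A)$ from the hom-sets; the paper simply cites Carboni--Vitale (Corollary 43) for the recognition step where you assemble it from Giraud plus the atoms-recognition theorem, which is the cleaner citation since bare extensivity plus exactness is exactly their hypothesis. The "main obstacle" you flag in the reflexive case --- extracting an element of $\Fix(M)$ so that the recovered monoid actually supports a reflexive theory --- is a genuine issue that the paper's one-line proof passes over entirely, so your instinct there is sound rather than a defect of your argument.
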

\begin{proof}
	The conditions are equivalent to $\C A$ and $\C B$ being a categories of presheaves on the full subcategories with object set $\{\underline V,\underline A\}$ (\cite{aC}, Corollary 43) where in the reflexive case $\underline V$ is the terminal object. Let $X\defeq \C A(\underline V,\underline A)$ (resp. $X\defeq \C B(\underline V, \underline A)$) and $M\defeq \C A(\underline A,\underline A)$ (resp. $M\defeq \C B(\underline A, \underline A)$). Then we have $\C A\simeq \widehat{\DD G}_{(X,M)}$ and $\C B\simeq \widehat{\rG}_{(X,M)}$.\qed
\end{proof}

Since categories of $(X,M)$-graphs are categories of presheaves, they enjoy many properties of the category of sets. We list a few without comment to show relevance to their practical use in such fields as logic, combinatorics, and computer science. 
\begin{corollary}\label{C:ToposXM}
	A category of (reflexive) $(X,M)$-graphs  
	\begin{enumerate}
		\item is complete, cocomplete, well-powered, well-copowered and locally cartesian closed.
		\item has enough injectives and projectives. 	
		\item is an adhesive HLR category (\cite{hE}, Definition 4.9).\footnote{with respect to monomorphisms} 
		\item is an $\infty$-positive geometric category with a separating set of objects (\cite{pJ}, A1.4.19, A1.2.4).
		\item is an $\infty$-lextensive category.
	\end{enumerate}
\end{corollary}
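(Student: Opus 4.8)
The plan is to reduce every clause to the single structural fact recorded in Definition \ref{D:XMGraph}: both $\widehat{\DD G}_{(X,M)}$ and $\widehat{\rG}_{(X,M)}$ are presheaf toposes $[\C C^{op},\mathbf{Set}]$ on a small (here two-object) category $\C C$, and every item on the list is a standard property of presheaf toposes. So rather than verify anything from scratch, I would invoke the corresponding general result in each case, supplying the relevant separating or projective-generating family by hand where the statement demands witnesses. The only work is choosing the right citation and matching the infinitary phrasing.

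For (1), completeness and cocompleteness are immediate, since limits and colimits in a functor category valued in $\mathbf{Set}$ are computed objectwise and $\mathbf{Set}$ is bicomplete. Every presheaf topos is an elementary topos, hence locally cartesian closed and well-powered. Well-copoweredness follows from the fact that a presheaf topos is a Grothendieck (locally presentable) topos, so its objects have only a small set of quotients. For (2), I would exhibit the generators explicitly: the representables $\mathbf{y}V$ and $\mathbf{y}A$ are regular projective, because Yoneda sends a regular epimorphism $G\twoheadrightarrow H$ to a surjection on hom-sets, and every presheaf is a quotient of a coproduct of representables; hence there are enough projectives. Enough injectives holds in any Grothendieck topos, which I would simply cite.

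For (3), I would use that every topos is adhesive with respect to its monomorphisms and that the HLR axioms of \cite{hE} (Definition 4.9) are verified for adhesive categories; a presheaf topos qualifies. For (4) and (5), every Grothendieck topos is an $\infty$-positive geometric category and is $\infty$-lextensive, as spelled out in \cite{pJ} (A1.4.19, A1.2.4); the required separating set is again $\{\mathbf{y}V,\mathbf{y}A\}$, which is precisely the pair of separators isolated in the preceding proposition, so no new construction is needed.

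There is no genuine mathematical obstacle here, since each assertion is a known feature of presheaf toposes. The only care required is bookkeeping in the infinitary clauses, namely $\infty$-positivity, $\infty$-lextensivity, and well-copoweredness: one must take the disjoint, universal coproducts and the relevant smallness of subobjects and quotients over arbitrary small index sets, not merely finite ones. This is automatic for a Grothendieck topos, but it is the one place where a careless appeal to the bare finitary elementary-topos axioms would be insufficient, so I would flag explicitly that the Grothendieck (not just elementary) status of $\widehat{\DD G}_{(X,M)}$ and $\widehat{\rG}_{(X,M)}$ is what underwrites the infinitary versions.
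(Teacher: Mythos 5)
Your proposal is correct and matches the paper's approach: the paper states this corollary without proof, remarking only that these are standard properties enjoyed by any category of presheaves, which is exactly the reduction you carry out. Your additional care about the infinitary clauses (Grothendieck versus merely elementary topos) and the explicit witnesses $\{\underline V,\underline A\}$ are sound elaborations of what the paper leaves implicit.
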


\section{Essential Geometric Morphisms}

Recall that an essential geometric morphism $f\colon \C E\to \C E'$ between toposes consists of adjunctions $f_!\dashv f^*\colon \C E'\to \C E$ and $f^*\dashv f_*\colon \C E\to \C E'$ which we write as $f_!\dashv f^*\dashv f_*\colon \C E\to \C E'$ and call the $f$-extension, $f$-restriction, and $f$-corestriction respectively.  A functor $F\colon \C C\to \C C'$ between small categories $\C C$ and $\C C'$ induces an essential geometric morphism $F_!\dashv F^*\dashv F_*\colon \widehat{\C C}\to \widehat{\C C'}$ where $F_!$ is the realization of $y_{\C C'}\circ F$, $F^*$ is the nerve of $y_{\C C'}\circ F$ and $F_*$ is the nerve of $F^*\circ y_{\C C'}$ where $y_{\C C}\colon \C C\to \widehat{\C C}$ and $y_{\C C'}\colon \C C'\to \widehat{\C C'}$ are the Yoneda embeddings (see \cite{mR} pp 194-198). In the subsequent, we denote the representables $\underline V\defeq \widehat{\DD G}_{(X,M)}(-,V)$ and $\underline A\defeq \widehat{\DD G}_{(X,M)}(-,A)$.

\begin{example}\label{E:FFC} \mbox{}
	\begin{enumerate}
		\item \label{E:FreeForget} (The Free, Forgetful, Cofree Adjunction for Categories of $(X,M)$-Graphs)\\ Consider the $(X,M)$-graph theory $\DD G_{(\empset, 1)}$, i.e., the discrete category with two objects $V$ and $A$. Then for an $(X,M)$-graph theory $\DD G_{(X,M)}$, there is the inclusion functor $\iota\colon \DD G_{(\empset,1)}\to \DD G_{(X,M)}$. Thus there is an essential geometric morphism $\iota_!\dashv \iota^*\dashv \iota_*\colon \mathbf{Set}^2\to \widehat{\DD G}_{(X,M)}$. The $\iota$-extension $\iota_!$ takes the pair of sets $(S(V),S(A))$ to the coproduct $\bigsqcup_{S(V)}\underline V\sqcup \bigsqcup_{S(A)}\underline A$ since the category of elements for $(S(V),S(A))$ lacks internal cohesion. The $\iota$-restriction $\iota^*$ takes an $(X,M)$-graph $G$ to the pair of sets $(G(V),G(A))$. By \cite{eR} (Proposition 3.3.9), it creates all limits and colimits.  The $\iota$-coextension $\iota_*$ sends $(S(V),S(A))$ to the $(X,M)$-graph with vertex set $S(V)$ and arc set $\mathbf{Set}^2((X,|M|),(S(V),S(A)))=S(V)^X\x S(A)^{|M|}$ where $|M|$ is the underlying set of $M$. The right-actions are given by $(f,s).x=f(x)$, $(f,s).m=(f\circ m,s.m)$ where $m\colon X\to X$ is the right-action by $m\in M$ and $s.m\colon |M|\to S(A)$ is defined $s.m(m')\defeq s(mm')$. 
		
		The counit $\varepsilon\colon \iota_!\iota^*\Rightarrow \Id$ of the adjunction $\iota_!\dashv \iota^*$ on a component $\varepsilon_G\colon \bigsqcup_{G(V)}\underline V\sqcup \bigsqcup_{G(A)}\underline A \to G$ is the epimorphism induced by the classification maps $v\colon \underline V\to G$ and $\alpha\colon \underline A\to G$ for vertices $v\in G(V)$ and arcs $\alpha\in G(A)$. The unit $\eta\colon \Id\Rightarrow \iota_*\iota^*$ of the adjunction $\iota^*\dashv \iota_*$ on a component $\eta_G\colon G\to \iota_*\iota^*(G)$ is the identity on vertices and sends arc $\alpha\in G(A)$ to $(\partial_G(\alpha),\overline \alpha)$ where $\overline \alpha\colon |M|\to G(A)$ is the constant map. Thus for each $(X,M)$-graph $G$ the component $\eta_G$ is a monomorphism. These natural transformations will be used to characterize the injective and projective objects in the category of $(X,M)$-graphs (see Chapter \ref{S:InjProj} below).
		
		\item \label{E:FreeForget2} (The Free, Forgetful, Cofree Adjunction for Categories of Reflexive $(X,M)$-Graphs)\\
		In the case of reflexive $(X,M)$-graphs, the inclusion $\iota\colon \DD G_{(\empset, 1)} \to \rG_{(X,M)}$ induces a similar essential geometric morphism $\iota_!\dashv \iota^*\dashv \iota_*\colon \mathbf{Set}^2\to \widehat{\rG}_{(X,M)}$ which behaves the same for the adjunction $\iota_!\dashv \iota^*$ as the non-reflexive case. However, for the $\iota$-coextension, $(S(V),S(A))$ is sent to the reflexive $(X,M)$-graph with vertex set $S(V)\x S(A)$ and arc set $S(V)^X\x S(A)^{|M|}$. The right-actions are given by $(f,s).x=(f(x),s(x\circ \ell))$ for each $x\in X$ and $(f,s).m=(f\circ m, s.m)$ as above. 
		
		Similar to the non-reflexive case, the counit $\varepsilon\colon \iota_!\iota^*\Rightarrow \Id$ of the adjunction $\iota_!\dashv \iota^*$ on a component $\varepsilon_G$ is the canonical epimorphism. The unit $\eta\colon \Id\Rightarrow \iota_*\iota^*$ on a component $\eta_G\colon G\to \iota_*\iota^*(G)$ is morphism which on vertices takes $v\in G(V)$ to $(v, v.\ell)$ and on arcs takes $\alpha\in G(A)$ to $(\partial_G(\alpha), \overline \alpha)$ where $\overline \alpha$ is the constant map. Therefore $\eta_G$ is a monomorphism for each reflexive $(X,M)$-graph $G$. 
		 
		\item \label{E:3} (Evaluation at Components) Let $1$ be the terminal category and consider the inclusion functor $V\colon 1\to 2$ which names the object $V$ in $\DD G_{(\empset, 1)}$. This induces the essential geometric morphism $V_!\dashv V^*\dashv V_*\colon \mathbf{Set}\to \mathbf{Set}^2$. The $V$-extension sends a set $S$ to $(S,\empset)$. The $V$-restriction sends the pair $(S(V),S(A))$ to $S(V)$. The $V$-coextension sends $S$ to $(S,1)$. 
		
		Similarly, for the inclusion functor $A\colon 1\to 2$ which names the object $A$ in $\DD G$, we have the essential geometric morphism $A_!\dashv A^*\dashv A_*\colon \mathbf{Set} \to \mathbf{Set}^2$. The $A$-extension sends a set $S$ to $(\empset, S)$. The $A$-restriction sends a pair $(S(V),S(A))$ to $S(A)$. The $A$-coextension sends $S$ to $(1,S(A))$.
		
		Let $\DD T$ be a reflexive or non-reflexive $(X,M)$-graph theory. Note that the evaluation functors $(-)_V, (-)_A\colon \widehat{\DD T}\to \mathbf{Set}$ are the compositions $V^*\iota^*$ and $A^*\iota^*$ respectively. Therefore, the evaluations are the restriction functors of essential geometric morphisms $\iota_!V_!\dashv (-)_V\dashv \iota_*V_*\colon \mathbf{Set} \to \widehat{\DD T}$ and $\iota_!A_!\dashv (-)_A\dashv \iota_*A_*\colon \mathbf{Set} \to \widehat{\DD T}$. This will be used to define vertex-labeled $(X,M)$-graphs in the subsequent (Definition \ref{D:VertexLabeled}).\footnote{For reflexive $(X,M)$-graphs the essential geometric morphism $\iota_!V_!\dashv (-)_V\dashv \iota_*V_I$ is equivalent to the essential geometric morphism $\Delta \dashv \Gamma\dashv B$ given in Chapter \ref{S:PointsPieces}.}
		\item \label{E:OrbitStab} (The Orbit-Stabilizer Theorem) Let $X$ be right $M$-set. Then for each $x\in X$ there is a functor $\named x\colon \DD G_{(1,1)}\to \DD G_{(X,M)}$ which sends $v\colon V\to A$ in $\DD G_{(1,1)}$ to $x\colon V\to A$. This induces the essential geometric morphism such that the restriction functor $x^*\colon \widehat{\DD G}_{(X,M)}\to \widehat{\DD G}_{(1,1)}$ gives a bouquet of $x$-incidences for an $(X,M)$-graph $H$, i.e., $x^*(H)(V)=H(V)$ and $ x^*(H)(A)=H(A)$ such that $\alpha.v$ is the vertex $\alpha.x$ in $H$. The $\widehat{\DD G}_{(X,M)}$-representable $\underline A$ has $\underline A(V)=\setm{v_x}{x\in X}$ and $\underline A(A)=\setm{a_m}{m\in M}$ with right actions $a_m.x=x.m$ and $a_m.m'=a_{mm'}$ (see Chapter \ref{S:ToposStructure}). Therefore $x^*(\underline A)$ is the bouquet which captures the orbit and stabilizer of the action, i.e., the orbit of $x$ is given by the set of vertices in $x^*(\underline A)$ which are not isolated (i.e., $\mathbf{Orb(x)}=\setm{x'\in X}{\exists \alpha\in M,\ v_{x'}=\alpha.x}$) and the stabilizer $\mathbf{Stab}(x)$ is given by the arc set of the subbouquet $\neg\neg x$, \footnote{By the internal logic of the topos $\widehat{\DD G}_{(1,1)}$ $\neg\neg\colon \Sub(x^*(\underline A))\to \Sub(x^*(\underline A))$ is the double negation closure operator. } i.e., $\neg\neg x(A)=\setm{m\in M}{a_m.x=v_x}$. Since an invertible element $g\in M$ is equivalent to an isomorphism $g\colon \underline A\to \underline A$ of $(X,M)$-graphs $x^*(g)\colon x^*(\underline A)\to x^*(\underline A)$ is an isomorphism of bouquets. This isomorphism restricts to an isomorphism between subbouquets $g\colon \neg\neg x\to \neg\neg (x.g)$. Since restriction functors preserve arc sets, we have $\bigcup_{g\in M}\neg\neg(x.g)(A)=M$. If $M$ is a group, every morphism $g\colon \underline A\to \underline A$ is an isomorphism implying that each $\neg\neg(x.g)$ is isomorphic to $\neg\neg x$. This gives us a $(X,M)$-graph theoretic proof of the orbit-stabilizer theorem.
		\begin{align*}
		|M|	&=\left|\bigcup_{g\in M}\neg\neg (x.g)(A)\right|\\
		& =|\neg\neg x(A)|\cdot |\{\text{non-isolated vertices in $x^*(\underline A)$}\}|  \\
		&=|\mathbf{Stab}(x)|\cdot |\mathbf{Orb}(x)|
		\end{align*}
		\item \label{E:Lagrange} (Lagrange's Theorem) Let $H$ be a subgroup of $G$ and $G/H$ the set of right cosets of $H$. There is the obvious right $G$-action on $G/H$ giving us the theory of $(|G/H|,G)$-graphs. For the functor $\named e\colon \DD G_{(1,1)}\to \DD G_{(|G/H|,G)}$ which picks out the coset $He$ for the group unit $e\in G$ induces a restriction functor $e^*\colon \widehat{\DD G}_{(|G/H|,G)}\to \widehat{\DD G}_{(1,1)}$ which preserves the arc and vertex sets. Since the right action is transitive and by using the observation above on the orbit-stabilizer theorem we have $|G|\iso |G/H||H|$ giving us a $(X,M)$-graph theoretic proof of Lagrange's Theorem. 
	\end{enumerate}
\end{example}

\section{Equivalence Results}
Since the theories and categories of $(X,M)$-graphs depend on the monoid and action, it is natural to see how monoid action morphisms transfers to the corresponding theories and categories. We define the category of monoid actions $\C M$ to consist of pairs $(X,M)$ where $M$ is a monoid and $X$ is a right $M$-set for objects and morphisms $(f,\varphi)\colon (X,M)\to (X',M')$ where $f\colon X\to X'$ is a set map and $\varphi\colon M\to M'$ is a morphism of monoids such that the following commutes
\[
\xymatrix{ X\x M \ar[r]^-{\text{\tiny action}} \ar[d]_{f\x \varphi} & X \ar[d]^{f} \\ X'\x M' \ar[r]^-{\text{\tiny action}} & X'.}
\]
The category of $(X,M)$-graph theories $\mbf{Th}$ we define to be the subcategory of small categories consisting of $(X,M)$-graph theories with morphisms those functors which preserve the vertex object $V$ and arc object $A$. We also define the (meta)category of $(X,M)$-graph toposes $\mbf{GT}$ to be the (meta)subcategory of categories of presheaves of the form $\widehat{\DD G}_{(X,M)}$ with morphisms as those essential geometric morphisms (defined up to natural isomorphism) such that the restriction functor preserves vertex and arc sets. 

\begin{proposition}\label{P:Equiv}
	The categories of monoid actions $\C M$, $(X,M)$-graph theories $\mbf{Th}$, and $(X,M)$-graph toposes $\mbf{GT}$ are equivalent. 
\end{proposition}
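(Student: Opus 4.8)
The plan is to realize the three-way equivalence as a composite of two separate comparisons, $\C M\simeq \mbf{Th}$ and $\mbf{Th}\simeq \mbf{GT}$, where the first is essentially a matter of unwinding Definition \ref{D:XMGraph} and the second carries the substantive content.

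For $\C M\simeq \mbf{Th}$ I would define $\Theta\colon \C M\to \mbf{Th}$ by sending a monoid action $(X,M)$ to the theory $\DD G_{(X,M)}$ and a morphism $(f,\varphi)\colon (X,M)\to (X',M')$ to the functor $\DD G_{(X,M)}\to \DD G_{(X',M')}$ that fixes the objects $V,A$ and acts as $f$ on $\DD G_{(X,M)}(V,A)=X$ and as $\varphi$ on $\DD G_{(X,M)}(A,A)=M$. The only verification is functoriality, and here preservation of the mixed composite $m\circ x=x.m$ is exactly the commuting action square defining a morphism of $\C M$, while preservation of composition on $\DD G_{(X,M)}(A,A)$ is exactly the statement that $\varphi$ is a monoid homomorphism. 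Conversely any $V,A$-preserving functor restricts to a set map $X\to X'$ and a map $M\to M'$ satisfying these same identities, and since $(X,M)$ is recovered from $\DD G_{(X,M)}$ as $(\DD G_{(X,M)}(V,A),\DD G_{(X,M)}(A,A))$, the functor $\Theta$ is bijective on objects and fully faithful, hence an isomorphism of categories.

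For $\mbf{Th}\simeq \mbf{GT}$ I would take the presheaf construction $\Phi\colon \mbf{Th}\to \mbf{GT}$, $\DD T\mapsto \widehat{\DD T}$, sending a $V,A$-preserving functor $F$ to the induced essential geometric morphism $F_!\dashv F^*\dashv F_*$. Since the restriction $F^*$ is precomposition with $F$, which fixes $V$ and $A$, one has $F^*(G)(V)=G(V)$ and $F^*(G)(A)=G(A)$, so $\Phi$ lands in $\mbf{GT}$, and it is surjective on objects by the definition of $\mbf{GT}$. The crux is fullness and faithfulness: every essential geometric morphism $f\colon \widehat{\DD T}\to \widehat{\DD T'}$ whose restriction $f^*$ preserves vertex and arc sets should be isomorphic to $\Phi(F)$ for an essentially unique $F$. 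This is where I expect the main obstacle, and the key is to see how the hypothesis on $f^*$ rigidifies $f$. Using the Yoneda lemma together with the adjunction $f_!\dashv f^*$, there are natural isomorphisms
\[
f^*(G)(V)\iso \widehat{\DD T}(\underline V_{\DD T},f^*G)\iso \widehat{\DD T'}(f_!\underline V_{\DD T},G),
\]
and the preservation hypothesis identifies this naturally with $G(V)\iso \widehat{\DD T'}(\underline V_{\DD T'},G)$; a second application of Yoneda forces $f_!\underline V_{\DD T}\iso \underline V_{\DD T'}$, and likewise $f_!\underline A_{\DD T}\iso \underline A_{\DD T'}$. Thus the defining condition on $\mbf{GT}$-morphisms is precisely what guarantees that the extension $f_!$ carries representables to representables — the feature separating functor-induced essential geometric morphisms from the general ones, which need not preserve representables. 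Since $f_!$ is cocontinuous it is determined up to isomorphism by $f_!\circ y_{\DD T}$, and this composite sends each object to a representable, so by full faithfulness of $y_{\DD T'}$ it factors as $y_{\DD T'}\circ F$ for a unique $V,A$-preserving functor $F$, reconstructed by reading the images of $x\colon V\to A$ and $m\colon A\to A$ as elements of $\underline A_{\DD T'}(V)=X'$ and $\underline A_{\DD T'}(A)=M'$. It then follows that $f_!\iso F_!$ and hence $f\iso \Phi(F)$.

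The delicate point, which I would flag explicitly rather than suppress, is the ``up to natural isomorphism'' bookkeeping built into the definition of $\mbf{GT}$. One checks that two $V,A$-preserving functors induce isomorphic geometric morphisms exactly when they are themselves naturally isomorphic (conjugation of $\varphi$ by a unit of $M'$ being the typical instance), so faithfulness of $\Phi$ holds only once the morphisms of $\mbf{Th}$, and hence of $\C M$, are read modulo this same natural isomorphism. The comparison is therefore cleanest when stated at the appropriate $2$-categorical level; with that convention in force the two displayed constructions give an equivalence, and composing with $\Theta$ yields the asserted equivalence of all three (meta)categories.
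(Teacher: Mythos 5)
Your proof takes essentially the same route as the paper's: the theory construction gives $\C M\simeq\mbf{Th}$ by unwinding Definition \ref{D:XMGraph}, and the presheaf construction gives $\mbf{Th}\simeq\mbf{GT}$ by showing every $\mbf{GT}$-morphism is induced by an essentially unique $V,A$-preserving functor, recovered exactly as in the paper from the condition $\underline{\gamma(x)}\iso\gamma_!(\underline x)$ (your Yoneda argument that the set-preservation hypothesis forces $f_!$ to carry representables to representables is the justification the paper leaves implicit). Your closing caveat --- that faithfulness requires reading $\mbf{Th}$- and $\C M$-morphisms modulo natural isomorphism, since conjugation by a unit of $M'$ yields distinct functors with isomorphic induced geometric morphisms --- is a legitimate refinement the paper passes over in silence, but it does not change the substance of the argument.
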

\begin{proof}
	Consider the functor $\DD G\colon \C M\to \mbf{Th}$, $(X,M)\mapsto \DD G_{(X,M)}$. Since any functor between theories $\gamma\colon \DD G_{(X,M)}\to \DD G_{(X',M')}$ is equivalent to a morphism of monoid actions $\gamma\colon (X,M)\to (X',M')$, $\DD G$ is an equivalence. 
	
	Next, consider the functor $\widehat{(-)}\colon \mbf{Th}\to \mbf{GT}$, $\DD G_{(X,M)}\mapsto \widehat{\DD G}_{(X,M)}$. A morphism $j\colon \DD G_{(X,M)}\to \DD G_{(X',M')}$ between theories yields an essential geometric morphism  $j_!\dashv j^*\dashv j_*$ such that the restriction functor $j^*\colon \widehat{\DD G}_{(X',M')}\to \widehat{\DD G}_{(X,M)}$ which preserves vertex and arc sets for each $(X,M)$-graph $G$, i.e., $j^*(G)(V)=\widehat{\DD G}_{(X,M)}(j(\underline V),G)=G(V)$ and $j^*(G)(A)=\widehat{\DD G}_{(X,M)}(j(\underline A),G)=G(A)$. Therefore $\widehat{(-)}$ is well-defined. If $\gamma_!\dashv \gamma^*\dashv \gamma_*\colon \widehat{\DD G}_{(X,M)}\to \widehat{\DD G}_{(X',M')}$ is a $\mbf{GT}$-morphism it is induced by a unique functor $\gamma\colon \DD G_{(X,M)}\to \DD G_{(X',M')}$ which preserves vertex and arc objects and sends a morphism $x$ to the unique $\DD G_{(X',M')}$-morphism $\gamma(x)$ such that $\underline{\gamma(x)}\iso \gamma_!(\underline x)$. Therefore, $\widehat{(-)}$ is an equivalence. 
	\end{proof}

	Since the functor $\C M\to \mbf{Monoid}$, $(X,M)\mapsto M$, is a fibration and opfibration, we have $\mbf{Th}$ and $\mbf{GT}$ also admit a fibration and opfibration\footnote{called a cofibration in \cite{jG}} to the category of monoids with fibers in categories of $M$-sets, i.e., presheaves on one-object categories. We follow the terminology and notation in \cite{jG}. 

\begin{lemma}\cite{jG} \label{P:FunctorFibration}
	A functor $P\colon \C E\to \C B$ is a fibration iff  for each small category $\DD I$, the functor $P^{\DD I}\colon \C E^{\DD I}\to \C B^{\DD I}$ is a fibration. Moreover, in this case a $\C E^{\DD I}$-morphism $\alpha$ is $P^{\DD I}$-cartesian iff each component $\alpha_x$ is $P$-cartesian.
\end{lemma}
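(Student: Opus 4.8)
The plan is to prove the biconditional and the cartesianness criterion together, since the ``moreover'' clause is exactly what makes the construction of lifts in the functor category go through. The backward implication of the iff is immediate: taking $\DD I$ to be the terminal category $1$ gives canonical isomorphisms $\C E^{1}\simeq \C E$ and $\C B^{1}\simeq \C B$ under which $P^{1}$ is identified with $P$, so if every $P^{\DD I}$ is a fibration then so is $P$. All the work is in the forward direction, for which I would first establish the easy half of the cartesianness criterion: if $\alpha\colon F\Rightarrow G$ in $\C E^{\DD I}$ has every component $\alpha_x$ $P$-cartesian, then $\alpha$ is $P^{\DD I}$-cartesian. Given a test morphism $\beta\colon K\Rightarrow G$ and $w\colon PK\Rightarrow PF$ in $\C B^{\DD I}$ with $P\alpha\circ w=P\beta$, I would define the required factorization $\gamma\colon K\Rightarrow F$ componentwise, letting $\gamma_x$ be the unique $P$-cartesian factorization of $\beta_x$ through $\alpha_x$ over $w_x$; naturality of $\gamma$ then follows by testing the two composites $\gamma_y\circ K(f)$ and $F(f)\circ\gamma_x$ against the cartesian $\alpha_y$, using naturality of $w$, $\beta$ and $\alpha$ together with the uniqueness of cartesian factorizations.

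Next I would construct cartesian lifts in $\C E^{\DD I}$ pointwise. Fix $G\in\C E^{\DD I}$ and $u\colon H\Rightarrow PG$ in $\C B^{\DD I}$. For each object $x$ of $\DD I$ choose a $P$-cartesian lift $\bar u_x\colon \bar G(x)\to G(x)$ of $u_x\colon H(x)\to PG(x)$, so that $P\bar G(x)=H(x)$. For a morphism $f\colon x\to y$ in $\DD I$, the composite $G(f)\circ\bar u_x$ lies over $PG(f)\circ u_x=u_y\circ H(f)$ by naturality of $u$, so the cartesian property of $\bar u_y$ yields a unique $\bar G(f)\colon \bar G(x)\to\bar G(y)$ over $H(f)$ with $\bar u_y\circ\bar G(f)=G(f)\circ\bar u_x$. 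Functoriality of $\bar G$ (preservation of composites and identities) and the fact that $\bar u$ is then a natural transformation both follow from the uniqueness clause of the cartesian property. By the criterion just proved, $\bar u$ is $P^{\DD I}$-cartesian, and since $P^{\DD I}\bar u=u$ this exhibits the desired cartesian lift; hence $P^{\DD I}$ is a fibration.

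Finally I would prove the remaining half of the cartesianness criterion: a $P^{\DD I}$-cartesian $\alpha\colon F\Rightarrow G$ has every component cartesian. Here I would set $u=P\alpha$ and form the pointwise cartesian lift $\bar u\colon\bar G\Rightarrow G$ over $u$ as above, which is $P^{\DD I}$-cartesian with $P\bar G=PF$. Since $\alpha$ and $\bar u$ are both cartesian morphisms into $G$ lying over the same $u$, the two universal properties produce vertical morphisms $\theta\colon F\Rightarrow\bar G$ and $\phi\colon\bar G\Rightarrow F$ with $\bar u\circ\theta=\alpha$ and $\alpha\circ\phi=\bar u$; the uniqueness clauses force $\theta\circ\phi$ and $\phi\circ\theta$ to be the relevant identities, so $\theta$ is a vertical isomorphism. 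Componentwise $\alpha_x=\bar u_x\circ\theta_x$ is the composite of an isomorphism with a $P$-cartesian morphism, hence $P$-cartesian.

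The main obstacle I anticipate is precisely this last direction: unlike the other claims it cannot be checked by a naive pointwise inspection, because global cartesianness of $\alpha$ only supplies factorizations for natural transformations, not for arbitrary maps into the individual objects $G(x)$. The comparison with the independently constructed pointwise lift $\bar u$, together with the fact that two cartesian morphisms over a common base map and into a common codomain differ by a unique vertical isomorphism, is what bridges this gap. The only other point requiring care is verifying that the assignment $\bar G$ is genuinely functorial, which I would handle uniformly by invoking uniqueness of cartesian factorizations rather than by direct computation.
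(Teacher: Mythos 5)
Your proof is correct and complete: the reduction of the backward implication to $\DD I=1$, the componentwise construction of lifts, and in particular the comparison of a $P^{\DD I}$-cartesian $\alpha$ with an independently built pointwise lift via a vertical isomorphism (the one direction that genuinely cannot be checked naively) are all handled properly. The paper itself offers no proof of this lemma — it is quoted from Gray's \emph{Fibred and cofibred categories} — so there is nothing to compare against; your argument is the standard one found in that source.
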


\begin{proposition}
	Let $P\colon \C E\to \C B$ be a fibration (resp. opfibration) and $D\colon \DD I \to \C E$ be a diagram.  Suppose the following conditions hold:
	\begin{ienum} 
		\item The limit $L\defeq \lim_{\DD I}PD$ (resp. colimit $K$) exists in $\C B$. 
		\item The fiber $\C E(L)$ has $\DD I$-limits (resp. $\DD I$-colimits).
		\item For each $\C B$-morphism $f\colon Px\to L$, the pullback $f^*\colon \C E(L)\to \C E(Px)$ preserves $\DD I$-limits (resp., for each $f\colon K\to Px$ the pushout $f_!\colon \C E(K)\to \C E(Px)$ preserves $\DD I$-colimits). 
	\end{ienum}
	Then the limit $L'\defeq \lim_{\DD I}D$ (resp.  $\colim_{\DD I}D$) exists and $P$ preserves it. 
\end{proposition}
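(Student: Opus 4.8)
The plan is to construct the limiting object inside the fiber $\C E(L)$ and then transport it up to $\C E$ along a cartesian lift, using condition (iii) to verify its universal property. The colimit statement is formally dual: since $P^{op}\colon \C E^{op}\to \C B^{op}$ is a fibration exactly when $P$ is an opfibration, with cocartesian lifts and the pushouts $f_!$ becoming cartesian lifts and pullbacks $f^*$, I would prove the fibration/limit case only and obtain the other by applying it to $P^{op}$. So assume (i)--(iii) and write $\pi_i\colon L\to PD(i)$ for the projections of the limit cone on $L=\lim_{\DD I}PD$. The first step is to reindex $D$ into the fiber: for each object $i$ of $\DD I$ set $\bar D(i)\defeq \pi_i^*D(i)\in\C E(L)$, the domain of a chosen cartesian lift $\kappa_i\colon \bar D(i)\to D(i)$ of $\pi_i$. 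For a morphism $u\colon i\to j$ the cone identity $PD(u)\circ\pi_i=\pi_j$ shows $D(u)\circ\kappa_i$ lies over $\pi_j$, so the cartesian property of $\kappa_j$ yields a unique vertical arrow $\bar D(u)\colon \bar D(i)\to\bar D(j)$ with $\kappa_j\circ\bar D(u)=D(u)\circ\kappa_i$; uniqueness in the cartesian property gives functoriality, so $\bar D\colon \DD I\to\C E(L)$ is a genuine diagram in the fiber.

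Next I would form $L'\defeq \lim_{\DD I}\bar D$ in $\C E(L)$, which exists by (ii), with fiber projections $p_i\colon L'\to\bar D(i)$. Composing with the cartesian maps gives arrows $\kappa_i\circ p_i\colon L'\to D(i)$ lying over $\pi_i$, and functoriality of $\bar D$ and of the $\kappa_i$ makes these a cone over $D$ in $\C E$ with $P(L')=L$. Thus it remains only to verify universality, which will simultaneously exhibit $P$ as preserving this limit.

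For the universal property I would take an arbitrary cone $(c_i\colon Y\to D(i))_i$ over $D$. Applying $P$ produces a cone over $PD$, hence a unique $g\colon PY\to L$ with $Pc_i=\pi_i\circ g$. Using the pseudofunctoriality isomorphisms $g^*\bar D(i)=g^*\pi_i^*D(i)\iso(\pi_i\circ g)^*D(i)=(Pc_i)^*D(i)$, each $c_i$ factors through its cartesian lift by a unique vertical map $\hat c_i\colon Y\to g^*\bar D(i)$ in $\C E(PY)$, and these assemble into a cone over $g^*\bar D$. Here condition (iii) enters decisively: since $g^*$ preserves $\DD I$-limits we have $g^*L'\iso\lim_{\DD I}g^*\bar D$, so the cone $(\hat c_i)$ factors uniquely through $g^*L'$ by a vertical map; composing with the cartesian lift $g^*L'\to L'$ over $g$ then produces the required arrow $Y\to L'$ over $g$, and cartesianness together with the uniqueness of the fiberwise factorization forces it to be the unique such factorization. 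This establishes $L'=\lim_{\DD I}D$ with $P(L')=L$.

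The step I expect to cost the most care is the universality argument, specifically the bookkeeping needed to move between fibers: one must deploy the reindexing isomorphisms $g^*\pi_i^*\iso(Pc_i)^*$ coherently enough that $(\hat c_i)$ really is a cone over $g^*\bar D$, and then invoke (iii) at precisely the point where the fiberwise limit must be recognized inside the fiber over $PY$. Conditions (i) and (ii) merely guarantee that the constituent pieces exist; it is (iii) that welds the fiberwise limit to the base limit and makes the factorization through $L'$ both available and unique. As noted, the opfibration case follows by running the same argument on $P^{op}$ verbatim.
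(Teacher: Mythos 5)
Your proposal is correct and follows essentially the same route as the paper: reindex $D$ into the fiber $\C E(L)$ along cartesian lifts of the limit cone, take the fiberwise limit $L'$ there, and verify universality by pulling everything back along the induced map $g\colon PY\to L$ into the fiber over $PY$, where condition (iii) supplies the needed limit. The only difference is presentational — the paper packages the lifted cone as a single cartesian morphism in the functor-category fibration $P^{\DD I}$ via Lemma \ref{P:FunctorFibration}, whereas you build $\bar D$ componentwise from a cleavage and check functoriality by hand.
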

\begin{proof} By duality, it is enough to prove the case for limits. 
	By Lemma \ref{P:FunctorFibration}, $P^{\DD I}\colon \C E^{\DD I}\to \C B^{\DD I}$ is a fibration. Let $p\colon \Delta L\Rightarrow PD=P^{\DD I}(D)$ be the universal cone in $\C B$ and $\overline p\colon p^*(D)\Rightarrow D$ the cartesian lift of $p$ (which exists since $P^{\DD I}$ is a fibration). Therefore, $P^{\DD I}(p^*(D))=P\circ p^*(D)=\Delta L$ and thus by the universal mapping property of the fiber category $P\circ p^*(D)\colon \DD I \to \C E$ factors through the fiber $\C E(L)$:
	\[
	\xymatrix{ \DD I \ar@/^/[drr]^{p^*(D)} \ar@{..>}[dr] \ar@/_/[ddr] \\ & \C E(L) \ar@{>->}[r] \ar[d] \ar@{}[dr]|-<\pb & \C E \ar[d]^{P} \\ & \mbf 1 \ar@{>->}[r]^{\named{L}} & \C B }
	\]
	By assumption, $\C E(L)$ has $\DD I$-limits and thus a universal cone $q\colon \Delta L'\Rightarrow p^*(D)$ exists over $p^*(D)$ in $\C E(L)$. We claim the composition $\xymatrix@1@C=1em{\Delta L' \ar@{=>}[r]^-q & p^*(D) \ar@{=>}[r]^-{\overline p} & D}$ is the universal cone over $D$ in $\C E$. Consider the following diagram (note that we omit the symbol $\Delta$ on $\C E$-morphisms):
	\[
	\xymatrix{ \Delta x \ar@{..>}@/_/[ddr]|-\hole_>>>>>>>>r \ar@/^/[drr]^-n \ar@/^/[dr]|->>>>{k=gh} \ar@{..>}[d]|-{h'=h} \ar@{..>}@/_2em/[dd]_{\ell} \\ \Delta L'' \ar[d]|-{Pr^*(q)} \ar[r]^{g=\overline s}_{\text{\tiny cartesian}} & \Delta L' \ar[d]^q \ar[r]^{\overline p q} & D \ar[d]^{\Id_D} \\ Pr^*(p^*(D)) \ar@{|->}[d] \ar[r]^-{\overline{Pr}}_-{\text{\tiny cartesian}} & p^*(D) \ar@{|->}[d] \ar[r]^{\overline p}_{\text{\tiny cartesian}} & D \ar@{|->}[d] \\ \Delta Px \ar[r]^{Pr=\Delta Pg} & \Delta L \ar[r]^{p} & PD}
	\]
	Given a cone $n\colon \Delta x\Rightarrow D$, by the universal mapping property of the cartesian lift of $\overline{p}$, there is a unique factorization $r\colon \Delta x\Rightarrow p^*(D)$ such that $\overline p\circ r=n$. Let $\overline{Pr}\colon Pr^*(p^*(D))\Rightarrow p^*(D)$ and $\overline s\colon p^*(\Delta L')\Rightarrow \Delta L'$ be cartesian lifts of $\Delta Pr$. By assumption $Pr^*(q)\colon Pr^*(\Delta L')\Rightarrow Pr^*(p^*(D))$ is the universal cone over $Pr^*(p^*(D))$ in the fiber $\C E(Px)$. Therefore, $Pr^*(\Delta L')=\Delta L''$ for some $\C E(Px)$-object $L''$ and $\overline s=\Delta g$ for some $\C E$-morphism $g\colon L''\to L'$.  Note that by definition of the functor $Pr^*\colon \C E(\Delta L)\to \C E(\Delta Px)$, we have $\overline{Pr}\circ Pr^*(q)=q\circ \Delta g$.  Since $\overline{Pr}$ is cartesian, the morphism $r\colon \Delta x\Rightarrow p^*(D)$ induces a unique $\C E^{\DD I}$-morphism $\ell\colon \Delta x \Rightarrow Pr^*(p^*(D))$ such that $\overline{Pr}\circ \ell=r$. Therefore by the universal mapping property of the limit $L''$, there is a unique $\C E$-morphism $h\colon x\to L''$ such that $\ell=Pr^*(q)\circ \Delta h$. Hence, 
	\[
	\overline p\circ q\circ \Delta(g\circ h)=\overline p\circ q\circ \Delta g\circ \Delta h=\overline p\circ \overline{Pr}\circ Pr^*(q)\circ \Delta h=\overline p\circ \overline{Pr}\circ \ell= \overline p\circ r=n.
	\]
	Thus $g\circ h$ is a factorization of $n$ through $\overline p\circ q$. For uniqueness, suppose $k\colon x\to L'$ were another factorization. Since $\Delta g$ is cartesian over $Pr$, there is a unique $\C E$-morphism $h'\colon x\to L''$ such that $g\circ h'=k$.  Since $\overline p\circ q\circ \Delta k=n=\overline p\circ r$, by universal property of $\overline p$ being cartesian we have $q\circ \Delta k=r$. Then $\overline{Pr}\circ Pr^*(q)\circ \Delta h'=q\circ \Delta g\circ \Delta h'=q\circ \Delta k=r=\overline{Pr}\circ Pr^*(q)\circ \Delta h$. Since $\overline{Pr}$ is cartesian, we have $Pr^*(q)\circ \Delta h=Pr^*(q)\circ \Delta h'$. Then by universal mapping property of the limit $L''$, we have $h'=h$ showing $k=g\circ h'=g\circ h$. The condition that $P$ preserves the limit is by construction, i.e., $P^{\DD I}(\overline pq)=p$.
		
\end{proof}

\begin{corollary}
	The categories $\C M$, $\mbf{Th}$, and $\mbf{GT}$ are (small) complete and cocomplete.
\end{corollary}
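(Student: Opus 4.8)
The plan is to collapse the three categories into one via the equivalences of Proposition \ref{P:Equiv} and then apply the immediately preceding Proposition to the (op)fibration $P\colon \C M\to \mbf{Monoid}$ recorded above. Since $\C M\simeq \mbf{Th}\simeq \mbf{GT}$ and an equivalence of categories preserves and reflects all limits and colimits, it suffices to show that $\C M$ is small complete and cocomplete; the statement for $\mbf{Th}$ and $\mbf{GT}$ then transports along the equivalences. So I would fix an arbitrary small diagram $D\colon \DD I\to \C M$ and verify the three hypotheses of the preceding Proposition for $P\colon \C M\to \mbf{Monoid}$, once in its fibration (limit) guise and once in its opfibration (colimit) guise.

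For condition (i), the base $\mbf{Monoid}$ is a finitary equational variety, hence complete and cocomplete, so both $\lim_{\DD I}PD$ and $\colim_{\DD I}PD$ exist in $\mbf{Monoid}$. For condition (ii), each fiber $\C M(M)$ is the category of right $M$-sets, i.e. the presheaf topos $[M^{op},\mathbf{Set}]$ on the one-object category $M$, and such a category is complete and cocomplete; thus every fiber has all $\DD I$-limits and all $\DD I$-colimits. The real content is condition (iii), the preservation of fibrewise (co)limits under reindexing. Here the reindexing attached to a monoid homomorphism $\varphi\colon M\to M'$ is, in the fibration, the restriction of scalars $\varphi^*\colon \C M(M')\to \C M(M)$ given by $x.m\defeq x.\varphi(m)$, and, in the opfibration, the extension of scalars $\varphi_!\colon \C M(M)\to \C M(M')$ which is left adjoint to $\varphi^*$. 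Since $\varphi^*$ is precisely precomposition with $\varphi$ regarded as a functor between the one-object categories $M$ and $M'$, it admits both a left and a right Kan extension, hence has both adjoints, and therefore preserves all limits and all colimits; in particular it preserves $\DD I$-limits, which is exactly condition (iii) in the limit case. Dually, $\varphi_!$ is a left adjoint, so it preserves $\DD I$-colimits, giving condition (iii) in the colimit case. Note that because \emph{every} such reindexing functor preserves the relevant (co)limits, the precise direction of the structure maps produced by the universal (co)cone into or out of $\lim PD$ resp. $\colim PD$ is immaterial.

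With all three hypotheses verified, the preceding Proposition produces $\lim_{\DD I}D$ and $\colim_{\DD I}D$ in $\C M$ and shows moreover that $P$ preserves them. As $\DD I$ was an arbitrary small shape, $\C M$ is small complete and cocomplete, and the equivalences of Proposition \ref{P:Equiv} carry this to $\mbf{Th}$ and $\mbf{GT}$. The one step I expect to require care is condition (iii): pinning down the cartesian and opcartesian lifts along $\varphi$ as restriction and extension of scalars, and confirming that each has the adjoint forcing preservation of the relevant (co)limits. Everything else is a bookkeeping application of the fibrational machinery already assembled.
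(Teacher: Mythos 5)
Your proof is correct and follows essentially the same route as the paper: transport along the equivalences of Proposition \ref{P:Equiv}, then apply the preceding fibration proposition to $\C M\to\mbf{Monoid}$, using that the fibers are presheaf categories of $M$-sets and that the reindexing functors are restriction and extension of scalars (the former having both adjoints, the latter being a left adjoint). You simply spell out the verification of the three hypotheses in more detail than the paper does.
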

\begin{proof}
	Follows from the proposition above, noting that each fiber over $M$ is the category of $M$-sets (a category of presheaves) and the pullback and pushout functors are the restriction and extension of scalar functors. 
\end{proof}

\begin{corollary}
	The category of monoids is a reflective and coreflective subcategory of $\C M$, $\mbf{Th}$ and $\mbf{GT}$.
\end{corollary}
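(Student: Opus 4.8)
The plan is to reduce everything to the category $\C M$ of monoid actions and then transport along the equivalences of Proposition~\ref{P:Equiv}. The conceptual heart of the argument is to exhibit the forgetful functor $U\colon \C M\to \mbf{Monoid}$, $(X,M)\mapsto M$, as the middle term of an \emph{adjoint triple} $s_0\dashv U\dashv s_1$ whose two outer functors are fully faithful sections of $U$. The fully faithful left adjoint $s_1$ will realize $\mbf{Monoid}$ as a reflective subcategory, and the fully faithful right adjoint $s_0$ will realize it as a coreflective subcategory; no single embedding can do both, since that would require a zero object in each fiber of right $M$-sets, which never exists.

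First I would construct the two sections. Let $s_0\colon \mbf{Monoid}\to \C M$ send $M$ to $(\empset, M)$ (the empty right $M$-set) and $s_1\colon \mbf{Monoid}\to \C M$ send $M$ to $(1, M)$ (the terminal right $M$-set with its unique action); on arrows both keep $\varphi$ and take the forced set map. Each is a section, $U s_0=U s_1=\Id_{\mbf{Monoid}}$. Full faithfulness is immediate from the degeneracy of the compatibility square in the definition of $\C M$: a morphism $(\empset,M)\to(\empset,M')$ has its square commuting vacuously because the domain set is $\empset$, while a morphism $(1,M)\to(1,M')$ has its square commuting because the codomain set is terminal; in both cases the morphism is exactly an arbitrary monoid map, so $\C M(s_i M, s_i M')\iso \mbf{Monoid}(M,M')$.

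Next I would establish the two adjunctions by the same degeneracy. For any $(X,M)$ and any monoid $N$, a morphism $(\empset,N)\to(X,M)$ has a vacuous square (empty domain), so $\C M(s_0 N,(X,M))\iso \mbf{Monoid}(N,U(X,M))$, natural in both variables; this yields $s_0\dashv U$, with counit $(\empset\hookrightarrow X,\Id_M)$. Dually a morphism $(X,M)\to(1,N)$ has a commuting square because the codomain set is terminal, so $\C M((X,M),s_1 N)\iso \mbf{Monoid}(U(X,M),N)$, giving $U\dashv s_1$ with unit $(X\to 1,\Id_M)$. Since a subcategory is coreflective precisely when its inclusion has a right adjoint and reflective precisely when its inclusion has a left adjoint, $s_0$ exhibits $\mbf{Monoid}$ as coreflective and $s_1$ exhibits it as reflective in $\C M$.

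Finally I would transport the whole picture along the equivalences $\C M\simeq \mbf{Th}\simeq \mbf{GT}$ of Proposition~\ref{P:Equiv}. Because those equivalences commute with the forgetful functors to $\mbf{Monoid}$ (the fibration data recorded just after that proposition), the images of $s_0$, $U$, and $s_1$ assemble into the identical adjoint triple in $\mbf{Th}$ and $\mbf{GT}$; there the reflective and coreflective copies of $M$ are the theories $\DD G_{(1,M)}$ and $\DD G_{(\empset,M)}$ (respectively their toposes $\widehat{\DD G}_{(1,M)}$ and $\widehat{\DD G}_{(\empset,M)}$), and since equivalences preserve adjunctions and full faithfulness, reflectivity and coreflectivity carry over. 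I expect the only delicate point to be bookkeeping rather than substance: one must check that the two compatibility squares degenerate for the \emph{right} reason in each case (empty domain for $s_0$, terminal codomain for $s_1$) and keep the adjunction directions straight, so that the empty action produces coreflectivity while the terminal action produces reflectivity.
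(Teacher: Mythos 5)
Your proof is correct and follows essentially the same route as the paper, which simply exhibits $M\mapsto(\empset,M)$ and $M\mapsto(1,M)$ as full and faithful left and right adjoints to the forgetful functor $\C M\to\mbf{Monoid}$ and leaves the verifications ``readily verified''; you supply those hom-set verifications and the transport along the equivalences of Proposition \ref{P:Equiv} explicitly. One small slip: in your opening paragraph you call $s_1$ the ``fully faithful left adjoint'' and $s_0$ the ``fully faithful right adjoint'' of the triple $s_0\dashv U\dashv s_1$, which reverses the labels, although the detailed argument and the final attributions (empty action gives coreflectivity, terminal action gives reflectivity) are correct.
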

\begin{proof}
	The functors $\lambda\colon \mbf{Monoid}\to \C M$, $M\mapsto (\empset, M)$ and $\rho\colon \mbf{Monoid}\to \C M$, $M \mapsto (1,M)$ are readily verified to be full and faithful left and right adjoints to the bifibration functor $\C M\to \mbf{Monoid}$.
	
\end{proof}

For the reflexive case, let $\mbf{rTh}$ be the subcategory of small categories consisting of reflexive $(X,M)$-graph theories with morphisms those functors which preserve the vertex object $V$ and arc object $A$. Let $\text{r}\C M$ be the full subcategory of $\C M$ consisting of objects $(X,M)$ such that $X\iso \Fix(M)$ as right $M$-sets. We also define the (meta)category of reflexive $(X,M)$-graph toposes $\tbf{rGT}$ to be the of categories of presheaves

There is an inclusion natural transformation $\DD G\Rightarrow \rG$ where $\DD G, \rG\colon \text{r}\C M\to \Cat$ are the functors which assign the $(X,M)$-theory and reflexive $(X,M)$-theory to each monoid action $(X,M)$ in $\text{r}\C M$. Therefore, for each reflexive $(X,M)$-graph theory $\rG_{(X,M)}$, there is an inclusion $r\colon \DD G_{(X,M)}\to \rG_{(X,M)}$ which induces an essential geometric morphism $r_!\dashv r^*\dashv r_*\colon \widehat{\DD G}_{(X,M)}\to \widehat{\rG}_{(X,M)}$.

\begin{proposition} Let $M$ be a monoid such that $\Fix(M)$ is non-empty. The category of reflexive $(X,M)$-graphs is equivalent to the category of presheaves $[\DD BM^{op},\mathbf{Set}]$ where $\DD BM$ is the one object category with homset equal to $M$.
\end{proposition}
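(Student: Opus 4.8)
The plan is to collapse the two-object theory $\rG_{(X,M)}$ onto its arc object by a Cauchy-density argument. Write $\DD BM$ for the full subcategory of $\rG_{(X,M)}$ spanned by the single object $A$; since $\rG_{(X,M)}(A,A)=M$ with the composition $m\circ m'=m'm$, this is exactly the one-object category on $M$ named in the statement, and its presheaves $[\DD BM^{op},\mathbf{Set}]$ are precisely the arc sets $G(A)$ with their induced $M$-action. The restriction functor $j^*\colon \widehat{\rG}_{(X,M)}\to [\DD BM^{op},\mathbf{Set}]$ along the inclusion $j\colon \DD BM\hookrightarrow \rG_{(X,M)}$ sends a reflexive $(X,M)$-graph $G$ to $(G(A), M)$, and the goal is to prove $j^*$ is an equivalence.

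The key structural observation --- and the \emph{only} place the reflexivity hypothesis $\Fix(M)\neq\empset$ is used --- is that the vertex object $V$ is a retract of $A$ in $\rG_{(X,M)}$. Choose any $m'\in\Fix(M)$; this yields an element $x_{m'}\in X=\rG_{(X,M)}(V,A)$, i.e.\ a morphism $x_{m'}\colon V\to A$, and we already have $\ell\colon A\to V$. The composition rule $\ell\circ x_{m'}=\Id_V$ exhibits $V$ as a retract of $A$, while the endomorphism $e\defeq x_{m'}\circ\ell\colon A\to A$ is, under the identification $X\cong\Fix(M)$, the element $m'\in M$ (using the rule $x\circ\ell=x$); this $e$ is idempotent since $m'm'=m'$ for $m'\in\Fix(M)$, and $V$ is its splitting. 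Hence \emph{every} object of $\rG_{(X,M)}$ is a retract of an object of $\DD BM$, so the fully faithful inclusion $j$ is Cauchy-dense.

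From here I would invoke the Morita principle for presheaf categories: a presheaf category depends only on the idempotent (Cauchy) completion of its base, and a fully faithful, Cauchy-dense inclusion of small categories induces an equivalence of idempotent completions, hence an equivalence of presheaf categories realized by restriction. Since $j$ is such an inclusion, $\overline{\DD BM}\simeq\overline{\rG_{(X,M)}}$, and therefore $j^*\colon \widehat{\rG}_{(X,M)}\to[\DD BM^{op},\mathbf{Set}]$ is an equivalence. This is the same mechanism already used for the characterization Proposition via \cite{aC}.

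The one point requiring care --- the main obstacle if one insists on arguing by hand rather than citing Morita invariance --- is reconstructing the vertex data from the arc data coherently. A pseudo-inverse to $j^*$ must send an $M$-set $S$ to the reflexive graph whose arc set is $S$, whose vertex set is the image of the idempotent $e$ acting on $S$, and whose structure maps $G(\ell)$ and $G(x_{m''})$ (for $x_{m''}\in X$) are read off from this splitting. Checking that this assignment is functorial on $\rG_{(X,M)}$ --- in particular that the $X$-indexed incidence maps are mutually compatible and independent, up to canonical isomorphism, of the chosen $m'\in\Fix(M)$ --- reduces to the relations $m'm''=m'$ and $m''m'=m''$ holding among fixed elements. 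These relations are exactly what makes all idempotents arising from $\Fix(M)$ conjugate in $\overline{\rG_{(X,M)}}$, which is precisely the bookkeeping that the Cauchy-completion argument absorbs automatically; this is why I would favor the conceptual route and let that abstraction handle the coherence.
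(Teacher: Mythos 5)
Your proof is correct and follows essentially the same route as the paper: the paper likewise observes that each $x\colon V\to A$ is a split monomorphism with retraction $\ell$, concludes that $\rG_{(X,M)}$ and $\DD BM$ have equivalent Cauchy completions, and then invokes Morita invariance of presheaf categories (citing Proposition 5.2.3 of \cite{mR}). Your additional bookkeeping about the idempotent $x_{m'}\circ\ell=m'$ and the independence of the choice of $m'\in\Fix(M)$ is a harmless elaboration of the same argument.
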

\begin{proof}
For each $x\in X$, the morphism $x\colon V\to A$ in $\rG_{(X,M)}$ is a split monomorphism with retraction $\ell\colon A\to V$. Hence $\rG_{(X,M)}$ and $\DD BM$ have equivalent Cauchy completions. Thus by Proposition 5.2.3 \cite{mR}, $\widehat{\rG}_{(X,M)}$ is equivalent to $[\DD BM^{op},\mathbf{Set}]$. 

\end{proof}

\begin{corollary}
	Let $M$ be a monoid such that $\Fix(M)$ is non-empty. The category of reflexive $(X,M)$-graphs is monadic over $\mathbf{Set}$.
\end{corollary}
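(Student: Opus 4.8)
The plan is to reduce to the classical fact that right $M$-sets are monadic over $\mathbf{Set}$, using the equivalence $\widehat{\rG}_{(X,M)}\simeq[\DD BM^{op},\mathbf{Set}]$ just proved. Under this equivalence a reflexive $(X,M)$-graph $G$ corresponds to the right $M$-set given by its arc set $G(A)$, on which $M=\rG_{(X,M)}(A,A)$ acts; the vertex set $G(V)$ is recovered as the retract of $G(A)$ determined by the split monomorphism $x\colon V\to A$ and its retraction $\ell$. Hence the forgetful functor I must show is monadic is evaluation at the arc object, $U\defeq \mathrm{ev}_A\colon \widehat{\rG}_{(X,M)}\to\mathbf{Set}$, $G\mapsto G(A)$.

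First I would record the left adjoint. Since $\underline A$ is the representable sent to the free right $M$-set $M$, the functor $U=\mathrm{ev}_A$ has a left adjoint $F\colon \mathbf{Set}\to\widehat{\rG}_{(X,M)}$ whose underlying $M$-set is $F(S)=S\times M$ with action $(s,m).m'\defeq(s,mm')$. The induced monad on $\mathbf{Set}$ is therefore $T\defeq(-)\times M$, with unit $\eta_S(s)=(s,e)$ and multiplication $\mu_S(s,m,m')=(s,mm')$ coming from the monoid product.

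The cleanest finish is to identify the Eilenberg--Moore category directly. A $T$-algebra is a map $a\colon S\times M\to S$, and the two algebra axioms $a(s,e)=s$ and $a(a(s,m),m')=a(s,mm')$ are precisely the unit and associativity laws for a right $M$-action, while $T$-algebra morphisms are exactly the equivariant maps. Thus $\mathbf{Set}^{T}$ is isomorphic to the category of right $M$-sets, the comparison functor is an isomorphism of categories, and $U$ is (strictly) monadic. Equivalently, one may invoke Beck's monadicity theorem: under the equivalence $U$ becomes the unique evaluation functor on $[\DD BM^{op},\mathbf{Set}]$, which reflects isomorphisms (a natural transformation of presheaves on a one-object category is invertible iff its single component is) and creates all colimits (these being computed by that single evaluation), in particular coequalizers of $U$-split pairs. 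The only point requiring care is the bookkeeping in the second step---confirming that the monad induced by $F\dashv U$ really is $(-)\times M$ with multiplication given by the monoid product and not its opposite; once the left/right conventions are matched there is no substantial obstacle.
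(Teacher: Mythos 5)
Your proof is correct and follows essentially the same route as the paper: it invokes the preceding equivalence $\widehat{\rG}_{(X,M)}\simeq[\DD BM^{op},\mathbf{Set}]$ and identifies the latter with the algebras for the monad $-\times M$ on $\mathbf{Set}$, which is exactly what the paper's one-line proof asserts "is easily shown." You simply supply the details (the left adjoint, the monad structure, and the identification of the Eilenberg--Moore category) that the paper omits.
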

\begin{proof}
	From the previous result, it is easily shown that the category of reflexive $(X,M)$-graphs is equivalent the category of algebras on the endofunctor $M\x -\colon \mathbf{Set}\to \mathbf{Set}$. 
	
\end{proof}

\chapter{Categories of $X$-Graphs} \label{S:XGraphs}

\section{$X$-Graphs}
 Let $X$ be a set, we define the following submonoids of the endomap monoid $\End(X)$:
\begin{align*}
& \oX\defeq \{\Id_X\}\\
& \sX \defeq \Aut(X) \quad \text{(the submonoid of automaps)}\\
& \rX \defeq \setm{f\in \End(X)}{f=\Id_X\text{ or } \exists x\in X,\ \forall x'\in X,\ f(x')=x}\\
& \srX \defeq \rX\cup \sX\\
& \hX=\hrX\defeq \End(X).
\end{align*}
Thus there is the following inclusions as submonoids in $\End(X)$
\[
\xymatrix@R=1em@C=1.25em{ \oX \ar@{>->}[d] \ar@{>->}[r]  & \sX \ar@{>->}[r] \ar@{>->}[d] & \hX \ar@{=}[d] \\ \rX \ar@{>->}[r] & \srX \ar@{>->}[r] & \hrX }
\]
The right-action of $M\subseteq \End(X)$ on $X$ is given by evaluation, e.g. $x.f\defeq f(x)$.\footnote{Note that the monoid operation on $\End(X)$ is given by $f\cdot g=g\circ f$.} 

\begin{definition}  Let  $X$ be a set. 
	\begin{enumerate}
		\item The theory for oriented $X$-graphs (resp. symmetric $X$-graphs, hereditary $X$-graphs) is defined as $\oG_X\defeq \DD G_{(X,\oX)}$ (resp. $\sG_X\defeq \DD G_{(X,\sX)}$, $\hG_X\defeq \DD G_{(X,\hX)}$). The category of oriented $X$-graphs (resp. symmetric $X$-graphs, hereditary $X$-graphs) is its category of presheaves $\widehat{\oG}_{X}$ (resp. $\widehat{\sG}_{X}$, $\widehat{\hG}_{X}$). 
		\item The theory for reflexive oriented $X$-graphs (resp. reflexive symmetric $X$-graphs, reflexive hereditary $X$-graphs) is defined as $\rG_X\defeq \rG_{(X,\rX)}$ (resp. $\srG_X\defeq \rG_{(X,\srX)}$, $\hrG_X\defeq \rG_{(X,\hrX)}$).\footnote{In each case, $X$ can be verified to be the submonoid of fixed elements given in the definition of a reflexive theory.} The category of reflexive oriented $X$-graphs (resp. reflexive symmetric $X$-graphs, reflexive hereditary $X$-graphs) is its category of presheaves $\widehat{\roG}_{X}$ (resp. $\widehat{\srG}_{X}$, $\widehat{\hrG}_{X}$). 
	\end{enumerate}
\end{definition}
The various categories of $X$-graphs can be thought of as models for $k$-uniform hypergraphs where $k$ is the cardinality of $X$ and the arcs take its incidence relation in multisets of vertices.  
\begin{example} \label{E:XGraphs}\mbox{}
	\begin{enumerate}
		\item (The Underlying Sets of Vertices and Arcs) When $X=\empset$, the categories of oriented, symmetric and hereditary $X$-graphs is the category $\mathbf{Set}\x \mathbf{Set}$. 
		\item \label{E:Bouq} (Bouquets) When $X=1$ is a one element set, the categories of oriented, symmetric and hereditary graphs is the category of bouquets, i.e., the category of presheaves on $\xymatrix@C=1em{V \ar[r]^s & A}$ (\cite{mR}, p 18). The categories of reflexive, reflexive symmetric and reflexive hereditary $X$-graphs is the category of set retractions.
		\item  (Conventional Categories of Graphs) When $X=\{s,t\}$, the categories of oriented, reflexive, symmetric, reflexive symmetric graphs are the categories of directed graphs, directed graphs with degenerate edges, undirected graphs with involution in \cite{rBS}. 
		
		The following is an example of a reflexive symmetric $X$-graph where $i\colon X\to X$ denotes the non-trivial automap.
				
		\begin{minipage}[t]{0.3\textwidth}
			\begin{center}\mbox{}\\\mbox{}\\
				$G\ $\framebox{ 
					$\xymatrix{ a \ar@{..>}@(ul,ur)[]^{\ell_a} \ar@<+.2em>[r]^{\alpha_0} \ar@<-.2em>@{<-}[r]_{\alpha_1} & b \ar@(d,dl)[]^{\beta_0} \ar@(d,dr)[]_{\beta_1} \ar@{..>}@(ul,ur)[]^{\ell_b} \ar@<+.2em>[r]^{\gamma_0} \ar@<-.2em>@{<-}[r]_{\gamma_1} & c \ar@{..>}@(ul,ur)[]^{\ell_c}}$}
			\end{center}
		\end{minipage}
		\begin{minipage}[t]{0.7\textwidth}
			\begin{align*}
			& G(A)= \{\alpha_0,\alpha_1,\beta_0,\beta_1,\gamma_0,\gamma_1,\ell_a,\ell_b,\ell_c \},\\
			& G(V)= \{a,\ b,\ c \}\\
			& \alpha_0.s= a,\  \alpha_0.t= b,\ \beta_0.s= b,\ \beta_0.t= b,\\
			& \gamma_0.s= c,\ \gamma_1.t= b,\\
			& a.\ell= \ell_a,\ b.\ell= \ell_b,\ c.\ell= \ell_c,\\
			& \alpha_0.i= \alpha_1,\ \beta_0.i= \beta_1,\ \gamma_0.i= \gamma_1	
			\end{align*}
		\end{minipage}
\mbox{}\\\\		
		Each loop extracted from a vertex via $\ell$ is depicted by a dotted arrow. We will call these arrows distinguished loops. They should be thought of as proxies for the vertices. Notice that for a distinguished loop $\ell_a$, we have $\ell_a.i=\ell_a$ since $\ell\circ i=\ell$ in $\srG_{X}$. However, a  non-distinguished loop may not be fixed by the right-action of $i$, as is the case with $\beta_0$ and $\beta_1$ above. If a loop $\delta$ has a distinct $i$-pair (i.e., $\delta.i\neq \delta$), we call it a unfixed loop (or a 2-loop in the case $X=2$).\footnote{In \cite{rBS}, it is called a band.} If $\delta$ is fixed by the $i$-action (i.e., $\delta.i=\delta$) it is called a fixed loop (or a 1-loop).  
		
		To connect this definition to undirected graphs, we identify edges which are $i$-pairs and define the set of edges $G(E)$ as the quotient of the set of arrows $G(A)$ under this automorphism defined by the $i$-action.\footnote{In the subsequent, we reserve the term edge for the equivalence class of arcs under the group $\sX$.} There is an incidence operator $\partial\colon G(E)\to G(V)^2$ which defines for an $i$-pair the set of boundaries. Then an undirected representation for $G$ can be given as		
		\begin{minipage}[t]{0.3\textwidth}
			\begin{center}\mbox{}\\\mbox{}\\$G\ $\framebox{ 
					$\xymatrix{ a \ar@{..}@(ul,ur)[]^{\ell_a}  \ar@{-}[r]^{\alpha_0\sim \alpha_1} & b \ar@(dl,dr)@{-}[]_{\beta_0\sim \beta_1}^2  \ar@{..}@(ul,ur)[]^{\ell_b} \ar@{-}[r]^{\gamma_0\sim \gamma_1} & c \ar@{..}@(ul,ur)[]^{\ell_c}}$
				}
			\end{center}
		\end{minipage}
		\begin{minipage}[t]{0.7\textwidth}
					\begin{align*}
			& \text{ \footnotesize $G(E)= \{\alpha_0\sim\alpha_1, \beta_0\sim\beta_1,\gamma_0\sim\gamma_1,\ell_a,\ell_b,\ell_c \}$, }\\
			& \text{ \footnotesize $G(V)= \{a,\ b,\ c \}$ }\\
			& \text{ \footnotesize $(\alpha_0\sim \alpha_1).\partial=\{a,b \},\ (\beta_0\sim\beta_1).\partial=\{b,b\},$ }\\
			& \text{ \footnotesize $(\gamma_0\sim \gamma_1).\partial= \{b,c\},\ \ell_a.\partial=\{a,a \}, \ \ell_b.\partial=\{b,b\},$}\\ 
			&\text{\footnotesize $\ell.c=\{c,c\}$.}	
			\end{align*}
		\end{minipage}

		We have placed a $2$ in the loop which came from the 2-loop $\beta_0\sim \beta_1$ even though the quotient has identified them. Keeping a distinction between fixed loops and unfixed loops is necessary for constructions of exponentials (see Chapter \ref{S:ToposStructure}, Corollary \ref{C:Exponentials} below).\footnote{In the subsequent, if a loop has no number written inside it is assumed to be a fixed loop.}
		\item (Hereditary $X$-Graphs) Since the monoid of endomaps on $2$ contains only automaps and constant maps, the category of reflexive hereditary $X$-graphs is the same as reflexive symmetric $X$-graphs. The category of hereditary $X$-graphs can be interpreted as a category of undirected graphs such that each edge is associated to a loop on its incident vertices.\footnote{Note that this is not the same as reflexive hereditary graphs. The $X$-graph with one vertex and no arcs is an example of a hereditary graph but not a reflexive hereditary graph. }
		\item \label{E:unfixedEdges}(Unfixed Edges) Let $X$ be a three element set $\{s,t,r\}$ and consider the symmetric $X$-graph $G$ with vertex set $G(V)\defeq \{a,b\}$ and arc set $G(A)\defeq \{\alpha_1,\alpha_2,\alpha_3 \}$ with right-actions given as follows
		\begin{align*}
			&\alpha_1.s=a && \alpha_1.t=b && \alpha_1.r=b&& \alpha_1.(st)=\alpha_2 && \alpha_1.(sr)=\alpha_3 && \alpha_1.(tr)=\alpha_1\\
			&\alpha_2.s=b && \alpha_2.t=a && \alpha_2.r=b && \alpha_2.(st)=\alpha_1 && \alpha_2.(sr)=\alpha_2 && \alpha_2.(tr)=\alpha_3\\
			&\alpha_3.s=b && \alpha_3.t=b && \alpha_3.r=a && \alpha_3.(st)=\alpha_3 && \alpha_3.(sr)=\alpha_1 && \alpha_3.(tr)=\alpha_2
		\end{align*}
		where $(st)$, $(sr)$ and $(rt)$ are the generators of the group of automaps of $X$ which permute the two elements.
		
		Next, consider the symmetric $X$-graph $G'$ with vertex set $G'(V)\defeq \{a,b\}$ and arc set $G'(A)=\Aut(X)$ such that $\sigma.x\defeq\begin{cases}a & \text{if } \sigma(x)=s\\ b & \text{if } \sigma(x)\neq s \end{cases}$ and $\sigma.\psi=\psi\circ \sigma$ for $\sigma, \psi\in G'(A)$.
		
		Then the undirected representation for $G$ and $G'$ can be given as 
		\begin{center}$G\ $\framebox{ 
				$\xymatrix{ a \ar@{-}[rr]_>{\tiny 2}^{\text{\tiny $\alpha_1\sim\alpha_2\sim\alpha_3$}} && b}$ 
			}$\quad\qquad $$G'\ $\framebox{ 
			$\xymatrix{ a \ar@{-}[rr]^{\text{\tiny $\alpha\defeq\Aut(X)$}} && b \ar@{-}@(ur,dr)[]_{\text{\tiny 2}}^{\alpha} }$ 
		}
		\end{center}
		
		Note that the edge in $G$ consists of $3$ arcs while the edge in $G'$ consists of $6$ arcs. We have placed a lobe at the vertex $b$ with a 2 inside to indicate that even though $b$ has multiplicity 2 in the incidence, they are not identified at the level of arcs. 
		
		Also notice that $G$ is the coequalizer of $\Id, f\colon G'\to G'$ where $f_V=\Id_V$ and $f_A(\sigma)$ is the arc corresponding to the automap 
		
		\[
		\psi(x)\defeq  \begin{cases}r & \text{if }\sigma(x)=t \\ t & \text{if }\sigma(x)=r \\ s & \text{if }\sigma(x)=s   \end{cases}.
		\]
		This shows how the edge in $G'$ allows internal symmetry (i.e., allows nontrivial automorphisms) while $G$ does not. 
		
		In general, if the arcs which represent an edge in a symmetric $X$-graph $G(V)$ can be generated by a set map $j\colon X\to G(V)$, i.e., there is exactly one arc with incidence of the form $j\circ \sigma$ for some automap $\sigma\in \Aut(X)$, then we say the edge is fixed. For unfixed edges we use a loop with the number of distinct repeated instances of the vertex is to be considered inside of it, e.g., in $G'$ the vertex $b$ should be counted twice in the incidence of the edge.  If there is more than one arc with the same incidence in the equivalence class, then we say the edge is unfixed. This generalizes fixed and unfixed loops.\footnote{In the subsequent, all edges will be considered fixed unless otherwise specified.}
		\item\label{E:MultVert} (Effect of Unfixed Edges on Morphisms)
		Let $X$ be the six element set $\{0,1,\dots, 5\}$ and consider a symmetric $X$-graph $H$ with two vertices $a$ and $b$ and one edge represented by an arc $\alpha$ such that $\alpha.0=\alpha.1=a$ and $\alpha.n=b$ for $n\neq 0,1$.\footnote{More precisely, the edge is the equivalence class $[\alpha]$ where $\alpha\sim \alpha'$ iff there is an automap $\sigma\colon X\to X$ such that $\alpha.\sigma=\alpha'$.} We represent $H$ by the following diagram
		\begin{center}
			$H\ $\framebox{ 
				$\xymatrix{ a \ar@{-}[r]^\alpha \ar@{}[r]_<{\text{\tiny 2}} \ar@{}[r]_>{\text{\tiny 4}} & b}$
			}
		\end{center}
		Notice that we have placed the incidence multiplicities under the edge $\alpha$. When $\alpha$ is a fixed edge, there are $\frac{6!}{2!4!}=15$ arcs in the equivalence class. If a symmetric $X$-graph $H'$ has a unfixed edge at $a$, i.e., another arc in the equivalence class $\beta$ such that $\partial_G(\beta)=\partial_G(\alpha)$ and $\alpha.(01)=\beta$ for the permutation which swaps the $0$ and $1$-incidence vertices, then we represent $H$ by  
				\begin{center}
					$H'\ $\framebox{ 
						$\xymatrix{ a \ar@{-}@(dl,ul)[]^{\alpha}_{\text{\tiny 2}} \ar@{-}[r]^\alpha \ar@{}[r]_>{\text{\tiny 4}} & b}$
					}
				\end{center}
				to indicate the presence of a unfixed edge. In this case, the equivalence class of $\alpha$ has $\frac{6!}{4!}=30$ arcs. We also have the following examples of unfixed edges
						\begin{center}
							\framebox{ 
								$\xymatrix{ a \ar@{-}@(dl,ul)[]^{\alpha}_{\text{\tiny 2}} \ar@{-}[r]^\alpha \ar@{}[r]_>{\text{\tiny 2}} & b \ar@{-}@(ur,dr)[]^{\alpha}_{\text{\tiny 2}}}$
							}$\qquad \qquad$\framebox{ 
							$\xymatrix{ a \ar@{-}@(dl,ul)[]^{\alpha}_{\text{\tiny 2}} \ar@{-}[r]^\alpha & b \ar@{-}@(ur,dr)[]^{\alpha}_{\text{\tiny 4}}}$
						}
						\end{center}		
						where the first indicates the unfixed edge which has kept a distinction between two incidence at vertex $b$, i.e., has $\frac{6!}{2!}=360$ arcs while the other has $6!$ arcs in the equivalence class of $\alpha$.
						
		Next, consider the symmetric $X$-graph $G$ with two vertices $a$ and $b$ and one fixed edge represented by an arc $\beta$ such that $\beta.0=a$ and $\beta.n=b$ for $n\neq 0$. 
		\begin{center}
			$G\ $\framebox{ 
				$\xymatrix{ a \ar@{-}[r]^\beta \ar@{}[r]_<{\text{\tiny 1}} \ar@{}[r]_>{\text{\tiny 5}} & b}$
			} 
		\end{center}
		Observe that while the symmetric graphs $H$ and $G$ have the same underlying set of vertices with one edge connecting them, there is no morphism between them because morphisms must preserve incidence.
		
		For symmetric $X$-graphs $K$ and $M$ below, notice that the morphism $f$ which sends vertices $a, b, c$ to $a, b, b$ must preserve the incidence multiplicities, i.e., the contraction of $b$ and $c$ to $b$ must respect the addition $3+1=4$ of multiplicities of incidence.
		\begin{center}
			$K\ $\framebox{ 
				$\xymatrix{ a \ar@{-}[r]^{\alpha} \ar@{}[r]_<{\text{\tiny 2}}  & b \ar@{-}[r]^{\alpha} \ar@{}[r]_<{\text{\tiny 3}} \ar@{}[r]_>{\text{\tiny 1}}& c }$} $\xymatrix{ \ar[r]^{f} &}$
			\framebox{ 
				$\xymatrix{ a \ar@{-}[r]^{\beta} \ar@{}[r]_<{\text{\tiny 2}} \ar@{}[r]_>{\text{\tiny 4}}& b}$} $\xymatrix{ M}$
			\end{center}

	\item (Hereditary $X$-Graphs)The category of (reflexive) hereditary $X$-graphs consists of objects $G$ such that each arc $\alpha \in G(A)$ is associated to arcs $\alpha.f$ where $f\colon X\to X$ is any endomap. In particular, for each non-bijective surjection $f$, $\alpha.f$ has incident contained in the incident set of $\alpha$. Thus the category of (reflexive) hereditary is the $X$-graph analogue of hereditary hypergraphs as defined in \cite{dM}.  Observe that in the reflexive case, for each constant map $f\colon X\to X$, $\alpha.f$ must be a distinguished loop. 

	\end{enumerate}
\end{example}

\section{Directed $X$-Graphs}\label{S:}
Next, we describe theories for directed $(X,M)$-graphs. Let $X$ be a set and $(S,T)$ be a partition of $X$, i.e., $S,T\subseteq X$ such that $S\cup T=X$ and $S\cap T=\empset$. Define the following submonoids of $\End(X)$:
\begin{align*}
& \doX\defeq \{\Id_X\}\\
& \dsX \defeq \Aut(S)\cup \Aut(T) \quad \ \text{(as submonoids of $\End(X)$)}\\
& \dhX\defeq \End(S)\cup \End(T) \quad \text{(as submonoids of $\End(X)$)}\\
& \drX \defeq \rX \quad \text{(as above)}\\
& \dsrX \defeq \drX \cup \dsX \\
& \dhrX \defeq \drX \cup \dhX
\end{align*}
In the definition of $\dsX$ and $\dhX$ an endomap $\sigma\colon S\to S$ is extended to an endomap $\overline \sigma\colon X\to X$ by assigning $\overline \sigma(x)\defeq\begin{cases}\sigma(x) & x\in S\\ x & x\nin S \end{cases}$ and similarly for $T$. 

There is the following inclusions as submonoids in $\End(X)$
\[
\xymatrix@R=1em@C=1.25em{ \doX \ar@{>->}[d] \ar@{>->}[r]  & \dsX \ar@{>->}[r] \ar@{>->}[d] & \dhX \ar@{>->}[d] \\ \drX \ar@{>->}[r] & \dsrX \ar@{>->}[r] & \dhrX }
\]

\begin{definition}
	Let $(S,T)$ be a partition of a set $X$. 
	\begin{enumerate}
		\item The theory for $(S,T)$-directed oriented $X$-graphs (resp. $(S,T)$-directed symmetric $X$-graphs, $(S,T)$-directed hereditary $X$-graphs) is defined to be $\doG_{(S,T)}\defeq \oG_X$ \footnote{The difference between $\doG_{(S,T)}$ and $\oG_X$ is in the interpretation of the incidence. In $\doG_{(S,T)}$, each incidence in $S$ is a source incidence and in $T$ is a target incidence.} (resp. $\dsG_{(S,T)}\defeq \DD G_{(X,\text{ds}(X))}$, $\dhG_{(S,T)}\defeq \DD G_{(X,\text{dh}(X))}$). The category of $(S,T)$-directed oriented $X$-graphs (resp. $(S,T)$-directed symmetric $X$-graphs, $(S,T)$-directed hereditary $X$-graphs) is its category of presheaves $\widehat{\doG}_{(S,T)}$ (resp. $\widehat{\dsG}_{(S,T)}$, $\widehat{\dhG}_{(S,T)}$).
		\item 	The theory for reflexive $(S,T)$-directed oriented $X$-graphs (resp. reflexive $(S,T)$-directed symmetric $X$-graphs, reflexive $(S,T)$-directed hereditary $X$-graphs) is defined to be \\$\droG_{(S,T)} \defeq \roG_X$ (resp. $\dsrG_{(S,T)}\defeq \DD G_{(X,\text{ds}(X))}$, $\dhrG_{(S,T)}\defeq \DD G_{(X,\text{dh}(X))}$). The category of $(S,T)$-directed oriented $X$-graphs (resp. $(S,T)$-directed symmetric $X$-graphs, $(S,T)$-directed hereditary $X$-graphs) is its category of presheaves $\widehat{\droG}_{(S,T)}$ (resp. \\$\widehat{\dsrG}_{(S,T)}$, $\widehat{\dhrG}_{(S,T)}$).
	\end{enumerate}
\end{definition}
We represent the theory for $(S,T)$-directed $X$-graphs and reflexive $(S,T)$-directed $X$-graphs by the following diagrams where $M$ is the appropriate monoid.
\[ 
\xymatrix{ V \ar@<.3em>[r]^S \ar@<-.3em>[r]_{T} & A \ar@(ur,dr)^M }\qquad\qquad \xymatrix{ V \ar@<.5em>[r]^S  \ar[r]|-T \ar@{<-}@<-.5em>[r]_{\ell} & A \ar@(ur,dr)^M }
\]
\begin{example}\mbox{}
	\begin{enumerate}
		\item  ($B$-Hypergraph)
		A $(S,1)$-directed symmetric $X$-graph (resp. $(1,T)$-directed symmetric $X$-graph) is the $(X,M)$-graph analogue of a $B$-hypergraph (resp.  a $F$-hypergraph) as defined in \cite{gG}.
		
		For instance, let $(S,T)=(\{s, t \}, p)$ and consider the $(S,T)$-directed symmetric $X$-graph $G$. There is an incidence operator $\partial$ which defines for the set of boundaries for an edge $\alpha$ as a pair $(\{a,b\}, c)$ where $c$ is the $p$-incidence vertex of $\alpha$ called the head of $\alpha$. Then a visual representation for $G$ can be given as
		\newpage
		\begin{minipage}[t]{0.3\textwidth}
				\begin{center}\mbox{}\\\mbox{}\\$G\ $\framebox{ 
					$\xymatrix@C=1em@R=.75em{ a  \ar@{..}[dd]|-{\alpha_0\sim\alpha_1} \ar@/_/[dr] & &  d \ar@<+.3em>[r]^\rho \ar@<-.3em>[r]_\omega \ar@{..}[dd]|-{\gamma_0\sim \gamma_1} \ar@/^/[dl]  & f \\ & c \ar@(ul,ur)[]^\delta \ar@(dl,dr)[]_{\beta_0\sim \beta_1}^2 & \\  b \ar@/^/[ur] & & e  \ar@/_/[ul]}$
				}
			\end{center}
		\end{minipage}
		\begin{minipage}[t]{0.7\textwidth}
			\begin{align*}
			& \text{ \footnotesize $G(E)= \{\alpha_0\sim\alpha_1,\ \beta_0\sim\beta_1,\ \gamma_0\sim\gamma_1,\ \delta,\ \rho,\ \omega \},$}\\
			& \text{ \footnotesize $G(V)= \{a,\ b,\ c, \ d,\ e,\ f \}$}\\
			& \text{ \footnotesize $(\alpha_0\sim \alpha_1).\partial=(\{a,b \},c),\  (\beta_0\sim\beta_1).\partial=(\{c\},c),$}\\
			&\text{ \footnotesize $(\gamma_0\sim \gamma_1).\partial= (\{d,e\},c),\ \delta.\partial=(\{c\},c), \ $} \\ 
			&\text{ \footnotesize $\rho.\partial=(\{d\},f),\  \omega.\partial=(\{d\},f).	$}
			\end{align*}
		\end{minipage}		
		
		\item (Directed $X$-Graphs Generalize $X$-Graphs) Any $(1,1)$-directed $X$-graph is the category of oriented $2$-graphs. Any $(X,\empset)$-directed or $(\empset, X)$-directed $X$-graph is equivalent to its $X$-graph counterpart.
	\end{enumerate}
\end{example}

\chapter{Comparison Functors}\label{S:ComparisonFunctors}

The theories for categories of $X$-graphs and directed $X$-graphs have natural comparison functors between them. This generalizes the construction of essential geometric morphisms constructed  in \cite{rB}\footnote{In \cite{rB}, $\underline A$, $\underline B$, $\underline C$, $\underline D$ (defined on pages 2, 3, 12, 13, respectively) denote the categories we denote as $\oG_{2}$, $\rG_2$, $\sG_2$, $\srG_2$ respectively. The inclusion functors $\underline A\hookrightarrow \underline B$, $\underline A\hookrightarrow \underline C$, and $\underline B\hookrightarrow \underline D$ (on pages 4, 13, 13 respectively) are the components of the natural inclusions  of the left hand square of Proposition \ref{P:GeoFunc} for $X=2$.} between the categories of directed graphs, reflexive graphs, and graphs with involution.

\begin{proposition}\label{P:GeoFunc}
	Let $\mathbf{Set}_{\text{\normalfont \tiny inj}}$, $\mbf{Set}_{\text{\normalfont \tiny inj}}^2$, $\Cat$ be the categories of sets and injective maps, the product of categories $\mathbf{Set}_{\text{\normalfont \tiny inj}}\x \mathbf{Set}_{\text{\normalfont \tiny inj}}$, and small categories. Then there are functors 
	\begin{align*}
	&\oG,\ \sG,\ \rG,\ \srG,\ \hG,\ \hrG\colon \mathbf{Set}_{\text{\normalfont \tiny inj}}\to \Cat,\\
	& \doG,\ \dsG,\ \droG,\ \dsrG,\ \dhG,\ \dhrG \colon \mbf{Set}_{\text{\normalfont \tiny inj}}^2\to \Cat
	\end{align*}
	and natural inclusions
	\[
	\xymatrix{  \oG \ar@{=>}[r] \ar@{=>}[d] & \sG  \ar@{=>}[d] \ar@{=>}[r] & \hG \ar@{=>}[d] \\ 
		\rG \ar@{=>}[r]  & \srG  \ar@{=>}[r] & \hrG }\qquad \qquad 
	\xymatrix{  \doG \ar@{=>}[r] \ar@{=>}[d] & \dsG  \ar@{=>}[d] \ar@{=>}[r] & \dhG \ar@{=>}[d] \\ 
		\droG \ar@{=>}[r]  & \dsrG  \ar@{=>}[r] & \dhrG }
	\]
	
\end{proposition}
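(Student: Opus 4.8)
The plan is to factor every assignment through the equivalence of Proposition \ref{P:Equiv}. Since the category of theories is equivalent to the category of monoid actions $\C M$ via $\DD G$, and the reflexive theories arise as the values of the functor $\rG\colon \text{r}\C M\to \Cat$, it suffices to build, for each of the six shapes $\tau$, a functor $\mathbf{Set}_{\mathrm{inj}}\to \C M$ (resp. $\mathbf{Set}_{\mathrm{inj}}\to \text{r}\C M$) given on objects by $X\mapsto (X,\tau(X))$, where $\tau(X)$ is the relevant submonoid of $\End(X)$, and then postcompose with $\DD G$ (resp. $\rG$). All the content lies in the definition on morphisms and in checking that it is a morphism of monoid actions; the directed cases are handled identically after replacing $\mathbf{Set}_{\mathrm{inj}}$ by $\mathbf{Set}_{\mathrm{inj}}^2$ and using the partition-respecting injection $f_S\sqcup f_T$.

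The key construction is extension by the identity. Given an injective map $f\colon X\to X'$, the corestriction $f\colon X\to f(X)$ is a bijection, so I may define $\mathrm{ext}_f\colon \End(X)\to \End(X')$ by $\mathrm{ext}_f(g)=f\circ g\circ f^{-1}$ on $f(X)$ and $\mathrm{ext}_f(g)=\Id$ on $X'\setminus f(X)$. One checks directly that $\mathrm{ext}_f$ is a monoid homomorphism, that it restricts to $\Aut(X)\to \Aut(X')$ and to $\{\Id_X\}\to\{\Id_{X'}\}$, and that the action square of Proposition \ref{P:Equiv} commutes, since $f(x)\in f(X)$ gives $\mathrm{ext}_f(g)(f(x))=f(g(x))$. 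Thus $(f,\mathrm{ext}_f)$ is a $\C M$-morphism, and $\mathrm{ext}_{\Id}=\Id$, $\mathrm{ext}_{f'f}=\mathrm{ext}_{f'}\circ \mathrm{ext}_f$ give functoriality. This produces $\oG,\sG,\hG$ and, restricting $\mathrm{ext}_f$ to partition-respecting endomaps, the directed functors $\doG,\dsG,\dhG$. Injectivity is used essentially and twice: to make $f^{-1}$ meaningful on $f(X)$, and to force $\mathrm{ext}_f$ to respect composition.

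For the reflexive shapes the distinguished-loop data constrains the construction: a functor of reflexive theories fixing $V$ and $A$ sends the unique arrow $\ell$ to $\ell'$, so the distinguished-loop relation $x_{m'}\circ \ell=m'$ forces the induced monoid map to carry $\Fix$ into $\Fix$ compatibly with $f$. For $\rG$ and $\srG$ this causes no trouble: the fixed elements are precisely the constant maps $c_a$, which for $|X|\ge 2$ are disjoint from $\Aut(X)$, so setting $c_a\mapsto c_{f(a)}$ while keeping $\mathrm{ext}_f$ on $\Aut(X)$ glues to a single homomorphism $\srX\to\srX'$ preserving $\Fix$ — a product of two bijections is never constant, so no incompatibility can arise — and functoriality is immediate. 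The hard part, which I expect to be the main obstacle, is the reflexive hereditary case $\hrG$ (and likewise $\dhrG$): here $\hrX=\End(X)$ contains non-bijective non-constant endomaps whose products can be constant, and $\mathrm{ext}_f$ sends a constant $c_a$ to the non-constant map equal to $f(a)$ on $f(X)$ and to the identity elsewhere, so it fails to preserve $\Fix$. The only $\Fix$-preserving extensions have the form $g\mapsto f\circ g\circ r$ for a retraction $r$ of $f$, and no system of retractions is functorial on $\mathbf{Set}_{\mathrm{inj}}$ (already $\emptyset\hookrightarrow\{\ast\}$ admits none). Resolving this — either by restricting the morphisms further or by defining the functor through the reflexive $M$-set semantics rather than through $\mathrm{ext}_f$ — is the crux of the proof.

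It remains to exhibit the natural inclusions. Each component is the inclusion functor of theories induced by the submonoid inclusions $\oX\subseteq\sX\subseteq\hX$ and $\rX\subseteq\srX\subseteq\hrX$, acting as the identity on $V$, on $A$, and on the $X$-indexed arrows, and as the subset inclusion on the hom-monoid at $A$. Naturality of both squares reduces to the observation that $\mathrm{ext}_f$ and the constant-map assignment $c_a\mapsto c_{f(a)}$ are given by one and the same formula on each submonoid; hence they commute strictly with the inclusions, and each commutativity is verified on the two generating families of morphisms $V\to A$ and $A\to A$.
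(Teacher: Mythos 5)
Your construction of the non-reflexive functors is exactly the paper's: its entire proof consists of the single formula $j_!(m')(x')=x'.m'$ for $x'\in X'$ and $j_!(m')(x')=x'$ otherwise, which is your $\mathrm{ext}_f$ with $X'$ identified with its image. Your handling of $\rG$ and $\srG$ is also correct for $|X|\ge 2$ and is \emph{more} careful than the paper, which never addresses how the reflexive relation $x_{m'}\circ\ell=m'$ constrains the induced monoid homomorphism; your observation that one must glue $\mathrm{ext}_f$ on $\Aut(X)$ with $c_a\mapsto c_{f(a)}$ on constants, and that this gluing is multiplicative because no product of bijections is constant, is the verification the paper omits. (For $|X|=1$ every reflexive theory has $V\iso A$, since $c_a=\Id$, while the target for $|X'|\ge 2$ does not, so all four reflexive functors already fail on $1\hookrightarrow 2$; this degenerate case should simply be excluded.)

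The obstruction you flag for $\hrG$ is real, and the paper does not resolve it --- its proof is silent on the hereditary reflexive case, and the statement is in fact false there. Naturality of $\hG\Rightarrow\hrG$ forces $\hrG(f)$ to agree with the identity-extension on $\End(X)\subseteq\Hom_{\hrG_X}(A,A)$, while $x_{c_a}\circ\ell=c_a$ forces $\hrG(f)(c_a)=c_{f(a)}$; these conflict for every non-surjective $f$ with $|X'|\ge 2$. Moreover no repair is possible: for $|X|\ge 4$ there is \emph{no} monoid homomorphism $\varphi\colon\End(X)\to\End(X')$ with $\varphi(g)\circ f=f\circ g$ and $\varphi(c_a)=c_{f(a)}$ whatsoever. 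Take $X=\{0,1,2,3\}$, $X'=X\cup\{y\}$, $g\colon 0,1\mapsto 0,\ 2,3\mapsto 1$ and $\rho=(0\,2)(1\,3)$, so that $g\circ\rho=(0\,1)\circ g$, $g^2=c_0$, $\rho^2=\Id$. Then $\varphi(\rho)$ is an involution restricting to $\rho$ on $X$, hence fixes $y$; since $\varphi(g)^2=c_0$ we cannot have $\varphi(g)(y)=y$, so $\varphi(g)(y)=w\in g^{-1}(0)=\{0,1\}$; but then $w=\varphi(g\circ\rho)(y)=\varphi((0\,1)\circ g)(y)=(0\,1)(w)$, which has no solution in $\{0,1\}$. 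So the column of the grid ending in $\hrG$ must be deleted (or $\mathbf{Set}_{\text{\normalfont\tiny inj}}$ replaced by the groupoid of bijections); your proof cannot be completed because the proposition cannot be. One correction to your closing remark: $\dhrG$ with both blocks $S,T$ nonempty is \emph{not} affected, because the non-constant elements of $\dhrX$ act blockwise and never compose to a constant of $X$, so your gluing argument for $\srG$ applies verbatim; only the degenerate partition with an empty block reduces to the broken $\hrG$ case.
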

\begin{proof}
	The inclusion of submonoids of $\End(X)$ and extensions of monoid actions induce functors of theories via Proposition \ref{P:Equiv}. 
	For example, if $j\colon X'\hookrightarrow X$ is an injective set map, then there is a functor $\overline j\colon \sG_{X'}\to \sG_{X}$ which sends the morphism $x'\colon V\to A$ to $j(x')\colon V\to A$ and $m'\colon A\to A$ to $j_!(m')\colon A\to A$ where $j_!(m')(x')=\begin{cases} x'.m' & x'\in X'\\
	x' & x'\nin X' \end{cases}$. 
\end{proof}
Thus we obtain the following corollary. 
\begin{corollary}\label{C:Essential}
	For each set inclusion $j\colon X'\hookrightarrow X$, respectively $\mathbf{Set}^2$-inclusion $k\colon (S',T')\hookrightarrow (S,T)$, there are essential geometric morphisms which commute up to natural isomorphism
	\[ 
	\xymatrix@!=.2em{ \widehat{\oG}_{X'} \ar[rrr]^{j_o} \ar[dr] \ar[dd] & && \widehat{\oG}_X \ar[dr] \ar[dd]|-\hole \\ 
		& \widehat{\rG}_{X'} \ar[rrr]^{j_r} \ar[dd] &&& \widehat{\rG}_X \ar[dd] \\ \widehat{\sG}_{X'} \ar[dd] \ar[rrr]^{j_s}  \ar[dr] & && \widehat{\sG}_{X} \ar[dr] \ar[dd]|-\hole  \\ & \widehat{\srG}_{X'} \ar[dd] \ar[rrr]^{j_{sr}}  & && \widehat{\srG}_{X}  \ar[dd] \\ \widehat{\hG}_{X'} \ar[rrr]^{j_s}  \ar[dr] & && \widehat{\hG}_X \ar[dr] \\ & \widehat{\hrG}_{X'} \ar[rrr]^{j_{sr}}  & && \widehat{\hrG}_X 
	} \qquad 
	\xymatrix@!=.2em{ \widehat{\doG}_{X'} \ar[rrr]^{k_o} \ar[dr] \ar[dd] & && \widehat{\doG}_X \ar[dr] \ar[dd]|-\hole \\ 
		& \widehat{\droG}_{X'} \ar[rrr]^{k_r} \ar[dd] &&& \widehat{\droG}_X \ar[dd] \\ \widehat{\dsG}_{X'} \ar[dd] \ar[rrr]^{k_s}  \ar[dr] & && \widehat{\dsG}_{X} \ar[dr] \ar[dd]|-\hole  \\ & \widehat{\dsrG}_{X'} \ar[dd] \ar[rrr]^{k_{sr}}  & && \widehat{\dsrG}_{X}  \ar[dd] \\ \widehat{\dhG}_{X'} \ar[rrr]^{k_s}  \ar[dr] & && \widehat{\dhG}_X \ar[dr] \\ & \widehat{\dhrG}_{X'} \ar[rrr]^{k_{sr}}  & && \widehat{\dhrG}_X 
	} 
	\]
\end{corollary}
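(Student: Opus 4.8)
The plan is to obtain both three-dimensional diagrams by applying the presheaf construction $\widehat{(-)}$ to a strictly commuting diagram of theory morphisms supplied by Proposition \ref{P:GeoFunc}. First I would evaluate the six functors $\oG,\sG,\hG,\rG,\srG,\hrG$ (resp.\ their directed analogues) at the injection $j\colon X'\hookrightarrow X$ (resp.\ $k\colon (S',T')\hookrightarrow (S,T)$), producing for each node a theory morphism such as $\oG(j)\colon \oG_{X'}\to \oG_X$; these are the $j$-edges joining the $X'$-layer to the $X$-layer. Each two-dimensional face of the resulting box of theories then commutes strictly for one of two reasons. A face lying within a single layer is the image, under the equivalence of Proposition \ref{P:Equiv}, of the commuting grid of submonoid inclusions of $\End(X')$ (resp.\ $\End(X)$) displayed before the definition of $X$-graphs; a face connecting the two layers is precisely a naturality square of one of the six natural inclusions ($\oG\Rightarrow\sG$, $\sG\Rightarrow\hG$, $\oG\Rightarrow\rG$, and so on) of Proposition \ref{P:GeoFunc}. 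Hence the whole diagram of theory morphisms commutes on the nose.

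Next I would invoke the construction recalled in the section on essential geometric morphisms: a functor $F\colon \C C\to \C C'$ of small categories induces $F_!\dashv F^*\dashv F_*\colon \widehat{\C C}\to \widehat{\C C'}$, where $F^*$ is restriction along $F^{op}$ and $F_!,F_*$ are its Kan extensions. Setting $j_o\defeq \widehat{\oG(j)}$, $j_s\defeq \widehat{\sG(j)}$, and so forth defines the horizontal morphisms of the cubes, the vertical and diagonal arrows being the images of the component inclusions. It then remains to check that $\widehat{(-)}$ sends a strictly commuting square of theory morphisms to a square of essential geometric morphisms commuting up to natural isomorphism, and this is where the only genuine content lies. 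The key point is that restriction is strictly functorial: for composable $F,G$ one has $(GF)^*(Q)=Q\circ (GF)^{op}=F^*(G^*(Q))$, so the restriction components commute on the nose. Since $G_!F_!$ is left adjoint to $F^*G^*=(GF)^*$ while $(GF)_!$ is left adjoint to the same functor, uniqueness of adjoints supplies a canonical isomorphism $(GF)_!\iso G_!F_!$, and dually $(GF)_*\iso G_*F_*$. Thus every face of each box commutes up to natural isomorphism; as the faces are finite in number and we only assert commutation up to isomorphism, pasting them yields commutation of the entire diagram.

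The main obstacle, such as it is, is organizational rather than conceptual: one must confirm that the $j$-edges, the reflexivization edges, and the symmetric/hereditary refinement edges really do fit together into strictly commuting squares of theories \emph{before} $\widehat{(-)}$ is applied, so that the formal argument of the second paragraph can take over. By Proposition \ref{P:Equiv} this reduces entirely to the commutativity of the two displayed grids of submonoid inclusions of $\End(X)$ together with the naturality squares of Proposition \ref{P:GeoFunc}, both of which are already in hand. The reflexive/directed cube on the right is handled verbatim, replacing $j$ by the $\mathbf{Set}^2$-injection $k$, the monoids $\oX,\sX,\dots$ by their partition-respecting counterparts $\doX,\dsX,\dots$, and invoking the corresponding grid of submonoid inclusions from the directed section.
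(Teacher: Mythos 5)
Your proposal is correct and follows essentially the same route the paper intends: Proposition \ref{P:GeoFunc} supplies the strictly commuting diagram of theory morphisms (the within-layer faces from the grid of submonoid inclusions, the cross-layer faces as naturality squares of the inclusions $\oG\Rightarrow\sG$, etc.), and the corollary then follows by applying $\widehat{(-)}$ and the standard fact that a functor of small categories induces an essential geometric morphism, with strict functoriality of restriction plus uniqueness of adjoints giving commutation up to natural isomorphism. The paper leaves exactly this argument implicit ("Thus we obtain the following corollary"), so your write-up simply makes it explicit.
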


An essential geometric morphisms between different categories of $X$-graph with a fixed set $X$, e.g., $\widehat{\sG}_X\to \widehat{\srG}_X$, are to be considered as modifications of structure and essential geometric morphisms induced by a set inclusion, e.g., $\sG_{X'}\to \sG_X$, as a modification of data.

\begin{example}\label{E:OSG}\mbox{}
	\begin{enumerate}
		\item \label{E:Reflexive} (The Free Reflexive $X$-Graph Functor) Let $r\colon \oG_{X}\to \roG_{X}$ be the inclusion functor and let $r_!\dashv r^*\dashv r_*\colon \widehat{\oG}_X\to \widehat{\roG}_X$ be the essential geometric morphism induced by $r$. The $r$-extension $r_!$ takes an oriented $X$-graph $G$ and freely adds a distinguished loop to each vertex $v\in G(V)$. The $r$-restriction functor $r^*$ forgets that distinguished loops are distinguished. The $r$-coextension functor $r_*$ gives the set of vertices $r_*(G)(V)=\setm{v_\gamma}{\gamma\text{ is a loop}}$ and the set of arcs $r_*(G)(A)=\widehat{\oG}_X(r^*(\underline A),G)$ which is the set of morphisms from the oriented graph with a loop at each vertex to $G$. A distinguished loop is the the morphism which sends every loop to a particular loop in $G$. 
		
		For instance, when $X=\{s,t\}$ we have the following assignments for an oriented $X$-graph $G$. 
\end{enumerate}

	\begin{center}
				\begin{tabular}{ c c  c c c } 
						\framebox{ 
										$\xymatrix@R=1.3em@C=.75em{  a \ar[d] \ar@(ul,ur)[]^{\mbox{}} \ar@{<-}[rr] && b\\ c & d \ar@(d,dl)[]_{\mbox{}} \ar@(d,dr)[] & e \ar[ull] \ar[l] \ar[u] \ar@(ur,r)[]^{\mbox{}} }$
									} &	$ \xymatrix{ \ar@{|->}@<-3em>[r]^{r_!} & }$ &	 \framebox{ 
									$\xymatrix@R=1.3em@C=.75em{  a \ar@{..>}@(l,dl)[]^{\mbox{}} \ar[d] \ar@(ul,ur)[]^{\mbox{}} \ar@{<-}[rr] && b \ar@{..>}@(l,dl)[]\\ c \ar@{..>}@(l,dl)[]  & d \ar@{..>}@(l,dl)[] \ar@(d,dl)[]_{\mbox{}} \ar@(d,dr)[]_{} & e \ar[ull] \ar@{..>}@(d,dr)[] \ar[l] \ar[u] \ar@(ur,r)[]^{\mbox{}} }$}  & $\xymatrix{ \ar@<-3em>@{|->}[r]^{r^*} & }$ 
								& \framebox{ $\xymatrix@R=1.3em@C=.75em{   a \ar@(l,dl)[]^{\mbox{}} \ar[d] \ar@(ul,ur)[]^{\mbox{}}  \ar@{<-}[rr] && b \ar@(l,dl)[]\\ c \ar@(l,dl)[]  & d \ar@(l,dl)[] \ar@(d,dl)[]_{\mbox{}} \ar@(d,dr)[]_{\mbox{}} & e \ar[ull]\ar@(d,dr)[] \ar[l] \ar[u] \ar@(ur,r)[]^{\mbox{}} }$ } 
								\\[1ex] 
							\end{tabular}
						\end{center}
					\begin{center}
						\begin{tabular}{ c c  c c c } 
							\framebox{ 
								$\xymatrix@R=1.3em@C=1em{  a \ar[d] \ar@(ul,ur)[]^{\gamma_1} \ar@{<-}@/^/[rr] && b\\ c & d \ar@(d,dl)[]^{\gamma_2} \ar@(d,dr)[]_{\gamma_3} & e \ar[ull] \ar[l] \ar[u] \ar@(ur,r)[]^{\gamma_4} }$
							} &	$ \xymatrix{ \ar@{|->}@<-3em>[r]^{r_*} & }$ &	 \framebox{ 
							$\xymatrix@R=1.3em@C=1em{  v_{\gamma_1} \ar@{..>}@(ul,ur)[]^{\mbox{}} && v_{\gamma_4} \ar@{..>}@(ul,ur)[] \ar[d] \ar[ll] \ar[dll] \\ v_{\gamma_2} \ar@<+.3em>[rr] \ar@<+.1em>[rr] \ar@{<-}@<-.3em>[rr] \ar@{<-}@<-.1em>[rr] \ar@{..>}@(dl,dr) && v_{\gamma_3} \ar@{..>}@(dl,dr)_{\mbox{}} }$}  & $\xymatrix{ \ar@<-3em>@{|->}[r]^{r^*} & }$ 
						& \framebox{ $\xymatrix@R=1.3em@C=1em{   v_{\gamma_1} \ar@(ul,ur)[]^{\mbox{}} && v_{\gamma_4} \ar@(ul,ur)[] \ar[d] \ar[ll] \ar[dll] \\ v_{\gamma_2} \ar@<+.3em>[rr] \ar@<+.1em>[rr] \ar@{<-}@<-.3em>[rr] \ar@{<-}@<-.1em>[rr] \ar@(dl,dr) && v_{\gamma_3} \ar@(dl,dr)_{\mbox{}} }$ } 
						\\[1ex] 
					\end{tabular}
				\end{center}

\begin{enumerate}
		\item[] Notice that there are four arcs between $v_{\gamma_2}$ and $v_{\gamma_3}$ in $r_*(G)$. These correspond to the four morphisms $\widehat{\oG}_X(r^*(\underline A),G)$ which map the two loops in $r^*(\underline A)$ to $\gamma_2$ and $\gamma_3$.
		
		In general, for an oriented $X$-graph $G$ the unit $\eta_G\colon G\to r^*r_!(G)$ of the adjunction $r_!\dashv r^*$ is a monomorphism while the counit $\varepsilon'_G\colon r^*r_*(G)\to G$ of $r^*\dashv r_*$ is neither a monomorphism nor an epimorphism.  For a reflexive oriented $X$-graph $H$, the counit $\varepsilon_H\colon r_!r^*(H)\to H$ of $r_!\dashv r^*$ is a monomorphism and the unit $\eta'_H\colon H\to r_*r^*(H)$ is neither a monomorphism not an epimorphism.

		\item \label{StructDual} (The Dualization of Data Functor) Let $\DD G_X$ be any of the theories above and let $\sigma\colon X\to X$ be an automap. In this case, the induced adjunctions $\sigma_!\dashv \sigma^*$ and $\sigma^*\dashv \sigma_*$ are isomorphisms with $\sigma_!=\sigma_*$. If $\sigma$ is an involution, we call $\sigma^*\colon \widehat{\DD G}_X\to \widehat{\DD G}_X$ a dualization of structure with respect to $\sigma$. In the case $X=2$, the dualization of $\widehat{\roG}_X$ is the usual dualization of the underlying reflexive graph of a small category (cf, dualization of data in Chapter \ref{S:HyM}).

		\item (The Target/Source Collapsing Functors) Let $\psi\defeq (\Id_S,!_T) \colon (S,T)\to (S,1)$ be the map in $\mathbf{Set}^2$ where $!_T\colon T\to 1$ is the terminal map. Then there is an essential geometric morphism $\psi_!\dashv \psi^*\dashv \psi_*\colon \dsG_{(S,T)}\to \dsG_{(S,1)}$. The $\psi$-extension takes an $(S,T)$-directed symmetric $X$-graph $G$ to the $(S,1)$-directed symmetric $X$-graph with vertex set $\psi_!G(V)=\frac{G(V)}{\sim}$ where $\sim$ is the equivalence relation on $G(V)$ such that $v\sim v'$ iff there is an arc $\alpha\in G(A)$ such that $v=\alpha.t$ and $v'=\alpha.t'$ for some $t, t'\in T$ and arc set $\psi_!G(A)=G(A)$ with restricted right-action, e.g., if $\alpha\in \psi_!G(A)$ then $\alpha.1=[\alpha.t]$ where $[\alpha.t]$ is the equivalence class of $\alpha.t$ for some $t\in T$. The $\psi$-restriction functor $\psi^*$ takes the $(S,1)$-directed symmetric $X$-graph $H$ to the $(S,T)$-directed symmetric $X$-graph with vertex set $H(V)$ and arc set $H(A)$ with right-action $\alpha.t=\alpha.1$ for each $t\in T$. The $\psi$-coextension functor $\psi_*$ takes the $(S,T)$-directed symmetric $X$-graph $G$ to the $(S,1)$-directed symmetric $X$-graph with vertex set $G(V)$ and arc set $\psi_*G(A)=\widehat{\DD G}_{(S,1)}(\psi^*(\underline A), G)$ which is the set of arcs $\alpha$ in $G$ with fixed target vertex, i.e., there is some $v\in G(V)$ such that for each $t\in T$, $\alpha.t=v$.  	
				
			Similarly if $\theta\defeq (!_S,\Id_T)\colon (S,T)\to (1,T)$ is the map in $\mathbf{Set}^2$ where $!_S\colon S\to 1$ is the terminal map. Then the essential geometric morphism $\theta_!\dashv \theta^*\dashv \theta_*\colon \dsG_{(S,T)}\to \dsG_{(1,T)}$ is analogous to that induced by $\psi$.
			
		\item \label{E:OSGraph} (The Symmetrization Functor) Let $p\colon \oG_{X}\to \sG_{X}$ be the inclusion functor and $p_!\dashv p^*\dashv p_*\colon \widehat{\oG}_X\to \widehat{\sG}_{X}$ the induced essential geometric morphism. Then the $p$-extension functor $p_!$ freely adds symmetric partners to each oriented arc, the $p$-restriction functor $p^*$ forgets the symmetric partners of each arc, and the $p$-coextension functor $p_*$ takes an oriented $X$-graph $G$ to the the symmetric $X$-graph $p_*(G)$ which has the same vertex set and the arc set $p_*(G)(A)\defeq \widehat{\oG}_X(p^*(\underline A), G)$, i.e., the set of possible symmetric partners between vertices. 
		
		For instance, when $X=\{s,t\}$ we have the following assignments for an oriented $X$-graph $G$.
\end{enumerate}
		\begin{center}
			\begin{tabular}{ c c  c c c } 
				\framebox{ 
					$\xymatrix@R=1.3em@C=1em{  a \ar@(l,u)[]^{\mbox{}} \ar@/^/[r]\ar@/_/[r]  & b  \\ c \ar@/^/[r] & d \ar@/^/[l] \ar@(ur,r)[] \ar@(r,dr)[]^{\mbox{}} \ar[u] }$
				} &	$ \xymatrix{ \ar@{|->}@<-3em>[r]^{p_!} & }$ &	 \framebox{ 
				$\xymatrix@R=1.3em@C=1em{  a \ar@{-}@(l,u)[]^{\mbox{}}_2 \ar@{-}@/^/[r]\ar@{-}@/_/[r]  & b  \\ c \ar@{-}@/^/[r] & d \ar@{-}@/^/[l] \ar@{-}@(ur,r)[]_2 \ar@{-}@(r,dr)[]^{\mbox{}}_2 \ar@{-}[u]}$}  & $\xymatrix{ \ar@<-3em>@{|->}[r]^{p^*} & }$ 
			& \framebox{ $\xymatrix@R=1.3em@C=1em{  & a \ar@(ul,u)[]^{\mbox{}} \ar@(dl,l)[]^{\mbox{}} \ar@/^/[r]\ar@/_/[r] \ar@{<-}@/^1em/[r] \ar@{<-}@/_1em/[r] & b \ar@<-.2em>[d]  \\ c  \ar@/^/[rr] && d \ar@{<-}@/^1em/[ll] \ar@/_1em/[ll] \ar@/^/[ll] \ar@(ur,r)[] \ar@(dl,d)[]_{\mbox{}} \ar@(r,dr)[]^{\mbox{}} \ar@<-.2em>[u] \ar@(d,dr)[] }$
			} 
			\\[1ex] 
		\end{tabular}
	\end{center}
\begin{center}
	\begin{tabular}{ c c  c c c } 
		\framebox{ 
			$\xymatrix@R=1.3em@C=1em{  a \ar@(l,u)[]^{\mbox{}} \ar@/^/[r]\ar@/_/[r]  & b  \\ c \ar@/^/[r] & d \ar@/^/[l] \ar@(ur,r)[] \ar@(r,dr)[]^{\mbox{}} \ar[u] }$
		} &	$ \xymatrix{ \ar@{|->}@<-3em>[r]^{p_*} & }$ &	 \framebox{ 
		$\xymatrix@R=1.3em@C=1em{  a \ar@{-}@(l,u)[]^{\mbox{}}  & b  \\ c \ar@{-}[r] & d \ar@{-}@(ur,r)[] \ar@{-}@(r,dr)[]^{\mbox{}} \ar@{-}@(dl,d)[]_{\mbox{}}^{\text{\tiny 2}} \ar@{-}@(d,dr)[]^{\text{\tiny 2}}}$}  & $\xymatrix{ \ar@<-3em>@{|->}[r]^{p^*} & }$ 
	& \framebox{ $\xymatrix@R=1.3em@C=1em{  a \ar@(l,u)[]^{\mbox{}}  & b  \\ c \ar@/^/[r] \ar@{<-}@/_/[r] & d \ar@(ur,r)[] \ar@(r,dr)[]^{\mbox{}} \ar@(dl,d)[]_{\mbox{}} \ar@(d,dr)[] \ar@(ul,u)[] \ar@(u,ur)[] }$
	} 
	\\[1ex] 
\end{tabular}
\end{center}
\begin{enumerate}[resume]
		\item \label{E:Assoc} (The Incidence Expansion Functor) Let $\gamma\colon X'\to X$ be an injective map and $\gamma_!\dashv \gamma^*\dashv \gamma_*\colon \widehat{\sG}_{X'}\to \widehat{\sG}_X$ be the induced essential geometric morphism. The $\gamma$-extension functor $\gamma_!$ takes a symmetric $X'$-graph $G$ to the symmetric $X$-graph with vertex set $\gamma_!(G)(V)=G(V)\sqcup\left(\frac{G(A)}{\sim}\x X\backslash X'\right)$ where $\sim$ is the equivalence relation induced by the automaps of $X'$. To describe the arc set of $\gamma_!(G)$, define 
		
		\[ 
		Q\defeq\setm{\partial_G(\alpha)\sqcup\overline{[\alpha]}\in (\gamma_!G(V))^X}{\alpha\in G(A)}
		\] 
		where $\overline{[\alpha]}\colon X\backslash X'\to \frac{G(A)}{\sim}\x X\backslash X'$ maps $x\mapsto ([\alpha],x)$. Then \[\gamma_!(G)(A)=\setm{f\in (\gamma_!G(V))^X}{\exists g\in Q,\exists \sigma\in \Aut(X),\  f=g\circ \sigma}\] with right-actions $f.x=f(x)$ for $x\in X$ and $f.\sigma=f\circ \sigma$ for $\sigma\in \sX$. In other words, the $\gamma$-extension extended each edge by freely adding $X\backslash X'$ vertices to the incidence of each edge. The $\gamma$-restriction functor $\gamma^*$ sends the symmetric $X$-graph to the symmetric $X'$-graph with vertex set $G(V)$ and arc set $G(A)$ with restricted incidence $\partial_{\gamma^*G}(\alpha)=\partial_G(\alpha)_{|X'}$ and induced action. In other words we lose some coherency between arcs creating more edges in $\gamma^*(G)$\footnote{In fact, for each symmetric $X$-graph edge $\alpha$, there is a symmetric $X'$-graph clique between the incidence vertices in $\alpha$.}(cf, \cite{wD} (Proposition 3.3)).  The $\gamma$-coextension $\gamma_*$ sends the symmetric $X'$-graph $G$ to the symmetric $X$-graph with vertex set $\gamma_*(G)(V)=G(V)$ and arc set $\gamma_*(G)(A)=\sG_{X'}(\gamma^*(\underline A), G)$, i.e., the set of $X'$-cliques in $G$. For example, if $X$ has cardinality $n$ and $X'$ has cardinality $n-1$, $\gamma^*(\underline A)$ is a symmetric $X'$-graph with $n$ edges such that every $n-1$ collection of vertices in $\gamma^*(\underline A)$ is connected by exactly one edge, i.e., $\gamma^*(\underline A)$ is a complete symmetric $X'$-graph on $n$ vertices.
		
		For instance, let $X=\{s,t,r\}$ and $X'=\{s,t\}$ with $\gamma\colon X'\hookrightarrow X$ and consider the $\gamma$-extension followed by the $\gamma$-restriction on a symmetric $X'$-graph $G$. 
\end{enumerate}

			\begin{center}
				\begin{tabular}{ c c  c c c } 
					\framebox{ 
						$\xymatrix@R=1.3em@C=.85em{ \\ a \ar@{-}[r]^\alpha & b \ar@<+.3em>@{-}[r]^\beta \ar@<-.3em>@{-}[r]_\lambda & c \ar@{-}@(ur,r)[]^\delta \ar@{-}@(dr,d)[]^\omega_{\text{\tiny 2}}}$
					} &	$ \xymatrix{ \ar@{|->}@<-3em>[r]^{\gamma_!} & }$ &	 \framebox{ 
					$\xymatrix@R=1em@C=.35em{ [\alpha] \ar@{-}[d]_{\overline \alpha} & [\beta] \ar@{-}[d]_{\overline \beta}&& [\delta] \ar@{-}[d]_{\overline \delta} \ar@{}[d]|->{\text{\tiny 2}} & [\omega] \ar@{-}[dl]|-{\overline \omega}\\ a \ar@{-}[r]_{\overline{\alpha}} & b \ar@<+.3em>@{-}[rr]^{\overline{\beta}} \ar@<-.3em>@{-}[rr]_{\overline \lambda} && c \ar@{-}[dl]|-{\overline \lambda} \ar@{-}@(dr,d)[]^{\overline\omega}_{\text{\tiny 2}} \\ && [\lambda] &&  }$}  				$ \xymatrix{ \ar@{|->}@<-3em>[r]^{\gamma^*} & }$ &	 \framebox{ 
					$\xymatrix@R=1em@C=.35em{ [\alpha] \ar@{-}[dr] \ar@{-}[d] & [\beta] \ar@{-}[dr] \ar@{-}[d] & [\delta]  \ar@{-}[d] & [\omega] \ar@{-}[dl]\\ a \ar@{-}[r] & b \ar@<+.3em>@{-}[r] \ar@<-.3em>@{-}[r] & c \ar@{-}[dl] \ar@{-}@(ur,r)[] \ar@{-}@(dr,d)[]_{\text{\tiny 2}} \\ & [\lambda] \ar@{-}[ul] &  }$}  
				\\[1ex] 
			\end{tabular}
		\end{center}
		
\begin{enumerate}[resume]
	\item[] Note that in $\gamma_!(G)$ there is an unfixed edge at $\overline \omega$ coming from the unfixed loop $\omega$. In general, for a symmetric $X'$-graph $G$ the unit $\eta_G\colon G\to \gamma^*\gamma_!(G)$ of the adjunction is a monomorphism. For a symmetric $X$-graph $H$ the counit $\varepsilon_H\colon \gamma_!\gamma^*(H)\to H$ is an epimorphism.

Next, consider the $\gamma$-coextension followed by the $\gamma$-restriction on the following symmetric $X'$-graph $G$.
\end{enumerate}
	
		\begin{center}
			\begin{tabular}{ c c  c  c c} 
				\framebox{ 
					$\xymatrix@R=1.3em@C=1em{ u \ar@{-}[rr] && v \\a \ar@{-}@/_1em/[rr]_{\lambda} \ar@{-}[r]^\alpha & b \ar@{-}[r]^\beta  & c \ar@{-}@(ur,r)[]^\delta }$
				} &	$ \xymatrix{ \ar@{|->}@<-3em>[r]^{\gamma_*} & }$ &	 \framebox{ 
				$\xymatrix@R=2em@C=1em{ u && v \\a \ar@{-}@/^2em/[rr]^{\text{\tiny $\lambda\delta\delta$}} \ar@{-}@/_2em/[rr]_{\text{\tiny $\lambda\lambda\delta$}}  \ar@{-}[r]^{\text{\tiny $\alpha\beta\lambda$}} & b \ar@<+.3em>@{-}[r]^{\text{\tiny $\alpha\beta\lambda$}} \ar@{-}[r]|-{\text{\tiny $\beta\delta\delta$}} \ar@<-.3em>@{-}[r]_{\text{\tiny $\beta\beta\delta$}} & c \ar@{-}@(ur,r)[]^{\text{\tiny $\lambda\lambda\delta$}}  \ar@{-}@(u,ur)[]^{\text{\tiny $\lambda\delta\delta$}} \ar@{-}@(r,dr)[]^{\text{\tiny $\beta\delta\delta$}} \ar@{-}@(dr,d)[]^{\text{\tiny $\beta\beta\delta$}}  }$}  &	$ \xymatrix{ \ar@{|->}@<-3em>[r]^{\gamma^*} & }$ &	 \framebox{ 
				$\xymatrix@R=1.3em@C=1em{ & b \ar@<+.5em>@{-}[dr] \ar@{-}[dr] \ar@{-}[dl] \ar@<-.3em>@{-}[dr] \ar@<+.3em>@{-}[dr] &\\ a \ar@{-}[rr] \ar@<-.5em>@{-}[rr] \ar@{-}@<+.3em>[rr] \ar@{-}@<-.3em>[rr]  && c \ar@{-}@(ur,r)[]^{\mbox{}}  \ar@{-}@(u,ur)[]_{\mbox{}} \ar@{-}@(r,dr)[] \ar@{-}@(dr,d)[]^{\mbox{}} \\ u & v }$}  
			\\[1ex] 
		\end{tabular}
	\end{center}

\begin{enumerate}[resume]	
	\item[] We have denoted the edges in the symmetric $X$-graph $\gamma_*(G)$ by the image of each edge in $\gamma^*(\underline A)$ (the complete symmetric $X'$-graph with three vertices) of a morphism $\widehat{\sG}_{X'}(\gamma^*(\underline A),G)$, e.g., the edge $\alpha\beta\lambda\in \gamma_*(G)$ corresponds to the symmetric $X'$-graph morphism $\alpha\beta\lambda\colon \gamma^*(\underline A)\to G$ which maps the three edges to $\alpha, \beta,$ and $\lambda$.
In general, the counit $\varepsilon'_G\colon \gamma^*\gamma_*(G)\to G$ is neither a monomorphism nor an epimorphism. The unit $\eta'_H\colon H\to \gamma_*\gamma^*(H)$ is readily verified to be an epimorphism.

		\item (The Associated Bouquet Functor) There are also essential geometric morphisms induced by non-injective set maps. For instance, let $\tau\colon X\to 1$ be the terminal set map and let $\DD G_X$ be any one of $\oG_{X}$, $\sG_{X}$, or $\hG_{X}$. Then there is an essential geometric morphism $\tau_!\dashv \tau^*\dashv \tau_*\colon \widehat{\DD G}_X\to \widehat{\DD G}_{(1,1)}$ where $\widehat{\DD G}_{(1,1)}$ is the category of bouquets (see Example \ref{E:XGraphs}\ref{E:Bouq}). The $\tau$-extension functor takes a $X$-graph $G$ to the bouquet $\tau_!(G)$ which has vertex set $G(V)/\sim$ where $\sim$ is the equivalence generated by $v\sim v'$ if there exists an arc $\alpha\in G(A)$ and $x, x'\in X$ such that $\alpha.x=v$ and $\alpha.x'=v'$ (i.e., the connected components of $G$) and arc set equal to $G(A)$ with right-action $\alpha.n$ equal to the connected component which contains $\alpha$. The $\tau$-restriction functor takes a bouquet $B$ to the graph $\tau^*(B)$ with vertex and arc set equal to $B(V)$ and $B(A)$ and such that for each $x\in X$ and $\beta\in B(A)$, $\beta.x=\beta.n$  and $\beta.m=\beta$ for each $m\in M$.  The $\tau$-coextension functor takes an $X$-graph $G$ to the bouquet $\tau_*(G)$ which has vertex set $G(V)$ and arc set equal to the loops in $G$ with the right-action given by $\alpha.n=\alpha.x$ for loop $\alpha$ and any $x\in X$. 
\end{enumerate}
\end{example}

Examples \ref{E:OSG}(\ref{E:FFC}, \ref{E:OSGraph}) along with composition by the adjunctions in Propositions \ref{P:HyperGSG} and \ref{P:EENerve} (take $X=2$) generalizes the existence of adjunctions between oriented graphs, undirected graphs, and hypergraphs.

In \cite{mR}, Proposition 13.2.1 states that for a functor $u\colon \C A\to \C B$ between small categories, the $u$-coextension functor $u_*$ admits a right adjoint $u^!$ if and only if for each $b\in \C B$, $u^*(\underline B)$ is a split subobject of a representable $\underline a$ for some $a\in \C A$. We may use this to verify that in each of the examples above the coextension functor does not admit a right adjoint. 

\chapter{Labeled $(X,M)$-Graphs}\label{S:Colored}

Let $\Gamma$ be a nonempty set of labels. A vertex labeling of a $(X,M)$-graph $G$ on a label set $\Gamma$ is given by a set map $\varphi\colon G(V)\to \Gamma $. Since the evaluation of a graph at the vertex object is left adjoint, we can define the functor $C^V\defeq \iota_*V_*\colon \mathbf{Set}\to \widehat{\DD G}_{(X,M)}$ as in Example \ref{E:FFC}(\ref{E:3}). This functor takes a set $\Gamma$ to the $(X,M)$-graph with set of vertices $C^V_{\Gamma }(V)\defeq \Gamma$ and arcs $C^V_\Gamma (A)\defeq \Gamma ^X$ with right-actions on an arc $f\colon X\to \Gamma$ given by $f.x=f(x)$ for $x\in X$ and $f.m=f\circ m$ where $m\colon X\to X$ is the right-action map of $m\in M$. In other words, the parametrized incidence map $\partial\colon C_S^V(A)=S^X\to S^X$ is the identity. Given a set map $g\colon \Gamma \to \Gamma'$,  $C^V_g\colon C^V_\Gamma  \to C^V_{\Gamma'}$ takes vertex $a\in \Gamma$ to $g(a)\in \Gamma'$ and an arc $f\colon X\to \Gamma$ to $g\circ f\colon X\to \Gamma'$.\footnote{The adjunction $(-)_V\dashv C^{V}_\Gamma$ is naturally isomorphic to the sheafification adjunction for $\neg\neg$-sheaves.} This leads us to the following definition.

\begin{definition}\label{D:VertexLabeled}
	Let $\Gamma$ be a set of labels. The category of vertex-labeled $(X,M)$-graphs on a label set $\Gamma$ is the slice category $\widehat{\DD G}_{(X,M)}\ls C^V_\Gamma $. 
\end{definition} 

By using a parametrized incidence which allows the incidence of arcs to take values in multisets, we are able to obtain greater control of what types of vertex-labellings are allowed for $k$-uniform hypergraphs than using the classical definition as the following examples show. 

\begin{example}\label{E:Labelings}\mbox{}
	\begin{enumerate}
		\item (Partitions of Unity $(X,M)$-Graphs) Let $X$ be a finite set and $[0,1]$ be the unit interval in the real numbers. We set
		\[ \textstyle
		P\defeq\setm{f\in [0,1]^X}{ \sum_{x\in X}f(x)=1}
		\]
		and define the sub-$(X,M)$-graph $G_P$ of $C_{[0,1]}^V$ with vertex set $G_P(V)=[0,1]$ and arc set $G_P(A)\defeq P$. Then the category $\widehat{\DD G}_{(X,M)}\ls G_P$ consists of $(X,M)$-graphs such that the sum of the weight of vertices (with multiplicity included) in an arc is equal to $1$. In other words, each edge represents a probability distribution of events represented by vertices. 
		
		More generally, we may replace $[0,1]$ by any abelian group $R$ and the condition that the sum equals $1$ by the condition that the application of the group operation equals $r$ for some element $r\in R$.
		\item \label{E:HC}(Non-monochromatic $(X,M)$-graphs) Let $\C M_\Gamma$ be the sub-$(X,M)$-graph of $C_{\Gamma}^V$ with the same vertex set $\Gamma$ but with arc set  
		\[
		 \C M_\Gamma(A)\defeq \setm{f\in \Gamma^X}{f\text{ is not constant}}.
		 \] 
		 In other words, $\C M_\Gamma$ has no loops. Then the category $\widehat{\DD G}_{(X,M)}\ls \widetilde{C}_\Gamma$ is the $(X,M)$-graph generalization of non-monochromatic uniform hypergraphs. Indeed, if $\varphi\colon G\to \C M_{\Gamma}$ is an object in $\widehat{\DD G}_{(X,M)}\ls \C M_\Gamma$, then an arc cannot send all its incident vertices to the same label since there are no loops in $\C M_\Gamma$.
		
		The chromatic number $\chi(G)$ of an $(X,M)$-graph $G$ we can define as the least cardinality of a set $\Gamma$ such that there is an $(X,M)$-graph morphism $G\to \C M_{\Gamma}$. If there is a morphism $f\colon G\to G'$, then clearly $\chi(G)\leq \chi(G')$ when both $\chi(G)$ and $\chi(G')$ exist.  Let $\C K$ be the category of cardinal numbers with the natural order as morphisms with a terminal object $T$ freely added. Then the chromatic number extends to a functor 
		\[
		\chi\colon \widehat{\DD G}_{(X,M)}\to \C K
		\]
		where we define $\chi(G)\defeq T$ whenever there is no morphism $G\to \C M_{\Gamma}$ for all sets $\Gamma$, e.g., whenever $G$ has a loop. A generalized Hedetniemi's conjecture can be formulated for $(X,M)$-graphs.
		\begin{quote}
			\textbf{Hedetniemi's Conjecture for $(X,M)$-Graphs}. The functor $\chi\colon \widehat{\DD G}_{(X,M)}\to \C K$ preserves finite products. 
		\end{quote}
		Since a product $G\x H$ has projection morphisms, 
		\[ 
			 \chi(G\x H)\leq \min\{\chi(G),\chi(H) \}.
		\] 
		In other words, $\chi$ is an oplax functor between cartesian monoidal categories. Thus it would be enough to prove the reverse inequality.
		
		\item \label{E:NRainbow}(Non-rainbow $(X,M)$-graphs) Let $\C R_\Gamma$ be the sub-$(X,M)$-graph of $C_{\Gamma}^V$ consisting of vertex set $\Gamma$ and arc set 
		\[ 
			\C R_{\Gamma}(A)\defeq \setm{f\in\Gamma^X}{f\text{ is not surjective}}.
			\] 
			Then the category $\DD G_{(X,M)}\ls \C R_{\Gamma}$ consists of labeled $(X,M)$-graphs whose arcs do not have incidence vertices of each label in $\Gamma$.
		\item (Non-monochromatic Non-rainbow $(X,M)$-graphs) We can combine the two conditions above by defining the sub-$(X,M)$-graph $\C S_\Gamma^V$ of $C_\Gamma^V$ to have vertex set $\Gamma$ and arc set $\C S_\Gamma^V(A)\defeq \C M_{\Gamma}^V(A)\cap \C R_{\Gamma}^V(A)$. Then the category $\DD G_{(X,M)}\ls \C S_{\Gamma}^V$ is the $(X,M)$-graph generalization of non-monochromatic, non-rainbow uniform graphs given in \cite{yC}. 
		\item (Fixing the Cardinalities of Incidence) Let $I$ be a subset of $\Gamma$ and $(n_i)_{i\in I}$ be a family of cardinal numbers. We define $\C N_\Gamma^V$ to have vertex set $\C N_{\Gamma}^V\defeq \Gamma$ and arc set $\C N_{\Gamma}^V(A)\defeq \setm{f\in \Gamma^X}{\forall i\in I,\ \#\inv f(i)=n_i}$.  Then the category $\widehat{\DD G}_{(X,M)}\ls \C N_{\Gamma}^V$ contains labeled $(X,M)$-graphs such that each arc has incidence vertices of label $i$ of cardinality $n_i$ (including multiplicities) for each $i\in I$.   
	\end{enumerate}
\end{example}

Next, we define arc-labeled $(X,M)$-graphs. Let $\Sigma$ be a nonempty set of labels. An arc labeling of a $(X,M)$-graph on a label set $\Sigma$ is given by a set map $\psi\colon G(A)\to \Sigma$ such that $\psi(\alpha.m)=\psi(\alpha)$ for each $m\in M$ and $\alpha\in G(A)$. While the case for labeling vertices was a matter of using the adjunction $(-)_V\dashv \iota_*V_*$, the case for labeling arcs is not the same since we will want the labels to respect $M$-actions. So we construct a right adjoint to the functor $\widetilde{(-)}_A\colon \widehat{\DD G}_{(X,M)}\to \mathbf{Set}$ which sends an $(X,M)$-graph $G$ to the set $G(A)/\sim$ where $\sim$ is an equivalence relation generated by $\alpha\sim \beta$ if $\exists m\in M,\ \alpha.m=\beta$.  Define the functor $C^A_{(-)}\colon \mathbf{Set} \to \widehat{\DD G}_{(X,M)}$ on a set $\Sigma$ by $C^A_\Sigma (V)\defeq 1$ (the terminal set) and $C^V_\Sigma (A)\defeq \Sigma$ with right-actions fixed for each $m\in M$, i.e., $\alpha.m=\alpha$ for each $\alpha\in \Sigma$ and $m\in M$. For a set map $f\colon \Sigma\to \Sigma'$, the $(X,M)$-graph morphism $C_f^{A}\colon C_\Sigma^A\to C_{\Sigma'}^A$ is the identity on the singleton vertex and takes $\sigma\in \Sigma$ to $f(\sigma)\in \Sigma'$ on arcs. It is straightforward to show that $\widetilde{(-)_A}\dashv C^{A}_{(-)}\colon \mathbf{Set} \to \widehat{\DD G}_{(X,M)}$.

\begin{definition} \label{D:ArcLabel}
	Let $\Sigma$ be a set of labels. The category of arc-labeled $(X,M)$-graphs on a label set $\Sigma$ is the slice category $\widehat{\DD G}_{(X,M)}\ls C^A_\Sigma $. 
\end{definition}
\newpage
\begin{example}\label{E:Labeled}\mbox{}
\begin{enumerate}
	\item (Functional $X$-Gaphs)  Consider the category of $(X,1)$-directed symmetric hypergraphs and let \\$\Sigma\defeq\DD R\x \DD R^{\DD R^X}\x \DD R^{\DD R^{2}}$, i.e., an element in $\Sigma$ consists of a real number $w$, and set maps $\psi\colon \DD R^{X}\to \DD R$ and $f\colon \DD R^2\to \DD R$. Then an arc-labeled $(X,1)$-directed symmetric hypergraph is the $(X,M)$-graph analogue to a functional hypergraph given in \cite{gA} (Definition 1).
\ignore{		\item \label{E:GameTheory} A game $G$ consists of a finite set of players $X$, a finite set of strategies $S$, and a payoff function $p\colon S^X\to \DD R^X$ which associates to every strategy profile $s\colon X\to S$ a payoff $p_s\colon X\to \DD R$ such that $p_s(x)\in \DD R$ is the payoff for the individual player $x$ given a strategy profile $s$. 
		
		We define a tournament of games as an $S$-vertex labeled oriented $X$-graph $\varphi\colon T\to C_S^V$. The arcs of $T$ should be interpreted as a set of games and the vertices as available strategies. Then for an arc $\alpha$, the strategy employed by player $x$ is the label of the $x$-incidence $\varphi_V(\alpha.x)$. The payoff function can be given as an oriented $X$-graph morphism $p\colon C^V_S\to C^A_{\DD R^{X}}$ and the payoff for player $x\in X$ in game $\alpha\in T(A)$ is given by the evaluation $p_A\circ \varphi_A(\alpha)\colon X\to \DD R$ at $x$. If we want to restrict to zero-sum games, we define the subobject $G_0$ of $C^V_S$ to have the same vertex set $G_0(V)=S$ and arc set $G_0(A)\defeq \setm{f\colon X\to S}{\sum_{x\in X}p_A(f)(x)=0}$ and define a tournament of zero-sum games as an object in $\widehat{\oG}_X\ls G_0$. More generally, given a set of payoff values $P\subseteq \DD R$, we set $G_P(V)=S$ and $G_P(A)\defeq \setm{f\colon X\to \DD R}{\sum_{x\in X}p_A(f)(x)\in P}$ and define a tournament of $P$-sum games as an object in $\widehat{\oG}_X\ls G_P$.
}
	\item \label{E:Oriented}  (Oriented Hypergraph) Consider the symmetric $X$-graph $\C O$ consisting of a single vertex and with arc set $\C O(A)\defeq 2^X$ with right-action given by $f.m=f\circ m$ for each $f\in \C O(A)$, $m\in \sX$. Then the slice category $\widehat{\sG}_X\ls \C O$ is the $X$-graph analogue of oriented hypergraphs given in $\cite{lR}$ where $2$ can be interpreted as the two-element set $\{\text{in},\text{out}\}$. Notice that in this case we did not identify $m$-partners in the arc labeling as we did in Definition \ref{D:ArcLabel} above. 
\end{enumerate}
\end{example}

We can simultaneously label vertices and arcs since a vertex labeling $\varphi \colon G\to  C^V_\Gamma$ and an arc labeling $\psi\colon G\to C^V_\Sigma$ induces a unique morphism $G\to C^V_\Gamma\x C^A_\Sigma$. 

\begin{definition}
	Let $\Gamma$ and $\Sigma$ be sets of labels and set $C_{(\Gamma ,\Sigma)}\defeq C^V_\Gamma \x C^A_{\Sigma}$. The category of labeled $(X,M)$-graphs on label sets $\Gamma$ and $\Sigma$ is the slice category $\widehat{\DD G}_{(X,M)}\ls C_{(\Gamma ,\Sigma)}$. 
\end{definition}

Recall that categories of presheaves are closed under taking slices, i.e., for a small category $\C C$ we have $\widehat{\C C}\ls F$ is equivalent to $[\int F^{op}, \mathbf{Set}]$ where $\int F$ is the category of elements of the presheaf $F$ (\cite{eR}, Section 2.4). Thus the following result holds.

\begin{corollary}
	Let $\Gamma$ and $\Sigma$ be sets. The category of labeled $(X,M)$-graphs on label sets $\Gamma$ and $\Sigma$ is a category of presheaves.
\end{corollary}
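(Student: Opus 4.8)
The plan is to observe that the statement is an immediate consequence of the fact, recalled just above, that a slice of a presheaf category is again a presheaf category. First I would recall that by Definition \ref{D:XMGraph} the category $\widehat{\DD G}_{(X,M)}$ is the presheaf topos $[\DD G_{(X,M)}^{op},\mathbf{Set}]$ on the small two-object category $\DD G_{(X,M)}$. Next I would note that the object $C_{(\Gamma,\Sigma)}\defeq C^V_\Gamma\x C^A_\Sigma$ is by construction an object of $\widehat{\DD G}_{(X,M)}$, i.e., a presheaf $F\defeq C_{(\Gamma,\Sigma)}\colon \DD G_{(X,M)}^{op}\to \mathbf{Set}$, being a product of the presheaves $C^V_\Gamma$ and $C^A_\Sigma$ constructed earlier in the chapter.

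The main step is then to invoke the cited equivalence $\widehat{\C C}\ls F\simeq [\int F^{op},\mathbf{Set}]$ with $\C C\defeq \DD G_{(X,M)}$, which directly identifies the category of labeled $(X,M)$-graphs $\widehat{\DD G}_{(X,M)}\ls C_{(\Gamma,\Sigma)}$ with the category of presheaves on the category of elements $\int C_{(\Gamma,\Sigma)}$. Since $\DD G_{(X,M)}$ is small and $F$ takes values in $\mathbf{Set}$, the category of elements $\int F$ is again small, so the right-hand side is a genuine category of presheaves and the conclusion follows.

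The only point requiring any care — and I do not expect it to constitute a real obstacle — is the smallness of $\int C_{(\Gamma,\Sigma)}$, which holds because its objects are pairs consisting of a $\DD G_{(X,M)}$-object $c$ together with an element of the set $F(c)$, and these form a set whenever $\DD G_{(X,M)}$ is small. Consequently no essential work beyond this bookkeeping is needed; the entire content is carried by the quoted slice theorem, and the corollary is really just the instantiation of that general fact at the presheaf $C_{(\Gamma,\Sigma)}$.
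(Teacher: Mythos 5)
Your proof is correct and follows exactly the route the paper takes: the corollary is stated as an immediate consequence of the quoted fact that $\widehat{\C C}\ls F\simeq [\int F^{op},\mathbf{Set}]$, instantiated at $F=C_{(\Gamma,\Sigma)}$ in $\widehat{\DD G}_{(X,M)}$. Your added remark on the smallness of $\int C_{(\Gamma,\Sigma)}$ is a harmless piece of bookkeeping the paper leaves implicit.
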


This means categories of labeled $(X,M)$-graphs are also presheaf toposes. Moreover, a relabeling of the sets $\Gamma$ and $\Sigma$ given by set maps $f\colon \Gamma \to \Gamma'$ and $g\colon \Sigma \to \Sigma'$ induce essential geometric morphisms (\cite{pJ}, Corollary 1.5.3) \[\Sigma_{(f,g)}\dashv (f,g)^*\dashv \Pi_{(f,g)}\colon \colon \widehat{\DD G}_{(X,M)}\ls C_{(\Gamma,\Sigma)}\to \widehat{\DD G}_{(X,M)}\ls C_{(\Gamma^*,\Sigma^*)}.\]

\begin{example} \label{E:Labeled2}\mbox{}
If $\Gamma =1$ is a singleton set, then $C^V_\Gamma$ and $C^A_\Gamma$ are the terminal $(X,M)$-graph with one vertex and one arc. Therefore, labeled $(X,M)$-graphs generalize both vertex-labeled (e.g. take $\Sigma=1$) and arc-labeled (e.g. take $\Gamma =1$) $(X,M)$-graphs as well as $(X,M)$-graphs (take $\Gamma =\Sigma=1$). 
\end{example}

\chapter{Hybrid and Mixed Structures}\label{S:HyM}

Next, we describe hybrid and mixed structures of $(X,M)$-graphs. 

\begin{definition}
	Let $\C F=(\DD G_{(X_j,M_j)})_{j\in J}$ be a family of theories for $(X,M)$-graphs. The theory for mixed $\C F$-graphs is the category $\DD G_{\C F}$ with one vertex object $V$ and an arc object $A_j$ for each $j\in J$ and morphism set the union of morphisms in each $\DD G_{(X_j,M_j)}$.
	Composition is defined as in each of the theories in $\C F$.   The category of mixed $\C F$-graphs is defined to be the category of presheaves $\widehat{\DD G}_{\C F}$.
\end{definition}

We depict a theory for mixed $\C F$-graphs as $\left(\xymatrix{ V \ar[r]|-{X_j} & A_j \ar@(ur,dr)^{M_j}  }\right)_J$.

\begin{example}\mbox{}
	\begin{enumerate}
		\item (Comparison with Classical Definition) When the family $\C F$ contains two theories, a mixed $\C F$-graph is the $(X,M)$-graph analogue of mixed hypergraphs given in \cite{vV} (Definition 10.3.1).
		\item (Oriented and Symmetric Structures on Same Set of Vertices) Consider the family $\C F\defeq (\DD G_i)_{i=0,1}$ where $\DD G_0=\sG_2$ and $\DD G_1=\oG_2$. Then a mixed $\C F$-graphs $G$ consists of a set of vertices $G(V)$ a set of $0$-arcs $G(A_0)$ and a set of $1$-arcs $G(A_1)$.  For example, the $\C F$-graph $G$ can be depicted as follows.
		\end{enumerate}
		\begin{multicols}{2}
			\begin{center}$G\ $\framebox{ 
					$\xymatrix@C=1em@R=.75em{ a \ar@{-}[dr]_{\beta_1\sim \beta_2} \ar@{-}[rr]_{\alpha_1\sim \alpha_2}  && b \ar@(ur,dr)[]^{\omega} \ar[dl]^{\varepsilon} \ar@/_/[ll]_{\rho}\\ & c \ar@{-}@(dl,dr)[]_{\gamma_1\sim \gamma_2}^{2} &}$
				}
			\end{center}
			\begin{align*}
			& G(V)= \{a,\ b,\ c\}\\
			& G(A_0)=\{\alpha_1\sim \alpha_2, \beta_1\sim \beta_2, \gamma_1\sim \gamma_2 \},\\
			& G(A_1)=\{\rho, \omega, \varepsilon \}\\
			\end{align*}
		\end{multicols}
\begin{enumerate}[resume]
		\item \label{E:RegularGraphs} (Regular $(X,M)$-graphs) Mixed bouquets can be used to characterize/define regular $(X,M)$-graphs. Let $S$ be a non-empty subset of $X$ and consider the family of theories $(\mathbb{G}_{(1,1)})_{x\in S}$. There is an obvious functor $B\colon \mathbb{G}_{\mathcal{F}}\to \mathbb{G}_{(X,M)}$ which takes $V$ to $V$ and $A_x\to A$. This induces an essential geometric morphism $B_!\dashv B^*\dashv B_*\colon \widehat{\mathbb{G}}_{\mathcal F}\to \widehat{\mathbb{G}}_{(X,M)}$. Then we say an $(X,M)$-graph $G$ is $S$-regular of order $k$ provided $B^*(G)\iso \sqcup_{v\in G}\neg\neg v'$ for any vertex $v'\in B^*(G)$ where $k=\#\neg\neg v(A)$. In the case of simple loopless symmetric $X$-graphs when $X=\{s,t\}$ and $S=\{s\}$ (or $\{t\}$ by symmetry) we regain the classical definition. Notice that this approach counts the multiplicities in the incidence of an arc. For example, the 2-loop symmetric $X$-graph where $X=\{s,t\}$ would be considered $s$-regular of order 2. 
	\end{enumerate}
\end{example}

The following result shows that arc-labeled $(X,M)$-graphs are the same as certain mixed $\C F$-graphs.

\begin{proposition}\label{P:LabelsHybrid}
	Let $M$ be a monoid, $X$ an $M$-set, and $\C F\defeq (\DD G_{(X,M)})_{j\in J}$ for some set $J$.  There is an adjoint equivalence $M\dashv K\colon \widehat{\DD G}_{\C F}\to \widehat{\DD G}_{(X,M)}\ls C_J^A$.
\end{proposition}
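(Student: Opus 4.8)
The plan is to reduce the statement to the presheaf-slice correspondence recalled just above, namely that $\widehat{\C C}\ls F \simeq [(\int F)^{op},\mathbf{Set}]$ for a presheaf $F$ on a small category $\C C$. Applying this with $\C C = \DD G_{(X,M)}$ and $F = C_J^A$, it suffices to compute the category of elements $\int C_J^A$ and exhibit an isomorphism of categories $\int C_J^A \cong \DD G_{\C F}$; the equivalence $\widehat{\DD G}_{\C F}\simeq \widehat{\DD G}_{(X,M)}\ls C_J^A$ then follows, and I would read off the functors $M$ and $K$ from it.

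First I would describe $\int C_J^A$ explicitly. Since $C_J^A(V)=1$ and $C_J^A(A)=J$, its objects are a single element $(V,\ast)$ together with one element $(A,j)$ for each $j\in J$. Recall that a morphism $(c,s)\to(c',s')$ in the category of elements is a $\DD G_{(X,M)}$-morphism $u\colon c\to c'$ with $C_J^A(u)(s')=s$. A morphism $(V,\ast)\to(A,j)$ is thus a morphism $x\colon V\to A$, i.e.\ an element of $X$, the defining condition being automatic because $C_J^A(V)=1$ is terminal; this yields a copy of $X$ for each $j$. A morphism $(A,j)\to(A,j')$ is an $m\in M$ with $C_J^A(m)(j')=j$; but the $M$-action on $C_J^A(A)$ is trivial, so $C_J^A(m)(j')=j'$, forcing $j=j'$ and then giving a full copy of $M$ at each $(A,j)$ and no morphisms across distinct labels. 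Finally there are no morphisms $(A,j)\to(V,\ast)$ since $\DD G_{(X,M)}(A,V)=\empset$, and only the identity at $(V,\ast)$.

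The identification $(V,\ast)\mapsto V$, $(A,j)\mapsto A_j$ then matches hom-sets with those of $\DD G_{\C F}$ ($X$ from $V$ to each $A_j$, a copy of $M$ at each $A_j$, nothing else), and since composition in both categories is inherited from that of $\DD G_{(X,M)}$ it is preserved; this is the required isomorphism $\int C_J^A\cong \DD G_{\C F}$. Transporting across the presheaf-slice equivalence, the right adjoint $K$ sends a mixed $\C F$-graph $G$ to the arc-labeled $(X,M)$-graph with vertex set $G(V)$, arc set $\bigsqcup_{j\in J}G(A_j)$, and labeling $\psi\colon \bigsqcup_j G(A_j)\to J$ collapsing $G(A_j)$ to $j$ (this $\psi$ is $M$-invariant because each $G(A_j)$ is $M$-stable), while $M$ sends an arc-labeled graph $\psi\colon H\to C_J^A$ to the mixed $\C F$-graph with $G(V)=H(V)$ and $G(A_j)=\inv\psi(j)$.

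The last step is to check that $M$ and $K$ are mutually inverse: $MK(G)$ recovers $G(A_j)$ as the fiber over $j$ of the collapsing labeling, and $KM(H,\psi)$ reassembles $H(A)=\bigsqcup_j \inv\psi(j)$ with its original labeling, both via the canonical coproduct isomorphism, so the unit and counit are natural isomorphisms satisfying the triangle identities on the nose and the equivalence is an adjoint equivalence $M\dashv K$. The only point requiring care is the morphism bookkeeping in $\int C_J^A$ — in particular that the trivial $M$-action on $C_J^A$ is precisely what forces the absence of morphisms between distinct labels, so that $\int C_J^A$ is a disjoint union of copies of the arc-monoid $M$ sharing the single vertex $V$ rather than something more entangled; this is exactly where the hypothesis that all members of $\C F$ are the single theory $\DD G_{(X,M)}$ is used.
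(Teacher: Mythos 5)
Your proposal is correct and follows essentially the same route as the paper: both reduce the claim to the presheaf-slice correspondence $\widehat{\C C}\ls F\simeq[(\int F)^{op},\mathbf{Set}]$ and then exhibit an isomorphism of categories between $\int C_J^A$ and $\DD G_{\C F}$. Your version simply spells out the morphism bookkeeping (in particular, that the trivial $M$-action forces $j=j'$ for morphisms $(A,j)\to(A,j')$) and the explicit descriptions of $M$ and $K$, which the paper leaves as "clear."
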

\begin{proof}
	It is enough to show that there is an equivalence between the category of elements $\int C^A_J$ and the mixed $\C F$-graph theory $\DD G_{\C F}$. Let $\mu\colon \DD G_{\C F}\to \int C^A_J$ be the functor which takes $V$ to $(1,V)$, each $A_j$ to $(j,A)$, the morphism $x\colon V\to A$ to $x\colon (1,V)\to (a_j,A)$ and $m\colon A_j\to A_j$ to $m\colon (a_j,A)\to (a_j,A)$. It is clear that this is an isomorphism of categories and hence induces an isomorphism of categories $M\dashv K\colon \widehat{\DD G}_{\C F}\to \widehat{\DD G}_{(X,M)}\ls C_J^A$.
 
\end{proof}

\begin{example}\mbox{} \label{E:CG}
	\begin{enumerate}
		\item (Associated $X$-Graph Functor) In Example \ref{E:OSG}(\ref{E:Assoc}), we were able to given the associated symmetric $X'$-graph to a symmetric $X$-graph via the restriction functor induced by an injective map $j\colon X'\hookrightarrow X$. For the oriented case, this is not possible since each injection will give a different restriction functor. However, a construction is possible by using the above adjunction. Let $X'\subseteq X$ and set $J$ be the set of injective maps from $X'$ to $X$. Define $\C F_{X}\defeq (\oG_X)_{j\in J}$. There is a functor  $\kappa\colon \DD G_{\C F_{X}}\to \oG_X$ which takes $V$ to $V$ and each $A_j$ to $A$. Next, define $\C F_{X'}\defeq (\oG_{X'})_{j\in J}$.  There is also a functor $\lambda\colon \DD G_{\C F_{X'}}\to \DD G_{\C F_{X}}$ which takes $V$ to $V$, $A_j$ to $A_j$, and a morphism $x'\colon V\to A_j$ to $j(x')\colon V\to A_j$. Then the functor $\mu\defeq \kappa\circ \lambda$ induces an essential geometric morphism $\mu_! \dashv \mu^* \dashv \mu_*\colon \widehat{\DD G}_{\C F_{X'}} \to \widehat{\oG}_X.$
					
		The restriction functor $\mu^*$ takes an oriented $X$-graph $G$ to the $\C F$-family of $X'$-graphs with same underlying vertex set as $G$ and $\mu^*G(A_j)=G(A)$ for each $j\in J$ with the $x'$-incidence of a $j$-arc $\alpha\in \mu^*G(A_j)$ equal to $\alpha.j(x')$ in $G$. Since $\widehat{\oG}_X$ has pullbacks, the terminal morphism $C^A_J\to 1$ induces an adjunction 
		$\Sigma_{C^A_J}\dashv {C^{A}_J}^*\colon  \widehat{\oG}_{X'}\to \widehat{\oG}_{X'}\ls C^A_{J}$ where $\Sigma_{C^A_J}$ acts by postcomposition and thus has the effect of forgetting the labeling.\footnote{We are using the $(X,M)$-graph $C^A_J$ introduced in Chapter \ref{S:Colored}.}  Then there is an adjunction  
		\[ \text{Gr}_{X'}\dashv \mu_*M{C^A_J}^*\colon \widehat{\oG}_{X'}\to \widehat{\oG}_{X}.
		\] 
		where we call $\text{Gr}_{X'}\defeq \Sigma_{C^A_J}K\mu^*\colon \widehat{\oG}_{X}\to \widehat{\oG}_{X'}$ the associated oriented $X'$-graph functor.
					
		For instance, let $X=\{a,b,c\}$ and $X'=\{s,t\}$. Then $J$ is the set of three inclusions $ab, ac, bc\colon X'\hookrightarrow X$ (e.g., $ab$ is the map which sends $s$ to $a$ and $t$ to $b$). Let $G$ be the $X$-graph with vertex set $G(V)\defeq \{u,v,w,x,y,z \}$ and arc set $G(A)\defeq \{\alpha,\beta,\gamma \}$ with incidence $\alpha.a=u$, $\alpha.b=\alpha.c=v$, $\beta.a=w$, $\beta.b=x$, $\beta.c=y$ and $\gamma.a=\gamma.b=\gamma.c=z$. The following composition is the associated oriented $X'$-graph $\text{Gr}_{X'}(G)$.
	\end{enumerate}
					\begin{center}
						\begin{tabular}{ c c  c c c } 
							&& ab \framebox{ $\xymatrix@R=1.3em@C=1em{  u \ar[r]^{\alpha}   & v  & \\ w \ar[r]^{\beta}  & x  &  y \\
									& & q \ar@(ul,dl)[]_{\gamma}}$ } && \\		
							$G\ $\framebox{ 
								$\xymatrix@R=1.3em@C=1em{  u \ar@{-}[r]^{\alpha} \ar@{}[r]_<{\text{\tiny a}} \ar@{}[r]_>{\text{\tiny bc}} & v & \\ w \ar@{-}[r]_<{\text{\tiny a}}^{\beta} \ar@{}[r]_>{\text{\tiny b}} & x \ar@{-}[r]_>{\text{\tiny c}}^{\beta} &  y \\
									& & q \ar@{-}@(ul,dl)[]_{\gamma} }$
							} &	$ \xymatrix{ \ar@{|->}@<-3em>[r]^{\mu^*} & }$ &	ac \framebox{ 
							$\xymatrix@R=1.3em@C=1em{  u \ar[r]^{\alpha} & v & \\ w \ar@/^/[rr]^{\beta} & x &  y \\
								& & q \ar@(ul,dl)[]_{\gamma} }$}  & $\xymatrix{ \ar@<-3em>@{|->}[r]^{\Sigma_{C^A_{J}}K} & }$ 
						& \framebox{ $\xymatrix@R=1.3em@C=1em{   u \ar@<+.2em>[r]^{\alpha_{\text{\tiny ab}}} \ar@<-.2em>[r]_{\alpha_{\text{\tiny ac}}}  & v \ar@(ur,dr)[]^{\alpha_{\text{\tiny bc}}} & \\  w \ar@/^/[rr]^{\beta_{\text{\tiny ac}}} \ar[r]_{\beta_\text{\tiny ab}}  & x \ar[r]_{\beta_{\text{\tiny bc}}} &  y & q \ar@(d,dr)[]_{\gamma_{\text{\tiny ac}}} \ar@(dl,d)[]_{\gamma_{\text{\tiny ac}}} \ar@(u,ur)[]^{\gamma_{\text{\tiny ab}}} }$ } 
						\\			
						&&	bc \framebox{ $\xymatrix@R=1.3em@C=.7em{   u & v \ar@(ur,dr)[]^{\alpha} & \\ w  & x \ar[r]^{\beta} &  y \\
								& & q \ar@(ul,dl)[]_{\gamma} }$ } &&
						\\[1ex] 
					\end{tabular}
				\end{center}
\begin{enumerate}[resume]
		\item[]		Notice that each arc in an oriented $X$-graphs get taken to three arcs in its associated oriented $X'$-graph. 
			\item \label{E:CayleyGraph}(Cayley Graph) Let $G$ be a monoid and $S\hookrightarrow G$ an injective set map. Define the family of theories $\C F\defeq (\oG_2)_{g\in S}$ ($2\defeq \{s,t\}$). There is a functor $\gamma\colon \DD G_{\C F}\to \DD G_{(|G|,G)}$ which takes $V$ to $V$, $A_g$ to $A$, $s\colon V\to A_g$ to $e\colon V\to A$ (the unit of the monoid $G$) and $t\colon V\to A_g$ to $g\colon V\to A$ for each $g\in S$. This induces the essential geometric morphism $\gamma_!\dashv \gamma^*\dashv \gamma_*\colon \widehat{\DD G}_{\C F}\to \widehat{\DD G}_{(|G|,G)}$. By composition with the equivalence above, the $\widehat{\oG}_2\ls C^A_S$-object $K\gamma^*(\underline A)$ is the Cayley graph $\Cay(G,S)$ determined by the set $S$. The construction shows that $\Cay(G,S)$ is an $S$ arc-labeled oriented graph. Notice that $S$ is a generating set iff the underlying $\oG_2$-graph of $\Cay(G,S)$ is connected. When $G$ is a group, this coincides with the classical definition. 
			\item \label{E:SchreierGraph} (Schreier Graph) We can generalize the construction of Cayley Graphs by replacing $\mathbb G_{(|G|,G)}$ above with $\mathbb G_{(|G/H|,G)}$ for a submonoid $H$ of $G$. The functor $\gamma\colon \mathbb{G}_{\mathcal F}\to \mathbb G_{(|G/H|,G)}$ takes $V$ to $V$, $A_g$ to $A$, $s\colon V\to A$ to $He\colon V\to A$ and $t\colon V\to A_g$ to $Hg\colon V\to A$.  Then the arc-labeled oriented graph $K\gamma^*(\underline A)$ is the Schreier graph Sch($G,H,S$). When $H=1$, we have Sch($G,1,S$)$=\Cay(G,S)$. Another way to see this is to take the morphism of group actions $p\colon (|G|,G)\to (|G/H|,G)$ which induces the morphism between theories $p\colon \mathbb{G}_{(|G|,G)}\to \mathbb{G}_{(|G/H|,G)}$ and thus gives us an essential geometric morphism between the toposes $p_!\dashv p^*\dashv p_*\colon \widehat{\mathbb G}_{(|G|,G)}\to \widehat{\mathbb G}_{(|G/H|,G)}$. Then by essential uniqueness of adjoints, $K\gamma^*p^*(\underline A)$ is the Schreier $(X,M)$-graph Sch$(G,H,S)$. When $G$ is a group, this definition coincides with the classical definition.
			\end{enumerate}
\end{example}

\begin{definition}\label{D:Hybrid}
	Let $M$ be a monoid and $\C F\defeq (X_j)_{j\in J}$ a family of right $M$-sets. The theory for a hybrid $\C F$-graphs is the category $\DD G_{(\C F, M)}$ with one arc object $A$ and a vertex object $V_j$ for each $j\in J$ and a morphism set equal to the union of $\bigcup_JX_j$ and $M$ with inherited composition. 
	The category of hybrid $\C F$-graphs is defined to be the category of presheaves $\widehat{\DD G}_{(\C F,M)}$.
\end{definition}

We depict a theory for hybrid $\C F$-graphs as $\left(\xymatrix{ V_j \ar[r]|-{X_j} & A \ar@(ur,dr)^{M}  }\right)_J$.

\begin{example}\label{E:Hybrid}\mbox{}
	\begin{enumerate}
		\item (Collective Hybrid $(X,M)$-Graphs) Let $(\DD G_{(X_j,M_j)})_{j\in J}$ be a family of $(X,M)$-graph theories. Let $M\defeq \prod_{j\in J}M_j$ the the product of monoids. Then by restriction of scalars along the projections, $\C F\defeq (X_j)_{j\in J}$ is a family of right $M$-sets and $\DD G_{(\C F,M)}$ is the theory for hybrid $\C F$-graphs. Thus any collection of $(X,M)$-theories admit a hybrid structure. 
		\item \label{E:Bipart} (The Category of $k$-partite $X$-Graphs) Let $M$ be the trivial monoid and $\C F\defeq (1)_{x\in X}$. Then $\widehat{\DD G}_{(\C F,1)}$ is the category of $k$-partite $X$-graphs where $k$ is the cardinality of $X$. In particular when $X=\{s,t\}$ then $\DD G_{(\C F,1)}$ is the category$\xymatrix{V_1 \ar[r]^{s}& A & V_2 \ar[l]_{t}}$ and thus $\widehat{\DD G}_{(\C F,1)}$ is the category of bipartite graphs, $\C B$. By Proposition \ref{P:FoliatedGraphs} above, we have $\C B$ is equivalent to both $\widehat{\oG}_2\ls \underline A$ and $\widehat{\sG}_2\ls \underline A$. 
		
	\item \label{E:BipartH} (Hypergraphs vs. Bipartite Graphs)	A hypergraph $H=(H(V), H(E), \varphi)$ (see Chapter \ref{S:PGraphs}) has a bipartite graph representation $i(H)$ where the set of $V_1$-vertices is $G(V)$, the set of $V_2$-vertices is the set $G(E)$, the set of arcs is given by $\setm{(v,e)\in H(V)\x H(E)}{v\in \varphi(e)}$ with right-actions $(v,e).s=v$ and $(v,e).t=e$. \footnote{This is equivalent to the definition of Hypergraphs given in \cite{hE}, Fact 4.17 } Given a morphism $f\colon H\to H'$ of hypergraphs, there is a bipartite graph morphism $i(f)\colon i(H)\to i(H')$ such that $i(f)_{V_1}\defeq f_V$, $i(f)_{V_2}\defeq f_E$ and  $i(f)_A(v,e)\defeq (f_{V}(v),f_E(e))$ for each $(v,e)\in H(A)$. This assignment defines a faithful (non-full) functor $i\colon \C H\to \C B$ (cf., \cite{wD}, Proposition 3.2).  
		
		For a bipartite graph $G$ there is a natural way to associate it to a hypergraph. We define $r(G)\defeq (G(V_1),G(V_2),\varphi_G)$ where $\varphi_G\colon G(V_2)\to \C P(G(V_1))$ takes a $V_2$-vertex $e$ to the set 
		\[\setm{v\in G(V_1)}{\exists a\in G(A),\ \sigma(a)=v\text{ and } \tau(a)=e}.\]
		
		However, the definition of $r$ does not extend to functor. Consider the hypergraphs $L$ and $P$ where $L$ has one $V_1$-vertex $v$, one $V_2$-vertex $e$ and one arc $a$ connecting them, and $P$ has two $V_1$-vertices $v_1$ and $v_2$, one $V_2$-vertex $e$ and two arcs $a_s$ and $a_t$ such that source incidents of $a_s$ and $a_t$ are $v_1$ and $v_2$ respectively. Let $g\colon L\to P$ be the bipartite graphs morphism with takes the arc $a$ to $a_s$, i.e., it is given by the following commuting diagram
		\[
		\xymatrix{ L \ar[d]_{g}  & \{v\} \ar[d]_{\named{v_1}}  & \{a\} \ar[r] \ar[l] \ar[d]_{\named{a_s}} & \{e\}  \ar[d]^{\Id_e} \\ P & \{v_1,v_2\} & \{a_s,a_t\} \ar[r] \ar[l] & \{e\} }
		\]
		The associated hypergraph of $L$ is the loop hypergraph $r(L)$ with one vertex $v$, one edge $e$ and the incidence $\named{v}\colon \{e\}\to \C P(\{v\})$ which sends $e$ to $\{v\}$. The associated hypergraph of $P$ is the edge hypergraph with two vertices $v_1$ and $v_2$, one edge $e$ and incidence $\named{v_1,v_2}\colon \{e\} \to \C P(\{v_1,v_2\})$ which sends $e$ to the set $\{v_1,v_2\}$.
		
		Consider the diagram
		\begin{align}\label{D:Ob1}
		\xymatrix{ \{e\} \ar@{}[dr]|-{\text{\tiny{not com.}}} \ar[r]^{\Id_e} \ar[d]_{\named v} & \{e\} \ar[d]^{\named{v_1,v_2}} \\ \C P(\{v\}) \ar[r]^-{\C P(\named{v_1})} & \C P(\{v_1,v_2\})}
		\end{align}
		Since $\{v_1\}\subsetneq \{v_1,v_2\}$ is a strict inclusion, the diagram does not commute. Thus, the assignment of $r$ does not extend to the morphism $g$. 

		The problem is that morphisms of hypergraphs preserve incidence of edges on the nose and morphisms of bipartite graphs allow subset inclusions. We will address this issue again in Chapter \ref{S:Simple}.
		
		\item (Hyperedges Between Arcs) Let $M$ be any of the submonoids of endomaps described for $X$-graphs and $(S,T)$-directed $X$-graphs, let $\DD G_X$ be the corresponding theory, and let $\C F$ be the family of $M$-sets consisting of $X$ and the singleton set $1$. Then the  hybrid $\C F$-graph can be interpreted as an $X$-graph $G$ and a collection of hyperedges between arcs in $G$ (including empty hyperedges) in the same way that a hypergraph can be interpreted as a bipartite graph. Note that the hyperedges must respect the $M$-actions, e.g., if a hyperedge $e$ is incident to an arc $\alpha$, then it is also incident to $\alpha.m$ for each $m\in M$. 
		\item \label{E:DualBi} (Hypergraph Dualization Functor) Let $\C F=(X)_{j\in J}$ be a constant family for some set $X$ and let $\sigma\colon J\to J$ be an automap. Then there is an isomorphism of categories $\overline \sigma\colon \DD G_{(\C F,M)}\to \DD G_{(\C F,M)}$, which takes $V_j$ to $V_{\sigma(j)}$ and induces an equivalence $D_\sigma\colon \widehat{\DD G}_{(\C F, M)}\to \widehat{\DD G}_{(\C F, M)}$. When $\sigma$ is an involution, we call $D_{\sigma}$ a dualization of structure  (cf, Example \ref{E:OSG}(\ref{StructDual})). In the case of bipartite graphs, the nontrivial automap $\sigma\colon 2\to 2$ induces the dualization equivalence $D_\sigma\colon \C B\to \C B$ which restricts along the faithful functor $i\colon \C H \to \C B$ given above in Example \ref{E:Hybrid}(\ref{E:Bipart})
		\[
			\xymatrix{ \C H \ar[d]_{D_\sigma} \ar[r]^i & \C B \ar[d]^{D_\sigma} \\ \C H \ar[r]^i & \C B. }	
		\]
		The functor $D_\sigma\colon \C H\to \C H$ is the well-known dualization functor for hypergraphs. 
		\item \label{E:Int} (Intersection Graphs) Let $\C F'$ be the family of $\Aut(2)$-sets consisting of the singleton $1$ and two-element set $2$. Then the theory for hybrid $\C F'$-graphs $\DD G_{(\C F',\Aut(2))}$ is the category generated by the graph 
		\[ 
			\xymatrix{V_1 \ar[r]^s & A \ar@(ul,ur)[]^{i} & V_2 \ar@<-.3em>[l]_{\tau} \ar@<+.3em>[l]^\sigma }
		\]
		where $i\circ \tau=\sigma$ for the non-trivial automap $i\colon 2\to 2$. Let $\C F=(X)_{i\in \{1,2\}}$ be the family  given above in Example \ref{E:Hybrid}(\ref{E:DualBi}) above. Then there is a functor from $\lambda\colon \DD G_{(\C F', \Aut(2))}\to \DD G_{(\C F,1)}$ which preserves vertex objects $V_1$ an $V_2$ respectively as well as the arc object $A$. This induces an essential geometric morphism $\lambda_!\dashv \lambda^*\dashv \lambda_*\colon \widehat{\DD G}_{(\C F',\Aut(2))}\to \DD G_{(\C F, 1)}$. There is also an embedding $\theta\colon \sG_2\to \DD G_{(\C F',\Aut(2))}$ which takes $V$ to $V_2$ preserves the arc object which induces another essential geometric morphism $\theta_!\dashv \theta^*\dashv \theta_*\colon \widehat{\sG}_{2}\to \widehat{\DD G}_{(\C F',\Aut(2))}$. Let $i\colon \C H\to \C B=\widehat{\DD G}_{(\C F, 1)}$ be the faithful functor described Example \ref{E:Hybrid}(\ref{E:Bipart}) above. Then the intersection graph construction (also called the line graph) of a hypergraph is given by the composition $\Int_2\defeq\lambda^*\theta^* i\colon \C H\to \widehat{\sG}_2$. Note that this differs slightly from the intersection graph construction given in  \cite{wD} (Proposition 3.4) since the construction there gave a simple loopless graph while our construction contains loops and multiple edges between vertices.
		
		More generally, for any set $X$ we let $\C F'$ be the family of $\Aut(X)$-sets consisting of the singleton and the set $X$. Then the construction above generalizes to a functor $\Int_X\colon \C H\to \widehat{\sG}_X$ which takes a hypergraph $H$ to the symmetric $X$-graph with vertex set  $\Int_X(H)(V)=H(E)$ and arc set 
		\[\Int_X(H)(A)=\setm{f\in H(E)^X}{\exists v\in H(V), \forall x\in X, v\in \varphi(f(x))}.\] 
		Taking the maximal subobject of $\Int_X(H)$ which is a $k$-uniform hypergraph where $k$ is the cardinality of $X$ and simplifying it (see Section \ref{S:Simple}) gives us the usual construction. However, note that restricting to this maximal subobject is not functorial. 
	\end{enumerate}
\end{example}

\begin{proposition}\label{P:FoliatedGraphs}
	Let $G$ be a group, $X$ a right $G$-set, and $\C F\defeq (1)_{x\in X}$ be the family consisting of the singleton set considered as the right $M$-sets for the trivial monoid $M=1$. Then $\widehat{\DD G}_{(X,G)}\ls \underline A$ is equivalent to the category of $k$-partite $X$-graphs $\widehat{\DD G}_{(\C F,1)}$ where $k$ is the cardinality of $X$.
\end{proposition}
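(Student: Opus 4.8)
The plan is to reduce the statement to an equivalence of the two underlying index categories and then invoke the stability of presheaf categories under slicing. First I would recall (\cite{eR}, Section 2.4, as used just above) that for any small category $\C C$ and presheaf $F$ one has $\widehat{\C C}\ls F\simeq [(\int F)^{op},\mathbf{Set}]$, where $\int F$ is the category of elements. Applied to $\C C=\DD G_{(X,G)}$ and $F=\underline A$ this gives $\widehat{\DD G}_{(X,G)}\ls \underline A\simeq \widehat{\int \underline A}$, so it suffices to identify $\int \underline A$ up to equivalence with the hybrid theory $\DD G_{(\C F,1)}$.

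Next I would compute $\int \underline A$ explicitly from the description of the representable recorded in Example \ref{E:FFC}(\ref{E:OrbitStab}): $\underline A(V)=\setm{v_x}{x\in X}$ and $\underline A(A)=\setm{a_g}{g\in G}$ with $a_g.x=v_{x.g}$ and $a_g.g'=a_{gg'}$. The objects of $\int \underline A$ are thus $V_x$ ($x\in X$) and $A_g$ ($g\in G$), and a morphism $(c,s)\to(c',s')$ is a $\DD G_{(X,G)}$-morphism $f\colon c\to c'$ with $s'.f=s$. Running through the four hom-sets of the theory: there are no morphisms $A_g\to V_x$ (the theory has none $A\to V$); the only morphisms $V_x\to V_{x'}$ are identities; a morphism $A_g\to A_{g'}$ is an element $n\in G$ with $a_{g'}.n=a_g$; and a morphism $V_x\to A_g$ is an element $y\in X$ with $a_g.y=v_x$. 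Here is where the group hypothesis does all the work: since $G$ acts freely and transitively on $\underline A(A)$, each pair $(g,g')$ admits a unique such $n$, so the full subcategory of $\int \underline A$ on $\setm{A_g}{g\in G}$ is the indiscrete (contractible) groupoid on the set $G$; and since $g$ acts invertibly on $\underline A(V)=X$, each pair $(x,g)$ admits a unique such $y$, giving exactly one arrow $V_x\to A_g$.

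Finally I would pass to a skeleton. Choosing the representative $A_e$ for the unit $e\in G$ collapses the contractible groupoid $\setm{A_g}{g\in G}$ to a single object whose only endomorphism is the identity, while the $V_x$ are untouched; the resulting skeletal category has objects $\{A_e\}\cup\setm{V_x}{x\in X}$, a unique arrow $V_x\to A_e$ for each $x$, no arrows $A_e\to V_x$, and no non-identity arrows elsewhere. This is precisely $\DD G_{(\C F,1)}$ for $\C F=(1)_{x\in X}$ with $k=\#X$. Hence $\int \underline A\simeq \DD G_{(\C F,1)}$, and since equivalent small categories have equivalent presheaf categories I conclude $\widehat{\DD G}_{(X,G)}\ls \underline A\simeq \widehat{\int \underline A}\simeq \widehat{\DD G}_{(\C F,1)}$, as claimed. (If an explicit equivalence is preferred, I would instead write down the functor $\DD G_{(\C F,1)}\to \int \underline A$ sending $A\mapsto A_e$, $V_x\mapsto V_x$, and the generator $V_x\to A$ to the unique arrow $V_x\to A_e$, and verify fullness, faithfulness, and essential surjectivity directly.)

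The main obstacle is verifying that the group axioms, and not merely a monoid structure, force the $A$-objects of $\int \underline A$ into an indiscrete groupoid: this is exactly the step where the free and transitive action makes $a_{g'}.n=a_g$ uniquely solvable, and it is what permits the $G$-fold redundancy to be collapsed to the single arc object $A$. For a general monoid this solvability fails, the $A$-objects need not be isomorphic, and the slice would be presheaves on a strictly larger index category, so the hypothesis ``$G$ is a group'' is indispensable. A secondary point requiring care is keeping track of variance so that the single orientation ``one arrow $V_x\to A$, none back'' is the one that survives into $\DD G_{(\C F,1)}$, which it does.
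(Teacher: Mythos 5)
Your proof is correct and follows essentially the same route as the paper's: both reduce via $\widehat{\C C}\ls F\simeq[(\int F)^{op},\mathbf{Set}]$ to identifying $\int\underline A$ with $\DD G_{(\C F,1)}$, and both use invertibility in $G$ to see that all objects $(a_g,A)$ are isomorphic (the paper phrases this as essential surjectivity of the explicit functor $L$ you mention in your final parenthetical, rather than as collapsing an indiscrete groupoid to a skeleton). Your identification of where the group hypothesis is used matches the paper's argument exactly.
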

\begin{proof}
	By \cite{eR} (Example 2.4.6) $\widehat{\DD G}_{(X,G)}\ls \underline A$ is equivalent the category of presheaves on the category of elements $\int \underline A$. There is an equivalence $L\colon \DD G_{(\C F, 1)} \to \int \underline A$ given by taking the arc object $A$ to $(a_1,A)$ and each $V_x$ to $(v_x,V)$. On morphisms, it takes $V_x\to A$ to $x\colon (v_x,V)\to (a_1,A)$. Thus $L$ is full and faithful. It is also essentially surjective since for each object $(a_\sigma,A)$ for $\sigma\in G$, there is an isomorphism $\underline \sigma\colon (a_\sigma,A)\to (a_1,A)$. Then since $\int \underline A$ is equivalent to $\DD G_{(\C F,1)}$, there is an equivalence between $\widehat{\DD G}_{(X,G)}\ls \underline A$ and $\widehat{\DD G}_{(\C F,1)}$.
	
\end{proof}

\begin{corollary}
	For each set $X$, the slice category $\widehat{\oG}_X\ls \underline A$ is equivalent to $\widehat{\sG}_X\ls \underline A$ with equivalence given by taking $\varphi\colon G\to \underline A$ to the symmetric extension $p_!(\varphi)\colon p_!(G) \to p_!(\underline A)\iso \underline A$ (see Example \ref{E:OSG}(\ref{E:OSGraph})).
\end{corollary}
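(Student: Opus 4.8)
The plan is to pass to categories of elements, where both slice categories become presheaf categories on small index categories, and to exhibit $p_!$ there as a Kan extension along an equivalence. First I would check that $\varphi\mapsto p_!(\varphi)$ is even well-defined into $\widehat{\sG}_X\ls \underline A$: the left adjoint $p_!$ induced by the inclusion $p\colon \oG_X\to \sG_X$ is a left Kan extension and hence preserves representables, so $p_!(\underline A)=p_!(y_{\oG_X}A)\iso y_{\sG_X}(pA)=\underline A$. Thus $\bar p_!\defeq p_!(-)$ maps $\widehat{\oG}_X\ls \underline A$ into $\widehat{\sG}_X\ls \underline A$. By \cite{eR} (Example 2.4.6), exactly as used in Proposition \ref{P:FoliatedGraphs}, the slice $\widehat{\oG}_X\ls \underline A$ is equivalent to the category of presheaves on the category of elements $\int\underline A$, which is isomorphic to the slice $\oG_X\ls A$ of the theory over the arc object, and similarly $\widehat{\sG}_X\ls \underline A$ is equivalent to the presheaf category on $\sG_X\ls A$.

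Next I would compute these two small categories. The slice $\oG_X\ls A$ has objects $(V,x)$ for $x\in X$ together with $(A,\Id_A)$, with a single non-identity morphism $(V,x)\to(A,\Id_A)$ for each $x$; this is precisely $\DD G_{(\C F,1)}$ for $\C F=(1)_{x\in X}$. The slice $\sG_X\ls A$ has the same vertex objects $(V,x)$ but now carries one object $(A,\sigma)$ for every $\sigma\in\Aut(X)$. The inclusion $p$ induces the evident functor $\bar p\colon \oG_X\ls A\to \sG_X\ls A$ sending $(V,x)\mapsto(V,x)$ and $(A,\Id_A)\mapsto(A,\Id_A)$. The key step is that $\bar p$ is an equivalence: full faithfulness is a direct hom-set comparison (the hom $(A,\Id_A)\to(A,\Id_A)$ is forced to be a singleton in both slices, and all other relevant homs already coincide in $\oG_X$ and $\sG_X$), while essential surjectivity holds because each $(A,\sigma)$ is isomorphic to $(A,\Id_A)$ via the connecting morphism $\sigma$. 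This last point is exactly where one uses that $\sX=\Aut(X)$ is a \emph{group}, so that every endomorphism of $A$ in $\sG_X$ is invertible and the ``extra'' objects $(A,\sigma)$ collapse onto $(A,\Id_A)$.

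Finally I would transfer this back to the presheaf slices. Since $\bar p$ is an equivalence of small categories, the induced left Kan extension $(\bar p)_!$ between the presheaf categories is an equivalence, so it suffices to identify $\bar p_!$ with $(\bar p)_!$ under the slice equivalences. I would verify this on representables: the representable at $(c,g)$ corresponds to the slice object $y_{\oG_X}(c)\xrightarrow{g}\underline A$, and $\bar p_!$ sends it, using $p_!(y_{\oG_X}c)\iso y_{\sG_X}(c)$, to the representable at $\bar p(c,g)$, which is exactly the value of $(\bar p)_!$. Both functors are cocontinuous ($\bar p_!$ because the forgetful functors to $\widehat{\oG}_X$ and $\widehat{\sG}_X$ create colimits and $p_!$ preserves them), and every presheaf is a colimit of representables, whence $\bar p_!\iso(\bar p)_!$ and $\bar p_!$ is an equivalence. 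I expect this last compatibility — confirming that the symmetrization $p_!$ of Example \ref{E:OSG}(\ref{E:OSGraph}) descends to Kan extension along $\bar p$, rather than to some merely abstractly equivalent functor — to be the main obstacle, resolved by the representables-plus-cocontinuity argument; the conceptual heart remains the group-theoretic collapse that makes $\bar p$ an equivalence. (As a shortcut for the bare equivalence, one may instead invoke Proposition \ref{P:FoliatedGraphs} twice, for the trivial group and for $\Aut(X)$ acting on $X$, both yielding $\widehat{\DD G}_{(\C F,1)}$.)
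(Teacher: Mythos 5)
Your argument is correct and is essentially the paper's: the corollary is an instance of Proposition \ref{P:FoliatedGraphs} applied twice (to the trivial group and to $\Aut(X)$ acting on $X$), whose proof is exactly your category-of-elements computation with the group-theoretic collapse of the objects $(A,\sigma)$ onto $(A,\Id_A)$. Your additional verification that the equivalence is realized concretely by $p_!$ --- via preservation of representables and cocontinuity of the induced functor on slices --- fills in a step the paper asserts but does not spell out, and it is sound.
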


Let $G$ be a group and $H$ an $(X,G)$-graph. The product projection $\pi_{\underline A}\colon \underline A\x H\to \underline A$ realizes $\underline A\x H$ as a $X$-partite $(X,G)$-graph. The other projection $\pi_{H}\colon \underline A\x H\to H$ in the category of $(X,G)$-graphs is the canonical $k$-partite $X$-cover and generalizes the canonical bipartite double cover discussed in \cite{dW} and \cite{dA}.

\chapter{The Topos Structure and Properties}\label{S:ToposStructure}

The category of (reflexive) $(X,M)$-graphs is a topos and hence is locally cartesian closed and has a subobject classifier. The limits and colimits are computed pointwise in the category of sets. We will construct exponentials and the subobject classifier below. The symbols and notation follow \cite{pJ}.

\section{The Yoneda Embedding}
In the category of $(X,M)$-graphs the representable $\underline V\defeq \DD G_{(X,M)}(V,-)$ consists of one vertex corresponding to the identity morphism and an empty arc set. In the reflexive case, $\underline V\defeq \rG_{(X,M)}(V,-)$ also has one distinguished loop corresponding to the morphism $\ell\colon A\to V$.  The representables $\underline A\defeq \DD G_{(X,M)}(A,-)$ and $\underline A\defeq \rG_{(X,M)}(A,-)$ each have vertex set equal to $X$ corresponding to each morphism $x\colon V\to A$ and arc set equal to $M$. The right-actions are given by Yoneda, e.g., $\underline \sigma=Y(\sigma)\colon \underline A\to \underline A$. Observe that each representable has no unfixed loops.

\begin{example} 
	 Let $X=\{s,t\}$. The Yoneda embedding gives the following diagrams, 
	
	\begin{center}
		$\text{$\widehat{\oG}_X:$}\qquad \underline V\ $\framebox{ 
			$\xymatrix{ v_1 }$} $\xymatrix{ \ar@<+.3em>[rr]^{\underline s} \ar@<-.3em>[rr]_{\underline t} &&}$
		\framebox{ 
			$\xymatrix{ v_s  \ar[rr]^{a_{1}} &&  v_t }$} $\xymatrix{ \underline A} \qquad $
	\end{center}
	\begin{center}
		$\text{$\widehat{\roG}_X:$}\qquad \underline V\ $\framebox{ 
			$\xymatrix{ v_1 \ar@{..>}@(ul,ur)[]^{a_\ell}}$} $\xymatrix{ \ar@<+1.5em>[rr]^{\underline s} \ar[rr]^{\underline t} \ar@{<-}@<-.5em>[rr]_{\underline \ell} &&}$
		\framebox{ 
			$\xymatrix{ v_s \ar@{..>}@(ul,ur)[]^{a_{s\ell}} \ar[rr]^{a_{1}} &&  v_t \ar@{..>}@(ul,ur)[]^{a_{t\ell}}}$} $\xymatrix{ \underline A \ar@(ul,ur)[]^{\underline{s}\circ \underline \ell} \ar@(dl,dr)[]_{\underline{t}\circ \underline \ell }}\qquad$
	\end{center}
	\begin{center}
		$\text{$\widehat{\sG}_X:$}\qquad \underline V\ $\framebox{ 
			$\xymatrix{ v_1 }$} $\xymatrix{ \ar@<+.3em>[rr]^{\underline s} \ar@<-.3em>[rr]_{\underline t} &&}$
		\framebox{ 
			$\xymatrix{ v_s  \ar@{-}[rr]^{a_{1}\sim a_{i}} &&  v_t }$} $\xymatrix{ \underline A \ar@(ur,dr)[]^{\underline i}}$
	\end{center}
	\begin{center}
		$\text{$\widehat{\srG}_X:$}\qquad \underline V\ $\framebox{ 
			$\xymatrix{ v_1 \ar@{..}@(ul,ur)[]^{a_\ell}}$} $\xymatrix{ \ar@<+1.5em>[rr]^{\underline s} \ar[rr]^{\underline t} \ar@{<-}@<-.5em>[rr]_{\underline \ell} &&}$
		\framebox{ 
			$\xymatrix{ v_s \ar@{..}@(ul,ur)[]^{a_{s\ell}} \ar@{-}[rr]^{a_{1}\sim a_{i}} &&  v_t \ar@{..}@(ul,ur)[]^{a_{t\ell}}}$} $\xymatrix{ \underline A \ar@(ur,dr)[]^{\underline i} \ar@(ul,ur)[]^{\underline{s}\circ \underline \ell} \ar@(dl,dr)[]_{\underline{t}\circ \underline \ell }}$
	\end{center}
	where $i\colon \{s,t\}\to \{s,t\}$ is the non-trivial automapping, $\underline s, \underline t$ are the symmetric $X$-graph morphisms which pick out $v_s$ and $v_t$ respectively, and $\underline i$ is the symmetric $X$-graph morphism which swaps $v_s$ with $v_t$ and $a_1$ with $a_i$. In the reflexive case, $\underline \ell$ is the terminal morphism, $\underline s\circ \ell, \underline t\circ \ell$ takes each arc to $a_{s\ell}$ and $a_{t\ell}$ respectively, and $\underline i$ swaps loops $a_{s\ell}$ with $a_{t\ell}$ and $a_1$ with $a_i$.
\end{example}

\section{Exponentials}\label{S:Exponential}
Let $G$ and $H$ be (reflexive) $(X,M)$-graphs. By Yoneda and the exponential adjunction, 
\begin{align*}
G^H(V)&=\widehat{\DD G}_{(X,M)}(\underline V,G^H)\iso \widehat{\DD G}_{(X,M)}(\underline V\x H, G)\\
G^H(A)&=\widehat{\DD G}_{(X,M)}(\underline A, G^H)\iso \widehat{\DD G}_{(X,M)}(\underline A\x H, G)
\end{align*}
with right-actions being defined by precomposition. For example, given an arc in $G^H$ represented by the morphism $f\colon \underline A\x H\to G$, for each $x\in X$, $f.x\defeq f\circ (\underline x\x H)\colon \underline V\x H\to G$. The evaluation morphism is defined on components 
\begin{align*}
\ev_V&\colon \widehat{\DD G}_{(X,M)}(\underline V\x H, G)\x H(V) \to G(V), \qquad (\gamma,v)\mapsto \gamma_{V}(\Id_V,v),\\
\ev_A&\colon \widehat{\DD G}_{(X,M)}(\underline A\x H,G)\x H(A)\to G(A),\ \qquad (\delta,a)\mapsto \delta_{A}(\Id_A,a).
\end{align*}
Thus for $(X,M)$-graphs, the vertex set $G^H$ is given by $G(V)^{H(V)}$ since $\underline V$ has just a single vertex with no arcs. For reflexive $(X,M)$-graphs, since $\underline V$ is the terminal object, $\underline V\x H\iso H$, the vertex set is given by the homset $G^H(V)=\widehat{\DD G}_{(X,M)}(H,G)$.

To give a description of the arc set of the (reflexive) $(X,M)$-graph $G^H$. We define a set map analogous to taking a homset of a category
\[
\overline G\colon G(V)^X\to 2^{G(A)},\quad (v_x)_{x\in X}\mapsto \setm{\beta\in G(A)}{\forall x\in X,\ \beta.x=v_x}.
\] 
We recall that the graph $\underline A\x H$ has a parametrized incidence operator  
 
 \[
    \partial\colon \underline A\x H(A)\to ((\underline A \x H)(V))^X.
 \]  For each set map $f\colon X\x H(V)=(\underline A\x H)(V)\to G(V)$ we compose to obtain the following diagram.
\[
\xymatrix{ \underline A\x H(A) \ar[d]_{\partial} \ar[r]^-{G_f\defeq\overline Gf^X\partial} & 2^{G(A)} \\ (X\x H(V))^X \ar[r]^-{f^X} & G(V)^X \ar[u]_{\overline G} } 
\]
We see that $G_f\defeq \overline Gf^X\partial(a_\sigma, \alpha)$ is the set of arcs in $G$ with the same set of incident vertices determined by the value of $f$ on the incident vertices of the arc $(a_\sigma, \alpha)$ in $\underline A\x H(A)$. Observe that a morphism $g\colon \underline A\x H\to G$ is determined on the arcs of $H(A)$, i.e., given an arc $(a_\sigma, \alpha)\in \underline A\x H(A)$ we have $g_A(a_\sigma,\alpha)=g_A(a_1,\alpha).\sigma$.

The general formula for the arc set of exponentials of  non-reflexive $(X,M)$-graphs is as follows
\begin{align*}
G^H(A)& \textstyle = \bigsqcup_{f\in (G^H(V))^X}\prod_{\alpha\in H(A)}G_f(a_1,\alpha)
\end{align*}
Thus an arc in $G^H$ is given by a pair $(f=(f_x)_{x\in X},g)$ where \\$\left(f_x\colon H(V)\to G(V)\right)_{x\in X}$ is a family of set maps and $g\colon H(A)\to G(A)$ is an element in the product $\prod_{\alpha\in H(A)}G_f(a_1,\alpha)$. Note that $((f_x)_{x\in X},g)$ is an arc in $G^H$ implies $f\colon X\x H(V)\to G(V)$ has at least one extension to a morphism $\underline A\x H\to G$. 

Given a family of set maps $\left(f_x\colon H(V)\to G(V)\right)_{x\in X}$, we define $\overline f\colon H(V)^X\to G(V)^X$ where $\overline f(h)(x)\defeq f_x(h(x))$ for each $h\in H(V)^X$ and $x\in X$. Then the set of arcs has an equivalent description 
\[
	G^H(A)=\setm{((f_x)_{x\in X},g)\in (G^H(V))^X\x G(A)^{H(A)}}{\overline f\circ \partial_H=\partial_G\circ g}
\]
i.e., it is the set of pairs $((f_x)_{x\in X},g)$ such that $g(\alpha).x=f_x(\alpha.x)$ for each $\alpha\in H(A)$ and $x\in X$. In diagram form we require that the following commute
\[
	\xymatrix{ H(A) \ar[d]_{\partial_H} \ar[r]^g & G(A) \ar[d]^{\partial_G} \\ H(V)^X \ar[r]^{\overline f} & G(V)^X.}
\]
The right-actions are given by $((f_x)_{x\in X},g).x=f_x$ for each $x\in X$ and \\ $((f_x)_{x\in X},g).\sigma=((f_{\sigma(x)})_{x\in X},g.\sigma)$ for each $\sigma\in M$ where $g.\sigma\colon H(A)\to G(A)$ takes $\alpha$ to $g(\alpha.\sigma)$. In other words, the following commute for each $x\in X$ and $\sigma\in M$.
\[
	\xymatrix@C=5em{ & G(V) \ar@{>->}[dr] & \\ H(V) \ar[ur]^{((f_x)_{x\in X},g).x} \ar@{>->}[r]^-{\iota_x} & \underline A\x H \ar[r]^{((f_x)_{x\in X},g)} & G}
\]	
\[	\xymatrix@C=5em{ \\ \underline A\x H \ar@/^3em/[rr]^-{((f_x)_{x\in X},g).\sigma} \ar[r]^-{\underline \sigma\x 1} & \underline A \x H \ar[r]^{((f_x)_{x\in X},g)} & G }
\]
where $\iota_x\colon H(V)\to \underline A\x H$ sends vertex $v$ to $(x,v)$.

In the reflexive case, given a family of morphisms $(f_x\colon H\to G)_{x\in X}$, we define $f\colon X\x H(V)\to G(V)$, $(x,v)\mapsto f_x(v)$.   Then the formula above hold for the reflexive case as well. We have 
\begin{align*}
	G^H(V)&=\widehat{\rG}_{(X,M)}(H,G)\\
	G^H(A)&=\textstyle \bigsqcup_{f\in (G^H(V))^X}\prod_{\alpha\in H(A)}G_f(a_1,\alpha).
\end{align*}
Alternatively, 
\[
	G^H(A)=\setm{((f_x)_{x\in X},g)\in (G^H(V))^X\x G(A)^{H(A)} }{\overline f\circ \partial_H=\partial_G\circ g}
\] as above. Then an arc in $G^H$ is given by a pair $((f_x)_{x\in X},g)$ where $(f_x)_{x\in X}$ is a family of graph morphisms $f_x\colon H\to G$ and $g\colon H(A)\to G(A)$ is an element in the product $\prod_{\alpha\in H(A)}G_f(a_1,\alpha)$. Then for each $x\in X$, $((f_x)_{x\in X},g).x=f_x$. Given a morphism $k\colon H\to G$ (i.e., a vertex in $G^H$), $k.\ell=((k)_{x\in X}, k_A)$ where $k_A\colon H(A)\to G(A)$ is the evaluation of $k$ at the arc component. For each $\sigma\in M$,  $((f_x)_{x\in X},g).\sigma=((f_{x.\sigma})_{x\in X}, g.\sigma)$ where $g.\sigma\colon H(A)\to G(A)$ takes $\alpha$ to $g(\alpha.\sigma)$.

The evaluation morphism $\ev\colon G^H\x H\to G$ for (reflexive) $(X,M)$-graphs is given as 
\begin{align*}
&\ev_V\colon G^H(V)\x H(V) \to G(V),\qquad (h, v) \mapsto h(v), \\
&\ev_A\colon G^H(A)\x H(A)\to G(A),\qquad (((f_x)_{x\in X},g),\alpha)\mapsto g(\alpha)
\end{align*}

\begin{example}\label{E:Exponentials}\mbox{}
	\begin{enumerate}
		\item (Characterization of Simple Graphs via the $k$-partite $X$-cover) The canonical $k$-partite $X$-cover $\pi_G\colon \underline A\x G\to G$ of a $(X,M)$-graph $G$ is a fixed loop in the endomorphism $(X,M)$-graph $G^G$ with incident vertex the identity map $\Id_{G(V)}\colon G(V)\to G(V)$. By the adjoint relation, fixed loops in an exponential object $G^H$ correspond to the homset $\widehat{\DD G}_{(X,M)}(1,G^H)\iso \widehat{\DD G}_{(X,M)}(H,G)$. It is easily checked that the loop $\pi_G$ corresponds to the identity morphism $\Id_G\colon G\to G$ in $\widehat{\DD G}_{(X,M)}$. 
		An $(X,M)$-graph $G$ is simple (see Chapter \ref{S:Simple}) if and only the $k$-partite $X$-cover $\pi_G$ is the only fixed loop at the vertex $\Id_{G(V)}$ in the endomorphism object $G^G$.
		\item \label{E:HypergraphFail} (Taking $\underline V$ as the Exponent) Let $X$ be a nonempty $M$-set. Then the exponential of $\underline V^{\underline V}$ in $\widehat{\DD G}_{(X,M)}$ is the terminal object $1$, which has one vertex and one fixed loop. This is an example of a creation of an arc from two $(X,M)$-graphs with no arcs. 
		
		More generally, for an arbitrary $(X,M)$-graph $G$, $G^{\underline V}$ is the $(X,M)$-graph with vertex set $G^{\underline V}=G(V)$ and arc set $G^{\underline V}(A)=G(V)^X$ with right-actions $f.x=f(x)$, $f.\sigma=f\circ \sigma$ for each $x\in X$, $f\in G^{\underline V}(A)$ and $\sigma\in M$, i.e., it is a $\neg\neg$-sheaf in the category of $(X,M)$-graphs (cf, Chapter \ref{S:Simple}).  
		
		\item  (Labeling Products)  Consider some category of $(X,M)$-graphs and let $C$ be one of the objects described in Chapter \ref{S:Colored} which classifies a type of labeling of vertices and/or arcs. Then a product $G\x H$ admits this type labeling if and only if $G$ admits a morphism to $C^H$ by the adjunction $-\x H\dashv (-)^H$ (cf, \cite{mE}). 
		
		\item \label{E:XLoops} (Creation of 2-Loops in Exponentials of Symmetric $X$-Graphs) Let $X$ be a set with cardinality greater than 1 and consider the symmetric $X$-graph $L$ such that $L(V)\defeq \{v\}$ and $L(A)\defeq \{0, 1\}$ where $0.\sigma=0$ and $1.\sigma=1$ for each $\sigma\in \sX$. The vertex set for $L^{\underline A}$ is a singleton $\{v \}$ since $L(V)$ is a singleton. The set $\underline A_A=\setm{(a_1,a_\sigma)}{\sigma\in \sX}\iso \sX$ and thus the set of arcs is  $L^{\underline A}(A)\iso \mathbf{Set}(\sX, \{0,1\})$. We show that $L^{\underline A}$ contains a unfixed loop. Consider a loop given by a set map $g\colon \sX\to \{0,1\}$ such that $\Id_X\mapsto 0$ and $\sigma\mapsto 1$ for the permutation $\sigma\colon X\to X$ which swaps distinct elements $x$ and $x'$ and leaves the rest fixed. Then $g.{\sigma}(\sigma)=g(
		\sigma\circ \sigma)=g(\Id_X)=0$ and thus $g.\sigma\neq g$.  Therefore $L^{\underline A}$ contains a unfixed loop. 
		
		For example, when $X=\{s,t\}$, the exponential $L^{\underline A}$ has the following undirected representation.
\end{enumerate}
\newpage	
\noindent		\begin{multicols}{2}
			\begin{center}
				$L^{\underline A}\ \ $\framebox{ 
					$\xymatrix{ v \ar@{-}@(ul,ur)[]^{00} \ar@{-}@(l,d)[]_{11} \ar@{-}@(d,r)[]_{01\sim 10}^2}$}
			\end{center}
			\begin{align*}
			& \text{ \footnotesize $L^{\underline A}(A)=\mathbf{Set}(s(2),\{0,1\})=\{00,11,01\sim 10\}$}\\
			& \text{ \footnotesize $L^{\underline A}(V)=\{v \}$}\\
			& \text{ \footnotesize $00.i=00, \ 11.i=11, 01.i=10$}
			\end{align*}
		\end{multicols}
\begin{enumerate}[resume]
		\item[] where $i\colon 2\to 2$ is the non-identity automorphism and $xy\colon s(2)\to s(2)$ is the set map $xy(\Id_X)=x,\ xy(i)=y$ for $x,y\in \{0,1\}$. Evaluation on arcs is given by projection, e.g., $\ev_A(xy,a_1)=x$. 
		\item \label{E:1Loops} (Creation of 2-Loops in Exponentials of Reflexive Symmetric $X$-Graphs) Let $X$ be a set of cardinality greater than $1$ and consider the reflexive symmetric $X$-graph $L$ such that $L(V)\defeq \{v\}$ and $L(A)\defeq \{0,1 \}$. For each $\sigma\in \sX$, we set $i.\sigma=i$ for $i=0,1$. We also set $v.\ell=0$. The vertex set of the exponential $L^{\underline A}$ is $\widehat{\rG}_{(X,\srX)}(\underline A, L)=L(A)=\{0,1 \}$ by Yoneda. Using the construction above we obtain the arrow set
		\[
		L^{\underline A}(A)=L(A)^X\x \mathbf{Set}(\srX,\{0,1\}).
		\]
		We show that $L^{\underline A}$ contains a unfixed loop. Consider the loop \\$((1)_{x\in X},g\colon \srX\to \{0,1\})$ such that $g(\Id_X)=0$ and $g(\sigma)=1$ for the automorphism $\sigma$ which exchanges two elements in $X$ and thus $g.{\sigma}(\Id_X)=g(\sigma)=1$ and $g.{\sigma}(\sigma)=g(\Id_X)=0$. Then $((1)_{x\in X},g).\sigma\neq ((1)_{x\in X}, g)$ showing $g$ is a unfixed loop in $L^{\underline A}$.
		
		For example, when $X=\{s,t\}$, the exponential $L^{\underline A}$ has arc set equal to $2^2\x \mathbf{Set}(2^2,2)$, i.e., it has $2^6=64$ elements. Each arc can be represented by a 6-digit binary number. The exponential object $L^{\underline A}$ is given as follows.
\end{enumerate}
		\begin{multicols}{2}
			\begin{center}
				$L^{\underline A}\ \ $\framebox{ 
					$\xymatrix{\\ \ar@<+.5em>@{}[r]^<<<<{7} & 0 \ar@{}[d]|->>4 \ar@{}@<+1em>[rr]^-{16} \ar@{-}[rr]^{\text{\tiny 0yzwu1$\sim$1ywzu0}} \ar@{..}@(ul,ur)[]|-{\text{\tiny 000000}} \ar@{-}@(ul,dl)[]|-{\text{\tiny 0yzzu0}} \ar@{-}@(dl,dr)[]|-{\text{\tiny 0yzwu0 $\sim$ 0ywzu0}}^2  && 1\ar@{..}@(ul,ur)[]|-{\text{\tiny 111111}} \ar@{-}@(ur,dr)[]|-{\text{\tiny 1yzzu1}}  \ar@{-}@(dl,dr)[]|-{\text{\tiny 1yzwu1 $\sim$ 1ywzu1}}^2 \ar@<.5em>@{}[r]^>>>>7 \ar@{}[d]|->>4 & \\ &  && }$}
			\end{center}
			\begin{align*}
			& \text{\footnotesize $L^{\underline A}(A)=\setm{(xyzwuv)}{x,y,z,w,u,v\in \{0,1\}}$}\\
			& \text{\footnotesize $L^{\underline A}(V)=\{0,1 \}$}\\
			& \text{\footnotesize $(xyzwuv).s=x,\ (xyzwuv).t=v,$}\\ 
			& \text{\footnotesize $(xyzwuv).i=(vywzux)$}.
			\end{align*}
		\end{multicols}
\begin{enumerate}[resume]
	\item[]	where $i\colon 2\to 2$ is the non-identity automap. We see that $L^{\underline A}$ has 16 fixed loops (with 7 non-distinguished fixed loops at each vertex), 8 non-fixed loops (4 at each vertex) and 16 edges between vertices.  It is helpful to keep track of the edges associated to the digits $(\underset{s}{x}\underset{\ell_s}{y}\underset{a_1}{z}\underset{a_i}{w}\underset{\ell_t}{u}\underset{t}{v})$. Then evaluation $\ev_A\colon L^{\underline A}(A)\x \underline A\to \{0,1\}$ is given by projection to the corresponding digit, e.g., $\ev_A( (\underset{s}{x}\underset{\ell_s}{y}\underset{a_1}{z}\underset{a_i}{w}\underset{\ell_t}{u}\underset{t}{v}),\ell_s)=y$.\footnote{In \cite{dP}[Proposition 2.3.1], it is proven that the category of conceptual graphs does not have exponentials by attempting to construct the corresponding exponential $L^{\underline A}$. We have given a constructive reason why it failed. Namely, the objects in the category of conceptual graphs lack 2-loops.}
		\item \label{E:BPDC}($X$-Fold Isomorphisms) Let $G$ and $H$ be $(X,M)$-graphs. The $(X,M)$-graph projection $\underline A\x G\to G$ is the $(X,M)$-graph generalization of the bipartite graph double cover \cite{dW}. By Proposition \ref{P:FoliatedGraphs} above, when $M$ is a group (e.g., in the case of oriented and symmetric $X$-graphs) then the projection $\underline A\x G\to \underline A$ is an object in $\widehat{\DD G}_{(\C F, 1)}$ which in the case $X=2$, is the category of bipartite graphs. Consider the subobject $\Iso^{\text{XF}}(H,G)$ of the exponential $G^H$ with vertex and arc sets given as follows
		\begin{align*}
			 \Iso^{\text{XF}}(H,G)(V) &\defeq \setm{f\in G^H(V)}{f\text{ is an automap of sets}}\\
			 \Iso^{\text{XF}}(H,G)(A) &\defeq \setm{((f_x)_{x\in X},g)\in G^H(A)}{\forall x\in X, f_x\text{ is an automap}}.
		\end{align*} 

 In other words, $\Iso^{\text{XF}}(H,G)$ is the induced sub-$(X,M)$-graph of $G^H$ consisting of vertices which are automaps $f\colon H(V)\to G(V)$. The $(X,M)$-graph $\Iso^{\text{XF}}(H,G)$ is the called the object of $X$-fold isomorphisms. When $X=2$, the object $\sigma(\Iso^{\text{XF}}(H,G))$ (where $\sigma$ is the simplification functor in Chapter \ref{S:Simple}) has arc set equal to the set of two-fold isomorphisms as given in \cite{jL}. Therefore $\Iso^{\text{XF}}(H,G)$ is the $(X,M)$-graph generalization of the set of two-fold isomorphisms. In the case $G=H$, it is the object of $X$-fold automorphisms $\Aut^{XF}(G)$ which contains the object $\Aut(G)$ as a sub-$(X,M)$-graph.
	\end{enumerate}
\end{example}
In a cartesian closed category, given exponentials $G^H$ and $H^K$, there is a unique morphism $c_{\text{\tiny KHG}}\colon  G^H\x H^K\to G^K$ induced by the universal mapping property of exponentials
\[
\xymatrix@C=3em{ G^H \x H^K \x K \ar[r]^-{1\x \ev_{K,H} } \ar@{..>}[d]_{c_{\text{\tiny KHG}}\x K}& G^H\x H \ar[d]^{\ev_{H,G}} \\ G^K\x K \ar[r]^-{\ev_{K,G}} & G}
\]
which internalizes composition, i.e., if $f\colon K\to H$ and $g\colon H\to G$ are morphisms, then the following commutes
\[
\xymatrix{ 1 \ar[r]^-{(\named g,\named f)} \ar[dr]_{\named{g\circ f}} & G^H \x H^K \ar[d]^{c} \\ & G^K }
\]
where $\named f\colon 1\to K^H$, $\named g\colon 1\to H^G$, and $\named{g\circ f}\colon 1\to G^K$ are the exponential adjoints to $f, g,$ and $g\circ f$ respectively. 
Thus given (reflective) $(X,M)$-graphs $G$, $H$, and $K$ composition has the following form

\begin{align*}
c_{\text{\tiny KHG},V}\colon (G^H\x H^K)(V) &\to G^K(V),\\ (f',f)&\mapsto f' f\\\\
c_{\text{\tiny KHG},A}\colon (G^H\x H^K)(A) &\to G^K(A),\\  (((f'_x)_{x\in X},g'),((f_x)_{x\in X},g))&\mapsto ((f'_xf_x)_{x\in X},g'g)
\end{align*}
This allows us to define endomorphism and automorphism (reflexive) $(X,M)$-graphs (cf, \cite{jS}). 

\begin{example}\mbox{}
 Let $G$ be an $(X,M)$-graph. Consider the automorphism $(X,M)$-graph $\Aut(G)$ and take the points of its simplification $\Gamma(\sigma(\Aut(G))$ (see Chapter \ref{S:PointsPieces} and Chapter \ref{S:Simple}) . This is a subgroup of the group of automaps $\Aut(G(V))$ in the category of sets. An $(X,M)$-graph $G$ is said to be vertex transitive provided $\Gamma(\sigma(\Aut(G)))$ acts transitively on the vertices of $G$. 
		
		Similarly, the points of the automorphism $(X,M)$-graph $\Gamma(\Aut(G))$ is a subgroup of automaps $\Aut(G(A))$ in the category of sets. An $(X,M)$-graph $G$ is said to be arc-transitive provided $\Gamma(\Aut(G))$ acts transitively on the set of arcs of $G$. 

\ignore{
		\item Let $G$ be a reflexive $(X,M)$-graph. Since there is a chaotic functor $B\colon \mathbf{Set} \to \widehat{\rG}_{(X,M)}$ which is right adjoint to the points functor \tbf{FINISH THIS}
		\item \tbf{INTERNAL GROUP and FUNCTORS}
		\item Consider 
	\end{enumerate} 
	}
\end{example}

\section{The Subobject Classifier}
The subobject classifier $\Omega$ for (reflexive) $(X,M)$-graphs is constructed via Yoneda,
\begin{align*}
\Omega(V)&=\widehat{\DD G}_{(X,M)}(\underline V, \Omega)=\Sub(\underline V)=\{\empset, \underline V\}\eqdef \{v_\bot, v_\top\}\\
\Omega(A)&=\widehat{\DD G}_{(X,M)}(\underline A, \Omega)=\Sub(\underline A)\eqdef \setm{a_E}{E\text{ is a sub-$(X,M)$-graph of }\underline A},
\end{align*} where right-actions are given by pullback. For example, 
\[{a_E.x=\begin{cases}v_\top & x\in E(V)\\ v_\bot & x\nin E(V) \end{cases}}\] for $a_E\in \Omega(A)$ and $x\in X$. For each $\sigma\in M$,  $a_E.\sigma=a_{\sigma^*(E)}$ where $\sigma^*(E)$ is given by the pullback of $E\hookrightarrow \underline A$ and $\underline \sigma\colon \underline A\to \underline A$. In the case of $(X,M)$-graphs this is just the restriction of $\sigma\colon X\to X$ to $E$. In the case of reflexive $(X,M)$-graphs, the vertex and arc sets are the same as the non-reflexive case. The distinguished loops are given as $v_\bot.\ell=a_\empset\eqdef a_\bot$ and $v_\top.\ell=a_{\underline A}\eqdef a_\top$. 
The universal subobject $\top\colon 1\to \Omega$ in both cases is the morphism which picks out the loop $a_\top$.

A sub-$(X,M)$-graph $G'\hookrightarrow G$ is associated to a characteristic morphism $\chi_{G'}\colon G\to \Omega$ such that for each $v\in G(V)$ and $\alpha\in G(A)$
\begin{align*}
\chi_{G',V}(v)&=\begin{cases}v_\top & v\in G'(V)\\ v_\bot & v\nin G'(V) \end{cases}, \\
\chi_{G',A}(\alpha)&=\begin{cases}a_\top & \alpha\in G'(A)\\ a_{E} & \alpha\nin G'(A)\text{ and }\forall x\in X,\ \alpha.x\in G'(V) \text{ iff } x\in E.  \end{cases}
\end{align*}
Then it is straightforward to show that the following is a pullback diagram
\[
\xymatrix@!=.5em{ G' \ar@{>->}[r] \ar[d] \ar@{}[dr]|-<\pb & G \ar[d]^{\chi_{G'}} \\ 1 \ar[r]^{\top} & \Omega.}
\]

\begin{example}\label{E:SubobjectClassifier}\mbox{}
	\begin{enumerate}
		\item  ((Reflexive) Symmetric $X$-Graphs) The subobject classifier for symmetric $X$-graphs  and reflexive symmetric $X$-graphs is given as follows.
		\begin{center}
			$\Omega \ \ $\framebox{ 
				$\xymatrix@C=3em{ v_\bot \ar@{-}@(ul,ur)[]^{a_\bot}\ar@/^/@<+.3em>@{-}[rr]^{a_E\sim a_{\sigma(E)}}_<<{\text{\tiny{$x\nin E$}}} \ar@{-}@/^/@<+.3em>[rr]_>>{\text{\tiny{$x\in E$}}} \ar@/_/@<-.3em>@{-}[rr]^{\vdots} && v_\top \ar@{-}@(ul,ur)[]^{a_\top} \ar@{-}@(ur,dr)[]^{a_{X}}}$}$\qquad \qquad$

		$\Omega \ \ $\framebox{ 
				$\xymatrix@C=3em{ v_\bot \ar@{..}@(ul,ur)[]^{a_\bot}\ar@/^/@<+.3em>@{-}[rr]^{a_E\sim a_{\sigma(E)}}_<<{\text{\tiny{$x\nin E$}}} \ar@{-}@/^/@<+.3em>[rr]_>>{\text{\tiny{$x\in E$}}} \ar@/_/@<-.3em>@{-}[rr]^{\vdots} && v_\top \ar@{..}@(ul,ur)[]^{a_\top} \ar@{-}@(ur,dr)[]^{a_{X}}}$}$\qquad\qquad$
		\end{center}
		where $a_E$ corresponds to a proper subset $E$ of $X$ and is associated to $a_{\sigma(E)}$ for each automap $\sigma\colon X\to X$, $a_\bot$ corresponds to the empty set, $a_X$ the set $X$, and $a_\top$ to $\underline A$. In the reflexive case, the loops corresponding to false $a_\bot$ and true $a_\top$ are distinguished.
			
		\item \label{E:DirSubC} ($(S,T)$-Directed Symmetric $X$-Graphs) Let $S$ and $T$ be sets and $X\defeq S\sqcup T$. The subobject classifier for $(S,T)$-directed symmetric $X$-graphs is given by
		\begin{center}
			$\Omega \ \ $\framebox{ 
				$\xymatrix@C=4em{ v_\bot \ar@(ul,ur)[]^{a_\bot}\ar@/^/@<+.3em>@{-}[rr]^{a_{(S',T')}\sim a_{(\sigma(S'),\tau(T')}} \ar@/_/@<-.3em>@{-}[rr]^{\vdots} && v_\top \ar@(ul,ur)[]^{a_\top} \ar@(ur,dr)[]^{a_{(S,T)}}}$}
		\end{center}
		
		where $S'$ and $T'$ are subsets of $S$ and $T$ respectively and $\sigma\colon S\to S$ and $\tau\colon T\to T$ are automaps. For $s\in S$, the $s$-incidence is given by $a_{(S',T')}.s=\begin{cases} a_\top & s\in S'\\ a_\bot & s\nin S' \end{cases}$ and similarly for eacy $t$-incidence, $t\in T$. For instance, if $S=\{a,b,c\}$ and $T=\{0,1\}$, the arc $a_{\{a,c\},\{0\}}$ and one of its $\dsX$-equivalent partners $a_{\{a,b\},\{1\}}$ are  depicted as
		\begin{center}
			$\xymatrix@R=.1em@C=4em{ v_\top \ar@/_/[dr] \ar@{..>}[dd] & & v_\top \ar@/_/[dr] \ar@{..>}[dd] & \\ & v_\top \ar@{..>}[dd] && v_\bot \ar@{..>}[dd] \\ v\bot \ar@{}[r]|-{a_{\{a,c\},\{0\}}} \ar@{..>}[dd] &  \ar@{}[r]|-{\sim}  & v_\top \ar@{}[r]|-{a_{\{a,b\},\{1\}}} \ar@{..>}[dd] & \\ & v_\bot  && v_\top \\ v_\top \ar@/^/[ur] & & v_\bot \ar@/^/[ur] }$
		\end{center}

		\item \label{E:BipartSOC} (Bipartite Graphs) Let $\widehat{\DD G}_{(\C F,1)}$ be the category of bipartite graphs, i.e., the category of presheaves on \\$\xymatrix{V_1 \ar[r]^{x_1} & A & V_2 \ar[l]_{x_2}}$ (see Example \ref{E:Hybrid}.\ref{E:Bipart}). The Yoneda embedding is given by 
		\begin{center}
			$\underline V_1 \ \ $\framebox{ 
				$\xymatrix@C=3em{v_1 \ar@{}[r] & }$}$\xymatrix{ \ar[r]^{\underline{x_1}} &}$
			$\underline A \ \ $\framebox{ 
				$\xymatrix@C=3em{v_{x_1} \ar[r]^{a_1} & v_{x_2}}$}$\xymatrix{ \ar@{<-}[r]^{\underline{x_2}} & }$
			\framebox{ 
				$\xymatrix@C=3em{ \ar@{}[r] & v_2 }$}$\ \ \underline V_2$
		\end{center}
		Therefore, the subobject classifier is the following bipartite graph
		\begin{multicols}{2}
			\begin{center}	
				$\Omega\ \ $\framebox{ 
					$\xymatrix@R=3em@C=5em{ v_{1\bot} \ar[dr]|-<<<<{a_{x_2}} \ar[r]^{a_\bot} & v_{2\bot} \\  v_{1\top} \ar[ur]|->>>>{\ a_{x_1}} \ar@<+.1em>[r]^{a_{12}} \ar@<-.5em>[r]_{a_\top}  & v_{2\top}  }$}
			\end{center}
			\begin{align*}
			&\Omega(V_1)=\{\empset,\underline V_1\}\eqdef \{v_{1\bot},v_{1\top}\},\\
			&\Omega(V_2)=\{\empset, \underline V_2\}\eqdef \{v_{2\bot},v_{2\top}\},\\ &\Omega(A)=\{\empset,\underline V_1, \underline V_2, \{v_1,v_2 \}, \underline A \} \\
			&\qquad \eqdef \{a_\bot, a_{x_1}, a_{x_2}, a_{12},a_1 \}.
			\end{align*}
		\end{multicols}
		
	\end{enumerate}
\end{example}

\section{The Internal Logic of $(X,M)$-Graphs}
In this section, we show how to use the internal logic of presheaf outlined in \cite{mR} (Ch. 9) to understand relations between substructures of $(X,M)$-graphs. Since the category of $(X,M)$-graphs is a category of presheaves, the subobject preorders $\Sub(G)$ have both a frame and coframe structure for each $(X,M)$-graph $G$  and thus are in particular bi-Heyting algebras (\cite{mR} Proposition 9.1.1, 9.1.11). 

The following examples show how we can use the internal logic of negation and subtraction to define common notions used in graph and hypergraph theory. Recall that the negation of a subobject $G'\hookrightarrow G$ is the largest subobject $\neg G'\hookrightarrow G$ such that $\neg G'\mm G'$ is the initial object and the subtraction of $G'\hookrightarrow G$ is the smallest subobject $\sim G'\hookrightarrow G$ such that $\sim G'\jj G'$ is $G$. Note that only negation is natural, i.e., is respected by the morphisms. 

\begin{example}\label{E:TNN} \mbox{}
	\begin{enumerate}
		\item (Strong Vertex Deletion)	Let $G$ be a (reflexive) $(X,M)$-graph and $v$ a vertex in $G$. A strong deletion of $v$ from $G$ is defined to be the subgraph $\neg v$ (cf. \cite{vV}, p 146). 
		\item \label{E:DR}	 (Decks and Reconstruction) Let $G$ be a (reflexive) $(X,M)$-graph. Then the deck of $G$ is the $(X,M)$-graph $\text{Deck}(G)\defeq \bigsqcup_{v\in G(V)}\neg v$.  The classical graph reconstruction conjecture is as follows:
		\begin{quote}
			\textbf{Vertex Reconstruction Theorem}: Let $G$ and $H$ be simple symmetric $2$-graphs with at least 3 vertices. If $\text{Deck}(G)\iso \text{Deck}(H)$, then $G\iso H$. 	
		\end{quote}
		Since monomorphisms of $(X,M)$-graphs restrict to subobjects, we have the functor $\text{Deck}\colon \text{Mono}(\widehat{\DD G}_{(X,M)})\to \text{Mono}(\widehat{\DD G}_{(X,M)})$. A slightly stronger verson of the reconstruction theorem is that $\text{Deck}$ is conservative.
		
		Similarly, to obtain the deck of edge deleted sub-$(X,M)$-graphs of a graph $G$, we use  $\text{Deck}_e(G)\defeq \bigsqcup_{[a]\in \frac{G(A)}{M}} \sim a$ where $\frac{G(A)}{M}$ is the set of arcs modulo the equivalence generated by the $M$-action. 
		\item (Stars, Transversals) An $(X,M)$-graph $G$ is called a star provided it is connected and there is a vertex $v$ in $G$ such that the subgraph $\neg v$ of $G$ has no arcs (i.e. $\neg v$ is discrete). A subset $S$ of vertices of a $(X,M)$-graph $G$ is called a transversal provided $\neg S$ has no arcs (i.e., $\neg S$ is discrete). 
		\item (Intersecting Families) An $(X,M)$-graph $G$ is called an intersecting family provided for each subobject $\named \alpha\colon \underline A\hookrightarrow G$, the subobject $\neg \named \alpha$ has no arcs (cf, \cite{vV}, p 155). 
		\item (Vertex Cover) We say a sub-$(X,M)$-graph $S\hookrightarrow G$ is a vertex cover provided $\neg S$ is the empty $(X,M)$-graph. 
		\item (Induced Sub-$(X,M)$-Graph) Given a sub-$(X,M)$-graph $G'\hookrightarrow G$, we call $\neg\neg G'$ the induced sub-$(X,M)$-graph of $G'$ ((cf, \cite{yL}, p 12)). 
		\item (Cuts and Minimal Cuts) A cut for an $(X,M)$-graph $G$ is defined to be pairs of disjoint sub-$(X,M)$-graphs $(H,K)$ such that $H(V)\cup K(V)=G(V)$ and $\alpha\in G(A)\backslash (H(A)\cup K(A))$ iff $\alpha$ is incident to a vertex in $H$ and a vertex in $K$. Given any sub-$(X,M)$-graph $S\hookrightarrow G$, the pair $(\neg\neg S,\neg S)$ is a cut. In fact, all cuts are of this form. Moreover, the set of arcs between $\neg\neg S$ and $\neg S$ is the arc set in the $(X,M)$-subgraph ${\sim(\neg\neg S\jj \neg S)}$. A nonempty set of vertices $S$ is said to be minimal provided $\sim(\neg\neg S \jj \neg S)$ has an arc set with a minimal cardinality with respect to all other nonempty subsets of vertices. 
		\item (Independent Sets) A subset $S$ of vertices of a $(X,M)$-graph $G$ is called an independent set provided $\neg\neg S=S$, i.e., there is no arc in $G$ with incidence contained in $S$. The independence cardinality of a hypergraph $G$ is the maximum cardinality of an independent set in $G$ (cf. \cite{vV}, p 151-152). 
		\item (Neighbor Operator) We define the neighbor operator 
		\[ \text{nbr}\defeq \sim \neg\colon \Sub(G) \to \Sub(G).\] For example, if $v$ is a vertex in $G$, then $\text{nbr}(v)$ is the subgraph of $G$ such that 
		\begin{align*} 
		\text{nbr}(v)(V)&=\setm{w\in G(V)}{\exists \alpha\in G(A), \exists x, x'\in X,\ \alpha.x=v \text{ and } \alpha.x'=w}\\
		\text{nbr}(v)(A)&=\setm{\alpha\in G(A)}{\exists x,x'\in X,\ \alpha.x=v \text{ and } \alpha.x'=w}.
		\end{align*} 
		In other words $\text{nbr}(v)$ is the subgraph containing $v$ and all arcs which contains $v$.\footnote{This is sometimes called the closed neighborhood of $v$ (cf. \cite{vV}. Definition 2.1.3)} Then $\text{nbr}^n(v)$ is the subgraph of $G$ which contains $v$ and all arcs which are in a sequence of arcs of length $n$ from $v$. The degree of $v$ is the cardinality of the arc set $\text{nbr}(v)$ (modulo the maximal subgroup of invertible elements of $M$). 
		\item \label{E:Kneser} (Kneser Hypergraph) Let $H$ be a hypergraph and $\Int_X(H)$ the intersection graph construction for a set $X$ (see Example \ref{E:Hybrid}(\ref{E:Int})). Let $\Int_X(H)\hookrightarrow \widetilde{\Int_X(H)}$ be the injective hull of $\Int_X(H)$ (see Chapter \ref{S:InjProj}). Then $\sim \Int_X(H)$ as a subobject of $\widetilde{\Int_X(H)}$ is the Kneser hypergraph $\text{KG}^k(H)$ (where $k$ is the cardinality of $X$) as defined in \cite{hH}. To obtain the classical definition of the Kneser hypergraph, take the maximal $k$-uniform hypergraph contained in $\sim \Int_X(H)$ where $k$ is the cardinality of $X$.
	\end{enumerate} 
\end{example}

Note that each of the definitions/characterizations above which are defined by negation is preserved under geometric morphisms which includes all essential geometric morphisms. However, subtraction is not preserved and so more care is needed when discussing transfer of structures under functors. 

\section{The Topos Properties}\label{S:PointsPieces}

Since each small category $\DD T$ has a unique functor to the terminal category $\mbf 1$, there is an essential geometric morphism 
\[
\Pi\dashv \Delta\dashv \Gamma\colon \widehat{\DD T} \to \mathbf{Set}
\]
where the functors are called the pieces, discrete, and points functor respectively (see \cite{mR} Chapter 11). 

The pieces functor $\Pi$ on a presheaf $G$ gives the set of connected pieces of $G$ and gives us a functorial definition of connectivity.\footnote{A $(X,M)$-graph $G$ is connected iff $\Pi(G)\iso 1$ the terminal set.} Recall that a category of presheaves is a connected topos provided the pieces functor preserves the terminal object. Since the terminal object in a category of (reflexive) $(X,M)$-graphs ($X\neq \empset$) is connected, the categories of $(X,M)$-graphs are connected toposes.  However, if $X$ has cardinality greater than $1$, $\Pi$ does not preserve finite products and hence is not strongly connected (\cite{pJT}, p 134). 

The discrete functor $\Delta$ on a set $Y$ gives a presheaf where each level is the set $Y$ and each right-action is the identity. The points functor $\Gamma$ on a presheaf $G$ gives the set of morphisms from the terminal set $1$ to $G$. In the reflexive case, this is just the evaluation of the presheaf on vertices, i.e., $\Gamma(G)=G(V)$. When the points functor $\Gamma$ admits a right adjoint $B\colon \mathbf{Set}\to \widehat{\DD T}$, this functor is called the chaotic functor. The following result is due to Lawvere \cite{wL89}.
\begin{proposition}
	The points functor $\Gamma\colon \widehat{\DD T}\to \mathbf{Set}$ admits a right adjoint $B\colon \mathbf{Set} \to \widehat{\DD T}$ if and only if at least one representable has a point. 	
\end{proposition}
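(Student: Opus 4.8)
The plan is to reduce the existence of the right adjoint $B$ to a colimit‑preservation property of $\Gamma$, and then to detect that property on the representables. Since $\widehat{\DD T}$ and $\mathbf{Set}$ are locally presentable and $\Gamma$, being the right adjoint of $\Delta$, is accessible, the adjoint functor theorem for locally presentable categories tells us that $\Gamma$ admits a right adjoint if and only if $\Gamma$ preserves all small colimits. So it suffices to prove that $\Gamma=\widehat{\DD T}(1,-)$ preserves small colimits if and only if some representable $\underline c\defeq\widehat{\DD T}(-,c)$ has $\Gamma(\underline c)\neq\empset$, i.e.\ has a point.

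For the forward implication I would invoke the density (co‑Yoneda) theorem: the terminal presheaf is canonically the colimit of the Yoneda diagram indexed by its category of elements, and because $1$ is the constant singleton presheaf we have $\int 1\iso\DD T$, so $1\iso\colim_{c\in\DD T}\underline c$. Applying the colimit‑preserving functor $\Gamma$ yields, in $\mathbf{Set}$,
\[
\{\ast\}=\Gamma(1)\iso\colim_{c\in\DD T}\Gamma(\underline c).
\]
A colimit of sets is empty whenever all of its terms are empty, so a nonempty (indeed singleton) colimit forces $\Gamma(\underline c)\neq\empset$ for at least one object $c$; that representable then has a point.

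For the converse, a point $p\colon 1\to\underline c$ together with the terminal map $t\colon\underline c\to 1$ satisfies $t\circ p=\Id_1$, exhibiting $1$ as a retract of the representable $\underline c$. Precomposition turns this into a retract of functors: $\Gamma=\widehat{\DD T}(1,-)$ is a retract of $\widehat{\DD T}(\underline c,-)$, which by Yoneda is the evaluation functor $\ev_c\colon G\mapsto G(c)$. Since colimits in $\widehat{\DD T}$ are computed pointwise, $\ev_c$ preserves all small colimits, and a retract of a colimit‑preserving functor again preserves colimits (the colimit comparison map for $\Gamma$ is a retract of the one for $\ev_c$, hence an isomorphism). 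Thus $\Gamma$ preserves small colimits and, by the reduction above, acquires a right adjoint $B$.

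The main obstacle is really the reduction in the first paragraph: one must justify that colimit‑preservation is genuinely equivalent to the existence of $B$, which rests on the local presentability of both $\widehat{\DD T}$ and $\mathbf{Set}$ (ensuring the accessibility/solution‑set hypotheses of the adjoint functor theorem). If one prefers to avoid that machinery, the alternative is to build $B$ by hand as the codiscrete (chaotic) functor and verify the adjunction directly, but this is more laborious; everything downstream—the density presentation of $1$, the singleton‑colimit argument, and the retract computation—is then routine.
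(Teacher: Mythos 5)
Your argument is correct. Note that the paper itself offers no proof of this proposition --- it is stated as a citation to Lawvere --- so there is nothing internal to compare against; judged on its own terms, every step of your proposal holds up. The forward direction (write $1\iso \colim_{c\in \DD T}\underline c$ via co-Yoneda, apply $\Gamma$, and observe that a colimit of empty sets is empty) is sound, and the converse correctly exploits that a point $p\colon 1\to \underline c$ is automatically split by the terminal map, making $\Gamma=\widehat{\DD T}(1,-)$ a retract of the evaluation functor $\ev_c$, which preserves all colimits since colimits in presheaf categories are computed pointwise; a retract of an isomorphism of comparison maps is again an isomorphism. The only stylistic difference from the argument Lawvere's result usually receives (and which the paper implicitly relies on when it describes $B$ concretely for reflexive $(X,M)$-graphs in the following paragraph) is that one normally writes down the codiscrete functor explicitly, $B(S)(c)\defeq \mathbf{Set}(\Gamma(\underline c),S)$, i.e.\ the nerve of $\Gamma\circ y$, and verifies the adjunction $\widehat{\DD T}(G,B(S))\iso \mathbf{Set}(\Gamma(G),S)$ by expanding $G$ as the canonical colimit of representables --- but that verification reduces to exactly the same claim that $\Gamma$ preserves these colimits, so your retract lemma is the substantive content either way. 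Your appeal to the adjoint functor theorem could even be weakened to the special adjoint functor theorem (presheaf toposes are cocomplete, co-wellpowered, and have a separating set), avoiding any discussion of accessibility, but as written the reduction is legitimate.
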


In the reflexive case, $\underline V$ is the terminal object, hence by the proposition categories of reflexive $(X,M)$-graphs admit a chaotic functor. We have the functor of points is equivalent to the evaluation functor on the vertex set, i.e., $\Gamma=(-)_V\colon \widehat{\DD G}_{(X,M)}\to \mathbf{Set}$ and so the chaotic functor $B$ on a set $S$ gives the reflexive $(X,M)$-graph with vertex set $B(S)(V)\defeq S$ and arc set $B(S)(A)\defeq S^X$ with $s.\ell$ being the constant function on $s$ for vertex $s\in S$, $f.x\defeq f(x)$ and $f.m=f\circ m$ for arc $f\colon X\to S$.   

However, in the case of non-reflexive $(X,M)$-graph $\underline V$ does not admit a point since it is loopless, while $\underline A$ admits a morphism from $1$ only if there is a $m\in M$ and $y\in X$ such that for each $x\in X$ we have $x.m=y$. Hence, there is no chaotic functor for categories of non-reflexive $(X,M)$-graphs when $M$ is a group. For example, the categories of hereditary $X$-graphs is the only category of non-reflexive $X$-graphs which admits a chaotic functor.  

Also recall that a topos $\C E$ is \'etendue provided there is an object $E$ such that $\C E\ls E$ is generated by the subobjects of its terminal object. It is shown in \cite{kR} (Theorem 1.5) that a category of presheaves $\widehat{\C C}$ is \'etendue if and only if every morphism in $\C C$ is a monomorphism. When $X$ has cardinality greater than 1, the category of reflexive $(X,M)$-graphs is not an \'etendue topos because $\ell\colon A\to V$ in $\rG_{(X,M)}$ is not a monomorphism.  However, in the non-reflexive case we have the following.

\begin{proposition}
	The category of $(X,M)$-graphs is \'etendue if and only if $M$ is a right cancellative monoid. 
\end{proposition}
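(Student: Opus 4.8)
The plan is to convert this into a combinatorial statement about the two-object theory $\DD G_{(X,M)}$ via the criterion of \cite{kR} (Theorem 1.5): a presheaf topos $\widehat{\C C}$ is \'etendue exactly when every arrow of $\C C$ is a monomorphism. Since $\widehat{\DD G}_{(X,M)}$ is a presheaf topos on $\DD G_{(X,M)}$, it therefore suffices to decide when every arrow of $\DD G_{(X,M)}$ is monic. First I would clear away the arrows that are automatically monic: $\Id_V$ is monic, and each $x\colon V\to A$ is monic for a degenerate reason, namely the only arrow with codomain $V$ is $\Id_V$ (because $\DD G_{(X,M)}(A,V)=\emptyset$ and $\DD G_{(X,M)}(V,V)=\{\Id_V\}$), so there is no pair of distinct arrows to separate. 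Consequently the entire problem reduces to the endo-arrows $m\colon A\to A$ indexed by $m\in M$.

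Next I would unwind monicity of a fixed $m\colon A\to A$: it is monic iff post-composition $g\mapsto m\circ g$ is injective on $\DD G_{(X,M)}(W,A)$ for each of the two objects $W\in\{V,A\}$. The incoming homsets are $\DD G_{(X,M)}(A,A)=M$ and $\DD G_{(X,M)}(V,A)=X$, and the composition laws $m\circ m'=m'm$ and $m\circ x=x.m$ turn the two injectivity tests into: (i) $m_1m=m_2m\Rightarrow m_1=m_2$ (right cancellation by $m$, where one must mind the order-reversing convention $m\circ m'=m'm$, so that the fixed factor $m$ genuinely sits on the right in the monoid product); and (ii) $x_1.m=x_2.m\Rightarrow x_1=x_2$ (injectivity of the right action of $m$ on $X$). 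Quantifying over all $m\in M$, family (i) is precisely right cancellativity of $M$, while family (ii) asserts that $X$ is acted on by injective maps.

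The step I expect to be the crux — and the point where the statement must be read with care — is reconciling clause (ii) with the clean formulation in terms of $M$ alone. In the regime that actually occurs here, clause (ii) is subsumed: for every $X$-graph theory of Chapter \ref{S:XGraphs} one has $M\le\End(X)$ acting by evaluation, and when $X$ is finite a right cancellative submonoid of $\End(X)$ is necessarily a group (a finite right cancellative monoid is a group, since right multiplication is then an injective hence surjective self-map, producing left inverses), so its elements are bijections and (i) already forces (ii). I would therefore state the equivalence under the standing assumption that the action is faithful (so $M\le\End(X)$), where (i) and (ii) collapse to a single condition and ``\'etendue $\iff$ $M$ right cancellative'' holds on the nose.

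Absent such an assumption the honest characterization is the conjunction of (i) and (ii), i.e.\ $M$ right cancellative \emph{together with} an action by injective maps; this caveat is genuinely needed, since for an infinite non-injective action (for instance the free monogenic $M\cong(\mathbb N,+)$ generated by a non-injective self-map whose powers are all distinct) one has $M$ right cancellative yet the arrow $m\colon A\to A$ fails test (ii), so $\DD G_{(X,M)}$ is not \'etendue. With the reduction to monicity and the cancellation computation in hand, the result then follows immediately from \cite{kR}.
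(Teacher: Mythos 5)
Your reduction via Rosenthal's criterion is exactly the route the paper takes: its entire proof is the single assertion that \'etendue is equivalent to every morphism of $\DD G_{(X,M)}$ being monic. Where you go beyond the paper is in actually carrying out that monicity check against \emph{both} incoming homsets $\DD G_{(X,M)}(A,A)=M$ and $\DD G_{(X,M)}(V,A)=X$, and this is a genuine catch. The test against $M$ (minding the order-reversal $m\circ m'=m'm$, which you handle correctly) yields right cancellativity of $M$; the test against $X$ yields the further requirement that each $m\in M$ act injectively on $X$; and the latter is not a consequence of the former for an arbitrary monoid action. Your counterexample scheme is sound --- take $M\iso(\DD N,+)$ acting on $\DD N$ by truncated subtraction, or even on a two-element set $\{a,b\}$ by $x.n=a$ for $n\geq 1$: in either case $M$ is cancellative, yet $1\colon A\to A$ fails to be monic because $1\circ\named a=1\circ\named b$ in $\DD G_{(X,M)}(V,A)$, so the topos is not \'etendue. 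The proposition as literally stated is therefore false without a supplementary hypothesis, and the paper's one-line proof silently identifies ``every morphism is monic'' with ``$M$ is right cancellative,'' which accounts only for the homset $M$ and not for $X$.

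Both of your proposed repairs are correct: either state the characterization as ``$M$ is right cancellative and acts on $X$ by injections,'' or note that the extra clause is automatic in the concrete settings the paper has in mind --- $M\leq\End(X)$ with $X$ finite, where a finite right cancellative monoid is a group and hence acts by bijections, or $M$ a group in general, which is precisely the case invoked in the remark following the proposition ($\widehat{\oG}_X$, $\widehat{\sG}_X$). So your write-up is not merely a fleshed-out version of the paper's argument; it corrects it.
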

\begin{proof}
	The condition is equivalent to every morphism in $\DD G_{(X,M)}$ being a monomorphism.  
	
\end{proof}
\noindent In particular, when $M$ is a group the category of $(X,M)$-graphs is \'etendue, e.g., the categories $\widehat{\oG}_X$ or $\widehat{\sG}_X$.

\section{Aside: The Various Products of Graphs} \label{S:PPP}

In this section we show how the various products of graphs satisfy certain universal properties thereby making these constructions categorical. We restrict our attention to the category of simple loopless symmetric $2$-graphs which we will call "graph" in this section. However, each of these products could be easily generalized to symmetric $X$-graphs.  We will be frequently be using the functors involved in essential geometric morphism $r_!\dashv r^*\dashv r_*\colon \colon \widehat{\sG}_2\to \widehat{\srG}_2$ induced by the obvious morphism of theories $\sG_2\to \srG_2$.

\textbf{The Strong Product}: Given graphs $G$ and $H$, the strong product is given by $G\boxtimes G\defeq r_!(G)\x r_!(H)$. Therefore, the universal property occurs in the category of reflexive symmetric $2$-graphs where distinguished loops are viewed as vertex proxies. 

\textbf{The Cartesian Product}:   Given graphs $G$ and $H$, the cartesian product is given by $G\square H\defeq \sim(r_!(G\times H))$ as a subobject of $G\boxtimes H$. Therefore, it is the smallest subobject $K$ of $G\boxtimes H$ such that the union $K\cup r_!(G\times H)=G\boxtimes H$. Then given any monomorphism $K\to G\boxtimes H$ such that $K\sqcup r_!(G\x H)\to G\boxtimes H$ is an epimorphism, there exists a unique monomorphism $f\colon G\square H\to H$ such that the following commutes
\[
	\xymatrix{ (G\square H)\sqcup r_!(G\x H) \ar@{-->}[d]_{f\sqcup \Id} \ar@{->>}[dr] \\ K\sqcup r_!(G\x H) \ar@{->>}[r] & G\boxtimes H.  }
\]

\textbf{The Co-normal Product}: Given graphs $G$ and $H$, the co-normal product is given by  $G*H\defeq (G\times K_{H(V)})\cup (K_{G(V)}\times H)$ as a subobject of $K_{G(V)}\times K_{H(V)}$ where $K_n$ is the complete graph (including loops) on $n$ vertices.  

\textbf{The Lexicographical Product}: Given graphs $G$ and $H$, the lexicographical product is given by $G\cdot H\defeq (G\times K_{H(V)})\cup( r^*r_!(G)\x H)$ as a subobject of $K_{G(V)}\x K_{H(V)}$. 

\textbf{The Modular Product}: Given graphs $G$ and $H$, the modular product is given by $(G\times H)\cup ((\sim G)\times (\sim H))$ as a subgraph of $K_{G(V)}\times K_{H(V)}$ where $\sim G$ is a subgraph of $K_{G(V)}^{ll}$ and $\sim H$ is a subgraph of $K_{H(V)}^{ll}$ (and $K_n^{ll}$ is the loopless complete graph on $n$ vertices).

\chapter{Simple $(X,M)$-Graphs}\label{S:Simple}\label{C:Simple}
The symbols and notation in this section follow from \cite{pJ}, Section C2.

\begin{definition}
	A (reflexive) $(X,M)$-graph $G$ is simple provided for any collection of vertices $(v_x)_{x\in X}$ (with multiplicities allowed), there is at most one edge $e$ incident with $(v_x)_{x\in X}$. 
\end{definition}
If the $(X,M)$-graph has each edge with incidence of distinct vertices, i.e., no multiplicities, then this definition corresponds to the usual one. However, a simple $(X,M)$-graph may also contain multiple edges that contain the same underlying vertices but each with different multiplicities. For example, if $X=3$ and we are given a symmetric $X$-graph $G$ such that $G(V)=\{0,1\}$ and $G(A)=\{001,010,100,011,101,110\}$ such that $001\sim 010\sim 100$ and $011\sim 101\sim 110$ under $s(3)$, then $G$ is simple even though it has two distinct edges with the same underlying set of vertices. However, notice that all arcs are fixed. In particular, simple $(X,M)$-graphs have no unfixed loops.

We recall that a Lawvere-Tierney topology on a topos with subobject classifier $\top\colon 1\to \Omega$ is a morphism $j\colon \Omega \to \Omega$ such that $j\circ j=j$, $j\circ \top=\top$, and $\mm\circ (j\x j)=j\circ \mm$ where $\mm\colon \Omega\x \Omega\to \Omega$ is the internal meet operator. In every topos, the negation morphism $\neg\colon \Omega\to \Omega$ which classifies false $\bot\colon \Omega \to \Omega$ induces a double negation topology $\neg\neg\colon \Omega\to \Omega$. For a presheaf topos, the subobject classified by $\neg\neg$ is a Grothendieck topology $J_{\neg\neg}\hookrightarrow \Omega$.

The double negation operator is described on vertices by $\neg\neg_V(v_\bot)=v_\bot,\ \neg\neg_V(v_\top)=v_\top$. On arcs, $\neg\neg_A(a_\top)=a_\top$ and for a subgraph with a proper subset of vertices $E\hookrightarrow \underline A$, $\neg\neg_A(a_E)=a_E$ and for the sub-$(X,M)$-graph $X$, $\neg\neg_A(a_X)=a_\top$ (since the complement of the subgraph which contains all vertices of $\underline A$ is empty). Thus in the category of (reflexive) $(X,M)$-graphs, the $(X,M)$-graph (resp. reflexive $(X,M)$-graph) the subobject of $\Omega$ classified by $\neg\neg\colon \Omega\to \Omega$ is given by 
\begin{center}
	$J_{\neg\neg} \ \ $\framebox{ 
		$\xymatrix{  v_\top \ar@{-}@(ur,dr)[]^{a_\top} \ar@{-}@(ul,dl)[]_{a_X}}$} $\qquad \text{resp.}\qquad 
	J_{\neg\neg} \ \ $\framebox{ 
		$\xymatrix{  v_\top \ar@{..}@(ur,dr)[]^{a_\top} \ar@{-}@(ul,dl)[]_{a_X}}$} .
\end{center}
Therefore, the only non-trivial sieve in $J_{\neg\neg}$ is the one corresponding to the subgraph $R_X\hookrightarrow \underline A$, where $R_X$ is the proper subgraph containing all the vertices of $\underline A$. Thus a $(X,M)$-graph $G$ is a $\neg\neg$-separated presheaf iff for each collection of vertices $(v_x)_{x\in X}$ (multiplicities allowed) in $G$ there is at most one arc incident with $(v_x)_{x\in X}$. Therefore, we have shown the following.

\begin{proposition}\label{P:SimpleNegNeg}
	The category of simple (reflexive) $(X,M)$-graphs is the category of $\neg\neg$-separated (reflexive) $(X,M)$-graphs.
\end{proposition}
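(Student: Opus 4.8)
The plan is to invoke the standard criterion for separatedness with respect to a Grothendieck topology on a presheaf topos (\cite{pJ}, C2.1) and to reduce it, using the computation of $J_{\neg\neg}$ already carried out above, to a single injectivity condition on arcs. Recall that a presheaf $G$ on $\DD G_{(X,M)}$ (resp. $\rG_{(X,M)}$) is $\neg\neg$-separated iff for every object $c$ and every $J_{\neg\neg}$-covering sieve $R$ on $c$ the canonical comparison map from $G(c)$ to the set of matching families $\mathrm{Match}(R,G)$ is injective. Since I have already pinned down every covering sieve, I only need to test this map on those sieves.

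First I would dispose of the object $V$: as $J_{\neg\neg}(V)=\{v_\top\}$, the only covering sieve on $V$ is the maximal one, for which the comparison map is a bijection, so no condition arises there. The content lives entirely at $A$, where besides the maximal sieve the sole covering sieve is $R_X\hookrightarrow\underline A$. Here the key step is to compute the matching families explicitly: $R_X$ is generated by the vertex morphisms $\{x\colon V\to A\}_{x\in X}$, and since the only endomorphism of $V$ is the identity there are no compatibility constraints to impose, whence $\mathrm{Match}(R_X,G)\cong G(V)^X$ and the restriction map is exactly
\[
G(A)\longrightarrow G(V)^X,\qquad \alpha\longmapsto (\alpha.x)_{x\in X}=\partial_G(\alpha).
\]
Thus $G$ is $\neg\neg$-separated iff $\partial_G$ is injective, i.e. iff no two distinct arcs share a parametrized incidence, which is the condition stated just before the proposition.

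The final step is to match this injectivity with the combinatorial definition. By the conventions recorded earlier, an edge is an orbit of arcs and is \emph{unfixed} precisely when its orbit contains two distinct arcs with the same parametrized incidence. Hence injectivity of $\partial_G$ simultaneously forbids two arcs in distinct orbits sharing an incidence tuple (several edges on the same vertices-with-multiplicity) and two arcs in a common orbit sharing an incidence tuple (an unfixed edge); together these are exactly the failures of simplicity, so $\partial_G$ is injective if and only if $G$ is simple, which also re-derives the remark that simple graphs carry no unfixed loops. I would then note that the reflexive case runs identically once one observes that $\underline V$ is terminal and that the covering sieves, already computed above, are unchanged; the extra generator $\ell$ contributes only the distinguished-loop endomorphisms to $R_X$, whose matching-family values are forced by the vertex values through the retraction $\ell$, so separatedness again collapses to injectivity of $\partial_G$.

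The main obstacle I anticipate is the bookkeeping in the reflexive case, namely verifying that $R_X$ contains exactly the expected morphisms and that $\ell$ imposes no genuinely new compatibility constraint on matching families, together with the careful reconciliation of the topos-theoretic phrase ``at most one arc'' with the stated combinatorial phrase ``at most one edge'': the passage between arcs and their $M$-orbits is exactly the place where unfixed edges must be excluded, so this is where the argument must be stated most precisely.
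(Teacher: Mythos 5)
Your proof is correct and follows essentially the same route as the paper: the paper likewise reads the covering sieves off the computed $J_{\neg\neg}$, observes that the only non-trivial one is $R_X\hookrightarrow\underline A$, and concludes that $\neg\neg$-separatedness amounts to having at most one arc with any given parametrized incidence, i.e.\ injectivity of $\partial_G$. Your explicit matching-family computation (including the check that $\ell$ imposes no new constraints in the reflexive case) and your reconciliation of ``at most one arc'' with the definition's ``at most one edge'' via the exclusion of unfixed edges merely spell out details the paper leaves implicit.
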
 
By \cite{pJ} [Proposition C2.2.13], each category of simple (reflexive) $(X,M)$-graphs is a Grothendieck quasi-topos. Namely, it is a locally cartesian closed category with a regular subobject classifier. Moreover, the adjunction $\sigma\dashv i\colon \Sep(\DD G_{(X,M)}, J_{\neg\neg})\to \widehat{\DD G}_{(X,M)}$ (resp. $\sigma\dashv i\colon \Sep(\rG_{(X,M)}, J_{\neg\neg})\to \widehat{\rG}_{(X,M)}$) is a reflective subcategory such that the inclusion preserves exponentials.  The regular subobject classifier is given by the image of $\Omega$ under the reflector $\sigma$. Thus the regular subobject classifier in the category of is the same as the subobject classifier but with the omission of the loop $a_X$.

\begin{example}\mbox{}
	\begin{enumerate}
	\item  (Categories of $X$-Relations as Categories of Simple $(X,M)$-Graphs) The $\neg\neg$-separated presheaves corresponding to the $X$-graph theories $\oG_{X}$, $\rG_{X}$, $\sG_{X}$, and $\srG_{X}$ are the $X$-ary systems of relations $\mbf{Rel}_X$, reflexive relations $\mbf{rRel}_X$, symmetric relations $\mbf{sRel}_X$, and reflexive symmetric relations $\mbf{rsRel}_X$ respectively. By composition of $\sigma\dashv i$ with the extensions and restrictions of the essential geometric morphisms (Chapter \ref{S:ComparisonFunctors}) gives us the corresponding adjunctions between systems of relations
	\[
		\xymatrix@!=1em{ & \mbf{rRel}_X \ar@<+.3em>[dr] \ar@{<-}@<-.3em>[dr] \\ 
			 \mbf{Rel}_X \ar@<+.3em>[ur]  \ar@{<-<}@<-.3em>[ur]  \ar@{<-<}@<+.3em>[dr] \ar@<-.3em>[dr] && \mbf{rsRel}_X \\ 
			 & \mbf{sRel}_X \ar@{<-<}@<+.3em>[ur] \ar@<-.3em>[ur]}
	\]
	where the inclusions are coreflective. When $X=\{s,t\}$, we can also obtain the transitive closure of equivalence relations $\mbf{Equiv}$ of $\mbf{rsRel}_X$ by composition with the free groupoid construction in Example \ref{E:NRAdj}(\ref{E:Groupoid}) below.
	
	\item (Hypergraph Morphisms vs. Bipartite Graph Morphisms) We address the obstruction related to $r$ extending to a functor in Example \ref{E:Hybrid}(\ref{E:Bipart}) by modifying the definition of hypergraph morphisms. Let $H$ and $H'$ be hypergraphs. We define a lax morphism of hypergraphs $f\colon H\to H'$ to consist of set maps $f_V\colon H(V)\to H'(V)$ and $f_E\colon H(E)\to H'(E)$ such that for each edge $e$ in $H$, $\C P(f_V)\circ \varphi(e)\subseteq \varphi'\circ f_E$. The diagram for a lax morphism is given as follows.
	\[
	\xymatrix{ H(E) \ar[r]^{f_E} \ar[d]_\varphi \ar@{}[dr]|-{\leq} & H'(E) \ar[d]^{\varphi'} \\ \C P(H(V)) \ar[r]^{\C P(f_V)} & \C P(H'(V))}
	\]
	Verification of the associativity and identity laws are straightforward, giving us a category of lax hypergraphs, $\C H_{\text{\normalfont lax}}$. 
	
	To see that the assignment of $r$ to lax hypergraphs lifts to a functor $\rho\colon \C H_{\text{\normalfont lax}} \to \C B$, observe that the obstruction depicted in Example \ref{E:Hybrid}(\ref{E:Bipart}) above disappears since it satisfies the requirement to be a lax morphisms 
	\begin{align}
	\xymatrix{ \{e\} \ar@{}[dr]|-{\leq} \ar[r]^{\Id_e} \ar[d]_{\named v} & \{e\} \ar[d]^{\named{v_1,v_2}} \\ \C P(\{v\}) \ar[r]^-{\C P(\named{v_1})} & \C P(\{v_1,v_2\})}
	\end{align}
	Given a bipartite morphism $g\colon G\to G'$, we define the lax hypergraph morphism $\rho(g)\colon \rho(G)\to \rho(G')$ as set maps $\rho(g)_{V}\defeq g_{V_1}$ and $\rho(g)_{E}\defeq g_{V_2}$. Then given an edge $e$ in $\rho(G)$, $\C P(g_{V_1})(\varphi(e))\subseteq \varphi' (g_{V_2}(e))$. Indeed, for $w\in \C P(g_{V_1})(\varphi(e))$, there exists a $V_1$-vertex $v$ in $G$ such that $v\in \varphi(e)$ and $g_{V_1}(v)=w$, and there exists an arc $a$ in $G$ such that $\sigma(a)=g_{V_1}(v)=w$ and $\tau(a)=w$. Since $g$ is a bipartite graph morphism, $g_A(a)$ is an arc in $G'$ such that $\sigma'(g_A(a))=w$ and $\tau'(g_A(a))=g_{V_2}(e)$ and hence $w\in \varphi'(g_{V_2}(e))$. Composition and identity laws are easily verified.
	
	The category of hypergraphs is a wide subcategory of lax hypergraphs.\footnote{Recall a wide subcategory is given by a faithful functor which is bijective on objects (\cite{eR}).} Moreover, the functor $i\colon \C H\to \C B$ factors through $\mu\colon \C H_{\text{\normalfont lax}} \to \C B$.
	\[
	\xymatrix@!=.1em{ &  \C H_{\text{\normalfont lax}} \ar[dr]^{\mu} & \\ 
		\C H \ar[rr]^{i} \ar@{>->}[ur]^{\text{\tiny wide}} && \C B}
	\] 
	The functor $\mu$ is full and faithful. Observe that for the bipartite graph $\mu(H)$ there is at most one arc connected a $V_1$-vertex $v$ to a $V_2$-vertex $e$ since this is the elementhood relation $v\in \varphi(e)$. If $f, f'\colon H\to H'$ are lax hypergraph morphisms such that $\mu(f)=\mu(f')$, then $\mu(f)_{V_1}=\mu(f')_{V_1}$ and $\mu(f)_{V_2}=\mu(f')_{V_2}$ and thus it is equal on arcs $\mu(f)_A=\mu(f')_A$. Similarly, any morphism $g\colon \mu(H)\to \mu(H')$ is determined by where it sends its vertices and thus is equal to $\mu(f)$ for $f=(g_{V_1},g_{V_2})$. We thus have that the category of lax hypergraphs is equivalent to the category of simple bipartite graphs $\Sep(\DD G_{(\C F,1)},J_{\neg\neg})$. Therefore $\C H_{\text{\tiny lax}}$ is a Grothendieck quasi-topos and thus a reflective subcategory of bipartite graphs. 
	
	Moreover, the functor $\C H\hookrightarrow \C H_{\text{\tiny lax}}$ admits a left adjoint $\Sigma\colon \C H_{\text{\tiny lax}}\to \C H$ given on objects by
	  \begin{align*}
	  \Sigma(H)(V) &\defeq H(V)\\
	  \Sigma(H)(E) &\defeq \setm{(e, S)}{e\in H(E),\ S\subseteq \varphi(e)}.
	  \end{align*}
	  The incidence operator is defined $\varphi\colon \Sigma (H)(E)\to \C P(\Sigma (H)(V))$, $(e,S)\mapsto S$. For a morphism $f\colon H\to H'$ between lax hypergraphs, we define the hypergraph morphism $\Sigma(f)\colon \Sigma(H)\to \Sigma(H')$ such that 
	  \begin{align*}
	  \Sigma(f)_{V}& \defeq f_{V},\\ 
	  \Sigma(f)_{E} &\colon (e,S) \mapsto (f_E(e), f_{V!}(S))
	  \end{align*}
	  where $f_{V!}(S)$ is the image of $S$ under $f_V$. It is straightforward to verify the adjoint relations. Note that $\Sigma\colon \C H_{\text{\tiny lax}}\to \C H$ factors through the full subcategory of hereditary hypergraphs as defined in \cite{dM}.
	  \end{enumerate}
\end{example}

An $(X,M)$-graph $G$ is a $\neg\neg$-sheaf provided for each collection of vertices $(v_x)_{x\in X}$ (multiplicities allowed) there is exactly one arc $e\in G(A)$ incident with $(v_x)_{x\in X}$. It is easy to see that the $\neg\neg$-sheaves are precisely the complete $(X,M)$-graphs. In this case, the evaluation functor $(-)_V\colon \Sh(\DD G_{(X,M)}, J_{\neg\neg})\to \mathbf{Set}$ is an equivalence of categories.

\begin{example}\mbox{}
 \label{D:Automata} Recall that a non-deterministic automaton $M=(Q,\sigma, \delta,\tau)$ on a fixed set of input symbols $\Sigma$, consists of a set of states, $Q$, a set of initial states $\sigma$, a set map $\delta\colon Q\x \Sigma\to \C P(Q)$ called the transition function and a predicate $\tau$ is a set of terminal states.\footnote{Note that we drop the finite condition and fix the set of input symbols.}. A morphism $f\colon M\to M'=(Q',\sigma',\delta',\tau')$ is a set map $f\colon Q\to Q'$ such that $f(\sigma)\subseteq \sigma'$, $f(\tau)\subseteq \tau'$, and $\delta'\circ (f\x \Sigma)=f\circ \delta$ as in the following diagram.
\[
\xymatrix{ Q\x \Sigma \ar[r]^{\delta} \ar[d]_{f\x \Sigma} & Q \ar[d]^f \\ Q'\x \Sigma \ar[r]^{\delta'} & Q'}
\]
It is straightforward to verify this defines a category. This leads us to the definition of an $(S,T)$-automata.

Let $(S,T)$ be a partition of $X$, $B=\{s,\bot,\top\}$ and $\Sigma$ be a nonempty set. The category of non-deterministic $(S,T)$-automata on $\Sigma$ is defined to be the category of $\neg\neg$-separated objects in $\widehat{ds\DD G}_{(S,T)}\ls C_{(B,\Sigma)}$.

We think of the vertex labellings of an $(S,T)$-automata as neutral states $s$, initial states $\bot$, and terminal states $\top$. A directed arc is labeled by an element in the alphabet $\Sigma$ and represents a transition from a collection of  $S$ states to a collection of $T$ states (multiplicities allowed). A string $\omega$ in $\Gamma$ is said to be accepted by an $(S,T)$-automata $G$ provided there exists a sequence of arcs which agrees in labeling with $\omega$ where an initial state vertex is connected to a final state vertex.  In particular, the category of non-deterministic $(1,1)$-automata on $\Sigma$ is equivalent to the category of non-deterministic automata on $\Sigma$. The following corollary generalizes \cite{sV}(Theorem 5). Thus the category of non-deterministic $(S,T)$-automata is a Grothendieck quasi-topos.
\end{example}
\ignore{
\textbf{GRAPH MINORS!!!} Built from of quotients of $\sim$ in a reflexive category

\[
	\xymatrix{ E \ar@{>->}[r]  & G \ar@{<-<}[r] & \sim E \ar@{->>}[r] & M}
\]

When is $\sim$ functorial???

\chapter{Internal Group $(X,M)$-Graphs}

\begin{definition}
	Let $\widehat{\DD T}$ be a (reflexive) $(X,M)$-graph theory. The category of groups internal to $\widehat{\DD T}$ is the category of presheaves with values taken in the category of groups $\Group(\widehat{\DD T})\defeq [\DD T^{op},\Group]$. 
\end{definition}

\begin{proposition}
	Let $G$ be an internal $(X,M)$-graph group. 
	\begin{ienum}
		\item The discrete subgraph of vertices $G(V)$ is a normal subgroup of $G$.
	\end{ienum}
\end{proposition}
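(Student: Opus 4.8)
The plan is to exploit the identification, implicit in the definition of $\Group(\widehat{\DD T})$, of an internal $(X,M)$-graph group with a presheaf of groups $G\colon \DD T^{op}\to \Group$. Under this identification all of the group structure on $G$---multiplication, inversion, and hence the conjugation morphism $G\x G\to G$---is computed objectwise in $\Group$. Consequently a subobject $N\hookrightarrow G$ in $\Group(\widehat{\DD T})$ is nothing more than a choice of subgroup $N(c)\leq G(c)$ for each object $c\in\{V,A\}$ that is stable under every structure map $G(f)$, and such an $N$ is a \emph{normal} subgroup object exactly when the conjugation morphism $G\x N\to G$ factors through $N$. Since morphisms of presheaves and their factorizations are detected objectwise, this last condition is equivalent to the purely componentwise requirement that $N(V)\trianglelefteq G(V)$ and $N(A)\trianglelefteq G(A)$. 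This reduction is the conceptual heart of the argument; everything else is bookkeeping with the composition relations of $\DD T$.

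First I would pin down the discrete subgraph of vertices as an honest subobject $D\hookrightarrow G$. On vertices we set $D(V)\defeq G(V)$, the full vertex group. On arcs we take $D(A)$ to be the smallest arc-subgroup forced by closure under the structure maps: in the non-reflexive theory $\DD G_{(X,M)}$ this is the trivial subgroup $\{e\}\leq G(A)$ (a discrete graph carries no non-identity arcs), while in the reflexive theory $\rG_{(X,M)}$ it is the subgroup $G(\ell)(G(V))\leq G(A)$ of distinguished loops. Checking that $D$ is a sub-presheaf of groups is where the relations of the theory enter: one verifies $G(x)(D(A))\subseteq D(V)$ for each $x\in X$ and that $D(A)$ is $M$-stable, which in the reflexive case uses the defining relations $\ell\circ x=\Id_V$, $\ell\circ m=\ell$, and $x\circ\ell\in\Fix(M)$ (so that $G(x)\circ G(\ell)=\Id_{G(V)}$ and $G(m)\circ G(\ell)=G(\ell)$), and in the non-reflexive case is immediate.

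Next I would establish normality componentwise. The vertex component $D(V)=G(V)$ is the entire group and is therefore automatically normal in $G(V)$. For the non-reflexive theory the arc component is settled at once, since $\{e\}\trianglelefteq G(A)$; the cleanest packaging is to exhibit $D$ directly as a kernel by constructing the quotient $q\colon G\to Q$ with $Q(V)\defeq 1$, $Q(A)\defeq G(A)$, $q_V$ the trivial homomorphism and $q_A\defeq \Id_{G(A)}$. Here $q$ respects every structure map (each incidence square collapses to a terminal map and the $M$-action is carried along unchanged), and $\ker q=D$; since kernels of homomorphisms of internal groups are normal, $D\trianglelefteq G$ follows.

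The step I expect to be the main obstacle is the reflexive case, where the genuine content is the normality of $D(A)=G(\ell)(G(V))$ inside $G(A)$. The relation $x\circ\ell\in\Fix(M)$ shows that each corresponding $M$-action $G(m')$ is an idempotent endomorphism of $G(A)$ whose image is precisely $D(A)$ and which restricts to the identity on $D(A)$; this already presents $D(A)$ as a retract of $G(A)$, but a retract need not be normal. The delicate part of the plan is therefore to leverage the \emph{whole} system of these idempotents together with $M$-equivariance and the commutation of the distinct incidence sections $G(\ell)$ to upgrade "retract" to "normal", and I would expect to spend most of the effort pinning down exactly which hypotheses on $(X,M)$ make this step go through (in particular isolating whether the statement requires the non-reflexive theory or a sharper reading of "discrete subgraph of vertices" in the reflexive setting).
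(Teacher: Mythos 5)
The paper offers no proof to compare against here: this proposition sits in a commented-out draft chapter and is stated without argument, so your proposal has to stand on its own. The parts you have nailed down are correct. Internal groups in $[\DD T^{op},\mathbf{Grp}]$ are presheaves of groups, subobjects and their normality are detected componentwise, and your treatment of the non-reflexive theory --- where the discrete subgraph of vertices is $(G(V),\{e\})$ and is exhibited as the kernel of the evident quotient onto $(1,G(A))$ --- is complete.

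The gap is the reflexive case, and it is not merely a hard step you have deferred: the statement is false there, so no amount of leverage from the idempotents $G(x\circ\ell)$ will close it. Any subpresheaf $D$ with $D(V)=G(V)$ must satisfy $G(\ell)(G(V))\subseteq D(A)$, since subobjects are closed under the structure maps; so the arc component of the discrete subgraph of vertices is forced to contain the image of $G(\ell)$, which is a retract of $G(A)$ but, as you suspected, need not be normal. Concretely, take $M=\{1,m'\}$ with $m'm'=m'$, so that $\Fix(M)=\{m'\}$ and $X=\{x_{m'}\}$; a presheaf of groups on $\rG_{(X,M)}$ is then exactly a split surjection of groups $G(x_{m'})\colon G(A)\to G(V)$ with section $G(\ell)$ and $G(m')=G(\ell)\circ G(x_{m'})$. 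Taking $G(A)=S_3$, $G(V)=\mathbb{Z}/2$, $G(x_{m'})=\mathrm{sgn}$ and $G(\ell)(\bar 1)=(12)$ gives $D(A)=\{e,(12)\}$, which is not even closed under conjugation in $S_3$, so $D$ is not a normal subgroup object. Your plan should therefore be amended: prove the proposition for the non-reflexive theory exactly as you have, and record a counterexample of this kind to show that the reflexive case must be excluded or the statement reformulated there.
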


\begin{proposition}
	Let $M$ be a group, $X$ a transitive right $M$-set, and $G$ be an internal $(X,M)$-graph group.
	\begin{ienum}
		\item The connected component $\langle e_V\rangle$ of the unit vertex $e_V\in G(V)$ is a normal subgroup of $G$.
		\item For each family of vertices $(v_x)_{x\in X}$ in $G$, either $\overline G((v_x)_X)$ is empty or $\overline G((v_x)_X)\iso \overline G((e_V)_X)$.
	\end{ienum}
\end{proposition}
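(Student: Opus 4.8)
The plan is to route both parts through a single observation: since $G$ takes values in groups, every structure map of $G$ is a group homomorphism, so in particular the parametrized incidence $\partial_G\colon G(A)\to G(V)^X$ is a homomorphism into the product group (each coordinate $G(x)\colon G(A)\to G(V)$ being one). Part (ii) is then immediate. By definition $\overline G((v_x)_X)=\partial_G^{-1}((v_x)_X)$ is a fibre of $\partial_G$, and $\overline G((e_V)_X)=\ker\partial_G$. A nonempty fibre is a coset of the kernel, and translation by any of its elements is a bijection onto $\ker\partial_G$; hence whenever $\overline G((v_x)_X)$ is nonempty it is isomorphic (as a set, indeed as a $\ker\partial_G$-torsor) to $\overline G((e_V)_X)$, as claimed.

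For (i) I would first describe $\langle e_V\rangle$ concretely. Writing $v\sim v'$ for the connectivity relation generated by $v\sim_1 v'$ iff some arc carries both $v$ and $v'$ among its incidences, the component of $e_V$ is the subpresheaf $C$ with $C(V)=\{v\in G(V):v\sim e_V\}$ and $C(A)=\partial_G^{-1}(C(V)^X)$ (an arc lies in the component iff all, equivalently any one, of its incidences does). Note the identity arc $e_A$ satisfies $e_A.x=e_V$ for every $x$, so $e_V$ carries a loop anchoring the base point. The whole statement reduces to $C(V)\trianglelefteq G(V)$: granting this, $C(A)=\partial_G^{-1}(C(V)^X)$ is normal in $G(A)$ as the preimage of a normal subgroup under $\partial_G$, $C$ is closed under the incidence and partner maps because $(\alpha.m).x=\alpha.(x.m)$, and normality of a sub-internal-group in a presheaf topos is exactly levelwise normality.

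The mathematical core is therefore that $\sim$ is a group congruence on $G(V)$. Here I would set $I\defeq \mathrm{Im}(\partial_G)\le G(V)^X$; it is a subgroup, and the identity $\partial_G(\alpha.m)(x)=\partial_G(\alpha)(x.m)$ shows that $I$ is invariant under the permutation action of $M$ on the coordinates of $G(V)^X$, while $M$ acts transitively on $X$. Closure of $C(V)$ under inverses is easy, since $\alpha\mapsto\alpha^{-1}$ inverts every incidence. Closure under products would be obtained by concatenating a path $e_V\to v$ with the left-translate by $v$ of a path $e_V\to w$: a step $d_{j-1}\sim_1 d_j$ witnessed by $q_j\in I$ at a coordinate pair $(x_j,y_j)$ is turned into a step $v\,d_{j-1}\sim_1 v\,d_j$ by left-multiplying $q_j$ by an element $r_j\in I$ with $r_j(x_j)=r_j(y_j)=v$.

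Producing such a ``partial loop'' at $v$ on a prescribed coordinate pair is the main obstacle, and it is exactly where the hypotheses enter: transitivity of $M$ lets one move the witnessing pair around via partner maps, and $M$-invariance of $I$ keeps the shifted elements inside $I$. That $M$-invariance cannot be dropped is visible already for $X=\{1,2\}$, where $I=2\mathbb Z\times 3\mathbb Z\le\mathbb Z^2$ is a subgroup but not swap-invariant and the component of $0$ is $2\mathbb Z\cup 3\mathbb Z$, not a subgroup. Normality would be attacked by the same device applied to the conjugated path $\gamma c_{j-1}\gamma^{-1}\sim_1\gamma c_j\gamma^{-1}$, and I expect this conjugation step — which needs partial loops at the conjugating element $\gamma$ rather than merely inside $C(V)$ — to be the most delicate point, and the place where one must pin down precisely how transitivity of the action is being invoked.
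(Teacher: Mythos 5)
First, on the comparison you were asked to make: the paper contains no proof of this proposition. It sits inside a chapter (``Internal Group $(X,M)$-Graphs'') that is wrapped in the author's \texttt{ignore} macro and never appears in the compiled document, surrounded by unfinished placeholders; so your attempt can only be judged against the statement itself. Your argument for (ii) is correct and complete: each $G(x)\colon G(A)\to G(V)$ is a homomorphism, hence so is $\partial_G\colon G(A)\to G(V)^X$, the set $\overline G((e_V)_X)$ is its kernel, and a nonempty fibre of a homomorphism is a coset of the kernel. (Note this uses neither that $M$ is a group nor that $X$ is transitive.)

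For (i), the step you flagged as ``the most delicate point'' is not merely delicate --- it is where the statement fails, because part (i) is false as written. Take $X=\{s,t\}$ and $M=\sX=\Aut(X)$, so $M$ is a group acting transitively. Let $G(V)=S_3$, let $H=\{e,(12)\}$, let $G(A)=H\times H$, let $G(s)$ and $G(t)$ be the two projections followed by the inclusion $H\hookrightarrow S_3$, and let $G(i)$ be the swap of factors; then $G(t)=G(s)\circ G(i)$, so this is a legitimate object of $[\sG_2^{op},\mbf{Group}]$. Every arc has both incidences in $H$, so the connected component of $e_V$ has vertex set exactly $H$, which is a subgroup of $S_3$ but not normal (conjugating $(12)$ by $(13)$ gives $(23)$). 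The diagnosis is exactly the one you gave: to conjugate a one-step path witnessed by $q_j\in I$ at coordinates $(x_j,y_j)$ you need some $r\in I$ with $r(x_j)=r(y_j)=\gamma$, and nothing in the hypotheses supplies such an $r$ when $\gamma$ is not an incidence of any arc --- in the example no arc touches $(13)$ at all. By contrast, closure under products and inverses does hold (at least for $X=2$) and does not need your ``partial loop at $v$'': pad the two witnessing chains with $(e_V,e_V)$-steps at the identity end and multiply them coordinatewise inside the $M$-invariant subgroup $I\le G(V)^X$. What survives of (i) is the weaker assertion that $\langle e_V\rangle(V)$ is normal in the subgroup of $G(V)$ generated by all incidences of arcs; for $X=2$ this follows from Goursat's lemma applied to the swap-invariant subgroup $I\le G(V)^2$, whose associated normal subgroup $\{v:(v,e_V)\in I\}$ is precisely the component of $e_V$. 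So your instinct about where the argument must break was sound; the correct conclusion to draw from it is a counterexample rather than a completion of the proof.
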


\tbf{REFLEXIVE GRAPHS!!!!!}

\section{Automorphism $(X,M)$-Graphs}

By \cite{}(\tbf{KELLY}) the endomorphisms object $G^G$ has the structure of an internal monoid where the monoid unit $e\colon 1\to G^G$ is the exponential adjoint to the identity $\Id_G\colon G\to G$ and the monoid operation given by internal composition $c_{\text{\tiny GGG}}\colon G^G\x G^G\to G$. Since categories of $(X,M)$-graphs are categories of presheaves, an object $H$ is internal monoid if and only if each component is monoid interpreted in the category of sets. For the endomorphism object $G^G$, we see that on vertices $G^G(V)=G(V)^{G(V)}$ is the monoid of endomaps on maps between vertices. On arcs, the monoid $G^G(A)$ has unit $((\Id_{G(V)})_{x\in X},\Id_{G(A)})$ and operation $\star\colon G^G(A)\x G^G(A)\to G^G(A)$ which takes $(((f'_x)_{x\in X},g'),((f_x)_{x\in X},g))$ to $((f_x'f_x)_{x\in X},g'g)$ as given above. 

The maximal submonoid object of $G^G$ which is a group is called the automorphism object of $G$ and denoted $\Aut(G)$. For the vertex set, $\Aut(G)$ consists of the set of automaps $G(V)\to G(V)$. The arc set of $\Aut(G)$ consists of those arcs $((f_x)_{x\in X},g)$ in $G^G$ such that $f(x,-)\colon G(V)\to G(V)$ for each $x\in X$ and $g\colon G(A)\to G(A)$ are bijective. 

\begin{example}\mbox{}
	\begin{enumerate}
		\item \tbf{FINISH!!!!!!!} Let $G$ be a group ...$\gamma^*$ preserved products and thus group objects. Therefore $\gamma^*(\Aut(\underline A))$ 
	\end{enumerate}
	
\end{example}	
}

\chapter{Injective Hulls and Projective Covers}\label{S:InjProj}

 Recall the morphism of (reflexive) $(X,M)$-graph toposes $\iota_!\dashv \iota^*\dashv \iota_*$ in Examples \ref{E:FFC}(\ref{E:FreeForget}, \ref{E:FreeForget2}).   We set $\Proj\defeq \iota_!\iota^*$ and $\Inj\defeq \iota_*\iota^*$ where $\iota\colon \DD G_{(\empset,X)}\to \DD G_{(X,M)}$ is the functor given above. Then since adjunctions are closed under composition, we have
\[
\Proj \dashv \Inj\colon \widehat{\DD G}_{(X,M)}\to \widehat{\DD G}_{(X,M)}.
\]
We will show in this section that the natural transformations $\varepsilon\colon \Proj \Rightarrow \Id$ and $\eta\colon \Id \Rightarrow \Inj$ can be thought of as the functorial projective and injective refinements for non-initial  $(X,M)$-graphs.

We first characterize the class of injective and projective objects. Recall that an object $Q$ in a category $\C C$ is injective provided for each monomorphism $m\colon A\to B$ and morphism $f\colon A\to Q$ there exists a morphism (not necessarily unique) $k\colon B\to Q$ such that $f=km$. Dually, an object $P$ in $\C C$ is (regular) projective\footnote{Note that since regular epimorphisms are equivalent to epimorphisms in categories of presheaves, a regular projective object is equivalent to a projective object.} provided for each (regular) epimorphism $e\colon B\to A$ and each morphism $f\colon P\to A$ there is a morphism $k\colon P\to B$ such that $f=ek$.\footnote{The results in this section generalize the results of \cite{wG} and \cite{kW}.}
\begin{proposition}\label{P:Injective}
	A  (reflexive) $(X,M)$-graph $Q$ is injective if and only if $Q$ is non-initial and for each set map $f\colon X\to Q(V)$, there is an arc $\alpha\in Q(A)$ such that the incidence map $\partial_Q(\alpha)$ is equal to $f$. 
\end{proposition}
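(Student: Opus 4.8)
The plan is to prove both directions by exploiting the adjunction $\iota^*\dashv \iota_*$ of Example \ref{E:FFC}(\ref{E:FreeForget}, \ref{E:FreeForget2}) together with the fact that the unit $\eta_G\colon G\to \iota_*\iota^*(G)=\Inj(G)$ is a monomorphism for every $G$. The condition ``for each set map $f\colon X\to Q(V)$ there is an arc $\alpha$ with $\partial_Q(\alpha)=f$'' says exactly that the parametrized-incidence map $\partial_Q\colon Q(A)\to Q(V)^X$ is surjective, so I would first restate the claim as: \emph{$Q$ is injective iff $Q$ is non-initial and $\partial_Q$ is epic}.

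For the \emph{necessity} direction, I would start from the observation that a cofree object $\iota_*(S(V),S(A))$ has arc set $S(V)^X\x S(A)^{|M|}$, and its incidence is the first projection composed with evaluation, which is surjective onto $Q(V)^X$ whenever $S(A)\neq\empset$; so all cofree objects on non-initial data are injective (this is the standard fact that $\iota_*$ preserves injectives because its left adjoint $\iota^*$ preserves monos, which it does since it creates limits). Conversely, given an arbitrary injective $Q$, I would use that $\eta_Q\colon Q\hookrightarrow \Inj(Q)$ is a monomorphism and injectivity of $Q$ gives a retraction $k\colon \Inj(Q)\to Q$ with $k\eta_Q=\Id_Q$; since $\Inj(Q)$ has surjective incidence and $\eta_Q$ is the identity on vertices, a diagram chase shows $\partial_Q$ must be surjective as well. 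The non-initiality is forced because the initial (empty) object cannot be injective (no map from a nonempty object factors through it along the mono $0\to 1$).

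For the \emph{sufficiency} direction, suppose $Q$ is non-initial with $\partial_Q$ surjective, and let $m\colon A\hookrightarrow B$ be a mono and $f\colon A\to Q$. I would build the extension $k\colon B\to Q$ by hand: on vertices, extend $f_V$ across the injection $m_V$ using non-initiality of $Q$ (choose any value in $Q(V)$, which is nonempty, for vertices outside the image); on arcs, for each $\beta\in B(A)$ I must produce $k_A(\beta)\in Q(A)$ compatible with the chosen vertex map and with $f$ on arcs in the image of $m_A$. Here surjectivity of $\partial_Q$ is precisely what lets me select an arc over any prescribed parametrized incidence $X\to Q(V)$. The main obstacle --- and the step I would spend the most care on --- is the compatibility with the $M$-action: the choices of $k_A(\beta)$ must satisfy $k_A(\beta.m)=k_A(\beta).m$ for all $m\in M$, so I cannot choose arc-images freely but must choose one representative per $M$-orbit of arcs in $B(A)$ and transport it by the action, checking that arcs already constrained by $f$ (those in $m_A(A(A))$) do not clash with this orbit-wise choice. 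This requires invoking the transitive-action description of incidence for the $m$-associated partner (the commuting triangle with $\langle\Id_X,\named m\rangle$ given after Definition \ref{D:XMGraph}) to verify $\partial_Q(k_A(\beta).m)$ equals the required incidence, and in the reflexive case additionally checking the $\ell$-action sends each vertex's image to a distinguished loop, which holds because $\partial_Q(x.\ell)$ is constant and surjectivity of $\partial_Q$ supplies such loops.
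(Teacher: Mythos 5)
Your necessity argument works but is more roundabout than necessary: to know that $\Inj(Q)$ has surjective parametrized incidence you already need $Q(A)\neq\emptyset$, and the cleanest way to obtain that --- and the whole direction at once --- is the direct argument the paper uses: a set map $f\colon X\to Q(V)$ is a morphism $\overline f\colon \bigsqcup_{x\in X}\underline V\to Q$, the canonical map $\bigsqcup_{x\in X}\underline V\to \underline A$ is a monomorphism, and an extension $\underline A\to Q$ is, by Yoneda, precisely an arc with incidence $f$. The retraction onto $\Inj(Q)$ is not needed for this direction.

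The substantive issue is in the sufficiency direction, and you have correctly isolated it: the arc assignment $h_A$ must satisfy $h_A(\beta.m)=h_A(\beta).m$ for all $m\in M$, a constraint the paper's own proof passes over when it asserts that the construction ``defines a morphism.'' However, your proposed repair --- choose one representative $\beta$ per $M$-orbit of $B(A)$, pick $\gamma\in Q(A)$ over the required incidence, and transport by $h_A(\beta.m)\defeq \gamma.m$ --- is not well defined: if $\beta.m=\beta.m'$ holds in $B$ but $\gamma.m\neq\gamma.m'$ in $Q$, the transport assigns two values to one arc. The hypothesis supplies an arc with the prescribed incidence, not one whose $M$-action relations refine those of $\beta$, and this cannot be extracted from the stated hypothesis. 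Concretely, in $\widehat{\sG}_2$ (so $X=\{s,t\}$ and $M=\Aut(X)=\{1,i\}$) let $Q$ have one vertex $v$ and a single unfixed $2$-loop $\{\alpha,\ \alpha.i\}$ with $\alpha.i\neq \alpha$: then $Q$ is non-initial and the unique map $X\to Q(V)$ is the incidence of $\alpha$, so $Q$ meets the criterion, yet the morphism $\underline V\to Q$ does not extend along the monomorphism $\underline V\hookrightarrow 1$, because the fixed loop of the terminal object would have to land on an $i$-fixed arc of $Q$ and there is none. So the obstruction you flagged is genuine and cannot be argued away; it is vacuous exactly when the equivariance constraint is trivial (e.g.\ $M=1$, the quiver setting of \cite{wG}), and in general the criterion has to be strengthened to demand arcs realizing prescribed $M$-equivariant data --- for instance, that the unit $\eta_Q\colon Q\to \Inj(Q)$ be a split monomorphism.
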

\begin{proof}
	Suppose $Q$ is injective and consider the set map $f\colon X\to Q(V)$. This is equivalent to giving an $(X,M)$-graph morphism $\overline f\colon \bigsqcup_{x\in X}\underline V\to Q$. Consider the inclusion $m\colon \bigsqcup_{x\in X}\underline V\to \underline A$ induced by the morphisms $\underline x\colon \underline V\to \underline A$. Since $Q$ is injective, there is a morphism $\alpha\colon \underline A\to I$ such that  $\alpha m=\overline f$. By Yoneda, this is equivalent to an arc $\alpha\in I(A)$ with incidence map $\partial_I(\alpha)=f$.
	
	Conversely, let $f\colon G\hookrightarrow H$ be a monomorphism and $g\colon G\to Q$ a morphism of  $(X,M)$-graphs. Since $Q$ is non-initial, there is a vertex $v\in Q(V)$. Each arc $\alpha$ in $H$ has incidence $\partial_H(\alpha)\colon X\to H(V)\iso f_V(G(V))\sqcup H(V)\backslash f_V(G(V)$ where $f_V(G(V))$ is the image of the vertices in $G$ under $f$. For each arc $\alpha$ in $H$ not in the image of $f_A$, let $j_\alpha\colon X\to Q(V)$ be the set map $[g_V,!]\circ \partial_G(\alpha)$ given by universal property of the disjoint union
	\[
	\xymatrix{ && f_V(G(V))\iso \inv f_V(G(V)) \ar@{>->}[d] \ar[rr]^-{g_V} && g_V(G(V)) \ar@{>->}[d] \\ X \ar[rr]^-{\partial_G(\alpha)} && f_V(G(V))\sqcup H(V)\backslash f_V(G(V)) \ar[rr]^-{[g_V,!]} && Q(V) \\ 
		&& H(V)\backslash f_V(G(V)) \ar@{>->}[u] \ar[rr]^-{!} && \{v\} \ar@{>->}[u]}
	\] 
	Thus by assumption, we may choose an arc $[\alpha]\in Q(A)$ with incidence equal to $j_\alpha$.  We define the following maps $h_V\colon H(V)\to Q(V)$ and $h_A\colon H(A)\to Q(A)$
	\begin{align*}
	h_V(w)&\defeq \begin{cases} g_V(u) & \text{if }\exists u\in G(V), f_V(u)=w \\ v & \text{if }\forall u\in G(V), f_V(u)\neq w\end{cases}\\
	h_A(\alpha)&\defeq \begin{cases} g_A(\beta) & \text{if }\exists \beta\in G(A), f_A(\beta)=\alpha \\ [\alpha] & \text{if }\forall \beta\in G(A), f_A(\beta)\neq \alpha \end{cases}
	\end{align*}
	By construction this defines a morphism $h\colon H\to Q$ such that $h\circ f=g$. Therefore $Q$ is injective.
	
\end{proof}

\begin{corollary}
	The class of injective objects in $\widehat{\DD G}_{(X,M)}$  ($\widehat{\rG}_{(X,M)}$) is precisely the class of non-initial split subobjects\footnote{An object $H$ is a split subobject of $G$ provided it admits a split monomorphism $s\colon H\to G$.} of objects in the essential image of the functor $\Inj$.
\end{corollary}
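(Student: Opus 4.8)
The plan is to prove the two inclusions separately, resting on two facts: that every object in the essential image of $\Inj$ is (under a mild nondegeneracy condition) injective, and the standard categorical observation that a retract of an injective object is again injective. Throughout I would lean on the explicit description of $\Inj=\iota_*\iota^*$ from Examples \ref{E:FFC}(\ref{E:FreeForget}, \ref{E:FreeForget2}) and on the characterization in Proposition \ref{P:Injective}.

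First I would record the auxiliary fact that $\Inj(G)=\iota_*\iota^*(G)$ is injective whenever $\iota^*(G)=(G(V),G(A))$ has both components nonempty. By Examples \ref{E:FFC}(\ref{E:FreeForget}, \ref{E:FreeForget2}) the arc set of $\Inj(G)$ is $G(V)^X\times G(A)^{|M|}$ with $x$-incidence $(f,s).x=f(x)$, so $\partial_{\Inj(G)}(f,s)=f$; consequently every set map $X\to \Inj(G)(V)$ is the incidence of some arc (choose any $s$, which exists since $G(A)\neq\empset$), and Proposition \ref{P:Injective} applies. Equivalently, since $\iota^*$ creates limits it preserves monomorphisms, so its right adjoint $\iota_*$ carries injectives of $\mathbf{Set}^2$ to injectives, and the injectives of $\mathbf{Set}^2$ are exactly the pairs of nonempty sets. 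I would also record the retract principle: if $Q$ is injective and $r\circ s=\Id_H$ with $s\colon H\to Q$, then given a monomorphism $m\colon A\to B$ and $f\colon A\to H$, injectivity of $Q$ yields $k\colon B\to Q$ with $km=sf$, whence $rk\colon B\to H$ satisfies $(rk)m=f$, so $H$ is injective.

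For the forward inclusion, let $Q$ be injective. By Examples \ref{E:FFC}(\ref{E:FreeForget}, \ref{E:FreeForget2}) the unit component $\eta_Q\colon Q\to \Inj(Q)$ is a monomorphism; applying the defining lifting property of the injective $Q$ to the monomorphism $\eta_Q$ and the identity $\Id_Q\colon Q\to Q$ produces a morphism $k\colon \Inj(Q)\to Q$ with $k\circ \eta_Q=\Id_Q$. Thus $\eta_Q$ is a split monomorphism exhibiting $Q$ as a split subobject of $\Inj(Q)$, an object of the essential image of $\Inj$, and $Q$ is non-initial by Proposition \ref{P:Injective}. For the reverse inclusion, let $H$ be a non-initial split subobject of some $G\cong \Inj(G')$. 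Since $H$ is a non-initial subobject of $G$, the object $G$ is non-initial, so $\iota^*(G)$ has nonempty components and $G$ is injective by the auxiliary fact; being a retract of the injective $G$, the object $H$ is injective by the retract principle. This closes both inclusions.

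The main obstacle is the auxiliary injectivity of $\Inj(G)$, and in particular the degenerate case $G(A)=\empset$: in the non-reflexive theory $\Inj(G)$ then collapses to the discrete graph on $G(V)$, which for $X\neq\empset$ fails the incidence-filling criterion of Proposition \ref{P:Injective}, so ``non-initial'' alone does not guarantee injectivity and one must read the statement as excluding this degeneracy. Notably this case does not arise in the reflexive setting, where an empty arc set already forces $\Inj(G)$ to be initial since its vertex set is $G(V)\times G(A)$. Away from this case the verification is the transparent computation above, and the only genuinely structural inputs are the monomorphy of the unit $\eta$ (already established in Example \ref{E:FFC}) and the elementary retract principle.
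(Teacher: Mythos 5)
Your argument is correct and, on the forward inclusion, coincides with the paper's: both use the monomorphy of the unit $\eta_Q\colon Q\to \Inj(Q)$ together with the injectivity of $Q$ itself to produce the splitting $r\eta_Q=\Id_Q$. Where you differ is in completeness, and your version is the stronger one. The paper's proof stops after the forward direction; it never verifies the reverse inclusion, and its justification that $\Inj(Q)$ is injective (``$Q$ is non-initial and thus by the previous lemma\dots'') elides the real point, which you make explicit: injectivity of $\Inj(Q)=\iota_*\iota^*(Q)$ requires $Q(A)\neq\empset$ as well as $Q(V)\neq\empset$, since the arc set is $Q(V)^X\x Q(A)^{|M|}$ and collapses to $\empset$ when $Q$ has no arcs; mere non-initiality of $Q$ does not suffice. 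You supply the retract principle needed for the reverse inclusion, and, more importantly, you isolate a genuine defect in the statement itself: in the non-reflexive case $\Inj(\underline V)$ is the one-vertex discrete $(X,M)$-graph, a non-initial split subobject (of itself) of an object in the essential image of $\Inj$, which fails the incidence-filling criterion of Proposition \ref{P:Injective} whenever $X\neq\empset$ and hence is not injective. So the corollary as written needs either to exclude objects with empty arc set or to restate the class as non-initial split subobjects of $\Inj(Q)$ for $Q$ ranging over objects with nonempty vertex and arc sets. Your observation that this degeneracy evaporates in the reflexive case, where $\Inj(G)(V)=G(V)\x G(A)$ forces $\Inj(G)$ to be initial whenever $G(A)=\empset$, is also correct. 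In short: your proof is sound where the paper's is incomplete, and the caveat you flag is a real correction rather than a quibble.
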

\begin{proof}
	Let $Q$ be an injective object in $\widehat{\DD G}_{(X,M)}$.  Hence $Q$ is non-initial and thus by the previous lemma, we have $\Inj(Q)$ is an injective object. Then since $\eta_Q\colon Q \to \Inj(Q)$ is a monomorphism, there must be a split epimorphism $r\colon \Inj(Q)\to Q$ such that $r\eta_Q=\Id$ by the property of $Q$ being injective.  
	
\end{proof}

We also have the dual argument that the class of projective objects in the category of  $(X,M)$-graphs is precisely the split quotients of objects in the essential image of $\Proj$. 

\begin{proposition}
	A (reflexive) $(X,M)$-graph $P$ is projective if and only if it is a coproduct of representables. 
\end{proposition}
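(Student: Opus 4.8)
The plan is to characterize projective objects in $\widehat{\DD G}_{(X,M)}$ (and $\widehat{\rG}_{(X,M)}$) as exactly the coproducts of representables. The statement has two directions, and the natural strategy is to leverage the adjunction $\Proj = \iota_!\iota^* \dashv \Inj$ already in hand, together with the description of the counit $\varepsilon\colon \Proj \Rightarrow \Id$ from Example~\ref{E:FFC}(\ref{E:FreeForget}).

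For the \emph{if} direction, I would first show each representable $\underline V$ and $\underline A$ is projective. This follows from Yoneda: $\widehat{\DD G}_{(X,M)}(\underline A, -) \cong (-)_A$ and $\widehat{\DD G}_{(X,M)}(\underline V, -) \cong (-)_V$ are the evaluation functors, which are restrictions of essential geometric morphisms (Example~\ref{E:FFC}(\ref{E:3})) and in particular preserve epimorphisms since colimits in a presheaf topos are computed pointwise and epimorphisms are pointwise surjections. Concretely, a lifting problem against an epi $e\colon B \twoheadrightarrow A$ reduces under Yoneda to lifting an element of $A(V)$ (resp. $A(A)$) through a surjection $e_V$ (resp. $e_A$), which is possible by the axiom of choice. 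Then I would note projectives are closed under coproducts: given $e\colon B \twoheadrightarrow A$ and $f\colon \bigsqcup_i P_i \to A$, solve each lifting problem $f\circ \mathrm{incl}_i$ separately and assemble by the universal property of the coproduct.

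For the \emph{only if} direction, suppose $P$ is projective. The counit component $\varepsilon_P\colon \iota_!\iota^*(P) \to P$ is an epimorphism (stated in Example~\ref{E:FFC}(\ref{E:FreeForget})), and by the description there $\iota_!\iota^*(P) = \bigsqcup_{P(V)}\underline V \sqcup \bigsqcup_{P(A)}\underline A$ is a coproduct of representables. Since $P$ is projective, the identity $\Id_P$ lifts through $\varepsilon_P$, yielding a section $s\colon P \to \iota_!\iota^*(P)$ with $\varepsilon_P \circ s = \Id_P$. Thus $P$ is a retract (split subobject) of a coproduct of representables. To finish, I must upgrade ``retract of a coproduct of representables'' to ``is itself a coproduct of representables.'' Here the key fact is that the representables $\underline V, \underline A$ are the indecomposable projectives (connected projectives), so that any retract of a coproduct of them decomposes as a coproduct of retracts of the summands; since $\underline V$ and $\underline A$ are indecomposable with no nontrivial retracts (their endomorphism monoids contain no nontrivial idempotents splitting off proper subobjects — one checks $\underline V$ is subterminal and $\underline A$ is connected), each retract summand is again representable.

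\textbf{The main obstacle} will be this last decomposition step: passing from ``$P$ is a retract of $\bigsqcup \underline{(-)}$'' to ``$P$ is a coproduct of representables.'' The clean way is to invoke that in an extensive, locally connected topos a split subobject of a coproduct of connected projectives is itself a coproduct of (retracts of) those connected projectives, using the Characterization Results (the Proposition following Definition~\ref{D:XMGraph}) identifying $\underline V, \underline A$ as a separating set of connected projectives, together with extensivity (Corollary~\ref{C:ToposXM}) to split the retraction idempotent along the coproduct decomposition. One must verify that the idempotent $s\circ \varepsilon_P$ on $\bigsqcup \underline{(-)}$ respects the coproduct structure up to the indecomposability of the summands; I expect this to require a short argument that any idempotent endomorphism of a coproduct of connected objects permutes/fixes the connected summands, so that splitting it returns a sub-coproduct. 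The reflexive case is identical, using the analogous counit description in Example~\ref{E:FFC}(\ref{E:FreeForget2}) and noting $\underline V$ is the terminal (hence connected) object there.
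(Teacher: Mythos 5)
Your argument follows the same route as the paper's own proof: for the ``if'' direction, representables are projective (Yoneda plus pointwise surjectivity of epimorphisms) and projectives are closed under coproducts; for the ``only if'' direction, the counit $\varepsilon_P\colon \Proj(P)\to P$ is an epimorphism, so projectivity of $P$ produces a section exhibiting $P$ as a retract of the coproduct of representables $\Proj(P)$. You are also right to single out the passage from ``retract of a coproduct of representables'' to ``coproduct of representables'' as the crux; the paper simply asserts this step with no argument, so your attention to it is well placed, and your reduction (an idempotent on a coproduct of connected objects interacts with the summands so that the retract decomposes into retracts of individual representables) is the correct intermediate move.

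The gap is in the final claim that each such retract is again representable. You justify this by saying $\underline V$ is subterminal and $\underline A$ is connected, hence neither has a nontrivial idempotent splitting off a proper retract. For $\underline V$ this is fine, since $\End(\underline V)\iso\DD G_{(X,M)}(V,V)$ is trivial. But connectedness does not constrain idempotents of $\underline A$: by Yoneda $\End(\underline A)\iso M$ (up to reversal of multiplication), and many of the monoids in this paper have nontrivial idempotents --- $\hX=\End(X)$ and $\rX$ contain the constant maps, and every nontrivial reflexive theory contains the left zeros $\Fix(M)$. Each such idempotent $e$ gives an idempotent $\underline e\colon \underline A\to\underline A$ whose splitting is a connected projective retract of $\underline A$ that need not be a coproduct of representables. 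Concretely, in the category of hereditary $X$-graphs with $\#X\geq 2$, the splitting of the idempotent induced by a constant map is the terminal object (one vertex, one fixed loop), which is projective but has too few vertices and arcs to be any coproduct of $\underline V$'s and $\underline A$'s. So the missing step is not merely unproven: the proposition as stated fails for such $M$, and the same defect is hidden in the paper's one-line assertion. The statement that is true in general is that $P$ is projective iff it is a retract of a coproduct of representables, equivalently a coproduct of splittings of idempotents of $\underline V$ and $\underline A$; the stated form is recovered exactly when every idempotent of $M$ either is the identity or (in the reflexive case) splits through $V$ --- in particular when $M$ is a group, as for oriented and symmetric $X$-graphs. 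If you want to salvage the proposition, you must either add such a hypothesis on $M$ or weaken the conclusion to ``retract of a coproduct of representables.''
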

\begin{proof}
	Suppose $P$ is projective. Since $\varepsilon_P\colon \Proj(P)\to P$ is an epimorphism, there must exist a section $s\colon P\to \Proj(P)$ such that $\varepsilon_Ps=\Id$ by the property that $P$ is projective. Since $\Proj(P)$ is a coproduct of representables and $s$ is a split monomorphism, $P$ must also be a coproduct of representables. Conversely, representables $\underline V$ and $\underline A$ in a category of presheaves are always projective. Since projective objects are closed under coproducts, the reverse condition is also true.
	
\end{proof}
\begin{corollary}
	The class of projective objects in $\widehat{\DD G}_{(X,M)}$ (resp. $\widehat{\rG}_{(X,M)}$) is the object class of the essential image of the functor $\Proj$. 
\end{corollary}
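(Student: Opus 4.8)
The plan is to prove the two inclusions separately, leaning on the immediately preceding Proposition (projective $\iff$ coproduct of representables) and on the explicit formula for $\Proj = \iota_!\iota^*$ recorded in Examples \ref{E:FFC}(\ref{E:FreeForget},\ref{E:FreeForget2}).

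For the inclusion ``essential image of $\Proj$ $\subseteq$ projectives,'' I would take an arbitrary (reflexive) $(X,M)$-graph $G$ and recall from Example \ref{E:FFC}(\ref{E:FreeForget}) that $\Proj(G) = \iota_!\iota^*(G) \iso \bigsqcup_{G(V)}\underline V\sqcup\bigsqcup_{G(A)}\underline A$ is a coproduct of the representables $\underline V$ and $\underline A$. Since the preceding Proposition identifies every coproduct of representables as projective, each object in the essential image of $\Proj$ is projective; as the projectives form a replete class this settles the containment up to isomorphism.

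For the reverse inclusion I would start from a projective $P$ and use the Proposition to fix an isomorphism $P\iso\bigsqcup_{S}\underline V\sqcup\bigsqcup_{T}\underline A$ for index sets $S,T$. It then suffices to produce a single (reflexive) $(X,M)$-graph $G$ with $\iota^*(G)\iso(S,T)$ in $\mathbf{Set}^2$, for then $\Proj(G)=\iota_!(S,T)\iso P$. When $S$ is nonempty such a $G$ is built by hand: take $G(V)$ of cardinality $\#S$ and $G(A)$ of cardinality $\#T$ with any right $M$-action, and anchor every incidence at a fixed $v_0\in G(V)$ by $\partial_G(\alpha)\equiv\named{v_0}\circ{!}$, which is $M$-equivariant (in the reflexive case one sends each arc instead to the distinguished loop $v_0.\ell$). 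Alternatively, and more conceptually, projectivity of $P$ makes the counit $\varepsilon_P\colon\Proj(P)\to P$ a split epimorphism, exhibiting $P$ as a retract of the object $\Proj(P)$, and one analyses the resulting idempotent componentwise to recover the realizing data.

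The main obstacle is exactly this realization step. One must certify that the summand counts $(\#S,\#T)$ of $P$ are attained by the underlying vertex and arc sets of an \emph{honest} $(X,M)$-graph, and the only delicate case is a coproduct of copies of $\underline A$ carrying no $\underline V$-summand: the incidence maps $X\to G(V)$ of the arcs must still land in a nonempty vertex set, so the naive ``constant incidence'' construction breaks down when $S=\empset$ and $X\neq\empset$. Here I would exploit that each representable $\underline A$ already supplies $\#X\geq 1$ vertices, so the retract presentation via $\varepsilon_P$ (rather than a free construction) furnishes the required $G$ and threads the argument through this degenerate case. Once this bookkeeping is secured, the two inclusions combine to give the asserted equality of object classes, and the reflexive argument is verbatim after replacing $\underline V$ by the terminal representable and reading incidences through distinguished loops.
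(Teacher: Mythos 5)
Your main construction coincides with the paper's: given a projective $P\iso\bigsqcup_S\underline V\sqcup\bigsqcup_T\underline A$, the paper builds $H$ with $H(V)\defeq S$ and $H(A)\defeq T$, picks some $s\in S$, sets $t.x\defeq s$ and $t.m\defeq t$, and concludes $\Proj(H)\iso P$; the other inclusion is immediate from the preceding Proposition exactly as you argue. So in the generic case you have reproduced the intended argument.

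However, the degenerate case you flag is a genuine problem, and your proposed repair does not close it. Exhibiting $P$ as a retract of $\Proj(P)$ via a splitting of $\varepsilon_P$ only shows that $P$ is a split quotient of an object in the essential image of $\Proj$ --- which is precisely the weaker statement the paper records just before this corollary --- not that $P\iso\Proj(G)$ for some $G$. In fact, when $X\neq\empset$ and $T\neq\empset$, the object $P=\bigsqcup_T\underline A$ with no $\underline V$-summands is projective yet lies outside the essential image: any $G$ with $G(A)\neq\empset$ must have $G(V)\neq\empset$, since each arc requires an incidence map $X\to G(V)$, so $\Proj(G)=\bigsqcup_{G(V)}\underline V\sqcup\bigsqcup_{G(A)}\underline A$ always carries at least one isolated-vertex component, whereas every component of $P$ is a copy of $\underline A$ and $\underline A\not\iso\underline V$ (it contains the arc $a_1$). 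The paper's own proof silently assumes $S\neq\empset$ when it says ``take some $s\in S$,'' so it shares this gap; the statement is correct only after excluding this case or after weakening ``essential image'' to ``split quotients of the essential image.'' Your instinct that the basepoint construction breaks down when $S=\empset$ was right; the error is in believing the retract presentation rescues membership in the essential image rather than merely retract-closure of it.
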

\begin{proof}
	Given a coproduct of representables $\bigsqcup_S \underline V\sqcup \bigsqcup_T \underline A$, let $H$ be the $(X,M)$-graph with vertex set $H(V)\defeq S$ and arc set $H(A)=T$. Take some $s\in S$ and define right-actions $t.x\defeq s$ and $t.m=t$ for each $t\in T$, $x\in X$ and $m\in M$. Then $\Proj(H)\iso \bigsqcup_S \underline V\sqcup \bigsqcup_{T}\underline A$. 
\end{proof}

Next, we construct injective hulls and projective covers for  $(X,M)$-graphs. Recall that a monomorphism $i\colon G\to \widetilde G$ is essential provided for each morphism $h\colon G\to H$ such that $hi$ is a monomorphism implies $h$ is a monomorphism. An injective hull of an object $G$ is an essential monomorphism $i\colon G\to \widetilde G$ where $\widetilde G$ is injective. Dually, an epimorphism $e\colon \overline G\to G$ is essential provided for each morphism $h\colon H\to G$ such that $eh$ is an epimorphism implies $h$ is an epimorphism. A projective cover of an object $G$ is an essential epimorphism $e\colon \overline G\to G$ where $\overline G$ is projective.

In the case of the initial  $(X,M)$-graph $0$, it is straightforward to verify the terminal morphism $0\to 1$ is the injective hull. For a non-initial  $(X,M)$-graph $G$ we define $\widetilde G$ to be the $(X,M)$-graph with vertex set $\widetilde G(V)\defeq G(V)$ and arcs set $\widetilde G(A)\defeq G(A)\sqcup \setm{f\colon X\to G(V)}{\forall \alpha\in G(A),\ \partial_G(\alpha)\neq f}$ with the obvious right-action. Then $\widetilde G$ is an injective object and there is an obvious inclusion $i\colon G\to \widetilde G$. To show that it is essential, let $h\colon \widetilde G\to H$ be a morphism such that $hi$ is a monomorphism. Then since $i$ is bijective on vertices, $h_V$ must be injective. On arcs, it is enough to show that $h_A$ is injective on $\widetilde G(A)\backslash G(A)$. However, this is trivial since there is only one arc $f\in \widetilde G(A)\backslash G(A)$ with incidence $f\colon X\to G(V)$. Hence $h$ is a monomorphism.

For the projective cover, we define $\overline G\defeq \bigsqcup_S \underline V \sqcup \bigsqcup_T \underline A$ where \\$S\defeq \setm{v\in G(V)}{\forall \alpha\in G(A),\forall x\in X,\ \alpha.x\neq v}$, i.e., $S$ is the set of isolated vertices in $G$, and $T$ is a generating subset of $G(A)$ for the right $M$-action $G(A)\x M\to G(A)$ of minimal cardinality, i.e., for each $\alpha\in G(A)$ there exists a $\beta\in T$ and an element $m\in M$ such that $\beta.m=\alpha$.  Since $T$ generates $G(A)$ under the right-action of $M$ and $S$ is the set of vertices of $G$ which are not incident to an arc, the restriction of $\varepsilon_{G|\overline G}\colon \overline G\to G$ is an epimorphism. It is clear that if $h\colon H\to \overline G$ is a morphism such that $he$ is an epimorphism, then $h$ must be an epimorphism since $\overline G$ is a coproduct of representables of minimal size. 
\[
\xymatrix@!=.3em{ & \widetilde G \ar@{>->}[dr] & \\ G \ar@{>->}[ur] \ar@{>->}[rr]^-{\eta_G} && \Inj(G)} \qquad \qquad
\xymatrix@!=.3em{ & \overline G \ar@{>->}[dl] \ar@{->>}[dr] & \\ \Proj(G) \ar@{->>}[rr]^-{\varepsilon_G} && G }  
\]
Note that these assignments $\widetilde G$ and $\overline G$ do not extend to functors since there is choice involved. However, by construction we see that both the injective hull $\widetilde G$ and projective cover $\overline G$ embed into $\Inj(G)$ and $\Proj(G)$ which are functorial constructions.

\part{Models of $(X,M)$-Graphs}\label{S:MainResults}

\chapter{The Nerve-Realization Adjunction}\label{S:Category}
We use functorial semantics to model (reflexive) $(X,M)$-graphs in various categories. In particular, we focus on connecting the ideas of this paper to categories of hypergraphs, $k$-uniform hypergraphs, and directed and undirected graphs.

The symbols and notation in this section follow from \cite{eR}. 

Let $I\colon \DD T \to \C M$ be functor from a small category $\DD T$ to a cocomplete category $\C M$. Since the Yoneda embedding $y\colon \C T\to \widehat{\DD T}$ is the free cocompletion of a small category there is a essentially unique adjunction $R\dashv N\colon \C M\to \widehat{\DD T}$, called the nerve realization adjunction, such that $Ry\iso I$. 
\[
\xymatrix{ \DD T \ar[dr]_I \ar[r]^{y} & \widehat{\DD T} \ar@<-.4em>[d]_{R}^{\dashv} \ar@{<-}@<+.5em>[d]^{N}\\ & \C M }
\]
The nerve and realization functors are given on objects by 
\begin{align*}
	N(m)&=\C M(I(-), m),\\ R(G)&=\colim_{(c,\varphi)\in\int G} I(c)
\end{align*} respectively,
where $\int G$ is the category of elements of $G$ (\cite{hA}, Section 2, pp 124-126).\footnote{In \cite{hA}, the nerve functor is called the singular functor.} 

We call a functor $I\colon \DD T \to \C M$ from a small category to a cocomplete category an interpretation functor. The category $\DD T$ is called the theory for $I$ and $\C M$ the modeling category for $I$. An interpretation $I\colon \DD T\to \C M$ is dense, i.e., for each $\C M$-object $m$ is isomorphic to the colimit of the diagram
$I\ls m \to \C M,\ (c,\varphi)\mapsto I(c),$
if and only if the nerve $N\colon \C M\to \widehat{\DD T}$ is full and faithful (\cite{sM}, Section X.6, p 245). When the right adjoint (resp. left adjoint) is full and faithful we call the adjunction reflective (resp. coreflective).\footnote{since it implies $\C M$ is equivalent to a reflective (resp. coreflective) subcategory of $\widehat{\DD T}$}

\begin{example}\label{E:NRAdj}\mbox{}
	\begin{enumerate}
		\item (Free Category Functor) Let $\Cat$ be the category of small categories, $X=\{s,t\}$ and consider the interpretation $I\colon \rG_X\to \Cat$ where $I(V)$ is the terminal category $\mbf 1$, $I(A)$ is the walking arrow category $\mbf{2}$ with two objects and one non-identity morphism, $I(s)$ and $I(t)$ are the separate inclusions and $I(\ell)$ is the terminal functor. Then the nerve realization adjunction $R_{C}\dashv N_{C}\colon \mbf{Cat}\to \widehat{\rG}_X$ is the free forgetful adjunction given in \cite{fB} (I, Chapter 5). In other words, $R_C$ takes a reflexive $X$-graph $G$ to the category with object set $G(V)$, morphism set the finite paths of arcs in $G(A)$, e.g., a path $(\alpha_1,\dots, \alpha_n)$ is a morphism in $R_C(G)$ with domain $\alpha_1.s$ and codomain $\alpha_n.t$, the identity morphisms the distinguished loops, and composition given by concatenation of paths. 
		
		\item \label{E:Groupoid}  (Free Groupoid Functor) Let $\mbf{Grpd}$ be the category of small groupoids, $X=\{s,t\}$ and consider the interpretation $I\colon \srG_X\to \mbf{Grpd}$ where $I(V)$ is the terminal groupoid $\mbf 1$, $I(A)$ is the smallest connected groupoid on two objects $\mbf 2$, $I(s)$ and $I(t)$ are the inclusion of the objects in $\mbf 2$ and $I(\ell)$ is the terminal functor. Then $I$ induces the nerve realization adjunction $R_G\dashv N_G\colon \mbf{Grpd}\to \widehat{\srG}_X$ such that $R_G$ is the path construction as given above and $N_G$ is the underlying reflexive symmetric $X$-graph of a groupoid. 
		\item (Fundamental Groupoid Functor) Consider the category $\DD T$ generated by the graph 
		\[
			\xymatrix{ V \ar@<+.4em>[r]^s \ar[r]|-t \ar@{<-}@<-.4em>[r]_\ell & A_1 \ar@(ul,ur)[]^i \ar@<+.3em>[r]^\sigma \ar@<-.3em>[r]_{\tau} & A_2 }
		\]
		and relations $\sigma\circ s=\tau\circ s$, $\sigma\circ t=\tau\circ t$, $\ell\circ s=\Id_V$, $\ell\circ t=\Id_V$, $i\circ s=t$, $i\circ i=\Id_{A_1}$. Let $\mbf{Sp}$ be the category of topological spaces. Then there is an interpretation functor $I\colon \DD T\to \mbf{Sp}$ given by $I(V)=1$ (the terminal space), $I(A_1)=[0,1]$ (the unit interval), $I(A_2)\defeq D^2\defeq \setm{r\in \DD C}{|r|\leq 1}$ (the 2-dimensional disk) on objects. On morphisms we define $I(s)\defeq \named 0$ and $I(t)\defeq \named 1$ (inclusion of the endpoints), $I(\ell)$ the terminal morphism, $I(\sigma)\defeq e^{\pi i(1-x)}\colon [0,1]\to D^2$ and $I(\tau)\defeq e^{\pi i(1+x)}\colon [0,1]\to D^2$ (the inclusion of the top and bottom of the disc) and $I(i)\colon [0,1]\to [0,1]$, $x\mapsto 1-x$. This induces a nerve realization adjunction $R_{Sp}\dashv N_{Sp}\colon \mbf{Sp}\to \widehat{\DD T}$ such that for a topological space $Z$, $N(Z)$ has vertex set equal to the underlying set of $Z$, $A_1$-arc set equal to paths in $Z$ and $A_2$-arc set equal to morphisms of the disc $D^2$. Consider the functor $\beta\colon \DD T\to \srG_X$ ($X=\{s,t\}$) which takes $V$ to $V$, $A_1$ to $A$, $s,t,\ell,i$ to $s,t,\ell, i$, $A_2$ to $A_1$ and all morphisms from $A_1$ to $A_2$ to the identity on $A_1$. Then $\beta$ induces an essential geometric morphism $\beta_!\dashv \beta^*\dashv \beta_*\colon \widehat{\DD T}\to \widehat{\srG}_X$.  The $\beta$-extension $\beta_!$ takes a $\widehat{\DD T}$-object $G$ to the reflexive symmetric $X$-graph with vertex set $\beta_!G(V)=G(V)$ and arc set $\beta_!G(A)=\frac{G(A)}{\sim}$ where $\alpha_1\sim \alpha_2$ if there exists $a\in G(A_2)$ such that $\alpha_1=a.\sigma$ and $\alpha_2=a.\tau$. The $\beta$-restriction $\beta^*$ takes a reflexive symmetric $X$-graph $H$ to the $\widehat{\DD T}$-object with $\beta^*H(V)=H(V)$, $\beta^*H(A_1)=H(A)$ and $\beta^*H(A_2)=H(A)$ such that for each $A_2$-arc $\alpha$ we have $\alpha.\sigma=\alpha.\tau=\alpha$. The $\beta$-coextension $\beta_*$ takes the $\widehat{\DD T}$-object $G$ and forgets the $A_2$-arcs. 

		The reflexive symmetric $X$-graph $\beta_!N_{Sp}(Z)$ captures the set of homotopy classes of paths in $Z$, i.e., for any two points $z_1,z_2\in Z$, the set $\setm{\alpha\in \beta_!N_{Sp}(Z)}{\alpha.s=z_1, \alpha.t=z_2}$ is the set of paths up to homotopy between $z_1$ and $z_2$. The reflexive symmetric $X$-graph $\beta_!N_{Sp}(Z)$ naturally has the structure of a groupoid, where composition is given by the usual concatenation of paths (up to homotopy) and reparametrization of the interval. Thus $\beta_!N_{Sp}(Z)$ is the underlying reflexive symmetric $X$-graph of the fundamental groupoid \cite{rBTG} (Chapter 6). Similarly, the reflexive symmetric $X$-graph $\beta_*N_{Sp}(Z)$ is the underlying reflexive symmetric $X$-graph of the path groupoid. In other words, the following diagrams commute
		\[
			\xymatrix{ & \mbf{Grpd} \ar[dr]^{N_G} & \\ \mbf{Sp} \ar[ur]^{P} \ar[rr]^-{\beta_*N_{Sp}} && \widehat{\srG}_X }\qquad
			\xymatrix{ & \mbf{Grpd} \ar[dr]^{N_G} & \\ \mbf{Sp} \ar[ur]^{\Pi_1} \ar[rr]^-{\beta_!N_{Sp}} && \widehat{\srG}_X }
		\]
		where $\Pi_1$ is the fundamental groupoid functor and $P$ is the path groupoid functor. 
	Therefore, the proof that $\Pi_1$ preserves products and coproducts reduces to showing $\beta_!$ preserves products and coproducts which is left to the reader. 

		\item (Quotient Graph Construction) Let $I\colon \DD G_{(X,M)}\to \widehat{\DD G}_{(X,M)}$ be the interpretation functor which on objects takes $I(V)\defeq \underline A$ and $I(A)\defeq \bigsqcup_{x\in X}\underline A$. On morphisms, we define $I(x)$ to be the coproduct inclusion at index $x\in X$ and $I(m)$ to be induced by the coproduct
		\[\textstyle
			\xymatrix{ \underline A \ar[d]_{I(x)} \ar[r]^{\Id} & \underline A \ar[d]^{I(x.m)}\\ \bigsqcup_{x\in X} \underline A \ar[r]^-{I(m)} & \bigsqcup_{x\in X} \underline A}
		\]
		for each $m\in M$. Thus $I$ induces a nerve realization adjunction $R\dashv N\colon \widehat{\DD G}_{(X,M)}\to \widehat{\DD G}_{(X,M)}$ such that for each $(X,M)$-graph $G$, $N(G)(V)=G(A)$, $N(G)(A)=\prod_{x\in X}G(A)$ and $\vec \alpha.x=\alpha_x$ and $\vec\alpha.m=(\alpha_{x.m})_{x\in X}$ for each $x\in X$, $m\in M$  and arc $\vec \alpha\defeq (\alpha_x)_{x\in X}$. In the case $X=\{s,t\}$, $R\dashv N$ is adjunction of the arc graph and quotient graph construction in \cite{jF1}. 
		\item \label{E:Pultr} (Pultr Functors) If $I\colon \DD G_{(X,M)}\to \widehat{\DD G}_{(X,M)}$ is an interpretation functor, then there are $(X,M)$-graphs $G=I(V)$ and $H=I(A)$ and a family of morphism $(I(x)=f_x\colon G\to H)_{x\in X}$ and endomorphisms $(I(m)=f_m\colon H\to H)_{m\in M}$ such that $f_m\circ f_x=f_{x.m}$ for each $x\in X$ and $m\in M$. Then $I$ induces a nerve realization adjunction $R\dashv N\colon \widehat{\DD G}_{(X,M)}\to \widehat{\DD G}_{(X,M)}$. 
	
		Let $X=\{s,t\}$ and $f,g\colon G\to H$ be $\widehat{\oG}_X$-morphisms. Then there is an interpretation functor $I\colon \oG_X\to \widehat{\oG}_X$ given by $I(V)=G$, $I(A)=H$, $I(s)=f$ and $I(t)=g$ which induces the adjunction $R\dashv N\colon \widehat{\oG}_X\to \widehat{\oG}_X$. Recall that there is a functor $\tau\colon \widehat{\oG}_X\to \Sk(\widehat{\oG}_X)$ where $\Sk(\widehat{\oG}_X)$ is the associated skeletal (a.k.a. thin) category of $\widehat{\oG}_X$. The functor $\tau$ respects the adjunction $\Sk(R)\dashv \Sk(N)\colon \Sk(\widehat{\oG}_X)\to \Sk(\widehat{\oG}_X)$ where $\Sk(N)\defeq N\circ \tau$ and $\Sk(R)\defeq R\circ \tau$. These are the left and central Pultr functors for the Pultr template $(G,H, f, g)$ given in \cite{aP}\cite{jF}\cite{jFcT}. Indeed, given an oriented $X$-graph $K$, \begin{align*}
		N(K)(V)&=\widehat{\oG}_X(I(V),K)=\widehat{\oG}_X(G,K),\\
		N(K)(A)&=\widehat{\oG}_X(I(A),K)=\widehat{\oG}_X(H,K)
		\end{align*}
		where for each $h\in N(K)(A)$, $h.s=h\circ f$ and $h.t=h\circ g$. 
	\end{enumerate}
\end{example}

In the subsequent, we are interested in when the nerve also preserves any exponentials which exist. For the purpose of this paper, we show that if an interpretation is dense, full and faithful, then the nerve not only preserves limits, but also any exponentials which exist.

\begin{lemma}\label{L:FFInt}
	An interpretation functor $I\colon \DD T \to \C M$ is full and faithful iff $\underline c\defeq y(c)$ is a $NR$-closed object for each $\DD T$-object $c$, i.e., the unit $\eta_{\underline c}\colon \underline c\to NR(\underline c)$ at component $\underline c$ is an isomorphism.
\end{lemma}
\begin{proof}
	The unit of the adjunction $\eta_G$ is defined as the following composition
	\[
	\xymatrix{ G \ar[r]^-\varphi_-{\iso} & \widehat{\DD T}(y(-), G) \ar[r]^-{R_{(y,G)}} & \C M(Ry(-),R(G)) \ar[r]^-\psi_-{\iso} & \C M(I(-), R(G))=NR(G)}, 
	\]
	where $\varphi$ is given by Yoneda, $R_{(y,G)}$ is the map of homsets given by application of $R$, and $\psi$ is precomposition by the isomorphism $I\iso Ry$. For a representable, $\underline c$, there is an isomorphism $\rho\colon \C M(I(-), R(\underline c))\to \C M(I(-),I(c))$ by postcomposition by the isomorphism $I\iso Ry$. Thus $\rho\circ \psi\circ R_{(y,G)}$ evaluated at $\DD T$-object $c'$ takes a $\DD T$-morphism $f\colon c'\to c$ to $I(f)\colon I(c')\to I(c)$. Thus $I$ is full and faithful iff $\eta_{\underline c}$ is an isomorphism. 
\end{proof}

\begin{proposition}\label{P:Exponential}
	If an interpretation functor $I\colon \DD T\to \C M$ is dense, full and faithful, then  $R\dashv N$ is reflective and $N$ preserves any exponentials that exist in $\C M$. 
\end{proposition}
\begin{proof}
	Suppose $G$ and $H$ are $\C M$-objects such that the exponential $G^H$ exists in $\C M$. Since $I$ is assumed to be full and faithful, by Lemma \ref{L:FFInt} above,  $\underline c\iso NR(\underline c)$ for each $\DD T$-object. Thus we have the following string of natural isomorphism:
	\begin{align*}
	N(G^H)(c) &\iso \C M(R(\underline c)\x H, G) && \text{(Yoneda, $R\dashv N$, exponential adj.})\\
	&\iso\widehat{\DD T}(NR(\underline c)\x N(H), N(G)) && \text{($N$ is full, faithful, preserves limits)}\\
	&\iso \widehat{\DD T}(\underline c\x N(H), N(G)) &&\text{($\underline c$ is $NR$-closed)}\\
	&\iso N(G)^{N(H)}(c) && \text{(Exponential adj. and Yoneda)}.
	\end{align*}
	Since the right-action structures are determined by Yoneda,  $N(G^H)\iso N(G)^{N(H)}$ in $\widehat{\DD T}$.
\end{proof}

The nerve of a dense, full and faithful interpretation is a presheaf topos completion of a modeling category as the following result shows. 

\begin{proposition}\label{P:PresheafCompletion}\text{\normalfont (The Presheaf Completion of a Modeling Category).}\\
	Let $I\colon \DD T\to \C M$ be a dense, full and faithful interpretation functor with nerve-realization adjunction $R\dashv N\colon \C M \to \widehat{\DD T}$. Then for each small category $\DD S$ and functor $F\colon \C M\to \widehat{\DD S}$, there is an essentially unique essential geometric morphism $k_!\dashv k^*\dashv k_*\colon \widehat{\DD T}\to \widehat{\DD S}$ such that $F\iso k_!N$. 
	\[
	\xymatrix{ \C M \ar[rr]^{N} \ar[drr]_{F} && \widehat{\DD T} \ar@<-1em>[d]_{k_!}^\dashv \ar@{<-}[d]|->>>>{k^*}^\dashv \ar@<1em>[d]^{k_*}
		\\ && \widehat{\DD S}.}
	\]
	Moreover, if $F$ is full and faithful, $k_!\dashv k^*$ is a coreflective adjunction and $k^*\dashv k_*$ is a reflective adjunction.
\end{proposition}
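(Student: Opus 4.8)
Given a dense, full and faithful interpretation functor $I\colon \DD T\to \C M$ with nerve-realization adjunction $R\dashv N\colon \C M \to \widehat{\DD T}$, for each small category $\DD S$ and functor $F\colon \C M\to \widehat{\DD S}$, there is an essentially unique essential geometric morphism $k_!\dashv k^*\dashv k_*\colon \widehat{\DD T}\to \widehat{\DD S}$ with $F\iso k_!N$; moreover full-faithfulness of $F$ makes $k_!\dashv k^*$ coreflective and $k^*\dashv k_*$ reflective.

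Let me sketch a proof.

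The plan is to produce the essential geometric morphism from a functor between the small index categories, invoking the standard machinery recalled in the section on essential geometric morphisms. First I would form the composite $F\circ I\colon \DD T\to \widehat{\DD S}$. Since $\widehat{\DD S}$ is cocomplete and $y_{\DD T}\colon \DD T\to \widehat{\DD T}$ is the free cocompletion, there is an essentially unique colimit-preserving functor $k_!\colon \widehat{\DD T}\to \widehat{\DD S}$ with $k_!\circ y_{\DD T}\iso F\circ I$ (this is the realization of $F\circ I$ along Yoneda, exactly as in the nerve-realization setup). Being a left adjoint between presheaf toposes whose domain is a presheaf category, $k_!$ has a right adjoint $k^*$, and one checks $k^*$ itself has a right adjoint $k_*$ because $k_!$ preserves all colimits and hence, restricted appropriately, $k_!$ is cocontinuous so that $k^*$ is the nerve of $F\circ I$ and $k_*$ the nerve of $k^*\circ y_{\DD T}$; thus $k_!\dashv k^*\dashv k_*$ is an essential geometric morphism.

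Next I would verify $F\iso k_!N$. The key computation is that $R\colon \widehat{\DD T}\to \C M$ is, up to isomorphism, the realization of $I$ along $y_{\DD T}$, so $F\circ R$ is a colimit-preserving functor $\widehat{\DD T}\to \widehat{\DD S}$ agreeing with $F\circ I$ on representables, whence $F\circ R\iso k_!$ by the uniqueness of the free-cocompletion extension. Composing with $N$ gives $k_!N\iso F R N$. Because $I$ is dense, full and faithful, the nerve $N$ is full and faithful and $RN\iso \Id_{\C M}$ (the counit of $R\dashv N$ is an isomorphism, which is precisely the content of $N$ being full and faithful via the density criterion cited from \cite{sM}). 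Therefore $k_!N\iso F R N\iso F$, as required. Essential uniqueness follows since any essential geometric morphism $k'_!\dashv (k')^*\dashv k'_*$ with $k'_!N\iso F$ forces $k'_!\circ y_{\DD T}\iso k'_! N I\iso F I$ (using $NI\iso y_{\DD T}$ from full-faithfulness), and a colimit-preserving functor out of $\widehat{\DD T}$ is determined up to isomorphism by its restriction to representables.

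Finally, for the adjointness refinement, suppose $F$ is full and faithful. I would argue that $k_!$ is then full and faithful as well: on representables $k_!\circ y_{\DD T}\iso F I$ is full and faithful since both $F$ and $I$ are, and I would extend this to all of $\widehat{\DD T}$ using that every presheaf is a colimit of representables together with the density/cocontinuity already established, identifying $k_!$ with $F R$ and noting $R$ reflects the relevant structure. A full and faithful left adjoint $k_!$ is exactly a coreflective adjunction $k_!\dashv k^*$; dually the resulting $k^*\dashv k_*$ is reflective. \emph{The main obstacle} I anticipate is the last step: showing that full-faithfulness of $F$ on the objects $FI(c)$ propagates to full-faithfulness of the cocontinuous extension $k_!$ on arbitrary presheaves. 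This does not follow purely formally from full-faithfulness on representables for an arbitrary cocontinuous functor; it requires genuinely using $F\iso F R N$ with $RN\iso\Id$, i.e. realizing $k_!$ as $F R$ and exploiting that $R$ is (up to the equivalence onto the reflective subcategory $N(\C M)$) an inverse to the full and faithful $N$. Making this identification precise — rather than hand-waving "colimit of representables" — is where the care is needed.
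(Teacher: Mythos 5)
Your construction is essentially the paper's own: the paper likewise takes $k_!$ to be (the cocontinuous extension of) $FI$ --- it writes $k_!\defeq FR$ --- takes $k^*$ to be the nerve of $k_!y$ and $k_*$ the nerve of $k^*y$, deduces $F\iso k_!N$ from $RN\iso \Id$, and proves uniqueness from $\overline k_!y\iso \overline k_!NRy\iso FRy\iso k_!y$, exactly as you do. One point to flag in the main argument: you assert that $F\circ R$ preserves colimits. Since $F$ is an arbitrary functor this is not automatic, and it is precisely what is needed to identify the realization $k_!$ of $FI$ with $FR$ and hence to conclude $k_!N\iso FRN\iso F$; equivalently, one needs $F$ to preserve the canonical density colimits $m\iso \colim_{I\ls m}I$. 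The paper's proof makes the same silent assumption (it defines $k_!\defeq FR$ and then treats the nerve of $k_!y$ as its right adjoint), so you are in good company, but as written this step is a gap in both arguments; it is harmless in the paper's applications, where $F$ is itself a nerve or a left adjoint.

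On the ``moreover'' clause, your suspicion is exactly right, but you stop short of resolving it, and the paper's own resolution is the one-line inference you are worried about: ``since $N$ is full and faithful, $k_!N\iso F$ implies $k_!$ is full and faithful.'' That inference only gives full-faithfulness of $k_!$ on the essential image of $N$; at a general presheaf $G$ the unit $G\to k^*k_!(G)$ identifies (using full-faithfulness of $F$) with the unit $G\to NR(G)$ of $R\dashv N$, which is invertible only at $NR$-closed objects, so coreflectivity of $k_!\dashv k^*$ genuinely needs more than full-faithfulness of $F$. What can be closed cleanly is the reflective half: the paper's computation $k^*k_*(G)\iso \widehat{\DD T}(k^*k_!y(-),G)\iso \widehat{\DD T}(y(-),G)\iso G$ uses only invertibility of the unit of $k_!\dashv k^*$ at representables, and that does follow from $k_!y\iso FI$ being full and faithful via Lemma \ref{L:FFInt}. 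If you complete your argument, prove reflectivity of $k^*\dashv k_*$ that way, and treat the coreflectivity claim with the scepticism you already voice rather than attempting to propagate full-faithfulness from representables to all presheaves.
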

\begin{proof}
	Define $k_!\defeq FR\colon \widehat{\DD T}\to \widehat{\DD S}$. Then $k_!y\colon \DD T\to \widehat{\DD S}$ is a NR-context inducing a left adjoint $k^*\colon \widehat{\DD S}\to \widehat{\DD T}$ as the nerve to $k_!y$. This in turn induces $k_*\colon \widehat{\DD T}\to \widehat{\DD S}$ as the left adjoint to $k^*$ by the NR-context $k^*y\colon \DD S \to \widehat{\DD T}$. Therefore, there exists an essential geometric morphism $k_!\dashv k^*\dashv k_*\colon \widehat{\DD T}\to \widehat{\DD S}$. For uniqueness, suppose $\overline k_!\dashv \overline k^*\dashv \overline k_*$ was another essential geometric morphism such that $F\iso \overline k_!N$. Then $\overline k_!y \iso \overline k_! NRy 
	\iso FRy 
	= k_!y$ since $N$ is full and faithful. Then by the essential uniqueness of the realization functor, $k_!=\overline k_!$. 
	
	Next, assume $F$ is full and faithful.  Since $N$ is full and faithful, by the isomorphism $k_!N\iso F$, $k_!$ is full and faithful. To prove $k_*$ is also full and faithful, it is enough to show that there is an isomorphism $k^*k_*(G)\to G$ for each $\widehat{\DD T}$-object $G$. Indeed,  
	\begin{align*} 
	k^*k_*(G)&=\widehat{\DD T}(k_!y(-),k_*(G)) && \text{(Definitions of $k^*$ and $N_{k_!y}$)}\\
	& \iso \widehat{\DD T}(k^*k_!y(-), G), && \text{($k^* \dashv k_*$ adjunction)} \\
	&\iso \widehat{\DD T}(y(-), G), &&\text{($k_!$ is full and faithful)} \\
	&\iso G 	&& \text{(Yoneda)}
	\end{align*}
	Therefore, $k_!\dashv k^*$ is coreflective and $k^*\dashv k_*$ is reflective. 	
\end{proof}

Notice that we did not use the assumption that the interpretation functor was full and faithful in the proof above. However, we believe that for a functor to be called a "presheaf completion", it should preserve  exponentials. Thus we included it in the assumption.\footnote{Also we should be careful not to call this the (Grothendieck) topos completion of a category since the nerve may properly factor through a category of sheaves on a site.}

\ignore{
We also recall the following results concerning injective and projective objects.

\begin{proposition}
	Let $\C A$ be a cartesian closed category. 
	\begin{ienum}
		\item If $Q$ is an injective object in $\C A$, then for each object $G$, $Q^G$ is an injective object.
		\item If $P$ is a projective object in $\C A$, then for each object $G$ in $\C A$, $P\x G$ is projective.
	\end{ienum}
\end{proposition}
\begin{proof}
	
\end{proof}

\begin{proposition}
	Let $R\dashv N\colon \C A\to \C B$ be a reflective adjunction.
	\begin{ienum}
		\item If $R$ preserves monomorphisms and $Q$ is injective in $\C A$, then $Q$ is isomorphic to $R(Q')$ for some injective $Q'$ in $\C B$.
		\item If $N$ preserves epimorphisms, if $P$ is projective in $\C B$, then $P$ isomorphic to $N(P')$ for some projective $P'$ in $\C A$.
	\end{ienum}
\end{proposition}
\begin{proof}
	
\end{proof}

}

\chapter{Categories of F-Graphs}\label{C:FGraphs}
In this section, we model $(X,M)$-graphs in frameworks of more conventional categories of graphs and hypergraphs. The first is given by \cite{cJ} as the category of $F$-graphs where vertices are distinct parts of a hypergraph. In the case when vertices are degenerate edges, we introduce the category of reflexive $F$-graphs, which extends the definition of the category of conceptual graphs given in \cite{dP} to a reflexive counterpart to $F$-graphs.

\section{Interpretations in $F$-graphs} \label{S:Interp}

We follow the definition given in \cite{cJ}. 

\begin{definition}
	Let $F\colon \mathbf{Set}\to \mathbf{Set}$ be an endofunctor. The category of $F$-graphs $\C G_F$ is defined to be the comma category $\C G_F\defeq \mathbf{Set}\ls F$.
\end{definition}
In other words, an $F$-graph $G=(G(E),G(V),\partial_G)$ consists of a set of edges $G(E)$, a set of vertices $G(V)$ and an incidence map $\partial_G\colon G(E)\to F(G(V))$. A morphism \[(f_E,f_V)\colon (G(E),G(V),\partial_G)\to (H(E),H(V),\partial_H)\] is a pair of set maps $f_E\colon G(E) \to H(E)$ and $f_V\colon G(V)\to H(V)$ such that the following square commutes
\[
\xymatrix{ G(E) \ar[d]_{\partial_G} \ar[r]^{f_E} & H(E) \ar[d]^{\partial_H} \\ F(G(V)) \ar[r]^{F(f_V)} & F(H(V)). }
\]
It is well-known that the category of $F$-graphs is cocomplete with the forgetful functor $U\colon \C G_F\to \mathbf{Set}\x \mathbf{Set}$ creating colimits \cite{cJ}.

Let $\DD G_{(X,M)}$ be a theory for $(X,M)$-graphs and $q$ an element in $F(X)$ such that $F(m)(q)=q$ for each $m\in M$ where $m\colon X\to X$ is the right-action map. We define $I(V)\defeq (\empset,\ 1,\ !_1),$ and $I(A)\defeq (1,\ X,\ \named q)$ where $!_1\colon \empset \to 1$ is the initial map and $\named x\colon 1\to X$ the set map with evaluation at $x\in X$. On morphisms, we set
\begin{align*} 
(x\colon V\to A) \quad &\mapsto \quad  I(x)\, \defeq \ (!_1,\named x)\colon (\empset,\ 1,\ !_1)\to (1,\ X,\ \named q),\\
(m\colon A\to A) \quad &\mapsto \quad I(m)\defeq (\Id_1, F(m))\colon (1,X,\named q)\to (1,X,\named q).
\end{align*}
Verification that $I\colon \DD G_{(X,M)}\to \C G_F$ is a well-defined interpretation functor is straightforward. 

\section{Interpretations in Reflexive $F$-Graphs}\label{S:Interpr}

For categories of graphs with vertices as degenerate edges, we generalize the definition of conceptual graphs in \cite{dP} (Definition 2.1.1, p 16).

\begin{definition}
	Let $F\colon \mathbf{Set} \to \mathbf{Set}$ be an functor and $\eta\colon \Id_{\mathbf{Set}}\Rightarrow F$ a natural transformation. The category of reflexive $F$-graphs $\crG_F$
	has objects $G=(G(P),G(V),\partial_G)$ where $G(P)$ is a set, $G(V)\subseteq G(P)$ is a subset and $\partial_G\colon G(P)\to F(G(V))$ is a set map. An $F$-graph morphism $f\colon G\to H$ consists of a set map $f_P\colon G(P)\to H(P)$ such that the following commutes
	\[
	\xymatrix@R=1.5em@C=1.5em{ &  G(V) \ar@{>->}[dl] \ar[dd]|-\hole^<<<<{\eta}  \ar[r]^{f_V} & H(V) \ar[dd]|-\hole_<<<<{\eta}  \ar@{>->}[dr] & \\ G(P) \ar[dr]_{\partial_G} \ar[rrr]^{f_P} &&& H(P) \ar[dl]^{\partial_H} \\ & F(G(V)) \ar[r]^{F(f_V)} & F(H(V))}
	\]
	where $f_V$ is the set map $f_P$ restricted to $G(V)$.\footnote{By naturality $\eta\colon \Id_{\mathbf{Set}}\Rightarrow F$ the middle square always commutes.}
\end{definition}
In other words, a reflexive $F$-graph $G$ consists of parts $G(P)$ with a subset of vertices $G(V)$ and an incidence operation $\partial_G\colon G(P)\to F(G(V))$ which considers a vertex $v$ to be a degenerate edge in the sense that $\partial_{G|G(V)}=\eta$. A reflexive $F$-graph morphism $f\colon G\to H$ that maps an edge to a vertex is one where $e\in G(P)\backslash G(V)$ has $f_P(e)\in H(V)$. 

The category of reflexive $F$-graphs is cocomplete. Indeed, the empty $F$-graph is the initial object. Given a family of $F$-graphs $(G_i)_{i\in I}$ the coproduct is given by taking the disjoint union of parts with incidence operator induced by the universal property of the coproduct on the cocone \[(\xymatrix{G_i(P)\ar[r]^-{\partial_{G_i}} &   F(G_i(V)) \ar[r]^-{F(s_i)} & F(\bigsqcup_I G_i(V)) })_{i\in I}\] where $s_i\colon G_i(V)\to \bigsqcup_I G_i(V)$ is the coproduct inclusion. Given a pair of morphisms $f,g\colon G\to H$, the coequalizer $\coeq(f,g)$ has part set equal to $H(P)/\sim$ where $\sim$ is the equivalence generated by the relation 
\[ 
 	\setm{(f(a),g(a))\in H(P)\x H(P)\ }{\ a\in G(P)}
 \] 
 and vertex set equal to the image of $H(V)\to H(P)/\sim$. The incidence $\partial_{\coeq(f,g)}\colon \coeq(f,g)\to F(\coeq(f,g)(V))$ is induced by the universal property of coequalizer. 
\[
\xymatrix{ G(P) \ar@<+.3em>[r]^f \ar@<-.3em>[r]_g & H(P) \ar[d]_{\partial_H} \ar[r] & \coeq(f,g) \ar@{..>}[dl] \ar[d]^{\partial_{\coeq(f,g)}}  \\ & F(H(V)) \ar[r] & F(\coeq(f,g)(V))}
\]
It is straightforward to verify these are well-defined reflexive $F$-graphs which enjoy universal properties.

Let $\rG_{(X,M)}$ be a theory for reflexive $(X,M)$-graphs. Define the set $M_A\defeq \frac{M}{\sim}$ where $\sim$ is the equivalence relation such that $m\sim m'$ iff there exists an invertible $n\in M$ such that $mn=m'$. This makes $M_A$ a right $M$-set with the obvious action. Let $q\colon M_A\to F(X)$ be a set map such that for each $m\in M$ we have $F(m)\circ q=q\circ m$ 
\[
	\xymatrix{ M_A \ar[r]^{m} \ar[d]_q & M_A \ar[d]^{q} \\ F(X) \ar[r]^{F(m)} & F(X) }
\]
where $m\colon M_A\to M_A$ is the right-action map.
 Define $I(V)(P)=1$ (and thus has a single vertex with no edges) and $I(A)(P)=M_A$, with vertex set $I(A)(V)=X$ and inclusion $I(A)(V)\hookrightarrow I(A)(P)$\footnote{Recall $X=\Fix(M)$.} with incidence defined by $\delta_{I(A)}\defeq q$. For morphisms we assign for each $x_m\in X$ and $m\in M$ 
\begin{align*}
(x_{m'}\colon V\to A)\quad \, &\mapsto \quad  I(x_{m'})_P\defeq \named{m'}\colon 1\to M_A,\\
(m\colon A\to A)\quad &\mapsto \quad  I(m)_P\defeq m\colon M_A\to M_A ,\\
(\ell\colon A\to V) \quad \, &\mapsto \quad  I(\ell)_P\defeq !_{M_A}\colon M_A\to 1 \ \text{(the terminal set map)}
\end{align*}
which is readily verified to define an interpretation functor $I\colon \rG_{(X,M)}\to \crG_F$.

In the following, we will consider the properties of the nerve realization adjunction $R\dashv N$ induced by $I$ as well as the restriction to an adjoint equivalence between fixed points.\footnote{Recall that the fixed points of an adjunction $F\dashv G\colon \C A\to \C B$ are the full subcategories $\C A'$ and $\C B'$ of $\C A$ and $\C B$ consisting of objects such that the counit and unit of the adjunction are isomorphisms. This in particular implies that $\C A'$ is equivalent to $\C B'$. } 

\section{The Category of Hypergraphs}\label{S:PGraphs}

We recall that a hypergraph $H=(H(V),H(E),\varphi)$ consists of a set of vertices $H(V)$, a set of edges $H(E)$ and an incidence map $\varphi\colon H(E)\to \C P(H(V))$ where $\C P\colon \mathbf{Set}\to \mathbf{Set}$ is the covariant power-set functor. In other words, we allow infinite vertex and edge sets, multiple edges, loops, empty edges and empty vertices.\footnote{An empty vertex is a vertex  not incident to any edge in $H(E)$. An empty edge is an edge $e$ such that $\varphi(e)=\empset$.} In other words the category of hypergraphs $\C H$ is the category of $\C P$-graphs.

Let $X$ be a set and apply the definition for the interpretation given above in \ref{S:Interp} for $\sG_X$ with $q\defeq X$ in $\C P(X)$. Note that for each automap $\sigma\colon X\to X$, $\C P(\sigma)$ is the identity map. Thus the interpretation $I\colon \sG_X\to \C H$ defined in Chapter \ref{S:Interp} is a well-defined functor. 

The nerve $N\colon \C H\to \widehat{\sG}_{X}$ induced by $I$ takes a hypergraph $H=(H(E),H(V),\varphi)$ to the symmetric $X$-graph $N(H)$ with vertex and arc set given by
\begin{align*}
N(H)(V)&=\C H(I(V),H)=H(V),\\
N(H)(A)&=\C H(I(A),H)=\setm{(\beta,f)\in H(E)\x H(V)^X}{\C P(f)=\varphi(\beta)}
\end{align*} Notice that in the case a hyperedge $e$ has less than $\#X$ incidence vertices the nerve creates multiple edges and if a hyperedge has more than $\#X$ incidence vertices there is no arc in the correponding symmetric $X$-graph given by the nerve (see Example \ref{E:HyperNR} below).

The realization $R\colon \widehat{\sG}_{X}\to \C H$ sends a symmetric $X$-graph $G$ to the hypergraph $R(G)=(R(G)(E),R(G)(V),\psi)$ with vertex, edge sets and incidence map given by
\begin{align*}
& R(G)(V)=G(V),\\  & R(G)(E)=G(A)/\sim, \quad \text{($\sim$ induced by $\sX$)},\\
&\psi\colon R(G)(E)\to \C P(R(G)(V)), \quad [\gamma]\mapsto \setm{v\in G(V)}{\exists x\in X, \gamma.x=v}
\end{align*} 
For a symmetric $X$-graph morphism $f\colon G\to G'$, the hypergraph morphism $R(f)\colon R(G)\to R(G')$ has $R(f)_V\defeq f_V$ and $R(f)_E\defeq [f_A]$ where $[f_A]\colon \frac{G(A)}{\sim} \to \frac{G'(A)}{\sim}$ is induced by the quotient.

\begin{example}\label{E:HyperNR} \mbox{}
	\begin{enumerate}
		\item \label{E:Creates} Let $X=\{a,b,c\}$ and consider the hypergraph $H$ with two vertices $0$ and $1$ and one hyperedge $\alpha$ between them. Then the nerve $N(H)$ has two vertices $0$ and $1$ and arc set $N(H)(A)=\{001, 010, 100, 011, 101, 110 \}$ with a pair of $\sX$-partners $001\sim 010\sim 100$ and $011\sim 101\sim 110$. The realization identifies the $\sX$-partners and thus $RN(H)$ has two vertices $0$ and $1$ and two edges $[001]$ and $[011]$ between them. The counit $\varepsilon_H\colon RN(H)\to H$ is bijective on vertex set and sends $[001]$ and $[011]$ to $\alpha$.
		\item Let $X=\{a,b,c\}$ and consider the hypergraph $H$ with four vertices and one hyperedge $\alpha$ connecting them. The nerve $N(H)$ has vertex set equal to $H(V)$ but empty arc set $N(H)(A)=\empset$.  The counit $\varepsilon_H\colon RN(H)\to H$ is the inclusion of vertices.
	\end{enumerate}
\end{example}

Let $k$ be a cardinal number. Recall that a hypergraph $H=(H(E),H(V),\varphi)$ is $k$-uniform provided for each edge $e\in H(E)$, the set $\varphi(e)$ has cardinality $k$.

\begin{proposition} \label{P:HyperGSG}
	Let $k$ be the cardinality of $X$ and $I\colon \sG_X\to \C H$ be the interpretation above. The fixed points of the nerve realization adjunction $R\dashv N\colon \C H\to \sG_X$ is equivalent to the category of $k$-uniform hypergraphs, $k\C H$.  Moreover, the inclusion $i\colon k\C H\to \widehat{\sG}_X$ preserves limits and any exponential objects which exist in $k\C H$. 
\end{proposition}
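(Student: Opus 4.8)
The plan is to use the explicit formulas for the nerve $N$ and realization $R$ recorded just above, together with the standard fact that the unit-iso and counit-iso subcategories of an adjunction are equivalent. Write $\mathcal X\defeq\widehat{\sG}_X$. Since the counit $\varepsilon_H\colon RN(H)\to H$ is the identity on vertices and sends $[(\beta,f)]\mapsto\beta$ on edges, the first task is to show $\varepsilon_H$ is an isomorphism exactly when $H$ is $k$-uniform. I would organize this combinatorial heart as follows: surjectivity of $\varepsilon_H$ on edges forces every $\varphi(\beta)$ to be the image of some $f\colon X\to H(V)$, hence $\#\varphi(\beta)\leq k=\#X$; injectivity forces any two full-image labellings $f,f'\colon X\to\varphi(\beta)$ to be related by an automap of $X$, i.e. to have equal fibre-cardinality profiles. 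When $\#\varphi(\beta)=k$ every full-image $f$ is a bijection and $\sX=\Aut(X)$ acts simply transitively on such bijections, so both conditions hold; a larger or smaller edge produces either no labelling or several $\Aut(X)$-orbits, breaking surjectivity or injectivity. This identifies $\{H:\varepsilon_H\text{ iso}\}$ with the full subcategory $k\C H$, and the general fixed-point equivalence then yields that the fixed points of $R\dashv N$ are equivalent to $k\C H$.

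The same computation drives the ``moreover''. Because $\varepsilon_H$ is an isomorphism for every $k$-uniform $H$, the corestricted interpretation $I'\colon\sG_X\to k\C H$ (legitimate since $I(A)=(1,X,\named X)$ and $I(V)=(\empset,1,!)$ are $k$-uniform) is dense, and it is full and faithful because $k\C H$ is full in $\C H$ and, for finite $X$, the only surjective self-maps of $X$ are the automaps in $\sX$. Hence $i\defeq N|_{k\C H}\colon k\C H\to\mathcal X$ is a full and faithful embedding, whose essential image $\mathcal F$ I would characterize concretely as the symmetric $X$-graphs all of whose arcs have injective incidence $\partial_G(\alpha)\colon X\hookrightarrow G(V)$ (equivalently, no unfixed edges and no repeated incident vertices).

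For preservation of limits I would show $\mathcal F$ is closed under limits in the topos $\mathcal X$. Limits in $\mathcal X$ are computed pointwise, and injectivity of incidence is stable under them: the incidence of $(\alpha_1,\alpha_2)$ in a product is $x\mapsto(\alpha_1.x,\alpha_2.x)$, injective as soon as one factor is, and any sub-symmetric-$X$-graph of an injective-incidence graph again has injective incidence, handling equalizers. Thus $\mathcal F$ is complete with limits created from $\mathcal X$; transporting along $k\C H\simeq\mathcal F$ shows $k\C H$ is complete and that $i$ preserves these limits, in contrast to $j\colon k\C H\to\C H$. Exponentials need a conditional argument, since $k\C H$ is not cartesian closed and $\mathcal F$ is not an exponential ideal (e.g. $D^{\underline V}$ acquires loops and leaves $\mathcal F$). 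Suppose $P\defeq G_0^{H_0}$ exists in $k\C H$. The key observation is that both representables $\underline V$ and $\underline A$ lie in $\mathcal F$, the arc incidences of $\underline A$ being the bijections $x\mapsto m(x)$. For $K\in k\C H$ I would run the chain $\mathcal X(iK,\,i(G_0)^{iH_0})\iso\mathcal X(iK\times iH_0,\,iG_0)\iso\mathcal X(i(K\times_{k\C H}H_0),\,iG_0)\iso k\C H(K\times_{k\C H}H_0,\,G_0)\iso k\C H(K,P)\iso\mathcal X(iK,iP)$, using the ambient exponential adjunction, preservation of products just established, fullness and faithfulness of $i$, and the defining property of $P$. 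Taking $K=\underline V$ and $K=\underline A$ determines the vertex and arc sets with all their right-actions, so by Yoneda $i(P)\iso i(G_0)^{i(H_0)}$ in $\mathcal X$.

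The main obstacle I anticipate is Part 1's characterization of when $\varepsilon_H$ is an isomorphism: pinning down the degenerate edge-cardinalities and the infinite-$X$ case, where $I$ fails to be full because surjective self-maps of $X$ need not be bijections, so the simply-transitive $\Aut(X)$-torsor argument is cleanest for finite $X$ (or for the multiplicity-aware notion of uniformity implicit in the symmetric $X$-graph picture). Once Part 1 is secured, the limit and exponential claims are formal consequences of $i$ being a full embedding whose image is limit-closed and contains the representables.
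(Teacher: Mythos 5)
Your proposal is correct and follows essentially the same route as the paper: the paper likewise identifies the fixed points with $k\C H$ (though it dismisses the counit analysis as "clear"), verifies that products and equalizers of $k$-uniform hypergraphs computed in $\widehat{\sG}_X$ remain $k$-uniform, and proves exponential preservation by exactly your chain of isomorphisms evaluated at $\underline V$ and $\underline A$ via Yoneda. Your explicit analysis of when $\varepsilon_H$ is an isomorphism, and in particular the caveat that the $\Aut(X)$-torsor argument requires surjective self-maps of $X$ to be bijective (hence is cleanest for finite $X$), supplies detail the paper's proof silently omits.
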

\begin{proof}
	It is clear that the fixed points is the category of $k$-uniform hypergraphs and that the product (respectively, equalizer) of $k$-uniform hypergraphs in $\widehat{\sG}_X$ is $k$-uniform. Thus the inclusion $i\colon k\C H\to \widehat{\sG}_X$ preserves limits. To show that $N$ must preserve any exponentials that exist, suppose $G^H$ is an exponential object in $k\C H$. We have the following natural isomorphisms:
	\begin{align*}
		N(G^H)(V)&=k\C H(I(V),G^H)\cong k\C H(I(V)\x H, G)\\
		 &\cong \widehat{\sG}_X(NI(V)\x N(H),N(G))\\
		 &\cong \widehat{\sG}_X(\underline V,N(G)^{N(H)})\cong N(G)^{N(H)}(V),\\\\
		 N(G^H)(A)&=k\C H(I(A),G^H)\cong k\C H(I(A)\x H, G)\\
		 &\cong \widehat{\sG}_X(NI(A)\x N(H),N(G))\\
		 &\cong \widehat{\sG}_X(\underline A,N(G)^{N(H)})\cong N(G)^{N(H)}(A).
	\end{align*}
Therefore, $N$ preserves any exponentials which exist in $k\C H$. 
\end{proof}
\begin{corollary} \label{C:HyperExp}
If $k$ is a cardinal number greater than $1$, the category of $k$-uniform hypergraphs does not have exponentials. 	
\end{corollary}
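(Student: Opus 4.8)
The plan is to argue by contradiction, exploiting the fact recorded in Proposition \ref{P:HyperGSG} that the inclusion $i\colon k\C H\to \widehat{\sG}_X$ preserves whatever exponentials happen to exist in $k\C H$. The idea is to locate $k$-uniform hypergraphs whose exponential, computed in the ambient topos $\widehat{\sG}_X$, must fall outside the essential image of $i$ — concretely, by carrying an unfixed loop, a feature that no object in that image can possess. This is the $(X,M)$-graph-theoretic manifestation of Example \ref{E:Exponentials}(\ref{E:XLoops}): exponentials in $\widehat{\sG}_X$ manufacture unfixed loops, whereas $k$-uniform hypergraphs cannot host them.

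First I would pin down the essential image of $i$. Unwinding the nerve formula $N(H)(A)=\setm{(\beta,f)\in H(E)\times H(V)^X}{\C P(f)=\varphi(\beta)}$ for a $k$-uniform $H$: since $\varphi(\beta)$ has cardinality $k=\#X$ and the image of $f$ equals $\varphi(\beta)$, the map $f$ is a surjection between sets of the same finite cardinality $k$, hence a bijection onto $\varphi(\beta)$, hence injective. Thus every arc of $i(H)$ has injective parametrized incidence $\partial(\beta,f)=f$. Consequently, for $k>1$, no object in the image of $i$ has any loop at all — a loop would force $f$ constant, hence non-injective — and in particular none has an unfixed loop. This is the structural feature I intend to contradict.

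Next I would take the simplest test objects. Let $G_0\defeq R(\underline A)$, the single-edge $k$-uniform hypergraph on $k$ vertices, so that $i(G_0)=NR(\underline A)\iso \underline A$ because representables are $NR$-closed by Lemma \ref{L:FFInt}. Were $k\C H$ cartesian closed, the exponential $G_0^{G_0}$ would exist, and Proposition \ref{P:HyperGSG} would give $i(G_0^{G_0})\iso \underline A^{\underline A}$ in $\widehat{\sG}_X$. I would then compute $\underline A^{\underline A}$ using the arc-set formula of Section \ref{S:Exponential} and exhibit an explicit unfixed loop. At the vertex $\Id_X\in \underline A^{\underline A}(V)$ (identifying $\underline A(V)$ with $X$), the matching constraint $\overline f\circ \partial_H=\partial_G\circ g$ with $f_x=\Id_X$ for all $x$ forces $g(a_\sigma)$ to have incidence $\sigma$, whence $g(a_\sigma)=a_\sigma$ for every $\sigma\in \sX$; that is, $g=\Id$. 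The resulting loop $\omega\defeq((\Id_X)_{x\in X},\Id)$ then satisfies $\omega.\tau=((\Id_X)_{x\in X},g.\tau)$ with $g.\tau(a_\sigma)=g(a_{\sigma\tau})=a_{\sigma\tau}$, and $a_{\sigma\tau}\neq a_\sigma$ whenever $\tau\neq \Id$. Since $\#X>1$, the group $\sX=\Aut(X)$ is nontrivial, so such $\tau$ exists and $\omega$ is an unfixed loop at $\Id_X$. Hence $i(G_0^{G_0})\iso \underline A^{\underline A}$ would be an object in the image of $i$ carrying an unfixed loop, contradicting the second paragraph; therefore $G_0^{G_0}$ cannot exist and $k\C H$ fails to be cartesian closed.

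The main obstacle I anticipate is the honest verification of the exponential computation: reading the incidence-matching constraint correctly so that $\omega$ genuinely lies in $\underline A^{\underline A}(A)$, is genuinely a loop (both incidence families equal to $\Id_X$), and is genuinely moved by the $\sX$-action. A secondary point needing care is the clean identification of the essential image of $i$ with the injective-incidence symmetric $X$-graphs, which underwrites the impossibility of an unfixed loop there; once the cardinality argument for injectivity is in place, the remainder is routine bookkeeping with the adjunction $R\dashv N$ already established.
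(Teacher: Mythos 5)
Your overall strategy is the paper's: assume the exponential exists in $k\C H$, push it through the exponential-preserving embedding of Proposition \ref{P:HyperGSG}, and exhibit a structural feature of the resulting object of $\widehat{\sG}_X$ that no nerve of a $k$-uniform hypergraph can possess. Your second paragraph is also correct, and for the right reason: a loop in $N(H)$ at a vertex $v$ is an arc $(\beta,f)$ with $f$ constant, so $\varphi(\beta)=\{v\}$ has cardinality $1<k$ (note this part needs no finiteness of $k$, whereas your injectivity claim for $\partial$ does). The problem is the key computation in your third paragraph: the arc $\omega=((\Id_X)_{x\in X},\Id)$ of $\underline A^{\underline A}$ is a \emph{fixed} loop, and in fact $\underline A^{\underline A}$ has no unfixed loops at all. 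The formula $g.\tau(\alpha)=g(\alpha.\tau)$ you applied produces the pair $((\Id_X)_{x\in X},\,\sigma\mapsto a_{\sigma\tau})$, which violates the defining condition $g(\alpha).x=f_x(\alpha.x)$ — as you yourself observe, the \emph{only} arc with all incidence components equal to $\Id_X$ has $g=\Id$, so your purported $\omega.\tau$ is not an element of the arc set. Deriving the action directly (precompose a morphism $\underline A\x\underline A\to\underline A$ with $\underline\tau\x 1$) gives $g\mapsto\bigl(\alpha\mapsto g(\alpha.\tau^{-1}).\tau\bigr)$, which fixes $\omega$. This is forced in any case: fixed loops of $G^H$ are exactly the global points $1\to G^H$, i.e.\ morphisms $H\to G$, and $\omega$ is the unique loop at the vertex $\Id_X$, hence must be the fixed loop corresponding to $\Id_{\underline A}$. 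The same uniqueness argument shows every loop of $\underline A^{\underline A}$ sits at some $h\in\Aut(X)\subseteq X^X$, is the only loop there, and is therefore fixed. (For $X=\{s,t\}$ one computes $\underline A^{\underline A}$ explicitly: four vertices, a fixed loop at $\Id$, a fixed loop at $i$, one edge between the two constant maps, and nothing unfixed.)

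The argument is nevertheless salvageable by your own second paragraph, which rules out \emph{all} loops — fixed or unfixed — in the essential image of the nerve. Since $\underline A^{\underline A}$ does carry (fixed) loops, it cannot be isomorphic to $N(Q)$ for any $k$-uniform $Q$ when $k>1$, and the contradiction with Proposition \ref{P:HyperGSG} goes through with the test objects you chose. You should simply drop the unfixed-loop claim and cite the canonical fixed loop $\pi_{\underline A}$ at $\Id_X$ instead. For comparison, the paper's proof invokes Example \ref{E:Exponentials}(\ref{E:HypergraphFail}) and takes both base and exponent to be the one-vertex edgeless hypergraph $I(V)$: then $\underline V^{\underline V}$ is the terminal object of $\widehat{\sG}_X$, one vertex with one fixed loop, which again cannot be the nerve of a $k$-uniform hypergraph for $k>1$ since a single vertex supports no $k$-element edge. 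That route reaches the same contradiction without touching the arc-set formula for exponentials at all.
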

\begin{proof}
	Example \ref{E:Exponentials}(\ref{E:HypergraphFail}) provides us with a counterexample. 
\end{proof}

The previous results shows that since $k\C H$ lacks connected colimits and exponentials and the inclusion $i\colon k\C H\hookrightarrow \C H$ does not preserve limits, we may continuously embed $k\C H$ in $\widehat{\sG}_X$ giving us the factorization of the inclusion $j\colon k\C H\hookrightarrow \C H$
\[
\xymatrix@!=.3em{ & \widehat{\sG}_{X} \ar[dr]^{R} & \\ \kH \ar@{>->}[ur]^{i} \ar@{>->}[rr]^j && \C H.}
\]
and work in the improved categorical environment $\widehat{\sG}_X$.

We are also able to use the adjunction above to classify the projective objects in the category of hypergraphs. Recall that a right adjoint functor is faithful if and only if the counit is an epimorphism.

\begin{lemma}
	Let $X$ be a set with cardinality $\kappa$ greater than $1$. Then the nerve $N\colon \C H\to \widehat{\sG}_X$ of the interpretation $I\colon \sG_X\to \C H$ is faithful on the full subcategory $\C H_\kappa$ consisting of hypergraphs $H$ with $\max_{e\in H(E)}\varphi(e)$ at most of cardinality $\kappa$.
\end{lemma}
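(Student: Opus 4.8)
The recalled criterion reduces the claim to a statement about the counit. Writing $\varepsilon\colon RN\Rightarrow\Id_{\C H}$ for the counit of $R\dashv N$, naturality gives $g\circ\varepsilon_H=\varepsilon_{H'}\circ RN(g)$ for any $g\colon H\to H'$; hence if $N(g)=N(g')$ and $\varepsilon_H$ is an epimorphism, then applying $R$ and composing with $\varepsilon_H$ forces $g=g'$. So the plan is to show that $\varepsilon_H$ is an epimorphism in $\C H$ for every $H\in\C H_\kappa$. First I would record that epimorphisms in $\C H=\mathbf{Set}\ls\C P$ are exactly the componentwise surjections: since the forgetful functor $U\colon\C H\to\mathbf{Set}\x\mathbf{Set}$ is faithful and creates (hence preserves) pushouts, the pushout characterization of epimorphisms shows $g$ is epi iff $U(g)$ is epi in $\mathbf{Set}\x\mathbf{Set}$, i.e. iff both $g_V$ and $g_E$ are surjective.

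Next I would unwind $\varepsilon_H$ from the explicit descriptions of $N$ and $R$ above. Since $RN(H)$ has vertex set $N(H)(V)=H(V)$, edge set $N(H)(A)/\!\sim$ (with $\sim$ the orbit relation of $\sX$ acting by precomposition), and incidence $[(\beta,f)]\mapsto\C P(f)=\varphi(\beta)$, the counit is the identity on vertices and sends an edge $[(\beta,f)]$ to $\beta$; this is well defined because $\sim$ alters only the second coordinate, and it commutes with incidence on the nose. Consequently $\varepsilon_H$ is automatically surjective on vertices, and the whole problem collapses to surjectivity of the edge component $[(\beta,f)]\mapsto\beta$.

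The heart of the argument is then the construction of a witnessing arc for each edge. Given $\beta\in H(E)$, an arc of $N(H)$ lying over $\beta$ is precisely a map $f\colon X\to H(V)$ with image $\C P(f)=\varphi(\beta)$, i.e. a surjection $X\twoheadrightarrow\varphi(\beta)$; such a surjection exists exactly when $1\le\#\varphi(\beta)\le\#X=\kappa$. The upper bound $\#\varphi(\beta)\le\kappa$ is the defining hypothesis of $\C H_\kappa$ and supplies the surjection, after which $[(\beta,f)]$ maps onto $\beta$, giving surjectivity on edges and hence $\varepsilon_H$ epi. This is the step where the cardinality condition is used essentially. The main obstacle is not the bound itself but the boundary case $\varphi(\beta)=\empset$: no surjection $X\twoheadrightarrow\empset$ exists once $\kappa>1$ forces $X\neq\empset$, so empty edges are never hit by $\varepsilon_H$ and genuinely spoil faithfulness when present. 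I would therefore handle this by reading the hypothesis as $1\le\#\varphi(e)\le\kappa$ (equivalently, restricting $\C H_\kappa$ to hypergraphs without empty edges); for such $H$ every edge admits a witnessing arc, $\varepsilon_H$ is a componentwise surjection, hence an epimorphism, and faithfulness of $N$ on $\C H_\kappa$ follows from the counit criterion.
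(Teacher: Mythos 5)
Your argument follows essentially the same route as the paper: reduce faithfulness on $\C H_\kappa$ to the counit $\varepsilon_H$ being an epimorphism, and then produce, for each hyperedge $\beta$, a morphism $I(A)\to H$ (equivalently a map $f\colon X\to H(V)$ with image $\varphi(\beta)$) witnessing surjectivity on edges. The one place you go beyond the paper is the empty-edge boundary case, and this is a genuine catch rather than pedantry: the paper's definition of $\C H$ explicitly admits edges $e$ with $\varphi(e)=\empset$, such a hypergraph lies in $\C H_\kappa$ as literally defined (the maximum incidence cardinality is $0\leq\kappa$), and since $X\neq\empset$ no map $X\to H(V)$ has empty image, so empty edges are invisible to $N$. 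Indeed a hypergraph with one vertex and two empty edges admits a nonidentity automorphism swapping them that $N$ collapses to the identity, so faithfulness fails without your added hypothesis $1\leq\#\varphi(e)$. The paper's proof silently assumes every hyperedge is inhabited; your restriction (or an equivalent convention excluding empty edges from $\C H_\kappa$) is needed for the lemma to be correct as used downstream in the separator argument.
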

\begin{proof}
	Given hypergraph $H$ in the subcategory $\C H_{\kappa}$ the counit $\varepsilon_H\colon RN(H)\to H$ of the adjunction $R\dashv N\colon \C H\to \widehat{\sG}_X$ at component $H$ is an epimorphism since for each hyperedge $e\in H(E)$, there is a symmetric $X$-graph morphism $f\colon I(A)\to H$ such that $f_E$ takes the lone hyperedge in $I(A)$ to $e$. 
	
\end{proof}

Let $I(V)$ be the hypergraph with one vertex and no hyperedges. For each cardinal number $k$, let $E_k\defeq I(A)$ be the hypergraph where $I\colon \sG_k\to \C H$ is the interpretation functor above. 

\begin{lemma}
	The proper class of objects consisting of the vertex object $I(V)$ and hyperedge objects $(E_k)_{k\in \tbf{Card}}$ is a family of separators for the category of hypergraphs.  
\end{lemma}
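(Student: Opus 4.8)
The statement to prove is that the class $\{I(V)\} \cup (E_k)_{k \in \mathbf{Card}}$ is a family of separators (a separating family) for the category of hypergraphs $\C H$. Recall that a family $(S_i)$ is separating when, for any parallel pair of distinct morphisms $f \neq g \colon H \to H'$, there is some index $i$ and a morphism $u \colon S_i \to H$ with $fu \neq gu$. My plan is to unwind what $I(V)$ and $E_k$ are concretely, then show that morphisms out of them detect vertices and edges respectively, and finally argue that a hypergraph morphism is entirely determined by its action on vertices and edges.

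\textbf{Setting up the detection of data.} First I would record the concrete descriptions. The object $I(V)$ is the hypergraph with a single vertex and no edges, so by the universal property a morphism $I(V) \to H$ is exactly a choice of vertex $v \in H(V)$; thus $\C H(I(V), H) \cong H(V)$. For a cardinal $k$, the object $E_k = I(A)$ (for the interpretation $I\colon \sG_k \to \C H$) is the hypergraph with one edge $e_0$ whose incidence set $\varphi(e_0)$ is a fixed $k$-element set of vertices; a morphism $E_k \to H$ is a pair of set maps respecting incidence, which amounts to choosing an edge $\beta \in H(E)$ together with a surjection-compatible labeling of its incidence, so that such morphisms pick out edges $\beta$ with $\#\varphi(\beta) \leq k$. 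The key observation is that \emph{every} edge $\beta \in H(E)$ is hit: taking $k \defeq \#\varphi(\beta)$ gives a morphism $E_k \to H$ whose edge component names $\beta$ (choosing any bijection between the $k$-element incidence set of $e_0$ and $\varphi(\beta)$). Here the fact that $k$ ranges over the \emph{proper class} of all cardinals, rather than a fixed $X$, is exactly what removes the cardinality obstruction seen in Example \ref{E:HyperNR}(\ref{E:Creates}) and the nerve's failure to capture large hyperedges.

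\textbf{The separation argument.} Suppose $f, g \colon H \to H'$ are morphisms with $f \neq g$. Since a hypergraph morphism consists of the pair $(f_V, f_E)$, having $f \neq g$ means either $f_V \neq g_V$ or $f_E \neq g_E$. If $f_V \neq g_V$, choose a vertex $v \in H(V)$ with $f_V(v) \neq g_V(v)$ and let $u \colon I(V) \to H$ be the corresponding vertex-naming morphism; then $f u \neq g u$ since these name distinct vertices of $H'$. If instead $f_E \neq g_E$, choose an edge $\beta \in H(E)$ with $f_E(\beta) \neq g_E(\beta)$, set $k \defeq \#\varphi(\beta)$, and let $u \colon E_k \to H$ be a morphism naming $\beta$ as above; then $(fu)_E$ and $(gu)_E$ name the distinct edges $f_E(\beta)$ and $g_E(\beta)$, so $fu \neq gu$. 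In either case a member of the family distinguishes $f$ from $g$, which is precisely the separating property.

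\textbf{Anticipated obstacle.} I expect the only genuinely delicate point to be the second case: verifying that when $f_E(\beta) \neq g_E(\beta)$ one can choose the \emph{same} incidence-respecting labeling $u \colon E_k \to H$ so that $fu$ and $gu$ genuinely differ on the edge component and not be forced to agree because of how the commuting incidence square $\C P(f_V)\circ \varphi = \varphi' \circ f_E$ constrains things. The subtlety is that two distinct edges of $H'$ could have equal incidence sets, so detecting $f_E(\beta) \neq g_E(\beta)$ must rely on the edge component of the morphism $E_k \to H$ being free to name any single edge regardless of incidence coincidences — which is guaranteed because $E_k$ has exactly one edge and the morphism's edge component is an unconstrained choice once a compatible vertex labeling is fixed. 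I would make this explicit by exhibiting the labeling as a chosen bijection $\varphi(e_0) \to \varphi(\beta)$ and noting that post-composition with $f$ versus $g$ affects the edge component independently, so no incidence coincidence in $H'$ can force agreement. Everything else reduces to the elementary correspondences $\C H(I(V),H) \cong H(V)$ and the naming of edges, which are routine.
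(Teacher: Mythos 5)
Your proof is correct, and it takes a more self-contained route than the paper's. The paper deduces the lemma from the preceding faithfulness result: it picks a single cardinal $\kappa$ large enough that both $H$ and $H'$ lie in $\C H_\kappa$, invokes the faithfulness of the nerve $N\colon \C H\to \widehat{\sG}_X$ on $\C H_\kappa$ (itself proven via the counit being an epimorphism) to get $N(f)\neq N(g)$, and then observes that the two components of the nerve are exactly $\C H(I(V),-)$ and $\C H(E_\kappa,-)$, so one of $I(V)$, $E_\kappa$ must separate. You instead unwind the homsets directly and match the cardinal to the witnessing edge, taking $k=\#\varphi(\beta)$ for the specific edge $\beta$ on which $f_E$ and $g_E$ disagree. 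The two arguments share the same core observation — morphisms out of $I(V)$ name vertices and morphisms out of $E_k$ name edges — but yours avoids any dependence on the faithfulness lemma and on bounding the edges of the codomain $H'$, which the paper's phrasing requires only so that the earlier lemma applies as stated. What the paper's packaging buys is economy within its narrative (the nerve is the organizing tool of the whole chapter); what yours buys is a direct verification, including the correct handling of the empty-incidence case via $E_0$ and an explicit check that the incidence square cannot force $fu=gu$ once the edge components differ. Both are valid proofs of the statement.
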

\begin{proof}
	Let $f, g\colon H\to H'$ be distinct hypergraph morphisms. Let $\kappa$ be the maximum of the cardinalities such that $H$ and $H'$ are in the subcategory $\C H_{\kappa}$. By the lemma above, the nerve $N\colon \C H\to \widehat{\sG}_{X}$ is faithful on $\C H_\kappa$. Thus we have $N(f)\neq N(g)$. Therefore, either $I(V)$ separates $f$ and $g$ or $I(\underline A)=E_k$ separates $f$ and $g$ by definition of the nerve functor. 
	
\end{proof}

\begin{proposition}
	A hypergraph $H$ is (regular) projective if and only if it has no hyperedges.  
\end{proposition}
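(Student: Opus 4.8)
The plan is to characterize the projective hypergraphs directly using the separating family established in the preceding lemmas, combined with the standard fact that in a cocomplete category an object is projective iff it is a retract (split quotient) of a coproduct of separators. First I would observe the easy direction: if $H$ has no hyperedges, then $H$ is a coproduct $\bigsqcup_{H(V)} I(V)$ of copies of the one-vertex hypergraph $I(V)$. Since $I(V)$ is projective (any epimorphism of hypergraphs is surjective on vertices, so a map out of a one-vertex hypergraph always lifts against a surjection on vertex sets), and projectives are closed under coproducts, such an $H$ is projective.

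For the converse I would argue by contradiction: suppose $H$ is projective and has a hyperedge $e$ with $\varphi(e)$ of some cardinality $k$. The key is to exhibit an epimorphism $q\colon B \twoheadrightarrow H$ against which the identity on $H$ fails to lift whenever $H$ carries an edge. The natural candidate for $B$ is a coproduct of hyperedge separators $E_k = I(A)$ (one for each edge of $H$, with matching incidence cardinality) together with copies of $I(V)$ for the isolated vertices, and $q$ the canonical map; this is an epimorphism by the faithfulness/counit argument in the lemma above (each hyperedge of $H$ is hit by a morphism from the appropriate $E_k$). A lifting $s\colon H \to B$ of $\Id_H$ through $q$ would realize $H$ as a retract of $B$. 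The obstruction to retraction is that morphisms of hypergraphs preserve incidence \emph{on the nose}: the edge $e$ with $k$ incident vertices in $H$ would have to be sent by $s$ into a single hyperedge summand $E_k$, but since distinct vertices of $e$ must go to the (few) vertices available and then be returned by $q$, a cardinality/injectivity bookkeeping forces a collapse that $q$ cannot undo, contradicting $qs = \Id_H$.

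The cleaner way to run the converse, which I would prefer, is to use the already-developed adjunction. For a cardinal $k>1$, the corresponding $E_k = I(A)$ is itself \emph{not} projective: this is precisely the content inherited from Example~\ref{E:Exponentials}(\ref{E:HypergraphFail}) and the failure of exponentials (Corollary~\ref{C:HyperExp}), or more elementarily, one checks that the canonical surjection onto $E_k$ obtained by splitting a single $k$-element edge into overlapping smaller edges admits no section because sections would have to preserve the full incidence set $\varphi(e)$. Thus no hypergraph containing a hyperedge can be a coproduct of representables under $R$, and since projectives are exactly retracts of coproducts of the separators $I(V)$ and $(E_k)$, a projective hypergraph can only be built from the projective separator $I(V)$ — i.e.\ it has no edges.

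\textbf{Main obstacle.} The delicate step is verifying that the hyperedge object $E_k$ genuinely fails to be projective for $k>1$, equivalently that the putative retraction $s$ cannot exist. This is where the rigidity of hypergraph morphisms (exact preservation of $\varphi$, no subset inclusions — the same phenomenon flagged in Example~\ref{E:Hybrid}(\ref{E:BipartH})) must be used carefully: I would construct an explicit epimorphism onto $E_k$ whose domain decouples the incidence, and show any section would violate the on-the-nose incidence condition. Once this single non-projectivity fact is secured, the separator family lemma closes the argument immediately.
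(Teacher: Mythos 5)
Your overall strategy --- reduce to non-projectivity of the hyperedge objects $E_k$, and derive that non-projectivity from the on-the-nose preservation of incidence --- is the same as the paper's, and your easy direction is identical. But both of the steps you flag as delicate have genuine gaps as written. First, your witnessing epimorphism is backwards. A hypergraph morphism $f\colon B\to E_k$ must satisfy $\varphi_{E_k}(f_E(b))=\mathcal{P}(f_V)(\varphi_B(b))$ for every edge $b$ of $B$, and the unique edge of $E_k$ has incidence the \emph{full} $k$-element vertex set; hence every edge of $B$ mapping to it must have incidence surjecting onto all $k$ vertices. So a hypergraph whose edges are ``overlapping smaller edges'' admits no morphism onto $E_k$ at all, and the cover you propose does not exist. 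What works is the opposite inequality: take a single \emph{larger} hyperedge. The paper uses the epimorphism $E_r\to 1$ with $r>k$, the morphism $E_k\to 1$, and the observation that there is no morphism $E_k\to E_r$ whatsoever, since $\mathcal{P}(s_V)$ applied to a $k$-set has cardinality at most $k<r$ and so cannot equal the incidence of the edge of $E_r$.

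Second, the ``standard fact'' that projectives are exactly retracts of coproducts of separators requires the separators to be \emph{projective}; here the $E_k$ are not, which is the whole point. Only the forward implication survives, and it cannot force edgelessness: $E_k$ is itself a retract of the canonical coproduct $\bigsqcup_{S,\,f\colon S\to E_k}S$ via the identity summand, and your first construction (summands of matching incidence cardinality) admits a section whenever the edges of $H$ are pairwise disjoint --- in particular for $H=E_k$ itself, so no ``cardinality bookkeeping'' can rescue it there. The converse has to be closed by applying the lifting test directly: if $H$ has an edge $e$ with $\#\varphi(e)=k$, lift $H\to 1$ through $E_r\to 1$ for $r>k$; any lift would send $e$ to the unique edge of $E_r$ and violate incidence preservation. (The appeal to Corollary \ref{C:HyperExp} is a red herring: the failure of exponentials has no bearing on projectivity.) With these two repairs your argument coincides with the paper's.
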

\begin{proof}
	If a hypergraph has no edges it is projective since it is a coproduct of the vertex object $I(V)$ which is clearly projective. Conversely, by the lemma above it is enough to show that each hyperedge object $E_k$ is not projective. Let $1$ be the terminal hypergraph with one vertex and one hyperedge. Then every morphism $E_k\to 1$ is a (regular) epimorphism. Let $E_r$ be the hyperedge object with $r$ vertices where $r$ is of cardinality strictly greater than $k$. Since there is no morphism from $E_k$ to $E_r$ there is no factorization of $E_k\to 1$ through $E_r$ showing $E_k$ is not (regular) projective.
	
\end{proof}

For each set $X$, the interpretation functor $I\colon \sG_X\to \C H$ factors through the full subcategory $\C H_{k}$ of hypergraphs consisting of hypergraphs $H$ such that the incidence of each edge is of cardinality less than or equal to the cardinality $k$ of $X$. In other words, $\C H_k$ is the slice category $\mathbf{Set}\ls \C P_k$ where $\C P_k$ is the covariant $k$-power set functor which takes a set to the set of all subsets with cardinality less than or equal to $k$. The inclusion functor $i\colon \C H_k\hookrightarrow \C H$ admits a coreflector $r\colon \C H\to \C H_k$ which takes a hypergraph $H$ to the hypergraph $r(H)$ with vertex set $r(H)(V)=H(V)$ and edge set $r(H)(E)\defeq \setm{\alpha\in H(E)}{\#\varphi(\alpha)\leq k}$. Therefore the nerve realization for the interpretation $I_k\defeq rRy\colon \sG_X\to \C H_k$ is $R_k\dashv N_k\colon \C H_k\to \widehat{\sG}_X$ where $R_k=rR$ and $N_k=Ni$. Moreover, by restricting to the subcategory $\C H_k$ the counit $\varepsilon_H\colon rRNi(H)\Rightarrow H$ is now an epimorphism.

\begin{proposition}
	For a cardinal number $k$, the class of projective objects in $\C H_k$ are precisely the coproducts of $I_k(V)$ and $I_k(A)$ where $I_k\colon \sG_X\to \C H_k$ is the interpretation functor described above. 
\end{proposition}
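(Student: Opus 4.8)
The plan is to transport the two earlier projectivity results across the coreflective nerve--realization adjunction $R_k\dashv N_k\colon \C H_k\to \widehat{\sG}_X$, exactly as the characterization of projectives in $\widehat{\DD G}_{(X,M)}$ as coproducts of representables was used to pin down the projectives of $\C H$. The starting observation is that $R_k y\iso I_k$, so $R_k(\underline V)\iso I_k(V)$ and $R_k(\underline A)\iso I_k(A)$; since $R_k$ is a left adjoint it preserves colimits, and in particular the images under $R_k$ of the projective generators $\underline V,\underline A$ of $\widehat{\sG}_X$ are precisely the two candidate generators in the statement.

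First I would verify that the two candidates are (regular) projective. Recall that $U\colon \C H_k\to \mathbf{Set}\times\mathbf{Set}$ creates colimits and that $\C H_k$ is coreflective in $\C H$, so a morphism is a regular epimorphism precisely when it is surjective on vertices and on edges. A morphism $I_k(V)\to B$ is just a choice of vertex of $B$, which always lifts along a vertex-surjection, so $I_k(V)$ is projective. For $I_k(A)$ one unwinds the interpretation: a morphism $I_k(A)\to B$ is an edge $b\in B(E)$ together with a surjective parametrization $f\colon X\to \varphi_B(b)$. I would then show projectivity is closed under coproducts (a map out of a coproduct is a family of maps out of the summands, each of which lifts), so that every coproduct of copies of $I_k(V)$ and $I_k(A)$ is projective.

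For the converse, let $P$ be projective. The paragraph preceding the statement shows the counit $\varepsilon_P\colon R_kN_k(P)\to P$ is an epimorphism; since $P$ is projective, $\varepsilon_P$ admits a section $s\colon P\to R_kN_k(P)$, exhibiting $P$ as a retract of $R_kN_k(P)$. Now $N_k(P)$ is a presheaf on $\sG_X$, hence a colimit of representables, and $R_k$ preserves this colimit, expressing $R_kN_k(P)$ in terms of $I_k(V)$ and $I_k(A)$. As in the presheaf-category argument, a split subobject of such an object is again a coproduct of copies of $I_k(V)$ and $I_k(A)$, which gives the desired description of $P$.

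The main obstacle is the projectivity of the edge generator $I_k(A)$, i.e.\ the lifting lemma: given a regular epimorphism $e\colon B'\to B$ and a morphism $g\colon I_k(A)\to B$ classifying an edge $b$ with surjection $f\colon X\to\varphi_B(b)$, one must produce an edge $b'\in B'(E)$ over $b$ together with a surjective parametrization $\tilde f\colon X\to \varphi_{B'}(b')$ satisfying $e_V\circ\tilde f=f$. Unwinding the morphism condition, this forces, fiber by fiber over each $y\in\varphi_B(b)$, a surjection $f^{-1}(y)\to e_V^{-1}(y)\cap\varphi_{B'}(b')$, so the crux is a comparison of fiber cardinalities that must be controlled using $\#X=k$ together with the defining bound $\#\varphi_{B'}(b')\le k$ of $\C H_k$. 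This is exactly the place where passing from $\C H$ to $\C H_k$ is essential and where the argument is most delicate; I would isolate it as a separate lemma, expecting the bound $\#\varphi_{B'}(b')\le k=\#X$ to be precisely what makes the required fiberwise surjections simultaneously realizable, and would use the freedom in choosing $b'$ among the edges over $b$ (or refining $e$ through its regular-epi factorization) to assemble a consistent global lift.
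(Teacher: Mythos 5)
Your architecture is the one the paper uses: verify that $I_k(V)$ and $I_k(A)$ are projective, close under coproducts, and for the converse split a projective $P$ through an epimorphism from a coproduct of the two generators. (One small imprecision in your converse: a retract of $R_kN_k(P)$ alone does not suffice, since a general presheaf is a \emph{colimit}, not a coproduct, of representables; you need to section the full composite $R_k(\Proj(N_k(P)))\twoheadrightarrow R_kN_k(P)\twoheadrightarrow P$, which is exactly what the paper does.) The decisive issue is the step you yourself flag as the main obstacle: projectivity of the edge generator $I_k(A)$. The paper disposes of it with ``it is clear,'' whereas you correctly reduce it to producing, for each $y\in\varphi_B(b)$, a surjection $f^{-1}(y)\to e_V^{-1}(y)\cap\varphi_{B'}(b')$, and then \emph{hope} that the bound $\#\varphi_{B'}(b')\le k=\#X$ makes these simultaneously realizable. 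It does not, and no choice of $b'$ over $b$ or refinement of $e$ repairs this in general.

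Concretely, take $k=3$ and $X=\{1,2,3\}$. Let $B$ have vertices $\{u,v\}$ and a single edge $b$ with $\varphi_B(b)=\{u,v\}$, and let $B'$ have vertices $\{u',v_1,v_2\}$ and a single edge $b'$ with $\varphi_{B'}(b')=\{u',v_1,v_2\}$. Define $e\colon B'\to B$ by $e_V(u')=u$, $e_V(v_1)=e_V(v_2)=v$, $e_E(b')=b$. This is a morphism of $\C H_3$ (the image of $\varphi_{B'}(b')$ under $e_V$ is $\{u,v\}=\varphi_B(b)$), it is surjective on vertices and edges, and it is even a regular epimorphism: it is the coequalizer of the two maps $I_3(V)\rightrightarrows B'$ naming $v_1$ and $v_2$. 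Now let $g\colon I_3(A)\to B$ classify $b$ with parametrization $f(1)=f(2)=u$, $f(3)=v$, which is legitimate since the image of $f$ is $\varphi_B(b)$. Any lift $\tilde g$ must send the unique edge to $b'$ and satisfy $e_V\circ\tilde f=f$, which forces $\tilde f(1)=\tilde f(2)=u'$ and $\tilde f(3)\in\{v_1,v_2\}$; the image of $\tilde f$ then has two elements and cannot equal $\varphi_{B'}(b')$, so $\tilde g$ is not a morphism $I_3(A)\to B'$ and no lift exists. Hence $I_k(A)$ fails to be (regular) projective for $k\ge 3$ (the fiber-matching you identified is automatic only for $k\le 2$), so the lemma you defer is false and the gap cannot be closed as proposed. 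Your instinct to isolate this step was exactly right --- it is also the unproved weak point of the published argument --- but the cardinality bound does not rescue it.
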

\begin{proof}
	It is clear that $I_k(V)$ is projective. Since a hypergraph $H$ in $\C H_k$ has an edge if and only if it admits a morphism from $I_k(A)$ to it and since epimorphisms in $\C H_k$ are those morphisms surjective on vertex and edge sets, it is clear $I_k(A)$ is projective. Therefore the coproducts of $I_k(V)$ and $I_k(A)$ are projective. Conversely, consider the following composition 
	\[
		\xymatrix{ R_k(\Proj(N(H))) \ar@{->>}[r] & R_kN_k(H) \ar@{->>}[r]^-{\varepsilon_H} & H}
	\]
	where the morphism $\xymatrix{R_k(\Proj(N(H))) \ar@{->>}[r] & R_kN_k(H)}$ is the application of $R_k$ on the epimorphism $\Proj(N(H))\to N(H)$ described in Chapter \ref{S:InjProj}. Note that since $R_k$ is a left adjoint, it preserves epimorphisms. Moreover, $R_k$ preserves colimits, therefore $R_k(\Proj(N(H)))=\bigsqcup_{N(H)(V)}R_k\underline V\sqcup \bigsqcup_{N(H)(A)}R_k\underline A$. Then since $R_ky=I_k$ where $y$ is the Yoneda embedding, we have $R_k(\Proj(N(H)))=\bigsqcup_{N(H)(V)}R_k\underline I_k(V)\sqcup \bigsqcup_{N(H)(A)}I_k(A)$. Therefore, every object in $\C H_k$ admits an epimorphism from a projective object (i.e., $\C H_k$ has enough projectives). Thus every projective in $\C H_k$ is a split subobject of the essential image of the functor $R_k\Proj\colon \widehat{\sG}_X\to \C H_k$. However it is clear that the only split subobjects are coproducts of $I_k(V)$ and $I_k(A)$. 
	
\end{proof}

\begin{proposition}
	A hypergraph $Q$ is injective if and only if $Q$ is non-initial and for each subset of $S\subseteq Q(V)$, there is an edge $\alpha\in Q(E)$ with $S=\varphi(\alpha)$.
\end{proposition}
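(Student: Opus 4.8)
The plan is to mirror the proof of Proposition \ref{P:Injective}, working directly in the comma category $\C H=\mathbf{Set}\ls \C P$ rather than in a presheaf topos. The first preliminary observation I would record is that a morphism $(f_V,f_E)$ in $\C H$ is a monomorphism precisely when both $f_V$ and $f_E$ are injective set maps; this follows because the forgetful functor $U\colon \C H\to \mathbf{Set}\x\mathbf{Set}$ is faithful and the vertex object together with the edge objects detect injectivity of each component. I would also dispose of the degenerate case by noting that the empty hypergraph $0$ is not injective: taking the inclusion $m\colon 0\hookrightarrow I(V)$ of the empty hypergraph into the one-vertex hypergraph and the map $f=\Id\colon 0\to 0$, a lift would be a morphism $I(V)\to 0$, which cannot exist since a vertex has no image. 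Hence injectivity forces $Q$ to be non-initial.

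For the forward direction, assume $Q$ is injective and fix a subset $S\subseteq Q(V)$. I would introduce two auxiliary hypergraphs: $D_S$ with vertex set $S$ and no edges, and $E_S$ with vertex set $S$ and a single edge $e$ satisfying $\varphi_{E_S}(e)=S$. The evident inclusion $m\colon D_S\hookrightarrow E_S$ (the identity on vertices, the empty map on edges) is a monomorphism, and the subset inclusion $S\hookrightarrow Q(V)$ defines a morphism $f\colon D_S\to Q$ which is vacuously compatible with incidence, as $D_S$ has no edges. Injectivity of $Q$ yields a lift $k\colon E_S\to Q$ with $km=f$; setting $\alpha\defeq k_E(e)$, the morphism condition gives $\varphi_Q(\alpha)=\C P(k_V)(S)=S$, since $k_V$ restricts to the inclusion of $S$ into $Q(V)$. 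This produces the required edge.

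For the converse, I would assume the stated condition and construct, for an arbitrary monomorphism $m\colon G\hookrightarrow H$ and morphism $g\colon G\to Q$, an extension $k\colon H\to Q$ with $km=g$. Choosing a vertex $v_0\in Q(V)$ (possible since $Q$ is non-initial), I define $k_V$ to agree with $g_V$ on the image of $m_V$ and to be constantly $v_0$ elsewhere, so that $k_V\circ m_V=g_V$. On edges, $k_E$ sends an edge in the image of $m_E$ to its $g_E$-value, and sends any edge $\beta$ outside the image to an edge $\alpha_\beta\in Q(E)$ with $\varphi_Q(\alpha_\beta)=\C P(k_V)(\varphi_H(\beta))$, which exists by hypothesis.

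Finally I verify that $k$ is a morphism and that $km=g$. The main obstacle, and the only place where genuine content enters, is checking the incidence compatibility $\varphi_Q\circ k_E=\C P(k_V)\circ\varphi_H$ on edges lying in the image of $m_E$: here I would combine the fact that $g$ and $m$ are morphisms with the identity $k_V\circ m_V=g_V$ to compute $\varphi_Q(g_E(\gamma))=\C P(g_V)(\varphi_G(\gamma))=\C P(k_V\circ m_V)(\varphi_G(\gamma))=\C P(k_V)(\varphi_H(m_E(\gamma)))$ for each $\gamma\in G(E)$. On edges outside the image of $m_E$ the compatibility holds by the defining property of $\alpha_\beta$, and the relation $km=g$ is immediate from the definitions of $k_V$ and $k_E$. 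This establishes that $Q$ is injective and completes the equivalence.
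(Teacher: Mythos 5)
Your proof is correct and follows essentially the same route as the paper's: the forward direction lifts the vertex inclusion of $S$ along the monomorphism from the discrete hypergraph on $S$ into the one-edge hypergraph $E_S$, and the converse extends along a monomorphism by sending new vertices to a fixed base vertex and each new edge to a chosen edge of $Q$ whose incidence matches the pushed-forward incidence. Your explicit checks of non-initiality, of the componentwise injectivity of monomorphisms (needed for well-definedness on vertices), and of the incidence compatibility are welcome tidyings of details the paper leaves implicit, but they do not change the argument.
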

\begin{proof}
	Suppose $Q$ is injective. For each subset $S\subseteq Q(V)$, let $E_S$ be the hypergraph with one edge $e$ with incidence equal to $\varphi(e)=S$. Let $f\colon \coprod_S I(V)\hookrightarrow E_S$ and $g\colon \coprod_S I(V)\hookrightarrow Q$ be the inclusions of vertices. Since $Q$ is injective, there is a morphism $h\colon E_S\to Q$ which necessarily is a monomorphism. Hence $Q$ must have an edge $q$ with incidence $\varphi(q)=S$. 
	
	Conversely, let $f\colon G\to H$ be a monomorphism and $g\colon G\to Q$ be a morphism in the category of hypergraphs. Since $Q$ is non-initial, there is a vertex $v\in Q(V)$. We define the morphism $h\colon H\to Q$ on vertices
	\begin{align*}
		h_V(w)\defeq \begin{cases} g_V(u) & \text{if } \exists u\in G(V),\ g_V(u)=w\\
		v & \text{if } \forall u\in G(V),\ g_V(u)\neq w.\end{cases}
	\end{align*}
Each edge $e$ in $H$ not in the image of $f_A$ has incidence a subset $S\subseteq H(V)$ which can be decomposed $S\cong S_0\sqcup S_1$ such that $S_0$ is in the image of $f_V$ and $S_1$ is disjoint to the image of $f_V$. Then for such an edge $e$ in $H(E)$, choose an edge $[e]\in Q(E)$ with incidence $g(f_V^{-1}(S_0))\cup \{v\}$ where $g_V(f_V^{-1}(S_0))$ is in the image of $f_V^{-1}(S_0)$ under $g_V$. Then we define 
\[
	h_E(e)\defeq \begin{cases}g_E(b) & \text{if } \exists b\in G(E),\ g_E(b)=e\\
	[e] & \text{if } \forall b\in G(E),\ g_E(b)\neq e. \end{cases}
\]
Then $h_V$ and $h_E$ describe a morphism of hypergraphs $h\colon H\to Q$ such that $h\circ f=g$. Therefore $Q$ is injective.
\end{proof}
\begin{corollary}
	Let $Q$ be a hypergraph and $X\defeq Q(V)$. Then $Q$  is injective if and only if $N(Q)$ is injective as a symmetric $X$-graph where $N$ is the nerve of the interpretation $I\colon \sG_X\to \C H$ defined above.
\end{corollary}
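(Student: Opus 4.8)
The plan is to prove both implications by reducing to the two concrete characterizations already in hand: Proposition \ref{P:Injective} for injectivity in $\widehat{\sG}_X$, and the characterization of injective hypergraphs proved just above (a hypergraph $Q$ is injective iff it is non-initial and every subset $S\subseteq Q(V)$ occurs as $\varphi(\beta)$ for some edge $\beta$). First I would record the two facts that make these characterizations directly comparable. Since $X\defeq Q(V)$, the nerve gives $N(Q)(V)=\C H(I(V),Q)=Q(V)=X$, and $N(Q)(A)=\setm{(\beta,f)\in Q(E)\x Q(V)^X}{\C P(f)=\varphi(\beta)}$ with parametrized incidence $\partial_{N(Q)}(\beta,f)=f$. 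Thus an arc of $N(Q)$ whose incidence equals a prescribed set map $f\colon X\to Q(V)$ is exactly a choice of edge $\beta$ with $\varphi(\beta)=\C P(f)$, where $\C P(f)$ denotes the image of $f$.

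The key combinatorial observation is that, as $f$ ranges over all set maps $X\to Q(V)=X$, the image $\C P(f)$ ranges over precisely the \emph{nonempty} subsets of $Q(V)$ (assuming $X\neq\empset$): every nonempty $S\subseteq Q(V)$ is an image, since $S\subseteq X$ and one may map $X$ onto $S$, while every image is nonempty because $X\neq\empset$. With this, the forward direction is immediate. If $Q$ is injective, then by the hypergraph characterization $Q$ is non-initial and carries an edge with incidence $S$ for every subset $S\subseteq Q(V)$, in particular for every subset of the form $\C P(f)$; hence for each $f\colon X\to N(Q)(V)$ there is an arc of $N(Q)$ with incidence $f$, and $N(Q)$ is non-initial, so Proposition \ref{P:Injective} makes $N(Q)$ injective. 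The degenerate case $X=\empset$, where $\widehat{\sG}_X\simeq\mathbf{Set}\x\mathbf{Set}$, I would dispatch separately: there the unique map $\empset\to\empset$ has empty image, so $N(Q)$ acquires an arc exactly when $Q$ has an empty edge, and both injectivity conditions reduce to ``$Q$ has an empty edge and is non-initial.''

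For the reverse direction, Proposition \ref{P:Injective} shows that injectivity of $N(Q)$ supplies, for every \emph{nonempty} $S\subseteq Q(V)$, an edge $\beta$ with $\varphi(\beta)=S$, together with non-initiality. This is the entire hypergraph-injectivity condition \emph{except} the clause for the empty subset $S=\empset$, and recovering that empty edge is the step I expect to be the main obstacle. The difficulty is structural rather than computational: since every set map $X\to Q(V)$ with $X\neq\empset$ has nonempty image, the functor $N$ records no information about empty edges, so no formal consequence of the injectivity of $N(Q)$ can by itself witness an edge $\beta$ with $\varphi(\beta)=\empset$. I would therefore close this gap either (i) under the convention that the hypergraphs in play already carry the (forced) empty edge, or (ii) by phrasing the equivalence for empty-edge-free hypergraphs, where the injective characterization reads ``every nonempty subset occurs as some $\varphi(\beta)$'' and matches the $\widehat{\sG}_X$ condition term for term. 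In either normalization the two characterizations coincide pointwise across the subset/image correspondence of the previous paragraph, and the equivalence follows; isolating and justifying this empty-edge normalization is the crux, while the remainder is the routine bookkeeping of that correspondence.
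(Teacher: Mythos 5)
Your reduction is the same one the paper uses: both directions are obtained by translating between Proposition \ref{P:Injective} and the characterization of injective hypergraphs, via the correspondence between set maps $j\colon X\to Q(V)$ and their images. The forward direction of your argument matches the paper's essentially verbatim.

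The obstruction you isolate in the converse is real, and it is not a defect of your write-up: it is an unclosed hole in the paper's own proof. The paper's converse begins ``let $S\subseteq Q(V)$; let $j\colon X\to N(Q)(V)$ be a set map with image equal to $S$,'' and no such $j$ exists when $S=\empset$ and $X=Q(V)\neq\empset$. Since the paper's characterization of injective hypergraphs genuinely demands an edge $\alpha$ with $\varphi(\alpha)=\empset$ (its proof applies injectivity to the inclusion of the vertex-discrete hypergraph into one carrying an empty edge), the corollary as literally stated fails: take $Q$ with $Q(V)=\{v\}$ and a single edge of incidence $\{v\}$. Then $N(Q)$ is a one-vertex bouquet with one loop and satisfies the hypotheses of Proposition \ref{P:Injective}, yet $Q$ is not injective --- the monomorphism from the one-vertex hypergraph into the hypergraph with one vertex and one empty edge admits no filler into $Q$. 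So your proposed normalization (building the empty edge into the objects under consideration, or restricting the matching to nonempty subsets) is not an optional convenience but a needed correction to the statement; once it is in place, the remaining bookkeeping you describe --- images of maps $X\to Q(V)$ are exactly the nonempty subsets because $X=Q(V)$ --- does close both directions, and your separate treatment of the degenerate case $X=\empset$ is consistent with the paper's conventions.
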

\begin{proof}
	Let $Q$ be an injective hypergraph. By Proposition \ref{P:Injective}, it is enough to show that for each set map $j\colon X\to N(Q)(V)$, there is an arc $\alpha\in N(Q)(A)$ such that $\partial(\alpha)=j$. the image of $j$ describes a subset $S$ of vertices of $Q$. Therefore, by the above result, there is a hyperedge $e$ with incidence equal to $S$. Let $\alpha\colon I(A)\to Q$ be the arc in $N(Q)$ corresponding to the hypergraph morphism which takes the vertex $x$ to $j(x)$ for each $x\in X$ and the single hyperedge $a\in I(a)$ to $e$. Then $\partial(\alpha)=j$ and thus $N(Q)$ is an injective symmetric $X$-graph.
	
	Conversely, suppose $N(Q)$ is injective and let $S\subseteq Q(V)$. Let $j\colon X\to N(Q)(V)$ be a set map with image equal to $S$. There is an arc $\alpha\in N(Q)(A)$ with incidence $\partial(\alpha)=j$.  Since $\alpha$ corresponds to the hypergraph morphism $\alpha\colon I(A)\to Q$, there must be an edge $e\in Q$ such that $a\in I(A)$ is mapped to $e$, i.e., $e$ has incidence equal to $Q$. Therefore, $Q$ is an injective hypergraph. 
\end{proof}

\ignore{
\section{The Category of Reflexive Hypergraphs}

Although the concept of a hypergraph in which the vertices are to be considered as degenerate edges has not been examined in the literature, it is straightforward to provide a definition in the context of reflexive $F$-graphs.

\begin{definition}
	The category of reflexive hypergraphs $r\C H$ is defined to be the category of reflexive $\C P$-graphs $r\C G_{\C P}$ where $\C P$ is the covariant powerset functor. The natural transformation $\eta\colon \Id_{\mathbf{Set}}\Rightarrow \C P$ is the singleton assignment, i.e., $\eta_{X}\colon X\to \C P(X)$ takes an element $x\in X$ to the singleton $\{x\}\in \C P(X)$. 
\end{definition}

Given the theory $\srG_X$, note that $M_A\iso X\sqcup 1$. We define the interpretation functor $I\colon \srG_X\to r\C H$ by assigning $q\colon X\sqcup 1 \to \C P(X)$ induced by the maps $\eta_X\colon X\to \C P(X)$ which maps $x$ to the singleton $\{x\}$ and .

Given a cardinal number $k$, a reflexive hypergraph $H$ is $k$-uniform provided each non-distinguished edge (i.e., an edge $e\in r\C H(P)$ is) is incident to $k$ distinct vertices. 

\tbf{FINISH THIS!!!!}

}

\section{The Category of Power Graphs} \label{S:UnderlinePiGraphs}

Let $X$ and $Y$ be sets. We define the symmetric $X$-power of $Y$, denoted $\underline \Pi_X(Y)$, as the multiple coequalizer of $(\underline \sigma\colon \Pi_X(Y) \to \Pi_X(Y))_{\sigma\in \sX}$ where $\underline \sigma$ is the $\sigma$-shuffle of coordinates in the product. This definition extends to a functor $\underline{\Pi} _{X}\colon \mathbf{Set} \to \mathbf{Set}$. Note that if $j\colon X'\to X$ is a set map, then there is a natural transformation $\underline{\Pi} _{X}\Rightarrow \underline{\Pi} _{X'}$ induced by the universal mapping property of the product. In particular, when $X\to X'=1$ is the terminal map, we have $\Id_{\mathbf{Set}}=\underline{\Pi} _1\Rightarrow \underline{\Pi} _X$ which we denote by $\eta\colon \Id_\mathbf{Set}\Rightarrow \underline{\Pi} _X$.\footnote{Note that in the case $X=2$, the category of $\underline \Pi_X$-graphs is the category of undirected graphs in the conventional sense in which morphisms are required to map edges to edges.  }

To define an interpretation functor $I\colon \sG_{X}\to \C G_{\underline \Pi_X}$, we let $q$ be the unordered set $(x)_{x\in X}$ in $\underline \Pi_X(X)$.  Since $\underline \Pi_X(\sigma)(x)_{x\in X}=(x)_{x\in X}$ for each automap $\sigma\colon X\to X$, the interpretation is well-defined.

\begin{lemma}
	The interpretation $I\colon \sG_{X}\to \C G_{\underline \Pi_X}$ is dense, full and faithful.
\end{lemma}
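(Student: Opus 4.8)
The plan is to compute the nerve $N$ and realization $R$ explicitly and then split the claim into its two halves, using the cited equivalence ``dense $\iff$ $N$ full and faithful'' together with Lemma \ref{L:FFInt}. First I would unwind the definitions: for a $\underline{\Pi}_X$-graph $G=(G(E),G(V),\partial_G)$ one gets $N(G)(V)\iso G(V)$ and
\[
N(G)(A)\iso\setm{(\beta,f)\in G(E)\x G(V)^X}{\partial_G(\beta)=[f]},
\]
where $[f]$ denotes the class of $f$ in $\underline{\Pi}_X(G(V))=G(V)^X/\sX$; the right actions are $(\beta,f).x=f(x)$ and $(\beta,f).\sigma=(\beta,f\circ\sigma)$. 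Dually $R$ sends a symmetric $X$-graph $G$ to the $\underline{\Pi}_X$-graph with vertex set $G(V)$, edge set $G(A)/\sX$, and incidence $[\gamma]\mapsto[\partial_G(\gamma)]$. In particular $R(\underline A)\iso I(A)$ and $R(\underline V)\iso I(V)$, confirming $Ry\iso I$.

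Second, I would verify that $I$ is full and faithful by the direct homset computation, comparing $\sG_X(c,c')$ with $\C G_{\underline{\Pi}_X}(I(c),I(c'))$ for $c,c'\in\{V,A\}$. The cases $(V,V)$, $(V,A)$ and $(A,V)$ are immediate from the shapes $I(V)=(\empset,1,!_1)$ and $I(A)=(1,X,\named q)$: a morphism out of $I(A)$ requires a map $1\to\empset$ on edges, forcing $\C G_{\underline{\Pi}_X}(I(A),I(V))=\empset=\sG_X(A,V)$, while $\C G_{\underline{\Pi}_X}(I(V),I(A))\iso X=\sG_X(V,A)$. The only case with content is $(A,A)$: a morphism $I(A)\to I(A)$ amounts to a map $f_V\colon X\to X$ with $\underline{\Pi}_X(f_V)(q)=q$, i.e. $[f_V]=[\Id_X]$ since $q=[\Id_X]$. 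Because $[f_V]=[\Id_X]$ says exactly that $f_V$ lies in the $\sX$-orbit of $\Id_X$, which is $\sX$ itself, this forces $f_V\in\sX$ and yields the bijection with $\sG_X(A,A)=\sX$. (Equivalently, one could run Lemma \ref{L:FFInt} and check $\eta_{\underline V},\eta_{\underline A}$ are isomorphisms using $R(\underline V)\iso I(V)$, $R(\underline A)\iso I(A)$.)

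Third, for density I would prove $N$ full and faithful directly. Faithfulness is easy: $N(h)_V=h_V$ recovers the vertex map, and since every edge $\beta\in G(E)$ admits a representative ordering $f$ with $[f]=\partial_G(\beta)$, the arc $(\beta,f)$ witnesses that $N(h)_A$ determines $h_E(\beta)$. For fullness, given $\psi\colon N(G)\to N(H)$ I would set $h_V\defeq\psi_V$ and define $h_E(\beta)$ to be the edge-component of $\psi_A(\beta,f)$ for any representative $f$ of $\partial_G(\beta)$. Then I would check (i) well-definedness, (ii) that $h$ is a $\underline{\Pi}_X$-graph morphism, and (iii) $N(h)=\psi$: for (ii)--(iii), equivariance of $\psi_A$ under the $x$-actions forces the second component of $\psi_A(\beta,f)$ to be $h_V\circ f$, whence $\partial_H(h_E(\beta))=[h_V\circ f]=\underline{\Pi}_X(h_V)(\partial_G(\beta))$ and $N(h)=\psi$ on the nose.

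The main obstacle is the well-definedness step (i), and it is precisely where the choice of the functor $\underline{\Pi}_X$ is essential. The set of orderings $f$ with $[f]=\partial_G(\beta)$ is exactly one $\sX$-orbit, and the arcs $(\beta,f)$ for fixed $\beta$ form a single orbit under the arc-action $(\beta,f).\sigma=(\beta,f\circ\sigma)$; since $\psi_A$ is $\sX$-equivariant, its edge-component is constant on this orbit, so $h_E(\beta)$ is independent of the representative. This is the crux: it works because $\underline{\Pi}_X(Y)=Y^X/\sX$ retains all multiplicity information, so every incidence class has a representative ordering and all such orderings are $\sX$-conjugate --- precisely the property that fails for the plain covariant powerset, where distinct multiplicities collapse and full-faithfulness of the nerve (equivalently density) breaks down.
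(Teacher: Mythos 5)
Your proof is correct, but it reaches density by a different route than the paper. The paper verifies the colimit universal property directly: given a cocone $\lambda$ on the canonical diagram $D\colon I\ls (E,V,\varphi)\to \C G_{\underline \Pi_X}$, it picks for each edge $e$ a representative ordering $f$ with $\underline\Pi_X f=\varphi(e)$, reads off the edge and vertex components of the factorization from the components $\lambda_{(\named e,f)}$ and $\lambda_{(!_E,\named v)}$, and appeals to cocone compatibility for uniqueness and well-definedness. You instead invoke the equivalence (already quoted in the paper from Mac Lane X.6) that density is the same as $N$ being full and faithful, compute $N$ explicitly, and verify full-faithfulness of $N$ by hand. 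The computational core is identical in both arguments — surjectivity of $G(V)^X\to \underline\Pi_X(G(V))$ guarantees a representative ordering for every edge, and the fact that the fibers are single $\sX$-orbits plus $\sX$-equivariance forces the edge assignment to be independent of the representative — but you make the well-definedness step explicit where the paper compresses it into "by the compatibility of the cocone." Your route has the side benefit that the explicit description of $N(G)(V)$ and $N(G)(A)$, which the paper needs anyway right after the lemma for Proposition \ref{P:EENerve} and the discussion of fixed loops, is produced as part of the proof; the paper's route is more self-contained in that it never needs the nerve formula inside the proof. Your direct homset check of full-faithfulness of $I$ itself (the $(A,A)$ case reducing to $[f_V]=[\Id_X]$ iff $f_V\in\sX$) supplies the detail behind the paper's "it is clearly full and faithful."
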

\begin{proof}
	It is clearly full and faithful. To show it is dense, let $(E,V,\varphi)$ and $(K,L,\psi)$ be $\C G_{\underline \Pi_X}$-objects and $\lambda\colon D\Rightarrow \Delta(K,L,\psi)$ a cocone on the diagram $D\colon I\ls (E,V,\varphi)\to \C G_{\underline \Pi_X}$. 
	Let $e$ be an edge in $E$ and $f\colon X\to V$ be the set morphism with $\underline{\Pi} _Xf=\varphi(e)$. Then $(\named e, f)\colon I(A)=(1,X,\top)\to (E,V,\varphi)$ is an object in $I\ls (E,V,\varphi)$ and thus there is a morphism $\lambda_{(\named {e},f)}\eqdef(\named{e'},g)\colon D(\named e,f)=(1,X,\top)\to (K,L,\psi)$. By the compatibility of the cocone, this gives us a uniquely defined $h\colon E\to K$, $e\mapsto e'$ on edges. Similarly for each vertex $v\in V$, there is a morphism $(!_E,\named v)\colon I(V)=(\empset, 1,!_1)\to (E,V,\varphi)$ and a cocone inclusion $(!_K,\named w)\colon D(!_E,\named v)=(\empset, 1,!_1)\to (K,L,\psi)$ giving us a factorization on vertices $k\colon V\to L$. Since $\psi\circ h(e)=\underline{\Pi} _X(kf)\circ \top=\underline{\Pi} _X(k)\circ \varphi(e)$ for each edge $E$,  $(h,k)\colon (E,V,\varphi)\to (K,L,\psi)$ is a well-defined $\C G_{\underline \Pi_X}$-morphism which necessarily is the unique factorization of the cocone. Therefore, $I$ is dense. 
	
\end{proof}

Note that the realization functor takes a $\widehat{\sG}_{X}$-object and quotients out the set of arcs by $\sX$. Hence the unit of the adjunction $\eta_P\colon P\to NR(P)$ is bijective on vertices and surjective on arcs. Hence the adjunction is epi-reflective. 

\begin{proposition}\label{P:EENerve}
	The nerve-realization $R\dashv N\colon \C G_{\underline \Pi_X}\to \widehat{\sG}_{X}$ induced by $I\colon \sG_{X}\to \C G_{\underline \Pi_X}$ is the presheaf topos completion of $\C G_{\underline \Pi_X}$.
\end{proposition}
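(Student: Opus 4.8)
The plan is to read ``presheaf topos completion'' through the universal property established in Proposition \ref{P:PresheafCompletion}, strengthened by the exponential-preservation clause of Proposition \ref{P:Exponential}. Both of those results share a single hypothesis on the interpretation functor: that it be dense, full, and faithful. Hence the entire proposition reduces to checking that $I\colon \sG_X\to \C G_{\underline\Pi_X}$ satisfies this hypothesis, after which every clause of the statement follows by direct citation.

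First I would invoke the preceding Lemma, which asserts precisely that $I\colon \sG_X\to \C G_{\underline\Pi_X}$ is dense, full, and faithful. This is exactly the hypothesis of Proposition \ref{P:PresheafCompletion} with $\DD T\defeq \sG_X$ and $\C M\defeq \C G_{\underline\Pi_X}$. Applying that proposition verbatim yields the defining factorization property: for every small category $\DD S$ and functor $F\colon \C G_{\underline\Pi_X}\to \widehat{\DD S}$ there is an essentially unique essential geometric morphism $k_!\dashv k^*\dashv k_*\colon \widehat{\sG}_X\to\widehat{\DD S}$ with $F\iso k_!N$, and when $F$ is full and faithful the adjunction $k_!\dashv k^*$ is coreflective while $k^*\dashv k_*$ is reflective.

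Next I would supply the two remaining features expected of a genuine completion. Since $I$ is dense, the cited equivalence (\cite{sM}, Section X.6) makes the nerve $N\colon \C G_{\underline\Pi_X}\to\widehat{\sG}_X$ full and faithful, so $R\dashv N$ is reflective and $N$ realizes $\C G_{\underline\Pi_X}$ as a reflective subcategory of the presheaf topos $\widehat{\sG}_X$. Because $I$ is moreover full and faithful, Proposition \ref{P:Exponential} guarantees that $N$ preserves any exponentials that exist in $\C G_{\underline\Pi_X}$, which is exactly the extra condition singled out in the remark following Proposition \ref{P:PresheafCompletion} as the reason for the qualifier \emph{topos}.

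The genuine mathematical content sits entirely in the preceding Lemma, which I am permitted to assume; once density and full faithfulness of $I$ are in hand, the statement is an assembly of earlier results. Thus I expect no essential obstacle, only a point of bookkeeping: confirming that the epi-reflectivity noted just before the statement (the unit $\eta_P\colon P\to NR(P)$ being bijective on vertices and surjective on arcs) is consistent with, indeed a concrete strengthening of, the abstract reflectivity delivered by Propositions \ref{P:PresheafCompletion} and \ref{P:Exponential}, so that no hypothesis of those results is left unmet in the case at hand.
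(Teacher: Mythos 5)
Your proof is correct and follows essentially the same route as the paper: the paper's own proof is a one-line citation of Proposition \ref{P:PresheafCompletion}, whose hypothesis (that $I$ is dense, full and faithful) is supplied by the immediately preceding Lemma. Your additional remarks on exponential preservation and epi-reflectivity are consistent elaborations rather than a different argument.
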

\begin{proof}
	Follows from Proposition \ref{P:PresheafCompletion}, that a dense, full and faithful interpretation induces the presheaf completion. 
\end{proof}
For a $\C G_{\underline \Pi_X}$-object $(B,C,\varphi)$, the embedding given by the nerve functor is given by 
\begin{align*}
N(B,C,\varphi)(V)&=\C G_{\underline \Pi_X}(I(V), (B,C,\varphi))\iso C,\\
N(B,C,\varphi)(A)&=\C G_{\underline \Pi_X}(I(A), (B,C,\varphi))\\ &=\setm{(e,g)\ }{\ e\in B,\ g\colon X\to C\ s.t.\ \underline{\Pi} _Xg=\varphi(e)} 
\end{align*}
The  right-actions are by precomposition, i.e., $(e,g).x=(e,g\circ \named x)$, $(e,g).\sigma=(e,g\circ \sigma)$. 

Let us show that all loops in the objects of the full subcategory of $\widehat{\sG}_{X}$ equivalent to $\C G_{\underline \Pi_X}$ are fixed loops. A loop in a $\C G_{\underline \Pi_X}$-object $(B,C,\varphi)$ is an edge $e\in B$ such that $\varphi(e)$ is $(v)_{x\in X}$ in $\underline{\Pi} _X(C)$ for some $v\in C$.  Therefore, there is only one morphism $(\named e, f)\colon I(A)\to (B,C,\varphi)$ and thus $(\named e,f\circ \sigma)=(\named e,f)$ for each $\sigma\in \sX$. Hence, each object in the reflective subcategory of $\widehat{\sG}_{X}$ equivalent to $\C G_{\underline \Pi_X }$ has only fixed loops.

\begin{corollary}\label{C:Exponentials}
	If $X$ has cardinality greater than $1$, the category $\C G_{\underline \Pi_X}$ does not have exponentials. 
\end{corollary}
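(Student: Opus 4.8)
The plan is to argue by contradiction, leveraging three facts already in hand. First, since the interpretation $I\colon \sG_X \to \C G_{\underline \Pi_X}$ is dense, full and faithful, Proposition \ref{P:Exponential} tells us the nerve $N\colon \C G_{\underline \Pi_X} \to \widehat{\sG}_X$ preserves whatever exponentials happen to exist in $\C G_{\underline \Pi_X}$. Second, every object in the essential image of $N$ (equivalently, in the reflective subcategory of $\widehat{\sG}_X$ equivalent to $\C G_{\underline \Pi_X}$) has only fixed loops, by the remark immediately preceding this corollary. Third, in $\widehat{\sG}_X$ the exponential $L^{\underline A}$ of Example \ref{E:Exponentials}(\ref{E:XLoops}) contains a genuinely \emph{unfixed} loop, and this is precisely where the hypothesis $\#X > 1$ is needed. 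If $\C G_{\underline \Pi_X}$ possessed an exponential whose nerve were $L^{\underline A}$, that nerve would be forced to have only fixed loops, contradicting the example.

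To make this precise I would first locate both $L$ and $\underline A$ inside the essential image of $N$. For $\underline A$ this is immediate: by Lemma \ref{L:FFInt} each representable is $NR$-closed, and since $I \iso Ry$ we obtain $N(I(A)) \iso NRy(A) \iso \underline A$ (and likewise $N(I(V)) \iso \underline V$). For $L$ — the symmetric $X$-graph with a single vertex $v$ and two fixed loops $0,1$ — I would exhibit an explicit preimage. Let $G \defeq (\{e_0,e_1\},\{v\},\varphi)$ be the $\underline \Pi_X$-graph with one vertex, two edges, and $\varphi(e_i)$ equal to the constant tuple $(v)_{x\in X}$, the unique element of $\underline \Pi_X(\{v\})$. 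Computing from the nerve formula $N(B,C,\varphi)(A)=\setm{(e,g)}{e\in B,\ g\colon X\to C,\ \underline \Pi_X g=\varphi(e)}$, the singleton vertex set forces $g$ to be the unique constant map, so $N(G)$ has exactly the two arcs $(e_0,g),(e_1,g)$, both fixed loops; hence $N(G)\iso L$.

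With these identifications, the contradiction runs as follows. Suppose $\C G_{\underline \Pi_X}$ has all exponentials; then in particular $G^{I(A)}$ exists, and Proposition \ref{P:Exponential} supplies a natural isomorphism
\[
 N\bigl(G^{I(A)}\bigr) \iso N(G)^{N(I(A))} \iso L^{\underline A}
\]
in $\widehat{\sG}_X$. The left-hand side lies in the essential image of $N$ and so has only fixed loops. But by Example \ref{E:Exponentials}(\ref{E:XLoops}), since $\#X > 1$, the object $L^{\underline A}$ carries an unfixed loop (the set map $g\colon \sX \to \{0,1\}$ with $g(\Id_X)=0$ and $g(\sigma)=1$ for a transposition $\sigma$ satisfies $g.\sigma \neq g$). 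Since isomorphism preserves the presence of an unfixed loop, this is the required contradiction, and no such exponential can exist.

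The one point that genuinely needs care — and the main obstacle — is verifying that the concrete $L$ really lies in the essential image of $N$, rather than merely satisfying the necessary ``only fixed loops'' condition; this is why I exhibit the explicit preimage $G$ and compute its nerve directly instead of appealing to the loop characterization alone. Everything else is bookkeeping: the exponential-preservation is delivered verbatim by Proposition \ref{P:Exponential}, and the creation of an unfixed loop in $L^{\underline A}$ is exactly the content of the cited example, which is the sole place where $\#X>1$ is invoked.
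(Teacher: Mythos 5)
Your argument is correct and follows the same route as the paper's proof: combine the observation that objects in the essential image of $N$ have only fixed loops with Proposition \ref{P:Exponential} (preservation of exponentials) and the unfixed loop in $L^{\underline A}$ from Example \ref{E:Exponentials}(\ref{E:XLoops}). Your explicit verification that $L$ itself lies in the essential image of $N$ (via the two-edge preimage $G$) is a detail the paper leaves implicit, and is a welcome addition rather than a deviation.
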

\begin{proof}
	By the above observation, it is enough to show that there exist objects $G$ and $H$ in $\C G_{\underline \Pi_X}$ such that $N(G)^{N(H)}$ has a unfixed loop in $\widehat{\DD G}_{(X,\sX)}$. Set $H\defeq I(A)$ and $G$ be the graph with one vertex and an $\sX$-loop. Then $N(G)^{N(H)}=L^{\underline A}$ as defined in Example \ref{E:Exponentials}(\ref{E:XLoops}) which we have shown has a unfixed loop. 
\end{proof}

It is straightforward to show that the full subcategory of simple hypergraphs in $\C G_{\underline \Pi_X}$ is a reflective subcategory which restricts to the nerve-realization adjunction. 

Next, we show that injective and projectives in the category of $\underline \Pi_X$-graphs are precisely those objects which are taken to injective and projective objects n the category of symmetric $X$-graphs.

\begin{proposition}\label{P:InjectiveN}
A $\underline \Pi_X$-graph $Q$ is injective if and only if $N(Q)$ is an injective symmetric $X$-graph.
\end{proposition}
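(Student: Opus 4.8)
The plan is to avoid re-deriving an explicit classification of the injective $\underline\Pi_X$-graphs (as was necessary for hypergraphs, where the nerve is not full and faithful) and instead to exploit the much better behaviour of the present adjunction. Because the interpretation $I\colon\sG_X\to\C G_{\underline\Pi_X}$ is dense, full and faithful by the preceding lemma, the density criterion recalled after the nerve-realization adjunction shows that $N$ is full and faithful, so $R\dashv N$ is reflective (Proposition \ref{P:EENerve}). This is exactly the situation in which injectivity transports across the adjunction by a purely formal argument, and I would run it in both directions.

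First I would record two preliminary facts. As a right adjoint, $N$ preserves monomorphisms. The only step needing the concrete description of the functors is that the realization $R$ \emph{also} preserves monomorphisms. For this I would use that a morphism $m\colon A\to B$ of symmetric $X$-graphs is monic iff $m_V$ and $m_A$ are injective, together with $R(G)(V)=G(V)$ and $R(G)(E)=G(A)/\sX$: then $R(m)_V=m_V$ is injective, and $R(m)_E$, which sends $[\gamma]\mapsto[m_A\gamma]$, is injective because for a presheaf morphism one has $m_A(\gamma.\sigma)=m_A(\gamma).\sigma$, so $[m_A\gamma]=[m_A\gamma']$ forces $m_A\gamma=m_A(\gamma'.\sigma)$ and hence $\gamma=\gamma'.\sigma$, i.e.\ $[\gamma]=[\gamma']$. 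Since monomorphisms in the comma category $\C G_{\underline\Pi_X}=\mathbf{Set}\ls\underline\Pi_X$ are exactly the morphisms injective on vertices and edges, $R(m)$ is monic.

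With these in hand both implications become formal. For the forward direction, given $Q$ injective, a monomorphism $m\colon A\to B$ in $\widehat{\sG}_X$, and $f\colon A\to N(Q)$, I would transpose $f$ to $\hat f\colon R(A)\to Q$, extend $\hat f$ along the monomorphism $R(m)$ using injectivity of $Q$, and transpose the resulting $k\colon R(B)\to Q$ back to $\tilde k\colon B\to N(Q)$; naturality of the adjunction bijection then gives $\tilde k\circ m=f$, so $N(Q)$ is injective. (Equivalently one could compute $N(Q)(A)$ and verify the criterion of Proposition \ref{P:Injective}, but the transpose argument is cleaner.) For the converse, given $N(Q)$ injective, a monomorphism $m\colon A\to B$ in $\C G_{\underline\Pi_X}$, and $f\colon A\to Q$, I would apply $N$ to obtain a monomorphism $N(m)$ and extend $N(f)$ to some $g\colon N(B)\to N(Q)$; fullness of $N$ writes $g=N(h)$, and faithfulness turns $N(h\circ m)=g\circ N(m)=N(f)$ into $h\circ m=f$, so $Q$ is injective.

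The one non-formal step, and hence the main obstacle, is verifying that $R$ preserves monomorphisms; the rest is a routine unwinding of adjoint transposes. The subtlety there is precisely that $R$ quotients arcs by the $\sX$-action, so one must use naturality of the graph morphism $m$ with respect to that action to rule out the collapse of distinct $\sX$-orbits — which is exactly where the hypothesis $M=\sX$ enters.
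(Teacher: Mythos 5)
Your proof is correct and follows essentially the same route as the paper's: the converse direction is formal from $N$ being full, faithful and mono-preserving, and the forward direction transposes a morphism across the reflective adjunction $R\dashv N$ using that $R$ preserves monomorphisms and that the counit at $Q$ is an isomorphism. The only substantive difference is that you actually verify that $R$ preserves monomorphisms (via equivariance of $m_A$ under the $\sX$-action on arcs), a step the paper asserts without proof.
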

\begin{proof}
If $N(Q)$ is injective, then $Q$ is injective since $N$ is full and faithful and preserves monomorphisms. Conversely, let $Q$ be an injective $\underline \Pi_X$-graph and consider the monomorphism $f\colon G\to H$ and morphism $g\colon G\to N(Q)$ of symmetric $X$-graphs. The realization functor preserves monomorphisms, hence $R(f)\colon R(G)\to R(H)$ is a monomorphism. Since the counit $\varepsilon_Q \colon RN(Q)\to Q$ is an isomorphism, $RN(Q)$ is injective and thus there is a morphism $h\colon R(H)\to RN(Q)$ such that $h\circ R(f)=R(g)$. Therefore, the following diagram commutes
	\[
	\xymatrix@C=.25em{ G \ar@/_5em/[dddr]_{g} \ar@{>->}[rr]^-f \ar[d]^{\eta_G} && H \ar@/^5em/[dddl]^{\overline h} \ar[d]_{\eta_H} \\ NR(G) \ar@{>->}[rr]^-{NR(f)} \ar[dr]_{NR(g)} && NR(H) \ar[dl]^{N(h)} \\ & NRN(Q) \ar[d]_{N(\varepsilon_Q)}^\iso & \\ & N(Q) &}
	\]
	where $\overline h\defeq \varepsilon_Q\circ N(h)\circ \eta_H$. Thus, $\overline h\circ f=g$ and hence $N(Q)$ is injective.
	
\end{proof}

\begin{proposition}\label{P:ProjectiveN}
	A $\underline \Pi_X$-graph $P$ is projective if and only if $N(P)$ is a projective $(X,\Aut(X))$-graph. 
\end{proposition}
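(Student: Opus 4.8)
The plan is to mirror the structure of the proof of Proposition \ref{P:InjectiveN} almost verbatim, exploiting the fact that the interpretation $I\colon \sG_X\to \C G_{\underline \Pi_X}$ is dense, full and faithful (the preceding Lemma), so that the counit $\varepsilon_P\colon RN(P)\to P$ is an isomorphism for every $\underline \Pi_X$-graph $P$, and that $N$ is full and faithful. The two ingredients I expect to need are: (i) $N$ preserves epimorphisms (equivalently, since $R\dashv N$, one direction is automatic), and (ii) $R$ preserves epimorphisms (automatic, $R$ being a left adjoint).

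For the forward direction, suppose $N(P)$ is projective. Given an epimorphism $e\colon B\twoheadrightarrow C$ in $\C G_{\underline \Pi_X}$ and a morphism $g\colon P\to C$, I would apply the full and faithful nerve $N$ to obtain $N(e)\colon N(B)\to N(C)$ and $N(g)\colon N(P)\to N(C)$. The first point requiring care is that $N(e)$ is an epimorphism of symmetric $X$-graphs; this follows because $N$ is full and faithful with the adjunction being epi-reflective (noted just before Proposition \ref{P:EENerve}, the unit $\eta_P\colon P\to NR(P)$ is bijective on vertices and surjective on arcs), and more directly because epimorphisms of $\underline \Pi_X$-graphs and of symmetric $X$-graphs are both detected on underlying vertex and arc/edge sets, which $N$ computes faithfully. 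Projectivity of $N(P)$ then yields a lift $k\colon N(P)\to N(B)$ with $N(e)\circ k=N(g)$. Since $N$ is full, $k=N(\ell)$ for a unique $\ell\colon P\to B$, and since $N$ is faithful, $N(e\circ \ell)=N(e)\circ N(\ell)=N(g)$ forces $e\circ \ell=g$, exhibiting the desired lift.

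For the converse, suppose $P$ is projective and I want $N(P)$ projective. Given an epimorphism $q\colon G\twoheadrightarrow H$ and a morphism $g\colon N(P)\to H$ of symmetric $X$-graphs, I would apply the realization functor $R$, a left adjoint, so $R(q)\colon R(G)\twoheadrightarrow R(H)$ is an epimorphism. Using the isomorphism $\varepsilon_P\colon RN(P)\xrightarrow{\iso} P$, I can form the composite $R(g)\colon RN(P)\to R(H)$ and precompose with $\varepsilon_P^{-1}$ to get a map $P\to R(H)$; projectivity of $P$ lifts this along $R(q)$ to a map $h\colon P\to R(G)$. Transposing back through the adjunction (using the unit $\eta_G\colon G\to NR(G)$ and the triangle identities) and the isomorphism $\varepsilon_P$ produces a symmetric $X$-graph morphism $N(P)\to G$; I would draw the same commuting diagram as in Proposition \ref{P:InjectiveN}, with arrows reversed and epimorphisms in place of monomorphisms, to verify it lifts $g$ along $q$.

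The main obstacle I anticipate is verifying that the relevant lift transposed back along the adjunction actually satisfies $q\circ(\text{lift})=g$, rather than only doing so after applying $R$ or $N$; this is where faithfulness of $N$ and the triangle identities must be deployed carefully, exactly as the injective case used $N(\varepsilon_Q)$ being an isomorphism. A secondary subtlety is confirming that "projective" and "(regular) projective" coincide here — but since $\C G_{\underline \Pi_X}$ is cocomplete and, by the nerve-realization setup, sits epi-reflectively inside a presheaf topos where the two notions agree (as remarked in Chapter \ref{S:InjProj}), this causes no trouble. I would therefore expect the proof to be short, essentially the formal dual of the injective argument, with the diagram chase and the epi-preservation of $R$ and $N$ being the only steps meriting explicit comment.
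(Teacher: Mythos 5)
Your forward direction ($N(P)$ projective $\Rightarrow$ $P$ projective) is fine and is exactly the paper's argument: $N$ is full, faithful and preserves epimorphisms, so a lift against $N(e)$ descends to a lift against $e$. The gap is in the converse, where the injective argument does \emph{not} dualize. After you lift in $\C G_{\underline \Pi_X}$ to obtain $h\colon P\to R(G)$, transposing along $R\dashv N$ (and using $\varepsilon_P^{-1}$) only produces a morphism $N(P)\to NR(G)$; to finish you would need a morphism $NR(G)\to G$ compatible with $q$, but the unit $\eta_G\colon G\to NR(G)$ points the wrong way and is not invertible --- it identifies the $\Aut(X)$-orbits of arcs. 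Concretely, take $X=\{s,t\}$, let $G$ be the symmetric $X$-graph with one vertex and one unfixed ($2$-)loop and $q\colon G\to H$ the epimorphism collapsing it to a fixed loop: then $NR(G)$ has a single fixed loop and admits no morphism to $G$ whatsoever, so no lift $N(P)\to G$ can be manufactured from data living over $R(G)$, even though lifts exist (e.g.\ for $P=I(A)$ one simply chooses one of the two arcs by Yoneda). This is the standard asymmetry of a reflective, as opposed to coreflective, adjunction: injectives transport along the unit, projectives do not.

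The paper's converse is accordingly structural rather than formal. Since $\Proj(N(P))\to N(P)$ is an epimorphism, $R$ preserves epimorphisms and $\varepsilon_P$ is an isomorphism, the projective $P$ receives an epimorphism from $R(\Proj(N(P)))\iso \bigsqcup_{N(P)(V)}I(V)\sqcup\bigsqcup_{N(P)(A)}I(A)$, hence is a retract of, and therefore itself, a coproduct of copies of $I(V)$ and $I(A)$; since $N$ preserves coproducts and $NI(V)=\underline V$, $NI(A)=\underline A$, the nerve $N(P)$ is a coproduct of representables and hence projective. You would need to replace your transposition step with this characterization-of-projectives argument (or something equivalent); as written, the converse direction of your proof does not go through.
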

\begin{proof}
	If $N(P)$ is projective, then $P$ is projective since $N$ is full and faithful and preserves epimorphisms. Conversely, let $P$ be a projective $\underline \Pi_X$-graph. It is clear that $I(V)$ and $I(A)$ are projective objects in $\C G_{\underline \Pi_X}$, thus $R(\Proj(N(P)))\iso \bigsqcup_{N(P)(V)}I(V)\sqcup \bigsqcup_{N(P)(A)}I(A)$ is projective. Since the projective refinement $\Proj(N(P)))\to N(P)$ (see Chapter \ref{S:InjProj}) is an epimorphism and $\varepsilon_P\colon RN(P)\to P$ is an isomorphism, the composition $R(\Proj(N(P)))\to RN(P)\to P$ is an epimorphism. Thus, $P$ is a split subobject of a coproduct of $I(V)$ and $I(A)$. However, the only split subobjects of such a coproduct is itself a coproduct of $I(V)$ and $I(A)$. Then since $N$ preserves coproducts and $NI(V)=\underline V$ and $NI(A)=\underline A$, $N(P)$ is projective. 
	
\end{proof}

\section{The Category of Reflexive Power Graphs}

Let $r\C G_{\underline \Pi_X}$ be the category of reflexive $\underline \Pi_X$-graphs.\footnote{When $X=2$, the category of reflexive $\underline \Pi_X$-graphs is the category of conceptual graphs as given in \cite{dP}.}
To define an interpretation functor $I\colon \srG_{X}\to r\C G_{\underline \Pi_X}$, note that $M_A\iso X\sqcup 1$. Let $\eta\colon \Id_\mathbf{Set}\Rightarrow \underline \Pi_X$ be the natural transformation defined above and let $q\colon X\sqcup 1\to \underline \Pi_X(X)$ the map induced by the singleton assignment $\eta_X\colon X\to \underline \Pi_X(X)$, $x'\mapsto (x')_{x\in X}$ and $\top\colon 1\to \C P(X)$, $x\mapsto (x)_{x\in X}$.  Since $\underline \Pi_X(\sigma)(x)_{x\in X}=(x)_{x\in X}$ for each automap $\sigma\colon X\to X$ and $\underline \Pi_X(\overline{x'})(x)=(x')_{x\in X}$ for each constant map $\overline {x'}\colon X\to X$, the interpretation is well-defined. 

\begin{lemma}
	The interpretation functor $I\colon \srG_{X}\to r\C G_{\underline \Pi_X}$ is dense, full and faithful.
\end{lemma}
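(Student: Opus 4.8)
The plan is to follow the exact template established for the non-reflexive power graph case in the preceding lemma (the one showing $I\colon \sG_X\to \C G_{\underline \Pi_X}$ is dense, full and faithful), adapting each step to account for the reflexive structure, the enlarged part set $M_A\iso X\sqcup 1$, and the incidence map $q$ built from the singleton assignment $\eta_X$ together with $\top$. Fullness and faithfulness should again be essentially immediate from the construction, since a morphism of reflexive $\underline\Pi_X$-graphs between $I(A)$ and $I(A)$ (or between $I(V)$-objects) is completely determined by its behaviour on the single edge-part and the vertex set, matching precisely the homsets $\srG_X(A,A)=M$, $\srG_X(V,A)=X$, $\srG_X(A,V)=\{\ell\}$, and $\srG_X(V,V)=\{\Id_V\}$.

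The substance lies in density. First I would invoke the characterization from the excerpt: $I$ is dense iff every $r\C G_{\underline\Pi_X}$-object is the colimit of the canonical diagram $I\ls(P,V,\varphi)\to r\C G_{\underline\Pi_X}$. So I would take an arbitrary reflexive $\underline\Pi_X$-graph $(P,V,\varphi)$, a cocone $\lambda\colon D\Rightarrow \Delta(K,L,\psi)$ over this comma-category diagram, and construct the unique factorizing morphism $(h,k)\colon (P,V,\varphi)\to (K,L,\psi)$. For each genuine edge $e\in P\setminus V$ I would choose the set map $f\colon X\to V$ with $\underline\Pi_X f=\varphi(e)$, obtain the object $(\named e,f)\colon I(A)\to(P,V,\varphi)$ of the comma category, and read off a component $\lambda_{(\named e,f)}$ to define $h$ on that part; for each vertex $v\in V$ I would use the object $(\named v)\colon I(V)\to(P,V,\varphi)$ to define $k$. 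Compatibility of the cocone under the $M$-action morphisms $I(m)$ and under $\ell$ forces these assignments to agree, yielding well-definedness, and the identity $\psi\circ h=\underline\Pi_X(k)\circ\varphi$ follows just as in the non-reflexive proof.

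The main obstacle — and the genuinely new ingredient — is the reflexive bookkeeping that did not appear before: I must verify that the factorization respects the inclusion $V\hookrightarrow P$ and the degeneracy condition $\partial_{\mid V}=\eta$. Concretely, the part set $M_A\iso X\sqcup 1$ means $I(A)$ now carries both its nondistinguished edge and its distinguished loops, and the morphism $I(\ell)=!_{M_A}\colon M_A\to 1$ collapses the whole part set onto the vertex. I would need to check that the cocone component chosen for an edge $e$ is consistent with the cocone components chosen for the distinguished loops $e.x=x.\ell$ sitting at its incident vertices, i.e. that applying $\ell$ and then reading off $\lambda$ agrees with reading off $\lambda$ at the vertex directly. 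This amounts to exploiting the relations $\ell\circ x_{m'}=\Id_V$ and $\ell\circ m=\ell$ in $\rG_{(X,M)}$, which guarantee that the vertex assignment $k$ is forced to be the restriction of the part assignment $h$, so that the two halves of the factorization cohere into a single reflexive $\underline\Pi_X$-graph morphism. Once that coherence is pinned down, uniqueness is automatic from the compatibility of $\lambda$, completing the density argument.
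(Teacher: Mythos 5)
Your proposal is correct and follows the same basic density argument as the paper: build the factorization of an arbitrary cocone $\lambda$ out of its components at objects of the comma category $I\ls G$, and use cocone compatibility over the morphisms induced by $x$, $m$, and $\ell$ for well-definedness, compatibility with incidence, and uniqueness. The only difference is organizational: you split the part set into genuine edges (handled via $I(A)$-components) and vertices (handled via $I(V)$-components) and then verify the two halves cohere through the relations involving $\ell$, whereas the paper's proof avoids the case split by observing that $I(A)$ classifies the \emph{entire} part set up to precomposition by automorphisms of $I(A)$ --- a morphism $I(A)\to G$ collapsing the nondistinguished part to a degenerate part picks out a vertex --- so that $h_P\colon G(P)\to H(P)$, $[e]\mapsto[\lambda_e]$, is defined in one stroke and its restriction to vertices comes for free from compatibility over $I(\ell)$. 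Both routes work; the paper's is slightly slicker, while yours makes the reflexive coherence condition (that $h_P$ restricts to $h_V$ and respects the degeneracy $\partial_{|V}=\eta$) explicit, which is exactly the point you correctly identify as the new ingredient relative to the non-reflexive lemma.
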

\begin{proof}
	It is clearly full and faithful. To show it is dense, let $G$ and $H$ be $r\C G_{\underline \Pi_X}$-objects and $\lambda\colon D\Rightarrow \Delta H$ a cocone on the canonical diagram $D\colon I\ls G \to r\C G_{\underline \Pi_X}$. It can be verified that $I(A)$ classifies the parts set $G(P)$ of a graph $G$ up to precomposition by automorphism $A'\to A'$. In other words, $G(P)\iso \frac{r\C G_{\underline \Pi_X}(I(A),G)}{\sim}$ and $H(P)\iso \frac{r\C G_{\underline \Pi_X}(I(A),H)}{\sim}$ where $\sim$ is the equivalence relation induced by automorphisms of $I(A)$. Thus we define $h_P\colon G(P)\to H(P)$, $[e]\mapsto [\lambda_{e}]$ where $[e]$ is the equivalence class of the morphism $e\colon I(A)\to G$ and $\lambda_e\colon D(e)\to H$ is the component of the natural transformation $\lambda$. Since $\lambda$ is a cocone, the map is compatible with incidence operations and the restriction to vertex sets, $h_V\colon G(V)\to H(V)$. Thus $h\colon G\to H$ is the unique factorization which shows the colimit of $D$ is $G$.
	
\end{proof}

Note that the realization functor takes a $\widehat{\srG}_{X}$-object and quotients out the set of arcs by $\sX$. Hence the unit of the adjunction $\eta_P\colon P\to NR(P)$ is bijective on vertices and surjective on arcs. Hence the adjunction is epi-reflective. 

\begin{proposition}\label{P:EVNerve}
	The nerve-realization $R\dashv N\colon r\C G_{\underline \Pi_X}\to \widehat{\srG}_{X}$ induced by $I\colon \srG_{X}\to r\C G_{\underline \Pi_X}$ is the presheaf topos completion of $r\C G_{\underline \Pi_X}$.
\end{proposition}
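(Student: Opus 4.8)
The plan is to proceed exactly as in the proof of Proposition \ref{P:EENerve}, invoking the general machinery of Proposition \ref{P:PresheafCompletion}. That proposition asserts that whenever an interpretation functor is dense, full and faithful, its nerve exhibits the modeling category as a presheaf topos completion: for every small category $\DD S$ and functor $F\colon r\C G_{\underline \Pi_X}\to \widehat{\DD S}$ there is an essentially unique essential geometric morphism $k_!\dashv k^*\dashv k_*\colon \widehat{\srG}_X\to \widehat{\DD S}$ with $F\iso k_!N$. Hence the entire content of the present statement reduces to checking the three hypotheses on the interpretation $I\colon \srG_X\to r\C G_{\underline \Pi_X}$, after which one reads off the conclusion.

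First I would cite the preceding Lemma, which already establishes that this interpretation is dense, full and faithful. Fullness and faithfulness are immediate from the construction of $I$, while density is the substantive point: it rests on the observation that $I(A)$ classifies the parts set $G(P)$ of a reflexive $\underline \Pi_X$-graph $G$ up to precomposition by automorphisms of $I(A)$, so that $G(P)\iso \frac{r\C G_{\underline \Pi_X}(I(A),G)}{\sim}$, and the colimit of the canonical diagram over $I\ls G$ recovers $G$. With these three properties in hand, Proposition \ref{P:PresheafCompletion} applies verbatim and produces the required universal essential geometric morphism, so no separate construction is needed here.

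The only place where genuine care is required, and thus the main obstacle, is the density argument for the reflexive interpretation, which must accommodate the distinguished loops (the vertex proxies coming from $\ell$) and the way $\underline \Pi_X$ collapses each constant map $\overline{x'}\colon X\to X$ to the diagonal $(x')_{x\in X}$. This is precisely what the preceding Lemma handles, so at the level of this proposition nothing further is demanded beyond the citation. I would also remark, as in the non-reflexive case, that since the realization functor quotients the arc set by $\sX$, the unit $\eta_P\colon P\to NR(P)$ is bijective on vertices and surjective on arcs; hence the adjunction is epi-reflective, confirming that the completion is reflective rather than merely coreflective.
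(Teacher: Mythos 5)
Your proof is correct and matches the paper's argument exactly: the paper likewise derives Proposition \ref{P:EVNerve} as an immediate consequence of Proposition \ref{P:PresheafCompletion} once the preceding lemma has established that $I\colon \srG_{X}\to r\C G_{\underline \Pi_X}$ is dense, full and faithful. Your additional remark on epi-reflectivity of the unit also mirrors the observation the paper makes just before the proposition.
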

\begin{proof}
	Follows from Proposition \ref{P:PresheafCompletion}, that a dense, full and faithful interpretation induces the presheaf completion.
	
\end{proof}
The full subcategory of $\widehat{\srG}_{X}$ induced by the nerve functor consists of reflexive symmetric $X$-graphs which have no unfixed loops. Indeed if $G$ is a $r\C G_{\underline \Pi_X}$-object then $N(G)(A)=r\C G_{\underline \Pi_X}(I(A),G)$ and so if $e\colon I(A)\to G$ is a loop, i.e., for each $x\in X$ there is a $v\colon I(V)\to I(A)$ such that $e\circ I(x)=v$, then $e\circ I(\sigma)=e$. 
\begin{corollary}\label{C:NoExpR}
	If $X$ has cardinality greater than $1$, the category $r\C G_{\underline \Pi_X}$ does not have exponentials.  
\end{corollary}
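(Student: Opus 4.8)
The plan is to mirror the proof of Corollary \ref{C:Exponentials}, using the fact that the nerve $N\colon r\C G_{\underline \Pi_X}\to \widehat{\srG}_X$ of the dense, full and faithful interpretation $I\colon \srG_X\to r\C G_{\underline \Pi_X}$ preserves any exponentials that exist (Proposition \ref{P:Exponential}, which applies since the interpretation is dense, full and faithful by the preceding lemma). The strategy is therefore a proof by contradiction: assume $r\C G_{\underline \Pi_X}$ had all exponentials, exhibit two objects $G$ and $H$ whose exponential's nerve would be forced to contain an unfixed loop, and then contradict the structural fact just recorded in the text---namely that every object in the essential image of $N$ has \emph{only fixed loops}.

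First I would recall the key observation established immediately before the statement: if $G$ is an $r\C G_{\underline \Pi_X}$-object then $N(G)(A)=r\C G_{\underline \Pi_X}(I(A),G)$, and any loop $e\colon I(A)\to G$ satisfies $e\circ I(\sigma)=e$ for every $\sigma\in \sX$, so $N(G)$ has no unfixed loops. Thus it suffices to produce $G,H$ in $r\C G_{\underline \Pi_X}$ for which $N(G)^{N(H)}$ does contain an unfixed loop in $\widehat{\srG}_X$. The natural choice, exactly paralleling Corollary \ref{C:Exponentials}, is to set $H\defeq I(A)$ so that $N(H)\iso \underline A$, and to take $G$ to be the reflexive $\underline \Pi_X$-graph corresponding to the reflexive symmetric $X$-graph $L$ of Example \ref{E:Exponentials}(\ref{E:1Loops})---that is, the one-vertex reflexive symmetric $X$-graph with arc set $\{0,1\}$, both arcs fixed by $\sX$, and distinguished loop $0$. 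Since $L$ has only fixed loops it lies in the essential image of $N$, so there is an $r\C G_{\underline \Pi_X}$-object $G$ with $N(G)\iso L$.

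Then, were the exponential $G^H$ to exist in $r\C G_{\underline \Pi_X}$, Proposition \ref{P:Exponential} would give $N(G^H)\iso N(G)^{N(H)}\iso L^{\underline A}$ in $\widehat{\srG}_X$. But Example \ref{E:Exponentials}(\ref{E:1Loops}) explicitly shows $L^{\underline A}$ contains an unfixed loop (the loop $((1)_{x\in X},g)$ with $g(\Id_X)=0$, $g(\sigma)=1$ satisfies $((1)_{x\in X},g).\sigma\neq ((1)_{x\in X},g)$). This contradicts the observation that nerves of $r\C G_{\underline \Pi_X}$-objects have no unfixed loops, completing the argument. I would phrase the proof compactly, since all the analytic work has already been done in Example \ref{E:Exponentials}(\ref{E:1Loops}) and Proposition \ref{P:Exponential}.

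The only point requiring minor care---and the place I would expect a subtlety rather than a genuine obstacle---is confirming that $N(G)\iso L$ for a bona fide $r\C G_{\underline \Pi_X}$-object $G$, i.e.\ that $L$ really is a fixed point of the nerve-realization adjunction (equivalently, that $L$ has no unfixed loops and hence lies in the reflective subcategory $N(r\C G_{\underline \Pi_X})$). Since $L$ has a single vertex and its two arcs are both $\sX$-fixed, it has no unfixed loops, so it is $NR$-closed and $L\iso NR(L)$; setting $G\defeq R(L)$ suffices. With that identification in hand the contradiction is immediate, so the argument is essentially a transcription of the non-reflexive case (Corollary \ref{C:Exponentials}) with $L$ and $L^{\underline A}$ replaced by their reflexive analogues from Example \ref{E:Exponentials}(\ref{E:1Loops}).

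\begin{proof}
	By the lemma above, $I\colon \srG_X\to r\C G_{\underline \Pi_X}$ is dense, full and faithful, so by Proposition \ref{P:Exponential} the nerve $N$ preserves any exponentials which exist in $r\C G_{\underline \Pi_X}$. As observed above, for each $r\C G_{\underline \Pi_X}$-object $G$ the reflexive symmetric $X$-graph $N(G)$ has no unfixed loops. Hence it is enough to exhibit objects $G$ and $H$ in $r\C G_{\underline \Pi_X}$ such that $N(G)^{N(H)}$ has an unfixed loop in $\widehat{\srG}_X$. Let $L$ be the reflexive symmetric $X$-graph of Example \ref{E:Exponentials}(\ref{E:1Loops}). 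Since $L$ has only fixed loops, it is $NR$-closed, so $L\iso NR(L)$; set $G\defeq R(L)$ and $H\defeq I(A)$, so that $N(G)\iso L$ and $N(H)\iso \underline A$. If the exponential $G^H$ existed in $r\C G_{\underline \Pi_X}$, then $N(G^H)\iso N(G)^{N(H)}\iso L^{\underline A}$. But Example \ref{E:Exponentials}(\ref{E:1Loops}) shows $L^{\underline A}$ contains an unfixed loop, contradicting the fact that $N(G^H)$ has no unfixed loops. Therefore $r\C G_{\underline \Pi_X}$ does not have all exponentials.
\end{proof}
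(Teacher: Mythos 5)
Your proof is correct and follows essentially the same route as the paper's: invoke the preservation of exponentials by the nerve of the dense, full and faithful interpretation, note that nerves of $r\C G_{\underline \Pi_X}$-objects have only fixed loops, and then take $H=I(A)$ and $G$ realizing the one-vertex reflexive graph $L$ of Example \ref{E:Exponentials}(\ref{E:1Loops}) so that $N(G)^{N(H)}=L^{\underline A}$ has an unfixed loop. Your added check that $L$ is $NR$-closed (hence genuinely in the essential image of $N$) is a small but worthwhile tightening of the paper's one-line argument, which merely describes $G$ directly.
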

\begin{proof}
	By the above observation, it is enough to show that there exist objects $G$ and $H$ in $\C G_{\underline \Pi_X}$ such that $N(G)^{N(H)}$ has an unfixed in $\widehat{\DD G}_{(X,\sX)}$. Set $H\defeq I(A)$ and $G$ be the graph with one vertex and two  unfixed loops. Then $N(G)^{N(H)}=L^{\underline A}$ as defined in Example \ref{E:Exponentials}(\ref{E:1Loops}) which we have shown has a unfixed loop.
	
\end{proof}	

It is also straightforward to show that the full subcategory of simple hypergraphs in $r\C G_{\underline \Pi_X}$ is a reflective subcategory which allows a restriction to the nerve-realization adjunction. 

The proofs for the preservation of injective and projective objects by the nerve $N$ are similar to the proofs in Proposition \ref{P:InjectiveN} and \ref{P:ProjectiveN} and are thus omitted.

\begin{proposition}
A reflexive $\underline \Pi_X$-graph $Q$ (resp. $P$) is injective (resp. projective) if and only if $N(Q)$ (resp. $N(P)$) is injective (resp. projective). 
\end{proposition}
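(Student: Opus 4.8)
The plan is to transcribe the arguments of Propositions \ref{P:InjectiveN} and \ref{P:ProjectiveN} into the reflexive setting, since every structural input those proofs rely on has a reflexive analogue already in hand. The key ingredients are: the interpretation $I\colon \srG_X\to r\C G_{\underline \Pi_X}$ is dense, full and faithful (the Lemma preceding Proposition \ref{P:EVNerve}); consequently the nerve $N$ is full and faithful and the counit $\varepsilon\colon RN\Rightarrow \Id$ is a natural isomorphism (Propositions \ref{P:EVNerve} and \ref{P:PresheafCompletion}); the realization $R$, being a left adjoint, preserves all colimits and epimorphisms; and $N$, being a right adjoint, preserves monomorphisms.

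For the injective statement I would argue as follows. For the direction $N(Q)$ injective $\Rightarrow$ $Q$ injective, given a monomorphism $m\colon A\rightarrowtail B$ and a morphism $f\colon A\to Q$ in $r\C G_{\underline \Pi_X}$, apply $N$ to obtain a mono $N(m)$ and a morphism $N(f)$; lift along the injectivity of $N(Q)$ to some $k\colon N(B)\to N(Q)$ with $k\circ N(m)=N(f)$; then use fullness and faithfulness of $N$ to write $k=N(h)$ for a unique $h\colon B\to Q$, which satisfies $h\circ m=f$ by faithfulness. For the converse, I would first verify (exactly as in the non-reflexive case, using that $R$ merely quotients the arc set by the $\sX$-action and is essentially the identity on vertices) that $R$ preserves monomorphisms; then, for a mono $f\colon G\rightarrowtail H$ and a map $g\colon G\to N(Q)$ in $\widehat{\srG}_X$, apply $R$, use that $RN(Q)\iso Q$ is injective to extend $R(g)$ along $R(f)$ to some $h\colon R(H)\to RN(Q)$, and set $\overline h\defeq N(\varepsilon_Q)\circ N(h)\circ \eta_H\colon H\to N(Q)$. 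Naturality of $\eta$ together with the triangle identity $N(\varepsilon_Q)\circ \eta_{N(Q)}=\Id$ then yields $\overline h\circ f=g$, so $N(Q)$ is injective.

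For the projective statement the argument dualizes. The direction $N(P)$ projective $\Rightarrow$ $P$ projective is immediate, since $N$ is full, faithful and preserves epimorphisms. For the converse I would use that $I(V)$ and $I(A)$ are projective in $r\C G_{\underline \Pi_X}$ and that $Ry\iso I$, whence $R(\Proj(N(P)))\iso \bigsqcup_{N(P)(V)}I(V)\sqcup \bigsqcup_{N(P)(A)}I(A)$ is projective; composing the application of $R$ to the projective refinement $\Proj(N(P))\twoheadrightarrow N(P)$ (Chapter \ref{S:InjProj}) with the isomorphism $\varepsilon_P\colon RN(P)\xrightarrow{\sim}P$ exhibits $P$ as a split quotient of such a coproduct, forcing $P$ itself to be a coproduct of $I(V)$'s and $I(A)$'s. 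Since $N$ preserves coproducts and $NI(V)\iso \underline V$, $NI(A)\iso \underline A$, the object $N(P)$ is a coproduct of representables and hence projective by the characterization of projectives in Chapter \ref{S:InjProj}.

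The main obstacle I anticipate is the one step not formally automatic from the adjunction, namely that the reflexive realization $R$ preserves monomorphisms: one must check that quotienting arcs by $\sX$ does not collapse distinct edges of a subobject. This follows because if $f$ is injective on arcs and $f(\alpha)=f(\beta).\sigma=f(\beta.\sigma)$ then $\alpha=\beta.\sigma$, so the induced map on $\sX$-orbits stays injective; the distinguished loops of the reflexive theory, being fixed by $\sX$, do not interfere with this computation. Everything else is a faithful transcription of the non-reflexive proofs, which is why the statement can reasonably be asserted by analogy.
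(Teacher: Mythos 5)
Your proposal is correct and matches the paper's intent exactly: the paper omits this proof, stating only that it is ``similar to the proofs in Propositions \ref{P:InjectiveN} and \ref{P:ProjectiveN},'' and your transcription into the reflexive setting is precisely that argument, with the added care of actually verifying that the reflexive realization preserves monomorphisms (the one step the paper also leaves implicit in the non-reflexive case).
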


\section{Other Categorical Structures}
The presheaf completions of the conventional categories of uniform hypergraphs above can be used to prove what topos structure exists in these categories. Immediate facts which are true for any reflective subcategory are that $\C G_{\underline \Pi_X}$ and $r\C G_{\underline \Pi_X}$ are complete, cocomplete, and well-powered. The full and faithful right adjoint $N$ creates limits, monomorphisms, split monomorphisms, split epimorphisms, and isomorphisms. Moreover, filtered colimits commute with finite limits (\cite{fB}, v1, Proposition 3.5.7).

It is easy to see that regular epimorphisms in $\C G_{\underline \Pi_X}$ and $r\C G_{\underline \Pi_X}$ are also created by $N$ as well, since regular epimorphisms in presheaf toposes are those surjective on components. However, $N$ does not preserve coequalizers. Take for instance the inclusions $I(s), I(t)\colon I(V)\to I(A)$, the coequalizer in $\C G_{\underline \Pi_2}$ and $r\C G_{\underline \Pi_2}$ are the graphs with one vertex and one loop. The coequalizer of $\underline s, \underline t\colon \underline V\to \underline A$ in $\widehat{\sG}_{2}$ and $\widehat{\srG}_2$ is the graph with one vertex and one 2-loop. 

To obtain colimits in $\C G_{\underline \Pi_X}$ and $r\C G_{\underline \Pi_X}$, we need to reduce unfixed loops to 1-loops. By the adjunctions $R\dashv N$ and \cite{fB} (v1, Proposition 5.5.6, p 213) each morphism $f\colon P\to Q$ in $\widehat{\sG}_{X}$ and $\widehat{\srG}_{X}$  factors uniquely up to isomorphism $f=g\circ h$ where $h$ is bijective on vertices and reduces each unfixed edges to fixed edges whenever $f$ reduces such edges. The realization functor takes such an $h$ to an isomorphism.

To show that $\C G_{\underline \Pi_X}$ and $r\C G_{\underline \Pi_X}$ are regular categories, it is left to show that $\C G_{\underline \Pi_X}$ and $r\C G_{\underline \Pi_X}$ have regular images. Indeed, given a morphism $f\colon G\to H$ in either $\C G_{\underline \Pi_X}$ and $r\C G_{\underline \Pi_X}$ we may form the diagram
\[
\xymatrix@R=1em@C=1.5em{ \ker(f) \ar@<+.2em>[r] \ar@<-.2em>[r] & G \ar@{->>}[d] \ar[r]^{f} & H  \ar@<+.2em>[r] \ar@<-.2em>[r] & \coker(f) \\ & \coim(f) \ar[r]^{\theta} & \im(f) \ar@{>->}[u]}
\]
where $\ker(f)$ and $\coker(f)$ denote the kernel and cokernel pairs of $f$, and $\coim(f)$ is the coequalizer of the projections from $\ker(f)$ and $\im(f)$ is the equalizer of the inclusions of into $\coker(f)$. The morphism $\theta$ is induced by the universal mapping properties of the coimage and image. By applying the endofunctor $RN$ there is a factorization
\[
\xymatrix@R=1em@C=1.5em{ R(\ker(N(f))) \ar@<+.2em>[r] \ar@<-.2em>[r] & RN(G) \ar@{->>}[d] \ar[r]^{RN(f)} & RN(H)  \ar@<+.2em>[r] \ar@<-.2em>[r] & R\coker(N(f)) \\  & R\coim(N(f)) \ar[d]_{\iso} \ar[r]^{\iso} & R\im(N(f)) \ar@{>->}[u] \\
	& RN(\coim(f)) \ar[r]^{RN(\theta)} & RN(\im(f)) \ar@{<-}[u]_{\iso} }
\]
where $\coim(N(f))\to N(\coim(f))$, $\coker(N(f))\to N(\coker(f))$, $\im(N(f))\to N(\im(f))$ are induced by the universal property of colimits. The morphism $R\coim(N(f))\to RN(\coim(f))$ is an isomorphism since $R\dashv N$ is a reflective adjunction (\cite{fB}, v1, Proposition 3.5.4). We also have $\coim(N(f))\iso \im(N(f))$ since this is true for all presheaf toposes. The morphism $R\im(N(f))\to RN(\im(f))$ is an isomorphism since $\coker(N(f))\to N(\coker(f))$ is bijective on vertices and reduces unfixed loops to 1-loops and thus is taken to an isomorphism by $R$. Then since the counit $\varepsilon\colon RN\Rightarrow \Id$ is a natural isomorphism, $\theta$ is an isomorphism. Therefore, $\C G_{\underline \Pi_X}$ and $r\C G_{\underline \Pi_X}$ are regular categories.

Presheaf toposes have (epimorphism, monomorphism) factorizations and $N$ is epi-reflective. Therefore, by \cite{jA} (Proposition 16.8), the categories $\C G_{\underline \Pi_X}$ and $r\C G_{\underline \Pi_X}$ are closed under formation of monomorphisms, i.e., if $m\colon P\to N(H)$ is a monomorphism, then $P\cong N(G)$ for some $G$. This proves that $\Sub(G)$ and $\Sub(N(G))$ are isomorphic as meet-complete lattices for each $\C G_{\underline \Pi_X}$ and $r\C G_{\underline \Pi_X}$-object $G$. The union $\cup_I G_i$ of subobjects $(G_i\hookrightarrow G)_{i\in I}$ in $\Sub(G)$ is given by the image of the universal morphism $\bigsqcup_I G_i\to G$. Since $N$ preserves coproducts and is closed under formation of images, $N$ preserves unions. Hence, $\Sub(G)$ and $\Sub(N(G))$ are isomorphic as complete lattices. Given a morphism $f\colon G\to H$, the change of base functor $f^*\colon \Sub(H)\to \Sub(G)$ is constructed in the presheaf topos $N(f)^*\colon \Sub(N(H))\to \Sub(N(G))$. Therefore there is an adjunction $\exists_f\dashv f^*\dashv \forall_f\colon \Sub(G)\to \Sub(H)$ such that the change of base functor preserves unions since this is true in all categories of presheaves.  

The subobject classifiers have only fixed loops, and thus are objects in $\C G_{\underline \Pi_X}$ and $r\C G_{\underline \Pi_X}$. The nerve $N$ creates limits, in particular pullbacks, and so the subobject classifier $\top\colon 1\to \Omega$ classifies the subobjects in $\C G_{\underline \Pi_X}$ and $r\C G_{\underline \Pi_X}$. The natural numbers object in any presheaf category is the discrete presheaf $\Delta \DD N$ for the set of natural numbers $\DD N$. The discrete objects in $\widehat{\DD G}_{X}$ and $\widehat{\rG}_{X}$ have only fixed loops and thus are objects in both $\C G_{\underline \Pi_X}$ and $r\C G_{\underline \Pi_X}$. The universal property of $\Delta \DD N$ comes from the universal property it enjoys in $\widehat{\sG}_{X}$ and $\widehat{\srG}_{X}$.

Recall that a category $\C E$ is geometric provided it is regular, each subobject lattice has arbitrary unions and intersections, and each change of base functor on subobject lattices has a right adjoint. A geometric category is $\infty$-positive when it has disjoint coproducts (see \cite{pJ}, A1.4 p. 43). Since $N$ preserves coproducts and coproducts are disjoint in the corresponding presheaf toposes, we have proven the following.
\begin{proposition}\label{P:InftyGeo}
	The categories $\C G_{\underline \Pi_X}$ and $r\C G_{\underline \Pi_X}$ are $\infty$-positive geometric categories with a subobject classifier and a natural numbers object. 
\end{proposition}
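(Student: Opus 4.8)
The plan is to leverage the nerve-realization adjunctions $R \dashv N$ for $\C G_{\underline \Pi_X}$ (Proposition \ref{P:EENerve}) and $r\C G_{\underline \Pi_X}$ (Proposition \ref{P:EVNerve}) together with the fact, established just above in the preceding section, that these categories are regular. The strategy is to transfer each required structural property from the ambient presheaf toposes $\widehat{\sG}_X$ and $\widehat{\srG}_X$ (which are $\infty$-positive geometric with NNO and subobject classifier by Corollary \ref{C:ToposXM}) across the reflection, using that $N$ is full, faithful, creates limits, is epi-reflective, and preserves coproducts. The geometric axioms to verify are: regularity; arbitrary unions and intersections in each subobject lattice; a right adjoint to each change-of-base functor; disjoint coproducts; and the existence of a subobject classifier and NNO. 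Since regularity is already proven in the preceding paragraphs, the bulk of the work is organizing the remaining verifications.

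First I would recall regularity of $\C G_{\underline \Pi_X}$ and $r\C G_{\underline \Pi_X}$, which was established via the $RN$-factorization argument showing $\theta$ is an isomorphism. Next I would invoke the closure-under-monomorphisms result from \cite{jA} (Proposition 16.8), valid because presheaf toposes admit (epi, mono) factorizations and $N$ is epi-reflective: this yields that $\Sub(G) \cong \Sub(N(G))$ as meet-complete lattices, and since $N$ preserves coproducts and images it preserves unions, giving the isomorphism as \emph{complete} lattices. Hence each subobject lattice has arbitrary unions and intersections. For the change-of-base adjoints, given $f\colon G \to H$ the pullback functor $f^*$ is computed as $N(f)^*$ in the presheaf topos, where the adjunction $\exists_f \dashv f^* \dashv \forall_f$ exists; transporting along the lattice isomorphisms yields the required right adjoint $\forall_f$ on $\Sub(H) \to \Sub(G)$. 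For the subobject classifier and NNO, the key observation is that both $\Omega$ and the discrete presheaf $\Delta\mathbb{N}$ have only fixed loops, hence lie in the essential images of $N$; since $N$ creates limits (in particular pullbacks), the classifying pullback square and the universal recursion square are inherited from $\widehat{\sG}_X$ and $\widehat{\srG}_X$. Finally, $\infty$-positivity follows because $N$ preserves coproducts and coproducts are disjoint in any presheaf topos, so disjointness transfers back.

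I expect the main obstacle to be the bookkeeping around which constructions $N$ does \emph{not} preserve—most notably coequalizers and, relatedly, the distinction between unfixed (2-)loops and fixed (1-)loops that forces colimits in $\C G_{\underline \Pi_X}$ to differ from those in the topos. The delicate point is ensuring that the objects witnessing the geometric structure ($\Omega$, $\Delta\mathbb{N}$, and the images appearing in the regularity argument) genuinely land in the reflective subcategory, which is exactly the subtlety that the ``only fixed loops'' characterization of the essential image of $N$ is designed to resolve. Once that characterization is in hand, every geometric axiom either is inherited through the limit-creating, epi-reflective right adjoint $N$ or is transported across the complete-lattice isomorphism $\Sub(G) \cong \Sub(N(G))$, and the proof assembles by citing \cite{pJ}, A1.4, for the definition of $\infty$-positive geometric category and confirming each clause in turn.
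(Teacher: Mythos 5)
Your proposal follows the paper's own argument essentially step for step: regularity from the preceding $RN$-factorization argument, the subobject-lattice isomorphism $\Sub(G)\cong\Sub(N(G))$ via \cite{jA} (Proposition 16.8) and epi-reflectivity, transport of $\exists_f\dashv f^*\dashv \forall_f$ from the presheaf topos, the ``only fixed loops'' observation placing $\Omega$ and $\Delta\DD N$ in the essential image of the limit-creating nerve, and disjointness of coproducts transferred back along $N$. This is the same proof, correctly assembled.
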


A  $\infty$-positive geometric category is called an $\infty$-pretopos if it also has effective equivalence relations, i.e., every equivalence relation is the kernel pair of its own coequalizer. By Giraud's Theorem (\cite{pJ}, C2.2.8) an $\infty$-pretopos with a separating set of objects is a Grothendieck topos. The categories  $\C G_{\underline \Pi_X}$ and $r\C G_{\underline \Pi_X}$ do not have exponentials and thus cannot be Grothendieck toposes. Then since $\C G_{\underline \Pi_X}$ and $r\C G_{\underline \Pi_X}$ have separating set of objects, they do not have effective equivalence relations. For instance, the kernel pair of a $\widehat{\sG}_2$-morphism $f\colon \underline A \to L_2$ where $L_2$ is the symmetric $2$-graph with one vertex and one 2-loop. Then $\ker(f)\hookrightarrow \underline A\x \underline A$ corresponds to an equivalence relation in $\C G_2$ and $r\C G_2$ which is not effective. 

Therefore, we have the following conceptual equations:
\begin{align*}
	\text{Graphs } + \text{ Exponentials } &= \text{ $(X,M)$-Graphs}\\
	 \text{Graphs } + \text{ Effective Equivalence Relations } &= \text{ $(X,M)$-Graphs}.
\end{align*}
This tells us that if we want to our category of graph or uniform hypergraphs to have exponentials and effective equivalence relations, we should work in a category of $(X,M)$-graphs.

\ignore{
\chapter{Graph Reconstruction}

The graph reconstruction conjecture was first proposed by Ulam \textbf{REFERENCE}. We restate it in the language of $(X,M)$-graphs:
\begin{quote} \textbf{The Graph Reconstruction Conjecture}: Each simple reflexive symmetric $2$-graph on at least three vertices is determined up to isomorphism by its deck. \footnote{Note that we use reflexive graphs since the distinguished 1-loops can be considered as vertex proxies and not as legitimate arcs.} 
\end{quote}
We recall that the category of simple reflexive symmetric $2$-graphs $\Sep(\srG_2,\neg\neg)$ is a quasi-topos, i.e., it is a reflective subcategory $\sigma\dashv i\colon \Sep(\srG_2,\neg\neg)\hookrightarrow \widehat{\srG}_2$  (see Chapter \ref{C:Simple})

\section{ Representation of Symmetric X-Graphs}
Each simple symmetric $X$-graph $G$ is the image of a representable arc object $\underline A$ in a category of $(X,M)$-graphs under a left adjoint functor $F$. Let $n=\#G(V)$ and consider the representable symmetric $n$-graph $\underline A$. Throughout this chapter, we will write $\mathbf{Graph}\defeq \Sep(\srG_2,\neg\neg)$.

\section{Generic Decks and Shuffling}

To each $(X,M)$-graph $G$

Let $\sigma\colon n\to n$ be a fixed-point free automap. Define the morphism of theories $\bar \sigma\colon \sG_n\to \sG_{n-1}$ 

\section{Functorial Graph Reconstruction}

\section{Using Intuitionistic Logic}

\section{Module Classifier}

Functorial description

Inner module classifier

Outer module classifier
\\\\
$M\defeq\ $\framebox{ 
					$\xymatrix@R=1.3em@C=1em{ \bullet \ar@{-}@(ul,ur)[]\ar@{-}[r] & \text{out} \ar@{-}@(ul,ur)^{} \ar@{-}[r] & \text{in} \ar@{-}[r] \ar@{}@(ul,ur) & \bullet  }$
				}

\part{Categories of Hypergraphs}

\chapter{The Complete Picture}
		We have made the argument in the main text that categories of $(X,M)$-graphs provide a nice categorical environment to do graph theory and uniform hypergraph theory. One disadvantage of using $(X,M)$-graphs is that the incidence is bounded by the cardinality of $X$. We step outside the main theory of $(X,M)$-graphs to examine and characterize the various types of hypergraphs the exist in the literature. 
		We show that the various categories of hypergraphs used in mathematics are determined by five main factors: (1) Does it have strict morphisms? (2) Do the objects allows multiplicities of vertices in the incidence of edges? (3) Do the objects have non-empty set of incidence vertices? (4) Do the objects have oriented incidence of edges? (5) Are the incidence of edges of objects bounded by some fixed cardinality?
		Based on these factors, thirty-two categories of hypergraphs are defined and the topos and adhesive structures are examined in each. It is shown that a change to an affirmative answer to any of the above factors is connected to the existence of a right adjoint functor. Moreover, definitions of k-uniform hypergraphs are given for each and shown to embed in certain presheaf categories which exhibit a better categorical environment for constructions used in mathematics.

\section{The Five Factors}

\ignore{Hypergraphs and k-uniform hypergraphs are general mathematical objects which have
a wide array of applications in combinatorics, game theory, computer science, logic, statistical physics, computational biology, network systems among others. Categorical methods using graphs and hypergraphs have been employed in graph theory to define magnitude of a graph \cite{tL}, category theory foundations (Ernst, The prospect of Doing what ...) graph and hypergraph transformations (Bauderon)... category of patches in (Mimram), ...  However, there is a wealth of related, yet distinct objects which bear the name hypergraph and often it is unclear what structure is relevant for a proper definition of morphism required for the definition of a category. Thus it is imperative to clarify the issue regarding the categorical structures involved and make a distinction in terminology.}

We isolate five main factors which determine a category of hypergraph: (1) Does it have strict morphisms? (2) Do the objects allows multiplicities of vertices in the incidence of edges? (3) Do the objects have non-empty set of incidence vertices? (4) Do the objects have oriented incidence of edges? (5) Are the incidence of edges of objects bounded by some fixed cardinality? By isolating these factors, we introduce new categories which are relevant for functorial constructions between categories of hypergraphs. We have addressed the poor categorical properties of k-uniform hypergraphs by introducing categories of $(X,M)$-graphs. We thus obtain a complete picture of graph and hypergraph theory in a categorical context, where each category has adjoint contructions to others. As a result, we obtain a complete picture of the functorial relations between categories of hypergraphs and k-uniform hypergraphs which allows us to transfer topos and adhesive categorical structure.

It is of general consensus that a hypergraph has a set of edges and a set of vertices such that each edge is associated to a set or a structured set of vertices incident to it. The difference between the various definitions of a category of hypergraphs in the literature is explained by isolated the five factors above. 
\begin{Ienum}
	\item Types of Edges
	\begin{aenum}
		\item \tbf{Empty Edges}: An edge may not be incident to any vertex.
		\item \tbf{Inhabited Edges}: Each edge must contain at least one vertex in its set of incidence vertices.
	\end{aenum}
	\item Types of Morphisms 
	\begin{aenum}
		\item \tbf{Lax}: A morphism $f$ is allowed to map a set of vertices incident to an edge $e$ onto a proper subset of the image $f(e)$.
		\item \tbf{Strict}: A morphism $f$ must take the set of vertices incident to an edge $e$ onto the vertices incident to the image $f(e)$.
	\end{aenum}
	\item Types of Incidence
	\begin{aenum}
		\item \tbf{No Multiplicity}: An edge must contain at most a single instance of a vertex in its incidence.
		\item \tbf{Multiplicity}: An edge may contain several copies of a vertex in its incidence.
	\end{aenum}
	\item Structure of Incidence
	\begin{aenum}
		\item \tbf{Unoriented Incidence}: An edge has a structureless set of incidence.
		\item \tbf{Oriented Incidence}: An edge has a linear order or listing of vertices in its incidence.
	\end{aenum}
	\item Size of Incidence
	\begin{aenum}
		\item \tbf{Unbounded Incidence}: For each cardinal $\alpha$ there is a hypergraph which has a set of vertices incident to an edge with cardinality greater than $\alpha$.
		\item \tbf{Bounded Incidence}: An edge has at most $\alpha$ vertices incident to it where $\alpha$ is a fixed cardinal number.
	\end{aenum}
\end{Ienum}
We will associate a $0$ to properties I(a), II(a), III(a), IV(a), and V(a) and a $1$ to properties I(b), II(b), III(b), IV(b), and V(b). It is our goal in this paper to define and examine the category of hypergraph associated to each five-digit binary number. 

We start with the definition of a hypergraph found in \cite{} where a hypergraph $H=(H(E),H(V))$ consists of a set of edges $H(E)$, a set of vertices $H(V)$, and an incidence map $\varphi\colon H(E)\to \C P(H(V))$ where $\C P\colon \mathbf{Set}\to \mathbf{Set}$ is the covariant powerset functor. A morphism of hypergraphs $f\colon H\to H'$ \tbf{FINISH THIS}  This is the category of hypergraph given in \cite{}. Thus this category of hypergraph allows empty edges, has strict morphisms, does not allow multiplicity of edges, has unoriented and unbounded incidence. In other words, it is associated to the binary number $01000$.

Recall in Chapter \ref{} (Example \ref{}), a bipartite graph $G=(G(V_1),G(V_2), G(A))$ consists of a set of $V_1$-vertices, a set of $V_2$-vertices, a set of arcs $G(A)$ and two set maps $\sigma\colon G(A)\to G(V_1)$ and $\tau\colon G(A)\to G(V_2)$ called source and target incidence respectively. There is a well-known representation of a hypergraph as a bipartite graph. Since the category of bipartite graphs is a category of presheaves, it have particularly nice properties. This is the definition of hypergraph used in \cite{}. It is associated to the binary number $00100$.

The representation of a hypergraph as a bipartite graph is well-known to extend to a faithful functor $i\colon \C H\to \C B$ (cf, \cite{}, \cite{}, \cite{}). Conversely, the obvious way to associate a bipartite graph to a hypergraph is not functorial. We address this issue in two ways. First, we define a lax morphism of hypergraphs. The resulting category of lax hypergraphs $\C H_{\text{\tiny lax}}$, associated to the binary digit $00000$, provides a factorization 
\[
\xymatrix{}
\] 
such that the hypergraph construction from a bipartite graph extends to a left adjoint functor $r_{\text{\tiny lax}}\dashv m_{\text{\tiny lax}}\colon \C H_{\text{\tiny lax}}\to \C B$. The embedding $m\colon \C H_{\text{\tiny lax}}\to \C B$ fails to be an equivalence because bipartite graphs allow multiplicities of vertices in the incidence of edges. We have shown in Proposition \ref{} , the category of lax hypergraphs is equivalent to the category of $\neg\neg$-separated bipartite graphs and thus has limits, colimits, exponentials, and a regular subobject classifier.  

A second way to address the issue above, is to restrict the morphisms in bipartite graphs to only those which are strict. The category of strict bipartite graphs, associated to the binary digit $01100$, also provides a factorization 
\[\]

\tbf{INHABITED GRAPHS, ORIENTED GRAPHS short intro uses}

The main structural result in this section is that changing a binary digit associated to a category from a $0$ to a $1$ corresponds to a right adjoint functor. Or equivalently, changing a digit from a $1$ to a $0$ corresponds to a left adjoint functor. This is captured by the notation $0\dashv 1$. 

\begin{table}[htbp]
	\centering
	\begin{tabular}{|c || c | c | c | c | c |} 
		\hline 
		Cat. & IE & Str & Mult & Orient & Bound \\ [0.5ex] 
		\hline\hline
		$\C H_{\text{\normalfont lax}}$ & 0 & 0 & 0 & 0 & 0     \\
		$\C H_{\text{\normalfont lax}}^{\alpha}$ & 0 & 0 & 0 & 0 & 1 \\ 
		$\oH_{\text{\normalfont lax}}$ & 0 & 0 & 0 & 1 & 0    \\
		$\oH_{\text{\normalfont lax}}^\alpha$ & 0 & 0 & 0 & 1 &  1   \\
		$\C B$ & 0 & 0 & 1 & 0 & 0 \\
		$\C B^\alpha$ & 0 & 0 & 1 & 0 & 1 \\
		$\oB$ & 0 & 0 & 1 & 1 & 0 \\
		$\oB^\alpha$ & 0 & 0 & 1 & 1 & 1 \\
		$\C H$ & 0 & 1 & 0 & 0  & 0 \\
		$\C H^\alpha$ & 0 & 1 & 0 & 0  & 1 \\
		$\oH$ & 0 & 1 & 0 & 1 & 0   \\
		$\oH^\alpha$ & 0 & 1 & 0 & 1 & 1   \\
		$\C B_{\text{\normalfont str}}$ & 0 & 1 & 1 & 0 & 0  \\
		$\C B_{\text{\normalfont str}}^\alpha$ & 0 & 1 & 1 & 0 & 1  \\
		$\oB_{\text{\normalfont str}}$ & 0 & 1 & 1 & 1 & 0 \\	
		$\oB_{\text{\normalfont str}}^\alpha$ & 0 & 1 & 1 & 1 & 1 \\
		$\C H_{\text{\normalfont lax}}^+$ & 1 & 0 & 0 & 0 & 0  \\
		$\C H_{\text{\normalfont lax}}^{+\alpha}$ & 1 & 0 & 0 & 0 & 1  \\
		$\oH_{\text{\normalfont lax}}^+$ & 1 & 0 & 0 & 1 & 0  \\	
		$\oH_{\text{\normalfont lax}}^{+\alpha}$ & 1 & 0 & 0 & 1 & 1  \\	
		$\C B^+$ & 1 & 0 & 1 & 0 & 0 \\
		$\C B^{+\alpha}$ & 1 & 0 & 1 & 0 & 1 \\
		$\oB^+$ & 1 & 0 & 1 & 1  & 0 \\
		$\oB^{+\alpha}$ & 1 & 0 & 1 & 1  & 1 \\
		$\C H^+$ & 1 & 1 & 0 & 0 & 0  \\
		$\C H^{+\alpha}$ & 1 & 1 & 0 & 0 & 1  \\
		$\oH^+$ & 1 & 1 & 0 & 1 & 0    \\
		$\oH^{+\alpha}$ & 1 & 1 & 0 & 1 & 1    \\
		$\C B_{\text{\normalfont str}}^+$ & 1 & 1 & 1 & 0 & 0 \\
		$\C B_{\text{\normalfont str}}^{+\alpha}$ & 1 & 1 & 1 & 0 & 1 \\
		$\oB_{\text{\normalfont str}}^+$ & 1 & 1 & 1 & 1 & 0 \\
		$\oB_{\text{\normalfont str}}^{+\alpha}$ & 1 & 1 & 1 & 1 & 1 \\
		[1ex] 
		\hline
	\end{tabular}
	\caption{The Thirty-two Categories of Hypergraphs}
\end{table}

\[
\xymatrix{ \C H^+ \ar@{>->}[r]_\top \ar@<+.75em>@{<-}[r]_\top \ar@<-.75em>@{<-}[r] \ar@<-.4em>@{>->}[d]^\dashv \ar@<+.4em>@{<-}[d] & \C H  \ar@<-.4em>@{>->}[d] \ar@<+.4em>@{<-}[d]_{\dashv} \ar@{>->}@<+.4em>[r]_-\top \ar@{<-}@<-.4em>[r] & \C B_{\text{\normalfont str}} \ar@<-.4em>@{>->}[d]^{\dashv} \ar@<+.4em>@{<-}[d] \ar@<+.75em>[r]_\top \ar@<-.75em>[r] \ar@{<-<}[r]_\top  & \C B_{\text{\tiny str}}^+ \ar@<-.4em>@{>->}[d]^\dashv \ar@<+.4em>@{<-}[d] \\ \C H_{\text{\tiny lax}}^+ \ar@{>->}[r]_\top \ar@<+.75em>@{<-}[r]_\top \ar@<-.75em>@{<-}[r] &
	\C H_{\text{\normalfont \tiny lax}} \ar@{>->}@<+.4em>[r]_-\top \ar@{<-}@<-.4em>[r] & \C B  \ar@<+.75em>[r]_\top \ar@<-.75em>[r] \ar@{<-<}[r]_\top & \C B^+
} 
\qquad \quad
\xymatrix{ \oH^+ \ar@{>->}[r]_\top \ar@<+.75em>@{<-}[r]_\top \ar@<-.75em>@{<-}[r] \ar@<-.4em>@{>->}[d]^\dashv \ar@<+.4em>@{<-}[d] & \oH  \ar@<-.4em>@{>->}[d] \ar@<+.4em>@{<-}[d]_{\dashv} \ar@{>->}@<+.4em>[r]_-\top \ar@{<-}@<-.4em>[r] & \oB_{\text{\normalfont str}} \ar@<-.4em>@{>->}[d]^{\dashv} \ar@<+.4em>@{<-}[d] \ar@<+.75em>[r]_\top \ar@<-.75em>[r] \ar@{<-<}[r]_\top  & \oB_{\text{\tiny str}}^+ \ar@<-.4em>@{>->}[d]^\dashv \ar@<+.4em>@{<-}[d] \\ \oH_{\text{\tiny lax}}^+ \ar@{>->}[r]_\top \ar@<+.75em>@{<-}[r]_\top \ar@<-.75em>@{<-}[r] &
	\oH_{\text{\normalfont \tiny lax}} \ar@{>->}@<+.4em>[r]_-\top \ar@{<-}@<-.4em>[r] & \oB  \ar@<+.75em>[r]_\top \ar@<-.75em>[r] \ar@{<-<}[r]_\top & \oB^+
} 
\]
\[  
\xymatrix{ \C H^{+\alpha} \ar@{>->}[r]_\top \ar@<+.75em>@{<-}[r]_\top \ar@<-.75em>@{<-}[r] \ar@<-.4em>@{>->}[d]^\dashv \ar@<+.4em>@{<-}[d] & \C H^{\alpha}  \ar@<-.4em>@{>->}[d] \ar@<+.4em>@{<-}[d]_{\dashv} \ar@{>->}@<+.4em>[r]_-\top \ar@{<-}@<-.4em>[r] & \C B_{\text{\normalfont str}}^{\alpha} \ar@<-.4em>@{>->}[d]^{\dashv} \ar@<+.4em>@{<-}[d] \ar@<+.75em>[r]_\top \ar@<-.75em>[r] \ar@{<-<}[r]_\top  & \C B_{\text{\tiny str}}^{+\alpha} \ar@<-.4em>@{>->}[d]^\dashv \ar@<+.4em>@{<-}[d] \\ \C H_{\text{\tiny lax}}^{+\alpha} \ar@{>->}[r]_\top \ar@<+.75em>@{<-}[r]_\top \ar@<-.75em>@{<-}[r] &
	\C H_{\text{\normalfont \tiny lax}}^{\alpha} \ar@{>->}@<+.4em>[r]_-\top \ar@{<-}@<-.4em>[r] & \C B^{\alpha}  \ar@<+.75em>[r]_\top \ar@<-.75em>[r] \ar@{<-<}[r]_\top & \C B^{+\alpha}
} 
\qquad \quad
\xymatrix{ \oH^{+\alpha} \ar@{>->}[r]_\top \ar@<+.75em>@{<-}[r]_\top \ar@<-.75em>@{<-}[r] \ar@<-.4em>@{>->}[d]^\dashv \ar@<+.4em>@{<-}[d] & \oH^{\alpha}  \ar@<-.4em>@{>->}[d] \ar@<+.4em>@{<-}[d]_{\dashv} \ar@{>->}@<+.4em>[r]_-\top \ar@{<-}@<-.4em>[r] & \oB_{\text{\normalfont str}}^{\alpha} \ar@<-.4em>@{>->}[d]^{\dashv} \ar@<+.4em>@{<-}[d] \ar@<+.75em>[r]_\top \ar@<-.75em>[r] \ar@{<-<}[r]_\top  & \oB_{\text{\tiny str}}^{+\alpha} \ar@<-.4em>@{>->}[d]^\dashv \ar@<+.4em>@{<-}[d] \\ \oH_{\text{\tiny lax}}^{+\alpha} \ar@{>->}[r]_\top \ar@<+.75em>@{<-}[r]_\top \ar@<-.75em>@{<-}[r] &
	\oH_{\text{\normalfont \tiny lax}}^{\alpha} \ar@{>->}@<+.4em>[r]_-\top \ar@{<-}@<-.4em>[r] & \oB^{\alpha}  \ar@<+.75em>[r]_\top \ar@<-.75em>[r] \ar@{<-<}[r]_\top & \oB^{+\alpha}
} 
\]
and adjoint relations $U\dashv \omega\colon \C A\to \text{o}\C A$ (respectively, $i\dashv \C A\colon \C A\to \C A^{\alpha}$) where $\C A$ is any of the unoriented  category (respectively, unbounded category) above.

We have already discussed adjunctions between $k$-uniform hypergraphs, hypergraphs, and $(X,M)$-graphs in Chapter \ref{S:MainResults}. It is easy to extend these results to an adjunction between oriented $X$-graphs, oriented $k$-hypergraphs, and oriented hypergraphs as well. Thus we obtain a complete picture of the adjoint relationships between the various types of categories of hypergraphs. 

For a cardinal number $k$, a hypergraph $H$ is $k$-uniform provided the cardinality of $\varphi(e)$ is $k$ for each edge $e$ in $H$ (cf. \cite{}). The full subcategory of $\C H$ consisting of $k$-uniform hypergraphs $\kH$ is not well-behaved. It lacks connected colimits, exponentials, and even though it is complete, the inclusion functor $j\colon \kH\to \C H$ is not continuous. We address this issue by showing the inclusion functor $j\colon \kH\to \C H$ factors through a category of presheaves $\widehat{\DD G}$
\[
\xymatrix@!=.3em{ & \widehat{\DD G} \ar[dr]^{h} & \\ \kH \ar@{>->}[ur]^{i} \ar@{>->}[rr]^j && \C H}
\]
such that $i\colon \kH\hookrightarrow \widehat{\DD G}$ is epi-reflective and $h$ is a left adjoint.

\tbf{Finish this.}

\ignore{
\section{Categorical Notions}
\subsection{Basic Notions}
Limits, colimits, exponentials, Adjoint Situations, epi-reflective adjunction, comma category, coalgebra (Show that for any category $\C A$ with  a terminal object  $1$, $\C A\ls F$ is equivalent to coalgebra over $\overline F\colon \C A\x \C A\to \C A\x \C A$ where $\overline F(a,b)=(F(b),1)$.)

The symbols and notation in this section follow from \cite{eR}. 

\subsection{Categories of Presheaves}
Define category of presheaves, Yoneda.

Let $I\colon \DD T \to \C M$ be functor from a small category $\DD T$ to a cocomplete category $\C M$. Since the Yoneda embedding $y\colon \C T\to \widehat{\DD T}$ is the free cocompletion of a small category there is a essentially unique adjunction $R\dashv N\colon \C M\to \widehat{\DD T}$, called the nerve realization adjunction, such that $Ry\iso I$. 
\[
\xymatrix{ \DD T \ar[dr]_I \ar[r]^{y} & \widehat{\DD T} \ar@<-.4em>[d]_{R}^{\dashv} \ar@{<-}@<+.4em>[d]^{N}\\ & \C M }
\]
The nerve and realization functors are given on objects by $N(m)=\C M(I(-), m),\ R(X)=\colim_{(c,\varphi)\in\int F} I(c)$ respectively,
where $\int F$ is the category of elements of $X$ (\cite{hA}, Section 2, pp 124-126).\footnote{In \cite{hA}, the nerve functor is called the singular functor.} The full subcategories of fixed points of this adjunction we denote by $\Cl(\widehat{\DD T})\hookrightarrow \widehat{\DD T}$ and $\Op(\C M)\hookrightarrow \C M$.\footnote{The object classes are given by $\Cl(\widehat{\DD T})=\setm{P\in \widehat{\DD T}}{\eta_P\text{ is an isomorphism}}$ and $\Op(\C M)=\setm{m\in \C M}{\varepsilon_m\text{ is an isomorphism}}$ where $\eta\colon \Id_{\widehat{\DD T}}\Rightarrow NR$ is the unit and $\varepsilon\colon RN\to \Id_{\C M}$ is the counit. Note that the restriction of $R\dashv N$ to fixed points is an adjoint equivalence between $\Cl(\widehat{\DD T})$ and $\Op(\C M)$. } 

We call a functor $I\colon \DD T \to \C M$ from a small category to a cocomplete category an interpretation functor. The category $\DD T$ is called the theory for $I$ and $\C M$ the modeling category for $I$. An interpretation $I\colon \DD T\to \C M$ is dense, i.e., for each $\C M$-object $m$ is isomorphic to the colimit of the diagram
$I\ls m \to \C M,\ (c,\varphi)\mapsto I(c),$
iff the nerve $N\colon \C M\to \widehat{\DD T}$ is full and faithful (\cite{sM}, Section X.6, p 245). When the right adjoint (resp. left adjoint) is full and faithful we call the adjunction reflective (resp. coreflective).\footnote{since it implies $\C M$ is equivalent to a reflective (resp. coreflective) subcategory of $\widehat{\DD T}$} 

We are interested in when the nerve also preserves any exponentials which exist. For the purpose of this paper, we show that if an interpretation is dense, full and faithful, then the nerve not only preserves limits, but also any exponentials which exist.

\begin{lemma}\label{L:FFInt}
	An interpretation functor $I\colon \DD T \to \C M$ is full and faithful iff $\underline c\defeq y(c)$ is a $NR$-closed object for each $\DD T$-object $c$, i.e., the unit $\eta_{\underline c}\colon \underline c\to NR(\underline c)$ at component $\underline c$ is an isomorphism.
\end{lemma}
\begin{proof}
	The unit of the adjunction $\eta_G$ is defined as the following composition
	\[
	\xymatrix{ G \ar[r]^-\varphi_-{\iso} & \widehat{\DD T}(y(-), G) \ar[r]^-{R_{(y,G)}} & \C M(Ry(-),R(G)) \ar[r]^-\psi_-{\iso} & \C M(I(-), R(G))=NR(G)}, 
	\]
	where $\varphi$ is given by Yoneda, $R_{(y,G)}$ is the map of homsets given by application of $R$, and $\psi$ is precomposition by the isomorphism $I\iso Ry$. For a representable, $\underline c$, there is an isomorphism $\rho\colon \C M(I(-), R(\underline c))\to \C M(I(-),I(c))$ by postcomposition by the isomorphism $I\iso Ry$. Thus $\rho\circ \psi\circ R_{(y,G)}$ evaluated at $\DD T$-object $c'$ takes a $\DD T$-morphism $f\colon c'\to c$ to $I(f)\colon I(c')\to I(c)$. Thus $I$ is full and faithful iff $\eta_{\underline c}$ is an isomorphism.
\end{proof}

\begin{prop}\label{P:Exponential}
	If an interpretation functor $I\colon \DD T\to \C M$ is dense, full and faithful, then  $R\dashv N$ is reflective and $N$ preserves any exponentials that exist in $\C M$. 
\end{prop}
\begin{proof}
	Suppose $G$ and $H$ are $\C M$-objects such that the exponential $G^H$ exists in $\C M$. Since $I$ is assumed to be full and faithful, by Lemma \ref{L:FFInt} above,  $\underline c\iso NR(\underline c)$ for each $\DD T$-object. Thus we have the following string of natural isomorphism:
	\begin{align*}
	N(G^H)(c) &\iso \C M(R(\underline c)\x H, G) && \text{(Yoneda, $R\dashv N$, exponential adjunction})\\
	&\iso\widehat{\DD T}(NR(\underline c)\x N(H), N(G)) && \text{($N$ is full and faithful, preserves limits)}\\
	&\iso \widehat{\DD T}(\underline c\x N(H), N(G)) &&\text{($\underline c$ is $NR$-closed)}\\
	&\iso N(G)^{N(H)}(c) && \text{(Exponential adjunction and Yoneda)}.
	\end{align*}
	Since the right-action structures are determined by Yoneda,  $N(G^H)\iso N(G)^{N(H)}$ in $\widehat{\DD T}$.
\end{proof}

\subsection{The Topos Structure of a Category of Presheaves}\label{S:Topos}

Double Negation, Quasi-topos, etc.

\subsection{Adhesive and Quasi-adhesive Categories}
Closure properties of adhesive categories
\begin{prop}
	
\end{prop}

Closure properties of HLR adhesive categories

A grothendieck quasi-topos is an adhesive category iff it preserves binary unions.

}

\section{The Categories of Hypergraphs}
\ignore{ 
	There are various definitions of hypergraphs in the literature. 
Our definition follows \cite{}. The definition given by bipartite graphs is in \cite{} for the purpose of graph rewriting. The definition given by inhabited hypergraphs is found in (). inhabited and unexposed hypergraphs \tbf{Finish this!!! Stress Hypergraph and Bipartite Graph as Fundamental, eg, 01000 and 00100}
}

\subsection{Hypergraphs and Bipartite Graphs}
A hypergraph $H=(H(V),H(E))$ consists of a set of vertices $H(V)$, a set of edges $H(E)$ and an incidence map $\varphi\colon H(E)\to \C P(H(V))$ where $\C P\colon \mathbf{Set}\to \mathbf{Set}$ is the covariant power-set functor (\cite{}). Given hypergraphs $H=(H(V),H(E),\varphi)$ and $H'=(H'(V),H'(E),\varphi')$ a morphism $f\colon H\to H'$ consists of a set map $f_V\colon H(V)\to H'(V)$ and a set map $f_E\colon H(E)\to H'(E)$ such that $\varphi'\circ f_E=\C P(f_V)\circ \varphi$. In other words, the following diagram commutes.
\[
	\xymatrix{ H(E) \ar[d]_{\varphi} \ar[r]^{f_E} & H'(E) \ar[d]^{\varphi'} \\ \C P(H(V)) \ar[r]^-{\C P(f_V)} & \C P(H'(V))}
\]
Thus the category of hypergraphs $\C H$ is the comma category $\mathbf{Set}\ls \C P$ (\cite{sM}). This category can be shown to be equivalent to a category of coalgebras on an endofunctor $F\colon \mathbf{Set}\x \mathbf{Set}\to \mathbf{Set}\x \mathbf{Set}$.\footnote{Take $F(V,E)\defeq (1,\C P(V))$ where $1$ is the terminal set (cf. \cite{cJ}).} 

\begin{proposition}
	The category of hypergraphs is a complete, cocomplete, adhesive category. 
\end{proposition}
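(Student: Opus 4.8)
The goal is to establish three properties of the category of hypergraphs $\C H = \mathbf{Set}\ls \C P$: completeness, cocompleteness, and adhesiveness. The plan is to exploit the comma category description together with the fact that the covariant powerset functor $\C P$ preserves certain limits and colimits, and to leverage the general machinery about comma/coalgebra categories already invoked in the excerpt.

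First I would handle cocompleteness. Since the excerpt notes (citing \cite{cJ}) that the forgetful functor $U\colon \C G_F \to \mathbf{Set}\x\mathbf{Set}$ creates colimits for any endofunctor $F$, and $\C H$ is precisely $\C G_{\C P}$, cocompleteness is immediate: colimits are computed on underlying vertex and edge sets, with the incidence map induced by the universal property since $\C P$ preserves colimits (being a left adjoint to the contravariant-to-covariant adjunction, or more directly since $\C P \cong \coprod$ over subsets and colimits in $\mathbf{Set}$ are well behaved). Concretely, for a diagram $D\colon \DD I \to \C H$, I would form $\colim D(V)$ and $\colim D(E)$ in $\mathbf{Set}$ and define the incidence via the composite using the comparison map $\colim \C P(D(V)) \to \C P(\colim D(V))$; this gives a cocone whose universal property follows from that of the underlying colimits.

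Next, completeness. Here the comma category $\mathbf{Set}\ls \C P$ has limits created by the forgetful functor provided $\C P$ preserves those limits. The key fact is that $\C P$ preserves all limits: as a right adjoint (the covariant powerset functor $\C P = \mathbf{Set}(-,2)^{\mathrm{op}}$-style, or concretely $\C P(X) \cong \prod$-representable via $\C P \dashv$ nothing directly, so I must be careful). The cleaner route is to observe that limits in a comma category $\mathbf{Set}\ls \C P$ exist whenever $\C P$ preserves them, and $\C P$ preserves products and equalizers — a product of hypergraphs has vertex set $\prod H_i(V)$, edge set $\prod H_i(E)$, and incidence $\prod \varphi_i$ landing in $\prod \C P(H_i(V))$, which maps canonically into $\C P(\prod H_i(V))$; since this canonical map $\prod \C P(H_i(V)) \to \C P(\prod H_i(V))$ need not be iso, I would instead take the limit in $\C H$ as the appropriate pullback along it. Since $\mathbf{Set}\x\mathbf{Set}$ is complete and $\C P$ preserves connected limits (it does preserve pullbacks and is standard that comma categories over finitely continuous functors inherit limits), completeness follows by the standard theorem on limits in comma categories (\cite{eR}), building each limit from the limit of underlying data together with the canonical comparison.

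The main obstacle, and where I would spend the most care, is adhesiveness. I would verify that $\C H$ has pullbacks along monomorphisms and pushouts along monomorphisms, and that such pushouts are \emph{van Kampen} squares. The cleanest strategy is to use the closure properties of adhesive categories rather than checking the van Kampen condition by hand: $\mathbf{Set}$ is adhesive, hence $\mathbf{Set}\x\mathbf{Set}$ is adhesive (products of adhesive categories are adhesive), and then I would invoke the result that a comma category $\C C \ls F$ is adhesive when $\C C$ is adhesive and $F$ preserves pullbacks (equivalently, that coalgebra/comma categories for pullback-preserving functors inherit adhesivity). Since $\C P$ preserves pullbacks — a fact I would verify directly, as $\C P(A\times_C B) \to \C P(A)\times_{\C P(C)}\C P(B)$ being well-behaved on monos is what is needed — the adhesive structure transports. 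The subtle point is that $\C P$ does not preserve all pullbacks strictly, so I expect to restrict attention to the pullbacks and pushouts relevant to the van Kampen squares (those along monomorphisms, where $\C P$ behaves well since it preserves monomorphisms and the relevant intersections), and to confirm the van Kampen squares are created by the forgetful functor to $\mathbf{Set}\x\mathbf{Set}$. I would then conclude via the characterization of adhesive categories as those with a suitable functor into an adhesive category creating the relevant structure.
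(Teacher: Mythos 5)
Your proposal is more substantive than what the paper itself records: the paper's proof of this proposition is only a stub (``Cocomplete due to [\,] \dots\ Adhesive due to [\,]'' with the citations left blank), so I judge your argument on its own terms. The completeness and cocompleteness parts are essentially correct. One stated justification is wrong but harmless: the covariant power-set functor $\C P$ is not a left adjoint and does not preserve colimits (it already fails on binary coproducts and on coequalizers). What actually makes colimits in $\mathbf{Set}\ls\C P$ componentwise is that the \emph{domain} leg of the comma category is the identity functor; your concrete construction, composing the comparison map $\colim \C P(D(V))\to\C P(\colim D(V))$ with $\colim\varphi$, is exactly right and needs no preservation property of $\C P$. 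Likewise your limit construction --- form $\lim V_i$ and $\lim E_i$ and pull back $\lim\varphi_i$ along the canonical comparison $\C P(\lim V_i)\to\lim\C P(V_i)$ --- is correct, and is presumably what the paper's ``Description of Products. Equalizers'' was meant to become.

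The genuine gap is in the adhesiveness argument. The closure result you invoke (a comma category over a pullback-preserving functor inherits adhesiveness) does not apply, because $\C P$ does not preserve pullbacks: for the cospan $2\to 1\leftarrow 2$ the comparison $\C P(2\times 2)\to\C P(2)\times_{\C P(1)}\C P(2)$ is a map from a $16$-element set onto a $10$-element set, so it is not even injective. You correctly note that $\C P$ \emph{does} preserve pullbacks of cospans in which one leg is monic (if $g\colon Y\hookrightarrow Z$ is injective, both sides reduce to $\C P(f^{-1}(Y))$), so pullbacks along monomorphisms and pushouts along monomorphisms in $\C H$ are computed on underlying sets. But this does not yield the van Kampen condition: a van Kampen cube over a given pushout has rear and front faces that are pullbacks of cospans with \emph{no} monicity hypothesis (for instance the face exhibiting $A'\cong C'\times_C A$, where both $A\to C$ and $C'\to C$ are arbitrary), and such pullbacks in $\C H$ are \emph{not} created by the forgetful functor to $\mathbf{Set}\times\mathbf{Set}$ --- their edge sets carry the extra pullback correction along the non-invertible comparison map above. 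So your final step, ``confirm the van Kampen squares are created by the forgetful functor,'' is precisely the assertion that needs proof, and as a statement about the ambient pullbacks it is false. To close the gap you would need either a direct verification of the van Kampen condition that handles cubes with non-monic maps, or a full embedding of $\C H$ into a topos preserving pullbacks and pushouts along monomorphisms; note that the faithful functor $i\colon\C H\to\C B$ into bipartite graphs constructed in the paper is not full, so it does not immediately serve this purpose.
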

\begin{proof}
	Cocomplete due to \cite{}
	
	Complete. Description of Products. Equalizers
	
	Adhesive due to \cite{}.
\end{proof}

A bipartite graph $G=(G(V_1),G(V_2), G(A))$ consists of a set of $V_1$-vertices $G(V_1)$, a set of $V_2$-vertices $G(V_2)$, a set of arcs $G(A)$ and two set maps $\sigma\colon G(A)\to G(V_1)$ and $\tau\colon G(A)\to G(V_2)$
called source and target incidence respectively. For bipartite graphs $G=(G(V_1),G(V_2), G(A), \sigma, \tau)$ and $G'=(G(V_1),G(V_2), G(A), \sigma, \tau)$, a morphism $g\colon G\to G'$ consists of set maps $g_{V_1}\colon G(V_1)\to G'(V_1)$, $g_{V_2}\colon G(V_2)\to G'(V_2)$ and $g_A\colon G(A)\to G'(A)$ such that the following diagram commutes
\[
	\xymatrix{ G(V_1) \ar[d]_{g_{V_1}} & G(A) \ar[d]_{g_A} \ar[r]^{\tau} \ar[l]_{\sigma} & G(V_2) \ar[d]^{g_{V_2}} \\ G'(V_1) & G'(A) \ar[r]^{\tau'} \ar[l]_{\sigma'} & G'(V_2)}
\]
Thus the category of bipartite graphs $\C B$ is the category of presheaves on $\xymatrix@!=.2em{V_1 \ar[r]^s & A & \ar[l]_{t} V_2}$. For an arc $a$ in the bipartite graph $G$, we will use the right-action notation of a presheaf  $a.s\defeq \sigma(a)$ and $a.t\defeq\tau(a)$ (\cite{}\tbf{Reyes}).

\begin{proposition}
	The category of bipartite graphs is an adhesive category. 
\end{proposition}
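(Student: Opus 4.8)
The plan is to lean on the structural fact, already recorded in Example \ref{E:Hybrid}(\ref{E:Bipart}), that $\C B$ is a category of presheaves: writing $\DD J$ for the three-object shape category $V_1 \to A \leftarrow V_2$, one has $\C B \simeq [\DD J^{op},\mathbf{Set}]$. Since any presheaf category on a small category is a Grothendieck topos, and since every elementary topos is adhesive (the Lack--Sobociński theorem), $\C B$ is adhesive. This is the one-line argument, and it parallels exactly the adhesive HLR assertion for $(X,M)$-graphs recorded in Corollary \ref{C:ToposXM}(3); I would present the proof in essentially this form, noting that the analogue for $\C H$ is more delicate precisely because $\C H$ is only the comma category $\mathbf{Set}\ls \C P$ and not itself a topos, whereas $\C B$ is a topos outright.

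For a more self-contained argument I would instead verify the defining conditions pointwise. Recall that $\C B$ is adhesive provided it has pullbacks, has pushouts along monomorphisms, and every such pushout is a van Kampen square. In a presheaf category all limits and all colimits are computed objectwise in $\mathbf{Set}$, so pullbacks and pushouts-along-monomorphisms in $\C B$ are obtained by forming the corresponding pullbacks and pushouts of the three underlying sets indexed by $V_1$, $A$, and $V_2$; likewise a morphism of $\C B$ is monic exactly when each of its three components is injective. Each structural requirement for $\C B$ therefore reduces to the same requirement in $\mathbf{Set}$, together with the observation that a commutative cube in $\C B$ exhibits pullback back faces (resp.\ a pushout top face, resp.\ pullback front faces) if and only if each of its three componentwise cubes in $\mathbf{Set}$ does. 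It then suffices to invoke the base case that $\mathbf{Set}$ is adhesive.

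The only step I would actually write out, rather than assert, is this reduction of the van Kampen condition to the pointwise level: one must confirm that recognizing the back faces as pullbacks, the top face as a pushout, and the front faces as pullbacks can all be done simultaneously on the three index components, which is where a careless slip is most likely even though the computation is immediate from objectwise (co)limits. I do not expect a genuine obstacle here, since the shape category $\DD J$ enters only as an indexing device; the identical argument shows that any functor category $[\C C,\mathbf{Set}]$ over a small category $\C C$ is adhesive, which is just the concrete form of ``presheaf toposes are adhesive'' specialized to $\DD J$.
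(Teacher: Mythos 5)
Your proposal is correct and takes essentially the same approach as the paper, whose entire proof reads ``Set is an adhesive category. Presheaf by [citation]'' --- i.e.\ exactly your pointwise reduction: pullbacks, pushouts along monomorphisms, and the van Kampen condition in $\C B$ are all checked componentwise over the three objects of the shape category, so adhesivity is inherited from $\mathbf{Set}$. Your alternative one-line route (presheaf categories are toposes, and toposes are adhesive) is also sound and is precisely the Lack--Soboci\'nski result the paper cites elsewhere.
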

\begin{proof}
	Set is an adhesive category. Presheaf by \cite{}.
\end{proof}

A hypergraph $H=(H(V), H(E), \varphi)$ has a bipartite graph representation $i(H)$ where the set of $V_1$-vertices is $G(V)$, the set of $V_2$-vertices is the set $G(E)$, the set of arcs is given by \\$\setm{(v,e)\in H(V)\x H(E)}{v\in \varphi(e)}$ with right-actions $(v,e).s=v$ and $(v,e).t=e$ (\cite{}). 

\[
	\tbf{PUT EXAMPLE OF hypergraph to bipartite graph}
\]

Given a morphism $f\colon H\to H'$ of hypergraphs, there is a bipartite graph morphism $i(f)\colon i(H)\to i(H')$ such that $i(f)_{V_1}\defeq f_V$, $i(f)_{V_2}\defeq f_E$ and for $(v,e)\in H(A)$, we define $i(f)_A(v,e)\defeq (f_{V}(v),f_E(e))$. It is straightforward to verify this assignment defines a faithful functor $i\colon \C H\to \C B$ (cf., \cite{wD}, Proposition 3.2). 

Conversely, for a bipartite graph $G$ there is a natural way to associate it to a hypergraph. We define $r(G)\defeq (G(V_1),G(V_2),\varphi_G)$ where $\varphi_G\colon G(V_2)\to \C P(G(V_1))$ takes a $V_2$-vertex $e$ to the set \\$\setm{v\in G(V_1)}{\exists a\in G(A),\ \sigma(a)=v\text{ and } \tau(a)=e}$.
\[
\tbf{PUT EXAMPLE OF Bipartite to hypergraph}
\]
However, in this case the definition of $r$ does not extend to functor. Consider the hypergraphs $L$ and $P$ where $L$ has one $V_1$-vertex $v$, one $V_2$-vertex $e$ and one arc $a$ connecting them, and $P$ has two $V_1$-vertices $v_1$ and $v_2$, one $V_2$-vertex $e$ and two arcs $a_s$ and $a_t$ such that source incidents of $a_s$ and $a_t$ are $v_1$ and $v_2$ respectively. Let $g\colon L\to P$ be the bipartite graphs morphism with takes the arc $a$ to $a_s$, i.e., it is given by the following commuting diagram
\[
\xymatrix{ L \ar[d]_{g}  & \{v\} \ar[d]_{\named{v_1}}  & \{a\} \ar[r] \ar[l] \ar[d]_{\named{a_s}} & \{e\}  \ar[d]^{\Id_e} \\ P & \{v_1,v_2\} & \{a_s,a_t\} \ar[r] \ar[l] & \{e\} }
\]
The associated hypergraph of $L$ is the loop hypergraph $r(L)$ with one vertex $v$, one edge $e$ and the incidence $\named{v}\colon \{e\}\to \C P(\{v\})$ which sends $e$ to $\{v\}$. The associated hypergraph of $P$ is the edge hypergraph with two vertices $v_1$ and $v_2$, one edge $e$ and incidence $\named{v_1,v_2}\colon \{e\} \to \C P(\{v_1,v_2\})$ which sends $e$ to the set $\{v_1,v_2\}$.

Consider the diagram
\begin{align}\label{D:Ob1}
	\xymatrix{ \{e\} \ar@{}[dr]|-{\text{\tiny{not com.}}} \ar[r]^{\Id_e} \ar[d]_{\named v} & \{e\} \ar[d]^{\named{v_1,v_2}} \\ \C P(\{v\}) \ar[r]^-{\C P(\named{v_1})} & \C P(\{v_1,v_2\})}
\end{align}
 Since $\{v_1\}\subsetneq \{v_1,v_2\}$ is a strict inclusion, the diagram does not commute. Thus, the assignment of $r$ does not extend to the morphism $g$. 
\[
\tbf{Put Diagrams of L and P and show obstruction}
\] 
The problem is that morphisms of hypergraphs preserve incidence of edges on the nose and morphisms of bipartite graphs allow subset inclusions. We call such morphisms strict and lax, respectively. Thus we should be careful not to identify hypergraphs with bipartite graphs when making categorical constructions, especially since the structure of a category comes entirely from the behavior of its morphisms. 

Even though $r$ does not lift to a functor, it is reasonable to ask whether $i$ admits a left or right adjoint. However, $i$ neither preserves limits nor colimits. Indeed, consider the hypergraph $H=(2,1,\top)$ where $\top\colon 1\to \C P(2)$ is the element $2$ in $\C P(2)$. Then $i(H)$ is the bipartite graph $P$ above. The product $P\x P$ has four $V_1$-vertices, one $V_2$-vertex, and four arcs connecting them.  The diagonal morphism of hypergraphs $\langle \Id_P,\Id_P \rangle\colon P\to P\x P$ embeds $P$ into $P\x P$ by taking $a_s$ to $(a_s,a_s)$ and $a_t$ to $(a_t,a_t)$. Since the diagonal morphism is lax, $i$ does not preserve binary products. 

The functor $i$ also does not preserve coequalizers. Consider the hypergraphs $H_V\defeq (1,\empset, !_{1})$, $H$ as above and the morphisms $\named v_s, \named v_t\colon H_V\to H$ which sends $\pt\in 1$ to $v_s$ and $v_t$ respectively. Since the forgetful functor $U\colon \C H\to \mathbf{Set} \x \mathbf{Set}$ creates colimits, the coequalizer
is the terminal object $H_1\defeq (1,1,\top)$ where $\top\colon 1\to \C P(1)$ is the element $1\in \C P(1)$. The bipartite graph $i(H_1)$ has one arc connecting one $V_1$-vertex to a $V_2$-vertex. However, the coequalizer of $i(\named{v_s}), i(\named{v_t})\colon i(H_V)\to i(H)$ is the bipartite graph with one $V_1$-vertex, one $V_2$-vertex, and two arcs connecting them. Thus $i$ does not admit a left nor right adjoint functor. 
 \subsection{Lax Hypergraphs}
 
 We address the obstruction related to $r$ extending to a functor by modifying the definition of hypergraph morphisms. Let $H$ and $H'$ be hypergraphs. A lax morphism of hypergraphs $f\colon H\to H'$ consists of set maps $f_V\colon H(V)\to H'(V)$ and $f_E\colon H(E)\to H'(E)$ such that for each edge $e$ in $H$, $\C P(f_V)\circ \varphi(e)\subseteq \varphi'\circ f_E$. The diagram for a lax morphism is given as follows.
 \[
	 \xymatrix{ H(E) \ar[r]^{f_E} \ar[d]_\varphi \ar@{}[dr]|-{\leq} & H'(E) \ar[d]^{\varphi'} \\ \C P(H(V)) \ar[r]^{\C P(f_V)} & \C P(H'(V))}
 \]
 Verification of the associativity and identity laws are straightforward, giving us a category of lax hypergraphs, $\C H_{\text{\normalfont lax}}$. 
 
 To see that the assignment of $r$ to lax hypergraphs lifts to a functor $\rho\colon \C H_{\text{\normalfont lax}} \to \C B$, observe that the obstruction depicted in Diagram $\ref{D:Ob1}$ above disappears since it satisfies the requirement to be a lax morphisms 
 \begin{align}
	\xymatrix{ \{e\} \ar@{}[dr]|-{\leq} \ar[r]^{\Id_e} \ar[d]_{\named v} & \{e\} \ar[d]^{\named{v_1,v_2}} \\ \C P(\{v\}) \ar[r]^-{\C P(\named{v_1})} & \C P(\{v_1,v_2\})}
 \end{align}
Given a bipartite morphism $g\colon G\to G'$, we define the lax hypergraph morphism $\rho(g)\colon \rho(G)\to \rho(G')$ as set maps $\rho(g)_{V}\defeq g_{V_1}$ and $\rho(g)_{E}\defeq g_{V_2}$. Then given an edge $e$ in $\rho(G)$, $\C P(g_{V_1})(\varphi(e))\subseteq \varphi' (g_{V_2}(e))$. Indeed, for $w\in \C P(g_{V_1})(\varphi(e))$, there exists a $V_1$-vertex $v$ in $G$ such that $v\in \varphi(e)$ and $g_{V_1}(v)=w$, and there exists an arc $a$ in $G$ such that $\sigma(a)=g_{V_1}(v)=w$ and $\tau(a)=w$. Since $g$ is a bipartite graph morphism, $g_A(a)$ is an arc in $G'$ such that $\sigma'(g_A(a))=w$ and $\tau'(g_A(a))=g_{V_2}(e)$ and hence $w\in \varphi'(g_{V_2}(e))$. Composition and identity laws are easily verified.

The category of hypergraphs is a wide subcategory of lax hypergraphs.\footnote{Recall a wide subcategory is given by a faithful functor which is bijective on objects (\cite{eR}).} Moreover, the definition of $i\colon \C H\to \C B$ lifts to a functor $\mu\colon \C H_{\text{\normalfont lax}} \to \C B$ giving us the following factorization.
\[
\xymatrix@!=.1em{ &  \C H_{\text{\normalfont lax}} \ar[dr]^{\mu} & \\ 
	\C H \ar[rr]^{i} \ar@{>->}[ur]^{\text{\tiny wide}} && \C B}
\] 
The functor $\mu$ is full and faithful. Observe that for the bipartite graph $\mu(H)$ there is at most one arc connected a $V_1$-vertex $v$ to a $V_2$-vertex $e$ since this is the elementhood relation $v\in \varphi(e)$. If $f, f'\colon H\to H'$ are lax hypergraph morphisms such that $\mu(f)=\mu(f')$, then $\mu(f)_{V_1}=\mu(f')_{V_1}$ and $\mu(f)_{V_2}=\mu(f')_{V_2}$ and thus it is equal on arcs $\mu(f)_A=\mu(f')_A$. Similarly, any morphism $g\colon \mu(H)\to \mu(H')$ is determined by where it sends its vertices and thus is equal to $\mu(f)$ for $f=(g_{V_1},g_{V_2})$. We thus have the following result.

\begin{proposition}
	The category of lax hypergraphs is equivalent to the category of $\neg\neg$-separated bipartite graphs.
\end{proposition}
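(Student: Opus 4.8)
The plan is to leverage the full faithfulness of $\mu\colon \C H_{\text{lax}}\to \C B$ already established in the preceding paragraph: once $\mu$ is full and faithful, it is an equivalence onto its essential image, so the entire problem reduces to identifying that essential image with the full subcategory $\Sep(\DD G_{(\C F,1)},J_{\neg\neg})$ of $\neg\neg$-separated bipartite graphs. Thus I would split the argument into two halves — showing every bipartite graph of the form $\mu(H)$ is separated, and showing every separated bipartite graph arises (up to isomorphism) as some $\mu(H)$ — and then invoke the standard fact that a full, faithful, essentially surjective functor onto a full subcategory is an equivalence of categories.

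First I would translate the categorical condition of $\neg\neg$-separatedness into a combinatorial one for bipartite graphs. Using Proposition \ref{P:SimpleNegNeg} together with the subobject classifier for bipartite graphs computed in Example \ref{E:SubobjectClassifier}(\ref{E:BipartSOC}), the only nontrivial covering sieve on the arc object is the one generated by the inclusion of the two vertex objects $\underline V_1$ and $\underline V_2$ into $\underline A$; hence a bipartite graph $G$ is $\neg\neg$-separated exactly when, for every $V_1$-vertex $v$ and $V_2$-vertex $e$, there is \emph{at most one} arc $a\in G(A)$ with $a.s=v$ and $a.t=e$. I would then observe that $\mu(H)$ satisfies this: its arc set is the elementhood relation $\setm{(v,e)}{v\in\varphi(e)}$, which contains at most one pair $(v,e)$ for each choice of endpoints. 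This is precisely the remark recorded just before the statement, so this half is essentially immediate.

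The main work — and the expected obstacle — is essential surjectivity. Given a $\neg\neg$-separated bipartite graph $G$, I would define a lax hypergraph $H=r(G)$ with $H(V)\defeq G(V_1)$, $H(E)\defeq G(V_2)$ and incidence $\varphi(e)\defeq\setm{v\in G(V_1)}{\exists a\in G(A),\ a.s=v,\ a.t=e}$, i.e.\ exactly the object-level assignment $r$ from Example \ref{E:Hybrid}(\ref{E:BipartH}). I then need a natural isomorphism $\mu(H)\iso G$. On the $V_1$- and $V_2$-vertices this is the identity, and on arcs the candidate map sends a pair $(v,e)$ with $v\in\varphi(e)$ to the unique arc of $G$ between $v$ and $e$; here the hypothesis that $G$ is separated is exactly what guarantees this arc exists and is unique, so the assignment is a well-defined bijection compatible with the source and target maps. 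The delicate points to verify are that this bijection is genuinely an isomorphism of bipartite graphs (commutation with $s$ and $t$) and, more importantly, that it is natural in $G$, since the inverse-on-objects construction $r$ does \emph{not} extend to a functor on all of $\C B$ (as the obstruction in Example \ref{E:Hybrid}(\ref{E:BipartH}) shows). I would handle naturality by restricting attention to separated $G$, where each arc is determined by its endpoints, so any bipartite morphism is forced to act on arcs in the unique compatible way; this rigidity is what repairs the earlier failure and makes the comparison coherent.

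Finally, with both halves in place I would conclude that $\mu$ corestricts to a full, faithful, and essentially surjective functor $\C H_{\text{lax}}\to \Sep(\DD G_{(\C F,1)},J_{\neg\neg})$, hence an equivalence. I anticipate the only real subtlety lies in the essential-surjectivity step, precisely because one must use separatedness to recover the arc data from the incidence relation; the matching of the abstract $\neg\neg$-topology with the concrete ``at most one arc between endpoints'' condition is routine once the subobject classifier of $\C B$ is in hand.
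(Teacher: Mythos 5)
Your proposal is correct and follows essentially the same route as the paper: establish that $\mu$ is full and faithful because arcs in $\mu(H)$ are determined by their endpoints, identify $\neg\neg$-separatedness of a bipartite graph with the condition that there is at most one arc between any $V_1$-vertex and $V_2$-vertex, and conclude the equivalence with $\Sep(\DD G_{(\C F,1)},J_{\neg\neg})$. You additionally spell out essential surjectivity via the object-level assignment $r$, which the paper leaves implicit; note only that no naturality of the isomorphisms $\mu(r(G))\iso G$ is actually required, since full faithfulness plus essential surjectivity onto the full subcategory already yields the equivalence.
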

\begin{proof}
	
\end{proof}
Using the results in Section \ref{S:Topos}, the next two corollaries follow.
\begin{corollary}
	There is an epi-reflective adjunction $\rho \dashv \mu\colon \C H_{\text{\normalfont lax}}\hookrightarrow \C B$.
\end{corollary}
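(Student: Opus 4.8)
The plan is to obtain the adjunction $\rho\dashv\mu$ by transporting the standard separation reflector across the equivalence just established. Recall that $\C B=\widehat{\DD G}_{(\C F,1)}$ is a presheaf topos and that, by the preceding proposition, $\C H_{\text{\normalfont lax}}$ is equivalent to the full subcategory $\Sep(\DD G_{(\C F,1)},J_{\neg\neg})$ of $\neg\neg$-separated bipartite graphs, with $\mu$ identified up to natural isomorphism with the inclusion $i\colon\Sep(\DD G_{(\C F,1)},J_{\neg\neg})\hookrightarrow\C B$. First I would invoke \cite{pJ}[C2.2.13] (already used for Proposition \ref{P:SimpleNegNeg}): the inclusion of $j$-separated objects into a presheaf topos admits a left adjoint $\sigma$, the associated-separated-presheaf functor. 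Composing $\sigma$ with a chosen quasi-inverse of the equivalence yields a left adjoint $\rho$ to $\mu$, and since $\mu$ is full and faithful the adjunction is reflective.

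Second, I would verify the reflection is \emph{epi}-reflective, i.e. that the unit $\eta_G\colon G\to\mu\rho(G)$ is an epimorphism for every bipartite graph $G$. For this I would give the explicit description of $\sigma$ on this site. The only nontrivial covering sieve in $J_{\neg\neg}$ is the one generated by the two endpoint inclusions $\underline{x_1}\colon\underline V_1\to\underline A$ and $\underline{x_2}\colon\underline V_2\to\underline A$, so separation identifies exactly the parallel arcs sharing a common source and target. Concretely, $\sigma(G)$ has the same vertex sets $G(V_1)$ and $G(V_2)$ and arc set $G(A)/\!\sim$, where $a\sim a'$ iff $a.s=a'.s$ and $a.t=a'.t$; the unit $\eta_G$ is the identity on vertices and the canonical quotient map on arcs. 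Hence $\eta_G$ is surjective on each of the three components. Since epimorphisms in a presheaf topos are precisely the componentwise-surjective morphisms, $\eta_G$ is an epimorphism, and therefore $\rho\dashv\mu$ is epi-reflective.

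The main obstacle, and really the only point demanding care, is this explicit identification of the separation functor together with the check that its unit is componentwise surjective; the mere existence of the adjunction is formal, given the cited result and the preceding equivalence. A secondary point to handle cleanly is that the equivalence $\C H_{\text{\normalfont lax}}\simeq\Sep(\DD G_{(\C F,1)},J_{\neg\neg})$ carries the unit of $\sigma\dashv i$ to an epimorphism; this holds because equivalences preserve epimorphisms, so the epi-reflectivity survives the transport of structure unchanged.
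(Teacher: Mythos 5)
Your proposal is correct and follows essentially the same route as the paper: identify $\C H_{\text{\normalfont lax}}$ with the $\neg\neg$-separated bipartite graphs via the preceding equivalence, obtain the reflector as the associated-separated-presheaf functor from the general theory, and observe that its unit is bijective on vertices and surjective on arcs (hence an epimorphism). The paper leaves these verifications implicit by citing the general results on separated objects, so your explicit description of the reflector on this site and the componentwise-surjectivity check simply fill in the details the paper omits.
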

 \begin{corollary}
 	The category of lax hypergraphs is a Grothendieck quasi-topos.
 \end{corollary}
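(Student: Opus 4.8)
The plan is to exploit the full and faithful functor $\mu\colon \C H_{\text{\normalfont lax}}\to \C B$ constructed immediately above the statement. Since $\mu$ is already shown to be full and faithful, it suffices to identify its essential image and check that image is exactly the full subcategory of $\neg\neg$-separated bipartite graphs; a full and faithful functor that is essentially surjective onto a full subcategory is an equivalence onto that subcategory. So the whole task reduces to two things: first, computing what "$\neg\neg$-separated" means concretely for a bipartite graph, and second, matching that description with the bipartite graphs of the form $\mu(H)$.

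First I would pin down the dense topology on the site $\xymatrix{V_1 \ar[r]^s & A & V_2 \ar[l]_t}$ underlying $\C B=\widehat{\DD G}_{(\C F,1)}$. The only nontrivial dense sieve is the sieve on $A$ generated by $\{s,t\}$: for every morphism $f$ into $A$ there is a morphism $g$ with $fg$ factoring through $s$ or $t$, which is immediate since $V_1$ and $V_2$ carry no nonidentity morphisms. Consequently, unwinding the separatedness condition for this single covering, a bipartite graph $G$ is $\neg\neg$-separated if and only if for each pair $(v_1,v_2)\in G(V_1)\times G(V_2)$ there is at most one arc $a\in G(A)$ with $a.s=v_1$ and $a.t=v_2$. (Alternatively one may read this off from the subobject classifier computed in Example~\ref{E:SubobjectClassifier}(\ref{E:BipartSOC}) together with Proposition~\ref{P:SimpleNegNeg}, which already identifies simple objects with $\neg\neg$-separated ones.) This is precisely the "at most one arc between a $V_1$-vertex and a $V_2$-vertex" condition.

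Next I would verify that the essential image of $\mu$ coincides with this class. On one hand, for any lax hypergraph $H$ the bipartite graph $\mu(H)$ has an arc connecting $v\in H(V)$ to $e\in H(E)$ precisely when $v\in\varphi(e)$, and that arc is unique, so $\mu(H)$ is $\neg\neg$-separated. On the other hand, given a $\neg\neg$-separated bipartite graph $G$, I would reconstruct a lax hypergraph by the same recipe as $\rho$: set $H(V)\defeq G(V_1)$, $H(E)\defeq G(V_2)$, and $\varphi(e)\defeq\setm{v\in G(V_1)}{\exists a\in G(A),\ a.s=v,\ a.t=e}$. Separatedness of $G$ says that an arc of $G$ is uniquely determined by its source-target pair, so the arc set of $G$ is in natural bijection with $\setm{(v,e)}{v\in\varphi(e)}$, which is exactly the arc set of $\mu(H)$; tracking the source and target maps gives $\mu(H)\cong G$. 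Hence $\mu$ is essentially surjective onto the $\neg\neg$-separated bipartite graphs, and combined with full faithfulness this yields the asserted equivalence $\C H_{\text{\normalfont lax}}\simeq\Sep(\DD G_{(\C F,1)},J_{\neg\neg})$.

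The main obstacle I anticipate is the reconstruction step, specifically arguing cleanly that separatedness promotes the tautological injection "arc $\mapsto$ (source, target)" to a bijection onto $\setm{(v,e)}{v\in\varphi(e)}$ and that this bijection is natural in $G$ so as to produce a genuine isomorphism $\mu(H)\cong G$ of presheaves rather than a mere bijection of arc sets; the remaining verifications (compatibility with the source/target maps, functoriality of the reconstruction, and the identity and composition laws) are routine and I would not grind through them.
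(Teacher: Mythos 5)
Your proposal is correct and follows essentially the same route as the paper: establish that $\mu\colon \C H_{\text{\normalfont lax}}\to \C B$ is full, faithful, and lands in (indeed, exhausts) the $\neg\neg$-separated bipartite graphs, so that $\C H_{\text{\normalfont lax}}\simeq\Sep(\DD G_{(\C F,1)},J_{\neg\neg})$, and then invoke the general fact (\cite{pJ}, Proposition C2.2.13) that separated objects for a Lawvere--Tierney topology on a presheaf topos form a Grothendieck quasi-topos. Your reconstruction argument for essential surjectivity is slightly more explicit than the paper's, which only notes that $\mu(H)$ has at most one arc per source--target pair and that morphisms are determined on vertices, but the content is the same.
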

 
 Therefore, $\C H_{\text{\normalfont lax}}$ is complete and cocomplete, locally cartesian closed and has a regular subobject classifier. The limits and exponentials can be computed in bipartite graphs. Colimits  can be constructed by applying the reflector $r$ to the colimit in bipartite graphs. The regular subobject classifier is given but identifying multiplicities of incidence in the subobject classifier $\Omega$ of bipartite graphs. 
 \[
	 \tbf{PUT DIAGRAM HERE}
 \]

 Bipartite graphs outside the essential image of $m$ are said to have multiplicities of vertices in the incidence of edges. The reflector $r$ identifies these multiplicities.
 
  \subsection{Strict Bipartite Graphs}
 A second way to address the obstruction above is to restrict the class of morphisms of bipartite graphs. Let $G$ be bipartite graphs. For each $V_2$-vertex $e$ in $G$ we assign the set $G(e)\defeq \setm{a\in G(A)}{a.t=e}$. A strict bipartite graph morphism $g\colon G\to G'$ is a bipartite graph morphism such that for each $V_2$-vertex $e$ in $G$, the restriction of $g_A$ to $G(e)\to G'(g_{V_2}(e))$ is surjective. It is straightforward to show this definition assembles into the category of strict bipartite graphs, $\C B_{\text{\normalfont str}}$. 
 
 Observe that the obstruction depicted in Diagram $\ref{D:Ob1}$ is not constructible in the category of strict bipartite graphs since $L(e)=\{v\}$ and $P(g_s(e))=P(g_t(e))=\{v_1,v_2\}$.  In fact, $r$ lifts to the functor $\rho_{\text{\normalfont \tiny str}}\colon \C B_{\text{\normalfont str}}\to \C H$ as the restriction of $\rho$ to $\C B_{\text{\normalfont str}}$. 
This follows from $\C P(g_{V_1})(g_{V_1})= \varphi'(g_{V_2}(e))$ for each $e\in G(V_2)$ iff $G(e)\to G'(\varphi(e))$ is surjective for each $e\in G(V_2)$. Moreover, the epi-reflective adjunction $\rho\dashv \mu\colon \C H_{\text{\normalfont lax}} \hookrightarrow \C B$ restricts to an epi-reflective adjunction $\rho_{\text{\normalfont \tiny str}}\dashv \mu_{\text{\normalfont str}}\colon \C H\to \C B_{\text{\normalfont str}}$.

 Note that epimorphisms are strict morphisms and that strict morphisms are right cancellable, i.e., if $g\circ f$ and $f$ is strict, then so is $g$. This shows that $\C B_{=}$ is closed under coequalizers. It is also clear that it is closed under coproducts (which are disjoint). Therefore, $\C B_{=}$ is closed under formation of colimits in $\C B$. However, it is not closed under the formation of products. Consider the diagonal morphism of the bipartite graph $\langle \Id_G,\Id_G\rangle\colon G\to G\x G$ where $G$ is the bipartite graph with two $V_1$-vertices, one $V_2$-vertex and two arcs connecting them. It is also not closed under the formation of equalizers. Let $G'$ is the bipartite graph with one $V_1$-vertex, one $V_2$-vertex and two arcs and let $f\colon G'\to G'$ be the morphism which swaps arcs. The equalizer $E$ of $f$ and the identity is the bipartite graph with one $V_1$-vertex, one $V_2$-vertex and no arcs. Thus the universal morphism of the equalizer $E\hookrightarrow G'$ is not strict. 
 \[
	 \xymatrix{ & G \ar[dl]_{\Id_G} \ar[dr]^{\Id_G} \ar@{..>}[d]|-{\langle\Id_G,\Id_G \rangle} \ar@{}[d]|->>{\text{\tiny not str}} & \\ G & G\x G  \ar[l] \ar[r] & G}\qquad \qquad \xymatrix@R=.5em{  \\E \ar@{>->}[r]_{\text{\tiny not str}} & G' \ar@<+.3em>[r]^f \ar@<-.3em>[r]_{\Id_{G'}} & G'  }
 \]
  Therefore, the inclusion $\C B_{\text{\tiny str}}\hookrightarrow \C B$ does not admit a left adjoint since it does not preserve limits. However, the inclusion $j\colon \C B_{\text{\tiny str}}\hookrightarrow \C B$ does admit a right adjoint $\Sigma\colon \C B\to \C B_=$. Let $G$ be a bipartite graph. We define 
  \begin{align*}
	  \Sigma(G)(V_1) &\defeq G(V_1),\\
	  \Sigma(G)(V_2) &\defeq \setm{(e, X)}{e\in G(V_2),\ X\text{ is a subgraph of }G(e)},\\
	  \Sigma(G)(A) & \defeq \setm{(a,(e,X))\in G(A)\x \Sigma(G(V_2))}{a\text{ is an arc in }X}
 \end{align*}
The source and target incidence maps are given $\sigma(a,(e,X))=\sigma(a)$ and $\tau(a,(e,X))=(e,X)$. For a morphism $f\colon G\to G'$, we define the strict bipartite morphism $\Sigma(f)\colon \Sigma(G)\to \Sigma(G')$ such that 
\begin{align*}
\Sigma(f)_{V_1}& \defeq f_{V_1},\\ 
\Sigma(f)_{V_2} &\colon (e,X)\mapsto (f_{V_2}(e), f_!(X)) \\
\Sigma(f)_{A} &\colon (a,(e,X))\mapsto (f_A(a),(f_{V_2}(e),f_!(X)))
\end{align*}
where $f_!(X)$ is the image of subgraph $X$ under $f$. \tbf{SHOW TRIANGLE IDENTITIES}

The restriction of $\Sigma$ to lax hypergraphs factors through the category of hypergraphs and thus defines the right adjoint to the inclusion $j_{\C H}\colon \C H\to \C H_{\text{\tiny lax}}$. The complete picture of functoral relations between lax/strict hypergraphs and bipartite graphs is given in the following diagram
 \[
 \xymatrix{ \C H  \ar@<-.4em>@{>->}[d]_{j_{\C H}} \ar@<+.4em>@{<-}[d]^{\Sigma_{\C H}}_{\dashv} \ar@{>->}@<+.4em>[r]^{\mu_{\text{\normalfont str}}}_-\top \ar@{<-}@<-.4em>[r]_{\rho_{\text{\normalfont \tiny str}}} & \C B_{\text{\normalfont str}} \ar@<-.4em>@{>->}[d]_{j}^{\dashv} \ar@<+.4em>@{<-}[d]^{\Sigma} \\
 	\C H_{\text{\normalfont lax}} \ar@{>->}@<+.4em>[r]^-{\mu}_-\top \ar@{<-}@<-.4em>[r]_-{\rho} & \C B
 }
 \]
 where the inclusion $i\colon \C H\to \C B$ is the diagonal $j\circ\mu_{\text{\tiny str}}=\mu\circ j_{\C H}$.

 \ignore{ \tbf{TOPOS Properties Adhesive properties}}
 
 \subsection{Inhabited Hypergraphs and Bipartite Graphs}

Many definitions of hypergraphs include the requirement that hyperedges have non-empty incidence (\cite{}\tbf{PUT Reference to texts}). We define the category of inhabited hypergraphs $\C H^+$ as the full subcategory of $\C H$ with objects hypergraphs $H=(H(E), H(V))$ such that the incidence of each edge is non-empty. In other words, $\C H^+$ is equivalent to the comma category $\mathbf{Set}\ls \C P_+$, where $\C P_+\colon \mathbf{Set}\to \mathbf{Set}$ is the covariant functor which takes a set $X$ to the set of non-empty subsets of $X$ and a set map to the direct image function. 

\begin{proposition}
	The category of inhabited hypergraphs is adhesive.
\end{proposition}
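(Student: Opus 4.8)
The plan is to deduce the adhesivity of $\C H^+$ from that of $\C H$ (established above) by realizing $\C H^+\iso\mathbf{Set}\ls\C P_+$ as a full subcategory of $\C H=\mathbf{Set}\ls\C P$ that is closed under the two constructions adhesivity actually requires, namely pullbacks and pushouts along monomorphisms. First I would record that the inclusion $i\colon\C H^+\hookrightarrow\C H$ is a left adjoint, its right adjoint $r\colon\C H\to\C H^+$ deleting every edge of empty incidence; this is well defined because morphisms of $\C H$ are strict, so the incidence of $f_E(e)$ is the direct image of the non-empty incidence of $e$ and hence non-empty. This makes $\C H^+$ a coreflective subcategory and already guarantees that $i$ preserves all colimits. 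The substance of the argument is then two closure lemmas, followed by a transfer of the van Kampen condition along $i$.

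Next I would show $\C H^+$ is closed under pullbacks in $\C H$. A pullback of $(E_1,V_1,\partial_1)\xrightarrow{f}(E_3,V_3,\partial_3)\xleftarrow{g}(E_2,V_2,\partial_2)$ has vertex set $V_1\x_{V_3}V_2$, while an edge is a triple $(e_1,e_2,S)$ with $f_E(e_1)=g_E(e_2)$, a subset $S\subseteq V_1\x_{V_3}V_2$, and $\C P(\pi_i)(S)=\partial_i(e_i)$ for $i=1,2$, its incidence being $\partial(e_1,e_2,S)=S$. The key observation is that the direct image of $\empset$ is $\empset$: if $\partial_1(e_1)$ and $\partial_2(e_2)$ are both non-empty, then $S$ cannot be empty. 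Hence the pullback of inhabited hypergraphs is inhabited, so $\C H^+$ has pullbacks, and $i$ preserves and (being full and closed under pullbacks) reflects them. The same direct-image computation shows closure under arbitrary finite limits, which I would note in passing.

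Then I would treat pushouts along monomorphisms. Since the forgetful functor $U\colon\C H\to\mathbf{Set}\x\mathbf{Set}$ creates colimits, the incidence of any edge of a colimit is the direct image of the incidence of a representing edge; as direct images preserve non-emptiness, every colimit of inhabited hypergraphs is inhabited (the initial object $(\empset,\empset)$ being vacuously so). Thus $\C H^+$ is closed under all colimits and $i$ preserves them. Because $\C H^+$ is full and closed under pullbacks, a morphism is monic in $\C H^+$ exactly when it is monic in $\C H$, so the monomorphisms and the pushouts along monomorphisms of the two categories coincide.

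Finally I would transfer the van Kampen property. A pushout along a monomorphism in $\C H^+$ is such a pushout in $\C H$, hence a van Kampen square there. Given a commutative cube in $\C H^+$ with this square as base and pullbacks as back faces, I read the cube in $\C H$, apply the van Kampen biconditional of the adhesive category $\C H$, and translate back: since $i$ preserves and reflects both pullbacks and pushouts along monomorphisms, the statements that the top face is a pushout and that the remaining faces are pullbacks hold in $\C H^+$ if and only if they hold in $\C H$. Therefore every such pushout in $\C H^+$ is van Kampen, and $\C H^+$ is adhesive. I expect the main obstacle to be exactly this descent step: one must verify that each face of the test cube is computed identically in $\C H^+$ and in $\C H$ so that the van Kampen equivalence is genuinely inherited, which is precisely where the limit and colimit closure established above is used, and which instantiates the closure property of adhesive categories under suitable full subcategories recorded earlier.
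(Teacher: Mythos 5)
The paper offers no argument here at all: the proof environment attached to this proposition (in the draft part on the thirty-two categories of hypergraphs) is left empty, just as it is for the companion claim that $\C H$ itself is adhesive. So there is no ``paper's route'' to compare against; what you have written would in fact fill the gap. Your strategy --- exhibit $\C H^+\hookrightarrow\C H$ as a full coreflective subcategory, check closure under pullbacks (an edge of a pullback in $\mathbf{Set}\ls\C P$ is a triple $(e_1,e_2,S)$ with $\C P(\pi_i)(S)=\partial_i(e_i)$, and a direct image of $\empset$ is $\empset$, so $S\neq\empset$ whenever the $\partial_i(e_i)$ are inhabited) and under all colimits (created by $U\colon\C H\to\mathbf{Set}\x\mathbf{Set}$, with incidence given by direct images, which preserve non-emptiness), and then descend the van Kampen biconditional along a full inclusion that preserves and reflects the relevant limits and colimits --- is sound and is the natural reduction. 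Its one external dependency is the adhesivity of $\C H$, which the paper also asserts without proof, so within the document's own logic your citation of ``established above'' is as good as the source.

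Two small corrections. First, the parenthetical claim that $\C H^+$ is closed under \emph{arbitrary} finite limits is false: the terminal object of $\mathbf{Set}\ls\C P$ is $(\C P(1),1,\Id)$, whose edge $\empset\in\C P(1)$ has empty incidence, so the empty limit already escapes $\C H^+$. This costs you nothing, since adhesivity only requires pullbacks and pushouts along monomorphisms, but the remark should be deleted or restricted to connected finite limits. Second, when you translate the top face of the test cube back from $\C H$ to $\C H^+$ you should say explicitly why that pushout cocone, computed in $\C H$, is again a pushout in $\C H^+$: this is exactly because $\C H^+$ is closed under colimits in $\C H$ and the inclusion is full, so the universal property restricts; stating this closes the only place where the descent could silently fail.
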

\begin{proof}
	
\end{proof}

The inclusion functor $i\colon \C H^+\to \C H$ admits both a left and right adjoint functor. 

Thus the inclusion $i\colon \C H^+\to \C H$ creates all limits and colimits.  

$F\dashv i \dashv G\colon \C B \to \C B^+$

\subsection{Oriented Hypergraphs and Bipartite Graphs}

An oriented hypergraph $H=(H(V),H(E),\varphi)$ consists of a set of vertices $H(V)$, a set of edges $H(E)$, and a set map $\varphi\colon H(E)\to \LinOrd(H(V))$ where $\LinOrd\colon \mathbf{Set} \to \mathbf{Set}$ is the functor which assigns to a set $X$ the set of linear orders on subsets of $X$ and to a set map $f\colon X\to Y$ assigns the set map $\LinOrd(f)\colon \LinOrd(X)\to \LinOrd(Y)$ induced by the image powerset functor. Given oriented hypergraphs $H=(H(V),H(E),\varphi)$ and $H=(H'(V),H'(E),\varphi')$, a morphism $f\colon H\to H'$ consists of a set map $f_V\colon H(V)\to H'(V)$ and a set map $f_E\colon H(E)\to H'(E)$ such that $\varphi'\circ f_E=\LinOrd(f_V)\circ \varphi$.
\[
	\xymatrix{ H(E) \ar[r]^{f_E} \ar[d]_{\varphi} & H'(E) \ar[d]^{\varphi'}\\ \LinOrd(H(V)) \ar[r]^{\LinOrd(f_V)} & \LinOrd(H'(V)) }
\]
In other words, the category of oriented hypergraphs $\oH$ is the comma category $\mathbf{Set}\ls \LinOrd$.
\begin{proposition}
	The category of oriented hypergraphs is a complete, cocomplete, adhesive category. 
\end{proposition}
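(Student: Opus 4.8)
The plan is to treat $\oH = \mathbf{Set}\ls\LinOrd$ as the category of $F$-graphs $\C G_F$ for the endofunctor $F = \LinOrd$, so that all three assertions follow by the arguments already used for the category of hypergraphs $\C H = \mathbf{Set}\ls\C P$. I would prove cocompleteness first, then completeness, then adhesiveness, the last being the real content.

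For cocompleteness I would simply invoke the cited fact that every category of $F$-graphs is cocomplete with the forgetful functor $U\colon \C G_F\to \mathbf{Set}\times\mathbf{Set}$ creating colimits \cite{cJ}. Concretely, colimits in $\oH$ are computed componentwise on the edge set and vertex set, the incidence map $\partial$ being induced on the colimit of vertex sets via the comparison maps of $\LinOrd$; no special property of $\LinOrd$ is used here.

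For completeness one cannot copy this, since $U$ does not create limits ($\LinOrd$ does not preserve them). Instead I would build limits by hand, exactly as in the $\C H$ case. Given a diagram $D\colon J\to\oH$ with $D_j = (E_j, V_j, \partial_j)$, put $V\defeq\lim_J V_j$ in $\mathbf{Set}$ with projections $p_j\colon V\to V_j$, and define the edge set $E$ as the subset of $\prod_j E_j\times\LinOrd(V)$ cut out by the equations $\LinOrd(p_j)(w) = \partial_j(e_j)$ for all $j$ together with $E_\alpha(e_j) = e_{j'}$ for all $\alpha\colon j\to j'$, with structure map the projection to $\LinOrd(V)$. This is a limit in $\mathbf{Set}$, and a routine check shows $(E,V,\pi_{\LinOrd(V)})$ is the limit in $\oH$. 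The essential point is only that $\mathbf{Set}$ is complete, so the incidence data may be pulled back along the maps $\LinOrd(p_j)$ rather than requiring $\LinOrd$ to preserve limits; products (take $J$ discrete) and equalizers are the two cases worth recording explicitly.

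The main work is adhesiveness. Here I would appeal to the transfer result for comma categories of the form $(\mathrm{Id}_{\C C}\downarrow F)$ over an adhesive base (cf. \cite{hE}): if $\C C$ is adhesive and $F$ preserves monomorphisms and pullbacks along monomorphisms, then $(\mathrm{Id}_{\C C}\downarrow F)$ is adhesive, with monos, pullbacks, and pushouts along monos all computed componentwise. Since $\mathbf{Set}$ is adhesive and the componentwise pushouts already exist by cocompleteness, everything reduces to one combinatorial lemma about $\LinOrd$. Preservation of monos is immediate: if $f$ is injective then distinct linear orders on subsets of the domain have distinct images, so $\LinOrd(f)$ is injective. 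The step I expect to be the main obstacle is preservation of pullbacks along a mono $m\colon A\hookrightarrow C$ against an arbitrary $g\colon B\to C$, i.e. bijectivity of the comparison $\LinOrd(A\times_C B)\to\LinOrd(A)\times_{\LinOrd(C)}\LinOrd(B)$. The delicacy is that pushing a linear order forward along the (possibly non-injective) projection $\pi_A$ collapses entries with equal image, so the length of the list can drop and the comparison is not obviously an iso. The resolution, which I would carry out carefully, exploits that the \emph{other} projection $\pi_B\colon A\times_C B\to B$ is the pullback of the mono $m$ and hence itself mono: a compatible pair $(\lambda_A,\lambda_B)$ forces the support of $\lambda_B$ into $\pi_B(A\times_C B)$, so $\lambda_B$ lifts uniquely along $\pi_B$ to an order $\lambda$ upstairs, and functoriality together with injectivity of $\LinOrd(m)$ forces $\LinOrd(\pi_A)(\lambda)=\lambda_A$, giving both injectivity and surjectivity. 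This is the analogue of the corresponding fact for $\C P$ in the hypergraph case, and I expect it to be the only genuinely non-formal step in the proof.
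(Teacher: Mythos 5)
The paper's own ``proof'' of this proposition is only a placeholder --- its body consists of the lines ``Cocomplete due to \cite{}'', ``Complete. Description of Products. Equalizers'', ``Adhesive due to \cite{}'' with empty citations --- so there is no argument to compare yours against; what you have written is, as far as I can judge, the proof the author intended to supply. Your decomposition is the right one and each piece is essentially sound. Cocompleteness is indeed the generic fact about comma categories $\mathbf{Set}\ls F$ (the forgetful functor to $\mathbf{Set}\x\mathbf{Set}$ creates colimits, no hypothesis on $F$). Your limit construction --- take $V=\lim_J V_j$ and carve the edge set out of $\prod_j E_j\x \mathrm{LinOrd}(V)$ by the equations $\mathrm{LinOrd}(p_j)(w)=\partial_j(e_j)$ --- is the standard construction of limits in $(\mathrm{Id}\downarrow F)$ when $F$ does not preserve limits, and the universal property does go through. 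For adhesiveness, the transfer theorem you invoke is the comma-category construction of adhesive HLR categories in \cite{hE} (Theorem 4.15), which requires exactly that the functor on the vertex side preserve pullbacks along the distinguished monos; your key lemma is argued correctly, and you have put your finger on the right lever: the projection $\pi_B$ is the pullback of the mono $m$ and hence mono, compatibility forces the support of $\lambda_B$ into $g^{-1}(m(A))$, and injectivity of $\mathrm{LinOrd}(m)$ closes the circle.

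Two caveats are worth recording. First, the cited theorem yields an adhesive HLR structure with respect to the class of \emph{componentwise} monomorphisms, whereas the proposition asserts adhesiveness outright; you still owe the (true, but not free) observation that every monomorphism of oriented hypergraphs is componentwise injective --- e.g.\ compute the kernel pair with your limit formula and use that $\mathrm{LinOrd}$ of an injection is injective --- and you should be aware that later corrections to the constructions of \cite{hE} downgrade some instances of Theorem 4.15 to \emph{weak} adhesive HLR categories, so the class of VK cubes actually covered should be checked against what ``adhesive'' is being taken to mean here. Second, a defect inherited from the paper rather than introduced by you: $\mathrm{LinOrd}$ is only loosely defined on morphisms (``induced by the image powerset functor''), and pushing a linear order forward along a non-injective map requires a convention whose compatibility with composition must be verified; your pullback lemma silently uses that the support of $\mathrm{LinOrd}(g)(\lambda_B)$ equals $g(\mathrm{supp}\,\lambda_B)$. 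Any reasonable convention satisfies this, but it should be fixed before the proposition is even well posed.
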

\begin{proof}
	Cocomplete due to \cite{}
	
	Complete. Description of Products. Equalizers
	
	Adhesive due to \cite{}.
\end{proof}

There is a faithful (non-full) functor $U\colon \oH\to \C H$ which forgets the orientation of the incidence of an edge. The forgetful functor admits a right adjoint $\omega\colon \C H \to \oH$ which assigns to a hypergraph $H$ the oriented hypergraph $\omega(H)$ which consists of the following structure
\begin{align*}
	\omega(H)(V) &\defeq H(V),\\
	\omega(H)(E) &\defeq \textstyle\bigsqcup_{e\in H(E)}\C O(\varphi_H(e)),\\
	\varphi_{\omega(H)}&\colon \omega(H)(E)\to \LinOrd(H(V)),\quad (e,<_e) \mapsto <_e  
\end{align*}
where $\C O(\varphi_H(e))$ is the set of linear orders on the set of vertices $\varphi_H(e)$ and $<_{e}$ is a linear order on $\varphi_H(e)$. Given a morphism of hypergraphs $f\colon H\to H'$  we define a morphism of hypergraphs $\omega (f)\colon \omega(H)\to \omega(H')$ where $\omega  (f)_E\colon \omega(H)(E)\to \omega(H')(E)$ takes $(e,<_e)$ to $(f_E(e),\LinOrd(f_V)(<_e))$.

The unit of the adjunction $\eta\colon \Id_{\oH} \Rightarrow \omega U$ is defined by \tbf{FINISH THIS}
The counit of the adjunction $\varepsilon\colon U\omega\Rightarrow \Id_{\C H}$ is defined by 

We may bound the size of incidence of the linear order $\mathbf{Set}\ls (-)^\alpha$ where $\alpha$ is a cardinal number and $(-)^\alpha\colon \mathbf{Set}\to \mathbf{Set}$ is the functor which assigns to a set $X$ the set of linear orders on subsets of cardinality less than $\alpha$.

\begin{proposition}
	Let $\alpha$ be an cardinal number. The comma category $\mathbf{Set}\ls (-)^\alpha$ is equivalent to a category of presheaves.
\end{proposition}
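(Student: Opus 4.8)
The plan is to exhibit $(-)^\alpha$ as a coproduct of representable endofunctors of $\mathbf{Set}$ and then recognise the resulting comma category as a presheaf category via the nerve--realization machinery of Chapter \ref{S:Interp}. First I would decompose an element of $(-)^\alpha(X)$ according to the order type $\beta<\alpha$ of its underlying ordered listing, writing $(-)^\alpha\iso \coprod_{\beta<\alpha}L_\beta$, where $L_\beta(X)$ is the set of $\beta$-indexed listings of vertices of $X$. Recording such a listing as the enumerating function $\beta\to X$ identifies $L_\beta$ with the representable functor $\mathbf{Set}(\beta,-)$ (with $\beta$ a chosen set of that cardinality), so that $(-)^\alpha\iso\coprod_{\beta<\alpha}\mathbf{Set}(\beta,-)$ is familially representable. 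Being a small coproduct of representables, $(-)^\alpha$ preserves connected limits, since any connected-limit cone over a diagram landing in the coproduct is forced into a single summand on which the representable preserves the limit; this is the structural feature that makes the comma category well behaved.

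Second, with the decomposition in hand I would build the small theory $\DD T_\alpha$ with a single vertex object $V$ and one arc object $A_\beta$ for each ordinal $\beta<\alpha$, setting $\DD T_\alpha(V,A_\beta)\defeq \beta$, $\DD T_\alpha(A_\beta,V)\defeq\empset$, and $\DD T_\alpha(A_\beta,A_\gamma)\defeq\{\Id\}$ if $\beta=\gamma$ and $\empset$ otherwise; a short hom-set computation in the comma category confirms the arc objects are rigid, exactly matching the disjoint summands of $(-)^\alpha$. Following the interpretation-functor recipe of Chapter \ref{S:Interp}, the generators yield $I(V)\defeq(\empset,1,!_1)$ and $I(A_\beta)\defeq(1,\beta,\named{\Id_\beta})$ in $\mathbf{Set}\ls(-)^\alpha$, and I would show the induced nerve--realization adjunction $R\dashv N\colon \mathbf{Set}\ls(-)^\alpha\to[\DD T_\alpha^{op},\mathbf{Set}]$ is an adjoint equivalence. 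Fullness and faithfulness of $N$ follow from density of $I$, proved exactly as in the lemmas for $\C G_{\underline\Pi_X}$: every object $(E,V,\varphi)$ is the canonical colimit of its generators, one per vertex and one per edge, the latter read off from the summand $\beta$ in which its incidence lands. The new ingredient beyond those earlier reflective-embedding results is essential surjectivity: because the arity is bounded by $\alpha$ and each arity $\beta<\alpha$ owns an arc object $A_\beta$, every presheaf $G$ is realised by the $F$-graph with $V\defeq G(V)$ and $E\defeq\coprod_{\beta<\alpha}G(A_\beta)$, so no ``oversized'' edges lie outside the essential image and $N$ is an equivalence.

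The main obstacle I anticipate is the bookkeeping around the literal ``linear orders on subsets'' description and the functoriality of $(-)^\alpha$ on non-injective maps. Read literally, a linear order lives on a genuine subset, so the enumerating function is an injection and $L_\beta$ becomes $\mathrm{Inj}(\beta,-)$, which is \emph{not} representable for $\beta\ge 2$ and is only functorial on injective maps; the clean coproduct-of-representables statement therefore requires treating an oriented edge as a listing in which vertex multiplicities are permitted, consistent with the parametrized-incidence philosophy used throughout this paper for $(X,M)$-graphs (and explaining why the bounded oriented category is a topos while bounded uniform hypergraphs are not). Once the incidence functor is normalised in this way, verifying that $(-)^\alpha$ preserves connected limits and that the generators are dense is routine; the genuinely delicate point I would write out in full is the identification of the full subcategory of generators of $\mathbf{Set}\ls(-)^\alpha$ with $\DD T_\alpha$ on the nose, i.e.\ that the canonical element of each summand admits no morphisms into the others, which is what collapses the adjunction to an equivalence $\mathbf{Set}\ls(-)^\alpha\simeq[\DD T_\alpha^{op},\mathbf{Set}]$.
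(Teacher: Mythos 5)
The paper's own proof of this proposition is left blank, so there is no written argument to compare against; but the intended construction is legible from the definition, a few lines above it in the source, of the small category for the unbounded oriented case (one vertex object $V$ and one arc object $A_i$ per arity $i$, with $i$ morphisms $V\to A_i$ and no non-identity morphisms between distinct arc objects), and your theory $\DD T_\alpha$ is exactly the $\alpha$-bounded version of that category, so your route is the one the paper intends. The argument is correct once $(-)^\alpha$ is normalised to the familially representable functor $\coprod_{\beta<\alpha}\mathbf{Set}(\beta,-)$: edges then carry a well-defined arity preserved by every morphism, the generators $I(V)$ and $I(A_\beta)$ form a dense full subcategory isomorphic to $\DD T_\alpha$ (your computation that a vertex map $f$ sends the universal listing to $f$ itself, forcing the arc objects to be rigid, is the key check), and the nerve is an equivalence --- although the nerve--realization machinery is heavier than necessary here, since comparing the data directly (a set $E$ over $\coprod_{\beta<\alpha}V^\beta$ versus a family of sets $G(A_\beta)$ equipped with maps to $G(V)^\beta$) already exhibits the equivalence of categories. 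Your closing caveat is the substantive point rather than bookkeeping: under the literal reading of $(-)^\alpha(X)$ as linear orders on subsets of $X$, made functorial via direct images, the summands are the non-representable injection functors and a non-injective vertex map lowers arity, so for $\alpha\geq 3$ the generator of arity $2$ acquires a non-identity morphism to the generator of arity $1$; the full subcategory of generators is then no longer the rigid theory $\DD T_\alpha$, and the comma category behaves like the category of hypergraphs, which is not a presheaf topos. The proposition therefore requires precisely the multiplicities-allowed listing convention you adopt, and your proof goes through under that convention.
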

\begin{proof}
	
\end{proof} 

An oriented bipartite graph $G=(G(V_1), G(V_2), G(A), (<_e)_{e\in G(V_2)})$ consists of a bipartite graph equipped with a total order $<_e$ on $G(e)\defeq \setm{a\in G(A)}{a.t=e}$ for each $V_2$-vertex $e$. A morphism of oriented bipartite graphs $g\colon G\to G'$ is a bipartite graph morphism such that for each $V_2$-vertex $e$ in $G$ the restriction of $g_A$ to $G(e)\to G'(g_{V_2}(e))$ is a morphism of total orders. In other words, for each $a, a'\in G(e)$, $a\leq_e a'$ implies $g_A(a)\leq_{g_{V_2}(e)} g_A(a')$.

\section{Directed Hypergraphs and Bipartite Graphs}

In \cite{}, hypergraph refers to a directed hypergraph

\begin{proposition}
	The category of directed hypergraphs is adhesive.
\end{proposition}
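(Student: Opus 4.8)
The plan is to present the category of directed hypergraphs in the same comma-category shape used for the ordinary and oriented cases, and then to deduce adhesiveness from the adhesiveness of $\mathbf{Set}$ together with the special preservation properties of the structure functor. Following \cite{gG}, a directed hyperedge carries a tail set and a head set of incident vertices, so a directed hypergraph is a $D$-graph in the sense of Chapter \ref{C:FGraphs}, where $D\colon \mathbf{Set}\to\mathbf{Set}$ is the functor $D(X)\defeq \C P(X)\x \C P(X)$ and $\C P$ is the covariant powerset functor. Thus the category of directed hypergraphs is the comma category $\vec{\C H}\defeq \mathbf{Set}\ls D$, with objects $\varphi=(\varphi_t,\varphi_h)\colon H(E)\to \C P(H(V))\x \C P(H(V))$ and the evident morphisms; equivalently, as in the coalgebraic remark for $\C H$, it is the category of coalgebras on $\mathbf{Set}\x\mathbf{Set}$ for the endofunctor $(V,E)\mapsto (1,D(V))$.

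First I would record that the forgetful functor $U\colon \vec{\C H}\to \mathbf{Set}\x\mathbf{Set}$ creates colimits (the standard fact for $F$-graphs cited in Chapter \ref{C:FGraphs}), so $\vec{\C H}$ is cocomplete and pushouts are computed componentwise on vertices and edges. Next I would show that $U$ creates those pullbacks that $D$ preserves, and in particular that $U$ creates pullbacks along monomorphisms: this is the crucial input, and it reduces to the lemma that $D$ preserves monomorphisms and pullbacks of monomorphisms. That lemma in turn reduces to the corresponding statement for $\C P$, since $D$ is a product of two copies of $\C P$ and both properties are computed coordinatewise. For $\C P$ the verification is elementary — direct image along an injection is injective, and for a cospan of inclusions the direct images satisfy $\C P(A\cap B)=\C P(A)\cap \C P(B)$ inside $\C P(C)$, so $\C P$ sends intersections to intersections, i.e. preserves pullbacks of monos.

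With these preservation facts in hand, I would transfer the van Kampen condition from $\mathbf{Set}\x\mathbf{Set}$ (which is adhesive, being a power of the adhesive category $\mathbf{Set}$) to $\vec{\C H}$. Concretely, given a pushout square along a monomorphism in $\vec{\C H}$, its image under $U$ is a pushout along a mono in $\mathbf{Set}\x\mathbf{Set}$ and hence van Kampen there; the descent diagrams needed for the van Kampen property in $\vec{\C H}$ are then built from pullbacks along monos in $\mathbf{Set}\x\mathbf{Set}$ and lifted uniquely to $\vec{\C H}$ using that $U$ creates pullbacks along monos and $D$ preserves them. This is exactly the mechanism by which $\C H$ and $\oH$ were shown adhesive, now applied to $D=\C P\x\C P$.

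The hard part will be the last step: adhesiveness demands the van Kampen property for \emph{all} pushouts along monomorphisms, and $D$ does not preserve arbitrary pullbacks (the covariant powerset functor fails to), so one cannot simply invoke an ``$F$ preserves pullbacks'' closure theorem for comma categories. The argument must therefore be confined to the class of pullbacks that actually arise in van Kampen squares — pullbacks along the monomorphic legs — where the preservation lemma for $\C P$ does hold; verifying that this restricted preservation suffices, and that no arbitrary pullback is secretly required, is where the care lies. I would handle this either by citing the adhesive-HLR machinery of \cite{hE} in the form that a comma category $\mathbf{Set}\ls F$ is adhesive whenever $F$ preserves monomorphisms and pullbacks of monomorphisms, or by repeating verbatim the van Kampen verification already used for $\C H$, which only ever pulls back along the mono side.
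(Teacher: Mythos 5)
The paper gives you nothing to compare against here: this proposition sits in an unfinished draft section, the proof environment in the source is empty, and the category of directed hypergraphs is never actually defined, so your proposal must stand on its own. Under the definition you adopt --- the comma category $\mathbf{Set}\ls D$ with $D\defeq \C P\times \C P$, following \cite{gG} --- the step you defer to the end is not merely where the care lies; it is where the statement fails. The forgetful functor to $\mathbf{Set}\times\mathbf{Set}$ does \emph{not} create the pullbacks occurring in a van Kampen cube, because those are taken along arbitrary morphisms (the vertical map $D'\to D$ and the non-monic leg $C\to D$), and $\C P$ does not preserve such pullbacks: the pullback of two edges over a common edge acquires one edge for \emph{every} subset $S$ of the vertex pullback whose two images are the prescribed incidence sets. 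Concretely, in $\mathbf{Set}\ls\C P$ (place all incidence in the tail component for the directed case): let $A$ have vertices $\{u,v\}$ and no edges, let $B$ be $A$ with one edge $e$, $\varphi_B(e)=\{u,v\}$, let $C$ be a single vertex $w$ with no edges, and let $A\to C$ be the collapse; the pushout $D$ has one vertex and one edge with incidence $\{w\}$, and $A\rightarrowtail B$ is a monomorphism. Now take $D'\to D$ where $D'$ has two vertices $w_1,w_2$ over $w$ and one edge $e'$ with $\varphi_{D'}(e')=\{w_1,w_2\}$, and form the cube whose four side faces are the pullbacks. Then $A'$ and $C'$ have no edges, but $B'=B\times_D D'$ has \emph{seven} edges over $(e,e')$, one for each subset of $\{u,v\}\times\{w_1,w_2\}$ surjecting onto both factors; the componentwise pushout $B'+_{A'}C'$ therefore has seven edges and is not $D'$. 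All side faces are pullbacks yet the top face is not a pushout, so the bottom pushout along a monomorphism is not van Kampen. (The same example already defeats adhesivity of the undirected comma category $\mathbf{Set}\ls\C P$.)

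Two consequences for your write-up. First, the citation you hope to lean on does not exist in the form you need: the comma-category constructions of adhesive and adhesive HLR categories in \cite{sL} and \cite{hE} require the second functor to preserve the pullbacks arising in the cubes, which are along arbitrary morphisms, and preservation of intersections by $\C P$ is genuinely insufficient. What does survive of your argument is a \emph{weak} adhesive HLR statement: the counterexample essentially uses the non-monic vertical morphism $D'\to D$, and if you restrict to cubes whose vertical morphisms are componentwise injective, then the pullbacks involved are created by the forgetful functor and your transfer from $\mathbf{Set}\times\mathbf{Set}$ goes through. Second, if you want adhesivity as literally stated, change the definition rather than the proof: present directed hypergraphs as a presheaf category (a directed bipartite-graph theory in the spirit of Example \ref{E:Hybrid}), in which case the result is immediate because every topos is adhesive \cite{sL}, or take incidence valued in sequences $E\to V^{*}\times V^{*}$ rather than subsets, since the free-monoid functor, unlike $\C P$, preserves pullbacks. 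Either choice is more consistent with the rest of the thesis than the strict powerset comma category.
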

\begin{proof}
	
\end{proof}

Similarly, a directed oriented hypergraph $H=(H(V),H(E),\varphi)$ consists of a set of vertices $H(V)$ a set of edges

Let $\doDDH$ be the category with set of objects $\setm{A_{(i,j)}}{i,j\in \DD N}+\{V\}$ with homsets $\doDDH(A_{(i,j)},A_{(i',j')})=\doDDH(A_{(i,j)},V)=\empset$ for $(i,j)\neq (i',j')$ and $\doDDH(V,A_{(i,j)})=\setm{s_k}{1\leq k\leq i}+\setm{t_k}{i+1\leq k\leq i+k}$.

\begin{proposition}
	The category of directed oriented hypergraphs is equivalent to the presheaf category on $\doDDH$.
\end{proposition}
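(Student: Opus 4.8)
The plan is to realize the category of directed oriented hypergraphs as a comma category of the form $\mathbf{Set}\ls F$ and then exhibit an explicit equivalence between $\mathbf{Set}\ls F$ and $[\doDDH^{op},\mathbf{Set}]$ by matching, level by level, the data of a presheaf on $\doDDH$ with the incidence data of a directed oriented hypergraph. Concretely, a directed oriented hypergraph consists of a vertex set $H(V)$, an edge set $H(E)$ partitioned by arity, and an assignment to each edge $e$ of arity $(i,j)$ of an ordered $i$-tuple of source vertices together with an ordered $j$-tuple of target vertices (multiplicities allowed). This is exactly an object of $\mathbf{Set}\ls F$ for the endofunctor
\[
F\colon \mathbf{Set}\to\mathbf{Set},\qquad F(X)\defeq \coprod_{(i,j)\in \DD N\x \DD N}X^{i+j}\iso \coprod_{(i,j)\in \DD N\x \DD N}\mathbf{Set}(i+j,X),
\]
since an incidence map $\varphi\colon H(E)\to F(H(V))$ sends each edge to a component $(i,j)$ (its arity) together with a tuple in $H(V)^{i+j}$. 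Because $F$ is a coproduct of representable functors, this is the same situation treated for $\mathbf{Set}\ls(-)^\alpha$ above, and I would either invoke that familial-representability principle directly or, as below, build the equivalence by hand.

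For the explicit comparison, first I would unwind a presheaf $G\colon \doDDH^{op}\to\mathbf{Set}$. Its data is a set $G(V)$, a family of sets $\bigl(G(A_{(i,j)})\bigr)_{(i,j)}$, and for each generating morphism a function, namely $G(s_k)\colon G(A_{(i,j)})\to G(V)$ for $1\le k\le i$ and $G(t_k)\colon G(A_{(i,j)})\to G(V)$ for $i+1\le k\le i+j$. The crucial observation is that $\doDDH$ has no non-identity composable pairs and no relations: the only non-identity morphisms are the $s_k,t_k\colon V\to A_{(i,j)}$, and they neither compose with one another nor factor through anything. Hence this data is entirely free, and a presheaf on $\doDDH$ is precisely a vertex set, an arity-indexed family of edge sets, and unconstrained ordered source/target incidence functions. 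I would then define $\Phi\colon [\doDDH^{op},\mathbf{Set}]\to \mathbf{Set}\ls F$ by $\Phi(G)(V)\defeq G(V)$, $\Phi(G)(E)\defeq \coprod_{(i,j)}G(A_{(i,j)})$, and $\varphi_{\Phi(G)}$ sending $e\in G(A_{(i,j)})$ to the pair consisting of $(i,j)$ and the tuple $\bigl(G(s_1)(e),\dots,G(s_i)(e),G(t_{i+1})(e),\dots,G(t_{i+j})(e)\bigr)\in G(V)^{i+j}$. Conversely $\Psi$ sends a directed oriented hypergraph $H$ to the presheaf with $\Psi(H)(V)\defeq H(V)$, $\Psi(H)(A_{(i,j)})$ the set of edges of arity $(i,j)$, and $s_k$-, $t_k$-actions reading off the $k$-th source and target vertices.

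Next I would verify that $\Phi$ and $\Psi$ are mutually quasi-inverse and extend to functors. On objects the composites are naturally isomorphic to the identity by reassembling or decomposing $H(E)$ along its arity partition. On morphisms, a natural transformation $\eta\colon G\Rightarrow G'$ is a component $\eta_V$ together with components $\eta_{(i,j)}$ whose naturality squares against $s_k$ and $t_k$ say exactly that the ordered incidence is preserved under $\eta_V$; in particular $\eta$ is forced to preserve arity since there are no morphisms between distinct $A_{(i,j)}$. This is precisely the condition defining a morphism of directed oriented hypergraphs, so $\Phi$ and $\Psi$ are full, faithful, and essentially surjective, giving the claimed equivalence.

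I expect the main obstacle to be bookkeeping around the morphisms rather than the objects: one must check that the notion of directed oriented hypergraph morphism really does coincide with naturality, i.e. that it preserves arity and respects ordered source/target incidence strictly through the vertex map. This is also where the definition matters most — the equivalence holds for the multiplicity-allowing, arity-fixed version encoded by the tuples $X^{i+j}$; had the incidence been taken in linear orders on subsets (no repeated vertices), a morphism collapsing vertices could change an edge's arity and the presheaf description would break, so I would take care to state the definition in the tuple form that makes $F$ a genuine coproduct of representables.
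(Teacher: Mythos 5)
Your proof is correct, and in fact the paper offers nothing to compare it against: the proof environment for this proposition is left empty in the source, so your argument is the only complete one on the table. The structure of your argument is exactly right: since $\doDDH$ has no non-identity composable pairs, a presheaf on it is free data consisting of a vertex set, an arity-indexed family of arc sets, and unconstrained incidence maps $s_k,t_k$; this matches the comma category $\mathbf{Set}\ls F$ for $F(X)=\coprod_{(i,j)\in\DD N\x\DD N}X^{i+j}$, a coproduct of representables; and on morphisms the naturality squares against $s_k$ and $t_k$ reproduce the commuting-square condition $\varphi'\circ f_E=F(f_V)\circ\varphi$, with arity automatically preserved because there are no morphisms between distinct $A_{(i,j)}$. (You silently correct the paper's typo $i+1\leq k\leq i+k$ to $i+1\leq k\leq i+j$, which is clearly intended.)

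Your closing caveat deserves emphasis because it is not mere bookkeeping: the paper's own (draft) definition of an \emph{oriented} hypergraph takes incidence in $\LinOrd(H(V))$, linear orders on \emph{subsets}, and its definition of a \emph{directed} oriented hypergraph breaks off mid-sentence. Under the $\LinOrd$-style definition a morphism that collapses vertices changes the arity of an edge, whereas a natural transformation of presheaves on $\doDDH$ must preserve the $A_{(i,j)}$-component; so the proposition as the paper would presumably set it up is false, and it becomes true only under the tuple-based, multiplicity-allowing definition you adopt. You have therefore not only supplied the missing proof but identified the definitional repair needed for the statement to hold; the same repair is needed for the companion claims about $\oDDH$ and $\mathbf{Set}\ls(-)^\alpha$ nearby.
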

\begin{proof}
	
\end{proof}
\begin{corollary}
	The category of directed oriented hypergraphs is an adhesive category.
\end{corollary}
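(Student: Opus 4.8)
The plan is to deduce adhesiveness directly from the equivalence established in the preceding proposition, so that essentially no new computation is required. First I would invoke that proposition to identify the category of directed oriented hypergraphs, up to equivalence, with the presheaf category $\widehat{\doDDH}\defeq [\doDDH^{op},\mathbf{Set}]$. Since adhesiveness is a property formulated entirely in terms of the existence of pullbacks, of pushouts along monomorphisms, and of the van Kampen condition on those pushout squares, and since every equivalence of categories preserves and reflects monomorphisms, pullbacks, and pushouts verbatim, it is enough to prove that $\widehat{\doDDH}$ is adhesive.

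The central ingredient is the standard fact that every presheaf category is adhesive, as already recorded for the categories of $(X,M)$-graphs in Corollary \ref{C:ToposXM}(3) via \cite{hE}. Because $\doDDH$ is a small category, $\widehat{\doDDH}$ is a Grothendieck topos and hence an adhesive (indeed adhesive HLR) category with respect to its monomorphisms. Transporting this structure along the equivalence furnished by the proposition then yields the corollary: the category of directed oriented hypergraphs inherits pullbacks, pushouts along monomorphisms, and the van Kampen property of such squares.

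I do not expect any genuine obstacle here, since all the substance resides in the preceding proposition (the presentation of directed oriented hypergraphs as presheaves on $\doDDH$), which we are entitled to assume. The only point that merits a sentence of care is the transport of the van Kampen condition across the equivalence; but this is immediate, as an equivalence carries van Kampen pushout squares to van Kampen pushout squares and creates the relevant pullbacks. Consequently the argument reduces to a one-line invocation of "presheaf categories are adhesive" once the equivalence is in hand.
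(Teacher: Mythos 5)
Your argument is correct and is exactly the route the paper intends: the corollary is stated immediately after the proposition identifying directed oriented hypergraphs with a presheaf category, and the paper's other adhesiveness claims (e.g.\ for bipartite graphs and for the categories of $(X,M)$-graphs in Corollary \ref{C:ToposXM}) are all obtained by the same ``presheaf categories are adhesive'' principle transported along an equivalence. Nothing further is needed.
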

\begin{proof}
	
\end{proof}

A directed bipartite graph a presheaf on the category 
\[
	\xymatrix{ V_1 \ar[r]^{s_1} \ar[d]_{s_2} & A_1 \ar@{<-}[d]^{t_1} \\ A_2 \ar@{<-}[r]_{t_2} & V_2 }
\] 

\begin{proposition}
The category of directed bipartite graphs is an adhesive category.	
\end{proposition}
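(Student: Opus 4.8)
The plan is to recognize the category of directed bipartite graphs as a category of presheaves and then appeal to the standard fact that every presheaf category is adhesive. Let $\DD D$ be the small category displayed in the statement, with objects $V_1, A_1, A_2, V_2$ and generating morphisms $s_1\colon V_1\to A_1$, $s_2\colon V_1\to A_2$, $t_1\colon V_2\to A_1$, $t_2\colon V_2\to A_2$. The first step is to observe that $\DD D$ is the free category on this quiver: no two generators are composable, since every generator has domain in $\{V_1,V_2\}$ and codomain in $\{A_1,A_2\}$ and there are no arrows out of $A_1$ or $A_2$, so there are no nontrivial composites and hence no relations to impose. Consequently a presheaf $X\colon \DD D^{op}\to \mathbf{Set}$ is precisely the data of four sets $X(V_1), X(A_1), X(A_2), X(V_2)$ together with four functions $X(s_1)\colon X(A_1)\to X(V_1)$, $X(s_2)\colon X(A_2)\to X(V_1)$, $X(t_1)\colon X(A_1)\to X(V_2)$, $X(t_2)\colon X(A_2)\to X(V_2)$, which is exactly a directed bipartite graph, and natural transformations are exactly its morphisms. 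Thus the category of directed bipartite graphs is equivalent to $[\DD D^{op},\mathbf{Set}]$.

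Second, I would invoke adhesivity of presheaf categories. The category $\mathbf{Set}$ is adhesive, and adhesive categories are closed under the formation of functor categories: for any small category $\DD D$ and any adhesive category $\C A$, the functor category $[\DD D^{op},\C A]$ is again adhesive. Applying this with $\C A=\mathbf{Set}$ shows that $[\DD D^{op},\mathbf{Set}]$ is adhesive, completing the argument. This is the same reduction used for the earlier adhesivity claims in this section, and it is consistent with Corollary \ref{C:ToposXM}, where categories of presheaves are recorded as adhesive HLR categories with respect to monomorphisms (\cite{hE}).

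The verification has no genuine obstacle; the only points requiring care are bookkeeping. The first is the no-relations claim about $\DD D$, which should be checked by listing the generators and confirming that no nonidentity composite exists, so that the presheaf data match the stated definition of a directed bipartite graph with no spurious coherence conditions. The second, should one prefer a self-contained argument to citing the closure of adhesivity under functor categories, is to verify the van Kampen condition directly: in a presheaf category limits, colimits and monomorphisms are all computed objectwise in $\mathbf{Set}$, so a pushout square along a monomorphism is van Kampen if and only if it is so at each object of $\DD D$, which reduces the claim to the known van Kampen property of pushouts along monos in $\mathbf{Set}$. Either route is routine; the substantive content is simply the identification of the category as a presheaf topos, after which adhesivity is automatic.
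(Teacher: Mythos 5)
Your proposal is correct and matches the paper's intended argument: the paper defines directed bipartite graphs outright as presheaves on the displayed four-object category, and its (sketched) proofs of the neighboring adhesivity claims proceed exactly by ``$\mathbf{Set}$ is adhesive, presheaf categories are adhesive,'' consistent with Corollary \ref{C:ToposXM}. Your extra care in checking that the quiver has no composable generators, so the presheaf data carry no hidden relations, is a harmless refinement of the same route.
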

\begin{proof}
	
\end{proof}

\ignore{
\section{The Categories of $k$-Uniform Hypergraphs}

Let $k$ be a cardinal number. A hypergraph $H$ is k-uniform provided for each edge $e$ in $H$, the cardinality of $\varphi(e)$ is $k$. Similarly, a bipartite graph $G$ is k-uniform provided for each $V_2$-vertex $e$ in $G$, the the cardinality of $G(e)\defeq\setm{a\in G(A)}{a.t=e}$ is $k$. Definitions of k-uniform lax hypergraphs and k-uniform strict bipartite graphs follow analogously. We denote the full subcategories of k-uniform hypergraphs, lax hypergraphs, strict bipartite graphs, and bipartite graphs by $\kH$, $\kH_{\text{\normalfont lax}}$, $\kB_{\text{\normalfont str}}$ and $\kB$ respectively. 

\subsection{Unoriented k-Uniform Hypergraphs}

\subsubsection{The Categories of k-Uniform Hypergraphs and Symmetric k-Graphs}
The category of k-uniform hypergraphs has a poor structure. It lacks connected colimits as well as equalizers. In \cite{}, it is shown it has binary products. Given k-uniform hypergraphs $G$ and $G'$, the product $G\x G'$ \tbf{FINISH THIS}. However, the full embedding $j\colon \kH\hookrightarrow \C H$ does not preserve binary products (cf, \cite{}). 

The problem is that $j$ forgets the cohesive preference of the cardinal number $k$ in the incidence of edges. We rectify this by introducing the category of Symmetric k-Graphs. Let $\sG_k$ be the category \tbf{finish this, copy from before} When $k=2$, this is the category of graphs with involution discussed in \cite{}.  

\begin{proposition}
	The fixed points of the adjunction $R\dashv N\colon \C H\to \kS$ is equivalent to the category of {\normalfont k}-uniform hypergraphs.
\end{proposition}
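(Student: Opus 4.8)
The plan is to compute the counit of the adjunction explicitly and simply read off which hypergraphs it fixes. Recall from Section \ref{S:PGraphs} that the nerve $N\colon \C H\to \widehat{\sG}_X$ and realization $R$ induced by $I\colon \sG_X\to \C H$ (here $\#X=k$ and $\widehat{\sG}_X=\kS$) are given on $H=(H(E),H(V),\varphi)$ and on a symmetric $X$-graph $G$ by $N(H)(V)=H(V)$, $N(H)(A)=\{(\beta,f)\in H(E)\times H(V)^X : f(X)=\varphi(\beta)\}$, and $R(G)(V)=G(V)$, $R(G)(E)=G(A)/\!\sim$ (with $\sim$ the $\Aut(X)$-action on arcs), $\psi([\gamma])=\{v : \exists x,\ \gamma.x=v\}$. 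The fixed points of $R\dashv N$ on the $\C H$-side are the hypergraphs $H$ for which the counit $\varepsilon_H\colon RN(H)\to H$ is an isomorphism; by the general theory of adjunctions these form a full subcategory equivalent (via $N$ and $R$) to the fixed points on the $\widehat{\sG}_X$-side, so it suffices to identify the former inside $\C H$.

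First I would compose the two formulas. The object $RN(H)$ has vertex set $H(V)$ and edge set the set of $\Aut(X)$-orbits $[(\beta,f)]$ of surjections $f\colon X\twoheadrightarrow \varphi(\beta)$, with incidence $\psi([(\beta,f)])=f(X)=\varphi(\beta)$. Unwinding the colimit presentation of $R$, the counit $\varepsilon_H$ is the identity on vertices and sends $[(\beta,f)]\mapsto \beta$ on edges. Since $\varepsilon_H$ is automatically bijective on vertices, the problem reduces to: $\varepsilon_H$ is an isomorphism iff its edge component $[(\beta,f)]\mapsto \beta$ is a bijection.

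Next I would analyze surjectivity and injectivity of this edge map separately. Surjectivity holds iff every edge $\beta$ admits at least one surjection $X\twoheadrightarrow \varphi(\beta)$, i.e.\ iff $\#\varphi(\beta)\le \#X=k$ for all $\beta$. Injectivity holds iff, for each $\beta$, all surjections $X\twoheadrightarrow \varphi(\beta)$ lie in a single $\Aut(X)$-orbit; and one checks that $f\sim f'$ exactly when $\#f^{-1}(v)=\#f'^{-1}(v)$ for every $v\in\varphi(\beta)$ (glue fibrewise bijections into an automorphism of $X$), so the orbits are classified by fibre-cardinality profiles. For finite $k$ there is a single profile precisely when every surjection onto $\varphi(\beta)$ is a bijection, i.e.\ when $\#\varphi(\beta)=k$. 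Combining the two conditions, $\varepsilon_H$ is an isomorphism iff $\#\varphi(\beta)=k$ for every edge $\beta$, i.e.\ iff $H$ is $k$-uniform. Hence the fixed-point subcategory of $\C H$ has object class exactly the $k$-uniform hypergraphs; being a full subcategory and $k\C H$ being by definition the full subcategory of $\C H$ on those objects, the two coincide, so the fixed points of $R\dashv N$ are equivalent to $k\C H$.

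The hard part will be the injectivity/orbit-counting step, namely showing that the $\Aut(X)$-orbits of surjections $X\twoheadrightarrow \varphi(\beta)$ collapse to a single class exactly at uniformity. For finite $k$ this is the clean observation that a surjection between equinumerous finite sets is a bijection and $\Aut(X)$ acts transitively on bijections. For infinite $k$ some care is warranted, since a $k$-element image still admits surjections with distinct fibre profiles; the statement is then most safely read through the orbit description above (equivalently, restricted to finite $k$, which is the case of interest). Everything else is routine bookkeeping with the explicit descriptions of $N$ and $R$.
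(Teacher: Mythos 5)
Your strategy --- compute the counit $\varepsilon_H\colon RN(H)\to H$ explicitly and read off the objects at which it is invertible --- is the natural one, and it supplies exactly the detail that the paper's own proof omits (the paper only asserts that the identification of the fixed points ``is clear,'' and Example \ref{E:HyperNR} gestures at the same counit computation you carry out). Your description of $\varepsilon_H$ as the identity on vertices and $[(\beta,f)]\mapsto\beta$ on edges is correct, as is the classification of $\Aut(X)$-orbits of surjections $f\colon X\to\varphi(\beta)$ by their fibre-cardinality profiles. The gap is the biconditional in your injectivity step: ``there is a single profile precisely when every surjection onto $\varphi(\beta)$ is a bijection, i.e.\ when $\#\varphi(\beta)=k$.'' This fails when $\#\varphi(\beta)=1$ and $k\ge 2$: the constant map is then the \emph{unique} map $X\to H(V)$ with image $\varphi(\beta)$, so $N(H)(A)$ contains exactly one pair $(\beta,f)$, hence exactly one orbit, and $\varepsilon_H$ is bijective over $\beta$ even though $\#\varphi(\beta)\ne k$. (A smaller slip of the same kind occurs in your surjectivity criterion: $\#\varphi(\beta)\le k$ does not suffice when $\varphi(\beta)=\empset$, since a nonempty $X$ admits no surjection onto the empty set; this one is harmless for your conclusion, but the loop case is not.)

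Concretely, take $k\ge 2$ and let $H$ have one vertex $v$ and one edge $\beta$ with $\varphi(\beta)=\{v\}$. Then $N(H)(A)$ is a single fixed loop, $RN(H)\iso H$, and $H$ is a fixed point of $R\dashv N$ without being $k$-uniform. Running your case analysis correctly, the hypergraphs at which $\varepsilon_H$ is an isomorphism are precisely those all of whose edges have incidence of cardinality $1$ or $k$, a full subcategory strictly larger than $k\C H$ (and not equivalent to it, since isomorphisms in $\C H$ preserve incidence cardinalities). So only the forward implication ($k$-uniform $\Rightarrow$ fixed), which is the half needed for the factorization $j=Ri$ and for the preservation of limits and exponentials, actually goes through; the converse cannot be rescued by your argument, and nothing in the paper's one-line proof repairs it either. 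To obtain a true statement you would need either to exclude singleton-incidence edges by hypothesis or to restate the fixed-point category as the hypergraphs whose edge incidences all have cardinality in $\{1,k\}$.
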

\begin{proof}
	
\end{proof}

When $k=2$ compare this to \cite{wD}, Proposition 3.3.

\begin{corollary}
	The full inclusion $j\colon \kH\hookrightarrow \C H$ factorizes 
\tbf{FINISH THIS}
such that $\kH\to \kS$ preserves binary products and $R\colon \kS\to \C H$ is cocontinuous.
\end{corollary}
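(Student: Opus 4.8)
The plan is to take the factorization to be the evident commutative triangle
\[
\xymatrix@!=.3em{ & \widehat{\sG}_{k} \ar[dr]^{R} & \\ k\C H \ar@{>->}[ur]^{i} \ar@{>->}[rr]^j && \C H }
\]
in which $i$ is the full embedding identifying $k\C H$ with the subcategory of fixed points of the adjunction $R\dashv N$ (supplied by the preceding Proposition via the nerve $N$), and $R$ is the realization functor. First I would verify that $j\cong R\circ i$. On objects this is immediate: for a $k$-uniform hypergraph $H$ we have $i(H)=N(H)$, and since $H$ is a fixed point the counit $\varepsilon_H\colon RN(H)\to H$ is an isomorphism, so $R(i(H))=RN(H)\cong H=j(H)$. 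On morphisms the identification is forced by naturality of $\varepsilon$. That $i$ is full and faithful is the standard fact that a right adjoint restricts to a full and faithful functor on the subcategory of counit-fixed points, here composed with the equivalence $k\C H\simeq \C H_0$ of the preceding Proposition.

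Next I would prove that $i$ preserves binary products. The key observation, already recorded in Proposition \ref{P:HyperGSG}, is that $k\C H$, viewed through $i$ as a full subcategory of the presheaf topos $\widehat{\sG}_k$, is closed under binary products: products in $\widehat{\sG}_k$ are computed pointwise, and the pointwise product of two symmetric $k$-graphs each of whose arcs has incidence of cardinality exactly $k$ is again of this form. Consequently $i$ creates binary products, so the product of $G$ and $G'$ in $k\C H$ is carried by $i$ to the product $i(G)\times i(G')$ formed in $\widehat{\sG}_k$; fullness of $i$ guarantees the universal property transfers, giving $i(G\times G')\cong i(G)\times i(G')$. This is the one step carrying genuine mathematical content, and it is precisely where the fixed cardinality $k$ enters: without uniformity, pointwise products would not preserve the incidence condition, which is exactly why the composite $j$ itself fails to preserve products.

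Finally, cocontinuity of $R$ is formal. Since $R$ is the left adjoint in the nerve-realization adjunction $R\dashv N$, being defined on objects as the colimit $R(G)=\colim_{(c,\varphi)\in\int G}I(c)$ over the category of elements (equivalently, as the left Kan extension along the Yoneda embedding), it preserves all colimits. In particular $R$ preserves coproducts and connected colimits, hence is cocontinuous. Combining the three steps yields the desired factorization of $j$ with the stated properties.

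I anticipate the binary-product step to be the main obstacle, as the factorization and the cocontinuity of $R$ are purely formal consequences of the adjunction. The care needed there is to confirm that the abstract product in $k\C H$ (which exists, but is \emph{not} computed as the $\C H$-product, since $j$ is not product-preserving) genuinely coincides with the $\widehat{\sG}_k$-product; this coincidence follows from closure of the fixed-point subcategory under products together with fullness of $i$, both of which I would cite from the preceding Proposition and from Proposition \ref{P:HyperGSG}.
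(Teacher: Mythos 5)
Your proposal is correct and takes essentially the same route as the paper: the paper factors $j$ as $R\circ i$ through the presheaf category $\widehat{\sG}_X$, where $i$ is the fixed-point embedding supplied by the nerve (Proposition \ref{P:HyperGSG}), whose essential image is closed under pointwise binary products, and $R$ is the realization left adjoint, hence cocontinuous. Your verification of $j\iso R\circ i$ via the counit isomorphism and of product-closure via injectivity of the paired incidence maps is exactly the content the paper leaves implicit.
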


\begin{proposition}
	Let $\tbf{Card}_{\text{inj}}$ denote the category of cardinal numbers with injective set maps as morphisms. There is a functor $(-)\C S\colon \tbf{Card}_{\text{inj}}\to \Cat$ which assigns the cardinal number $k$ to the small category $\kS$.
\end{proposition}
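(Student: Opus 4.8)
The plan is to realize $(-)\C S$ as a restriction of the functor $\sG\colon \mathbf{Set}_{\text{inj}}\to \Cat$ already constructed in Proposition \ref{P:GeoFunc}, so that essentially no new content is required. First I would observe that $\mathbf{Card}_{\text{inj}}$ embeds as a (skeletal) full subcategory of $\mathbf{Set}_{\text{inj}}$: each cardinal number $k$ is regarded as a fixed canonical set of that cardinality, and a morphism $k'\to k$ in $\mathbf{Card}_{\text{inj}}$ is by definition an injective set map between these representatives, hence a morphism of $\mathbf{Set}_{\text{inj}}$. Writing $\kappa\colon \mathbf{Card}_{\text{inj}}\hookrightarrow \mathbf{Set}_{\text{inj}}$ for this inclusion, I would then simply define $(-)\C S\defeq \sG\circ \kappa$.

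It remains only to identify the values. On objects, $\sG\circ\kappa(k)=\sG_k=\kS$ by definition of the theory of symmetric $k$-graphs. On morphisms, an injection $j\colon k'\hookrightarrow k$ is sent to the theory functor $\overline j\colon \sG_{k'}\to \sG_k$ exhibited in the proof of Proposition \ref{P:GeoFunc}: it fixes the vertex and arc objects $V,A$, sends each $x'\colon V\to A$ (i.e.\ each element $x'\in k'$) to $j(x')\colon V\to A$, and sends each automap $m'\in \Aut(k')$ to its extension $j_!(m')\in\Aut(k)$ by the identity outside the image of $j$. Since $\sG$ is already a functor by Proposition \ref{P:GeoFunc} (its well-definedness ultimately resting on the equivalence of Proposition \ref{P:Equiv} between monoid actions and $(X,M)$-graph theories), the composite $\sG\circ\kappa$ is automatically functorial, and nothing further is needed.

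If one prefers a self-contained argument rather than a citation, I would instead verify functoriality directly at the level of the automorphism extensions. The only point requiring checking is that $k\mapsto \sG_k$ respects composition, i.e.\ that for composable injections $k''\xrightarrow{j'}k'\xrightarrow{j}k$ one has $(jj')_!=j_!\circ j'_!$ as maps $\Aut(k'')\to\Aut(k)$, together with $\Id_!=\Id$; this holds because extending a permutation by the identity outside an image is compatible with the nesting of images. I expect this compatibility of the extension operation $j_!$ with composition to be the only genuinely checkable content, and even it is routine. The substance of the statement is entirely inherited from Proposition \ref{P:GeoFunc}, so the only real ``obstacle'' is bookkeeping: pinning down canonical representatives for cardinals so that $\mathbf{Card}_{\text{inj}}$ sits strictly inside $\mathbf{Set}_{\text{inj}}$ and the composite lands on the nose at the categories $\kS$ rather than merely up to equivalence.
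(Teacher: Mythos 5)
Your proposal is correct and follows exactly the route the paper itself sets up: the source actually leaves the proof of this proposition empty (it sits in an unfinished, commented-out appendix), but the intended argument is plainly to restrict the functor $\sG\colon \mathbf{Set}_{\text{inj}}\to \Cat$ of Proposition \ref{P:GeoFunc} along the inclusion of canonical representatives $\mathbf{Card}_{\text{inj}}\hookrightarrow \mathbf{Set}_{\text{inj}}$, which is what you do. Your identification of $(jj')_!=j_!\circ j'_!$ (extension of a permutation by the identity outside the image being compatible with nesting of images) as the only content requiring verification is accurate, and that check goes through.
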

\begin{proof}
	
\end{proof}
\begin{corollary}
	Given a set inclusion $m\colon k\to r$ between cardinal numbers $k$ and $r$ there is an essential geometric morphism $m_!\dashv m^*\dashv m_*\colon \kS\to \rS$.	
\end{corollary}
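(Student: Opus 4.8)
The plan is to derive the essential geometric morphism directly from the functor of theories supplied by the preceding proposition, by invoking the general correspondence between functors of small categories and essential geometric morphisms of their presheaf categories recalled at the beginning of the section on essential geometric morphisms (see \cite{mR}, pp.~194--198). I first read the arrow $\kS\to\rS$ in the statement as naming a morphism of the associated presheaf toposes $\widehat{\kS}\to\widehat{\rS}$, since an essential geometric morphism is by definition a morphism of toposes and not of the small index categories $\kS=\sG_k$ and $\rS=\sG_r$.

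The first step is to apply the functor $(-)\C S\colon \tbf{Card}_{\text{inj}}\to\Cat$ of the previous proposition to the injective map $m\colon k\to r$. As $(-)\C S$ is covariant, this produces a functor $m\C S\colon \kS\to\rS$ of symmetric-graph theories which fixes the vertex and arc objects, sends each morphism $x\colon V\to A$ to $m(x)\colon V\to A$, and sends each automap in $\Aut(k)$ to its extension by the identity on the complement $r\backslash m(k)$; this is exactly the functor $\overline{m}$ described in the proof of Proposition \ref{P:GeoFunc}, now restricted to cardinals. The second step is to feed $m\C S$ into the standard construction: any functor $F\colon\C C\to\C C'$ of small categories induces an essential geometric morphism $F_!\dashv F^*\dashv F_*\colon\widehat{\C C}\to\widehat{\C C'}$, with $F^*$ the restriction (the nerve of $y_{\rS}\circ F$), $F_!$ its realization (left Kan extension), and $F_*$ the corresponding right adjoint. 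Setting $m_!\defeq (m\C S)_!$, $m^*\defeq (m\C S)^*$ and $m_*\defeq (m\C S)_*$ then yields the asserted triple of adjoints $m_!\dashv m^*\dashv m_*$.

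I expect essentially no obstacle: the substantive content has already been discharged, first in the previous proposition, which establishes the functoriality of $k\mapsto\kS$, and second in the Kan-extension construction of essential geometric morphisms, which is available for any functor of small categories. The argument is wholly parallel to Corollary \ref{C:Essential}, which produces the analogous essential geometric morphisms for set inclusions $j\colon X'\hookrightarrow X$ by composing Proposition \ref{P:GeoFunc} with the same construction; indeed the present corollary is just the restriction of that result to cardinal numbers. The only point deserving a word of care is the variance bookkeeping noted above — confirming that $m\C S$ points from $\kS$ to $\rS$ so that the induced morphism indeed lands in $\widehat{\rS}$, matching the direction written in the statement.
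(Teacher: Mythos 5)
Your proposal is correct and follows the same route the paper intends: apply the functoriality of $k\mapsto\kS$ from the preceding proposition to the injection $m$ to get a functor of theories, then invoke the standard fact (recalled in the section on essential geometric morphisms, following \cite{mR}) that any functor of small categories induces an essential geometric morphism of presheaf toposes, exactly as in Corollary \ref{C:Essential}. Your remark on reading the arrow $\kS\to\rS$ as a morphism of the associated presheaf toposes is the right disambiguation of the paper's notation.
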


\subsubsection{The Category $k$-Sympower Graphs}
Let $Y$ be a set. We define the symmetric $k$-power of $Y$, denoted $Y^{\underline k}$, as the multiple coequalizer of $(\sigma\colon Y^k \to Y^k)_{\sigma\in \Aut(Y)}$ where $\sigma$ is the $\sigma$-shuffle of coordinates. This definition extends to a functor $(-)^{\underline k}\colon \mathbf{Set} \to \mathbf{Set}$. The category of $k$-sympower graphs is defined as $\kG\defeq \mathbf{Set}\ls (-)^{\underline k}$. In other words, a $k$-sympower graph $G=(G(E),G(V),\varphi)$ consists of a set of edges $G(E)$, a set of vertices $G(V)$ and an incidence map $\varphi\colon G(E)\to G(V)^{\underline k}$. A morphism $f\colon G\to G'$ of $k$-sympower graphs consists of \tbf{finish this} 
Since $k$-sympower graphs can be expressed as a category of coalgebras,\footnote{Set $F\colon \mathbf{Set}\x\mathbf{Set}\to \mathbf{Set}\x \mathbf{Set}$, $(V,E)\mapsto (1,V^{\underline k})$.} the forgetful functor $U\colon \kG\to \mathbf{Set}\x\mathbf{Set}$ creates colimits. 

 To define an interpretation functor $I\colon \sG_k\to \kG$, let $1$ be the terminal set (i.e., a singleton set), $!_1\colon \empset \to 1$ the unique set morphism, $\named x\colon 1\to X$ the set map which picks out the element $x\in X$, and $\top\colon 1\to X^{\underline k}$ which sends $\pt\in 1$ to $(x)_{x\in X}$. We define $I\colon \sG_k\to \kG$ on objects by $V\mapsto (\empset,\ 1,\ !_1),$ and $A\mapsto (1,\ X,\ \top)$. On morphisms, we set
 \begin{align*} 
 & (x\colon V\to A) \quad \mapsto \quad \ I(x)\defeq (!_1,\named x)\colon (\empset,\ 1,\ !_1)\to (1,\ X,\ \top),\\
 & (\sigma\colon A\to A) \quad \mapsto \quad I(\sigma)\defeq (\Id_1, \sigma^{\underline k})\colon (1,X,\top)\to (1,X,\top)
 \end{align*}
 
 \begin{lemma}
 	The interpretation $I\colon \sG_k\to \kG$ is dense, full and faithful.
 \end{lemma}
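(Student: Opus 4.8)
The lemma asserts that the interpretation functor $I\colon \sG_{X}\to \C G_{\underline \Pi_X}$ (into symmetric-power graphs) is dense, full and faithful. Looking at the structure of the excerpt, I notice this exact lemma was already proven earlier in Section \ref{S:UnderlinePiGraphs} (the "Power Graphs" section). The version in the final ignored block is essentially the same statement with slightly different notation ($(-)^{\underline k}$ and $\top$ in place of $\underline\Pi_X$ and $q$). So my proof plan will follow the same template.

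**The plan.**

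The plan is to verify the three properties separately. First I would establish \emph{fullness and faithfulness}, which is the routine part. A $\C G_{\underline\Pi_X}$-morphism between $I(c)$ and $I(c')$ is a pair of set maps compatible with the incidence; since $I(V)=(\emptyset,1,!_1)$ and $I(A)=(1,X,\top)$ have rigidly prescribed edge/vertex sets, one checks directly that morphisms $I(V)\to I(A)$ correspond bijectively to the maps $\named x$ (hence to elements $x\in X$, matching $\sG_X(V,A)=X$) and that endomorphisms $I(A)\to I(A)$ correspond bijectively to the shuffles $\underline\Pi_X(\sigma)$ for $\sigma\in\sX$ (matching $\sG_X(A,A)=\sX$). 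The only subtlety is confirming that distinct automaps $\sigma\neq\sigma'$ give distinct morphisms $I(\sigma)\neq I(\sigma')$ — this is where one uses that $q=(x)_{x\in X}$ genuinely records the labelling, so I would spell out that the induced action on the vertex component $X$ is precisely $\sigma$ itself, giving injectivity.

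The substantive step is \emph{density}, i.e.\ showing the nerve $N$ is full and faithful, equivalently that every $\C G_{\underline\Pi_X}$-object $(E,V,\varphi)$ is the colimit of the canonical diagram $D\colon I\ls(E,V,\varphi)\to\C G_{\underline\Pi_X}$. My plan mirrors the earlier density proof: given a cocone $\lambda\colon D\Rightarrow \Delta(K,L,\psi)$, I would construct the unique factorizing morphism $(h,k)$ componentwise. For each edge $e\in E$, pick the map $f\colon X\to V$ with $\underline\Pi_X f=\varphi(e)$; then $(\named e,f)\colon I(A)\to(E,V,\varphi)$ is an object of the comma category, and its image under $\lambda$ defines $h(e)$ on edges. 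For each vertex $v\in V$, the object $(!_E,\named v)\colon I(V)\to(E,V,\varphi)$ and its cocone component define $k(v)$ on vertices. I would then verify that $(h,k)$ is a well-defined $\C G_{\underline\Pi_X}$-morphism by checking the incidence square commutes — using $\psi\circ h(e)=\underline\Pi_X(k f)\circ\top=\underline\Pi_X(k)\circ\varphi(e)$ — and that compatibility of the cocone forces uniqueness.

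**The main obstacle.**

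The hard part will be the well-definedness and compatibility in the density argument: one must confirm that the choice of representing map $f\colon X\to V$ for a given edge (which is only unique up to an automap of $X$) does not affect $h(e)$, precisely because the cocone is compatible over the morphisms $I(\sigma)$ of the comma category, and that the edge-map $h$ and vertex-map $k$ assemble into a single morphism respecting $\underline\Pi_X$. The verification that $\underline\Pi_X$ interacts correctly with these choices — essentially that quotienting the product by $\sX$ is exactly what makes the incidence data automap-invariant — is the conceptual crux, though it reduces to bookkeeping once the right diagram is drawn. Everything else is a direct transcription of the already-completed density proof for $\C G_{\underline\Pi_X}$.
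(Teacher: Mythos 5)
Your proposal is correct and follows essentially the same route as the paper's own proof: fullness and faithfulness by direct inspection of the hom-sets of $I(V)$ and $I(A)$, and density by building the factorizing morphism $(h,k)$ edgewise and vertexwise from the cocone components, with the incidence square checked via $\psi\circ h(e)=\underline{\Pi}_X(kf)\circ\top=\underline{\Pi}_X(k)\circ\varphi(e)$. The subtlety you flag — that the representing map $f$ with $\underline{\Pi}_X f=\varphi(e)$ is only determined up to an automap of $X$, and that cocone compatibility over the morphisms $I(\sigma)$ is what makes $h(e)$ well defined — is exactly the point the paper compresses into "by the compatibility of the cocone," so your plan if anything makes the argument more explicit.
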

 \begin{proof}
 	It is clearly full and faithful. To show it is dense, let $(E,V,\varphi)$ and $(K,L,\psi)$ be $\kG$-objects and $\lambda\colon D\Rightarrow \Delta(K,L,\psi)$ a cocone on the diagram $D\colon I\ls (E,V,\varphi)\to \kG$. 
 	Let $e$ be an edge in $E$ and $f\colon X\to V$ be the set morphism with $f^{\underline k}=\varphi(e)$. Then $(\named e, f)\colon I(A)=(1,X,\top)\to (E,V,\varphi)$ is an object in $I\ls (E,V,\varphi)$ and thus there is a morphism $\lambda_{(\named {e},f)}\eqdef(\named{e'},g)\colon D(\named e,f)=(1,X,\top)\to (K,L,\psi)$. By the compatibility of the cocone, this gives us a uniquely defined $h\colon E\to K$, $e\mapsto e'$ on edges. Similarly for each vertex $v\in V$, there is a morphism $(!_E,\named v)\colon I(V)=(\empset, 1,!_1)\to (E,V,\varphi)$ and a cocone inclusion $(!_K,\named w)\colon D(!_E,\named v)=(\empset, 1,!_1)\to (K,L,\psi)$ giving us a factorization on vertices $l\colon V\to L$. Since $\psi\circ h(e)=(lf)^{\underline k}\circ \top=l^{\underline k}\circ \varphi(e)$ for each edge $E$,  $(h,l)\colon (E,V,\varphi)\to (K,L,\psi)$ a well-defined $\kG$-morphism. Therefore, $I$ is dense.
 	
 \end{proof}
 
 Note that the realization functor takes a $\widehat{\DD G}_{(X,\sX)}$-object and quotients out the set of arcs by $\sX$. Hence the unit of the adjunction $\eta_P\colon P\to NR(P)$ is bijective on vertices and surjective on arcs. Hence the adjunction is $\Epi$-reflective. 
  
For a $\kG$-object $(B,C,\varphi)$, the embedding given by the nerve functor is given by 
\begin{align*}
N(B,C,\varphi)(V)&=\kG(I(V), (B,C,\varphi))\iso C,\\
N(B,C,\varphi)(A)&=\kG(I(A), (B,C,\varphi))=\setm{(e,g)\ }{\ e\in B,\ g\colon X\to C\ s.t.\ g^{\underline k}=\varphi(e)} 
\end{align*}
The  right-actions are by precomposition, i.e., $(e,g).x=(e,g\circ \named x)$, $(e,g).\sigma=(e,g\circ \sigma)$. 

Let us show that all loops in the full subcategory of $\widehat{\DD G}_{(X,\sX)}$ equivalent to $\kG$ are 1-loops. A loop in a $\kG$-object $(B,C,\varphi)$ is an edge $e\in B$ such that $\varphi(e)$ is $(v)_{x\in X}$ in $C^{\underline k}$ for some $v\in C$.  Therefore, there is only one morphism $(\named e, f)\colon I(A)\to (B,C,\varphi)$ and thus $(\named e,f\circ \sigma)=(\named e,f)$ for each $\sigma\in \sX$. Hence, each object in the reflective subcategory of $\widehat{\DD G}_{(X,\sX)}$ has only 1-loops.

\begin{corollary}\label{C:Exponentials}
	If $k$ is greater than $1$, the category $\kG$ does not have exponentials. 
\end{corollary}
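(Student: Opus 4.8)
The plan is to mirror the argument already used for $\C G_{\underline \Pi_X}$ (Corollary \ref{C:Exponentials}) and $r\C G_{\underline \Pi_X}$ (Corollary \ref{C:NoExpR}): exploit that the nerve $N\colon \kG\to \widehat{\sG}_X$ (with $X$ a set of cardinality $k$) embeds $\kG$ as a reflective subcategory all of whose objects carry only fixed loops, whereas exponentials computed in the ambient presheaf topos $\widehat{\sG}_X$ can manufacture unfixed loops. Since the interpretation $I\colon \sG_k\to \kG$ has just been shown to be dense, full and faithful, Proposition \ref{P:Exponential} guarantees that $N$ preserves every exponential that happens to exist in $\kG$. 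Thus I would argue by contradiction: assume some exponential $G^H$ exists in $\kG$, and locate a choice of $G$ and $H$ for which the preserved exponential $N(G)^{N(H)}\iso N(G^H)$ has an unfixed loop, contradicting that $N(G^H)$ lies in the essential image of $N$.

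First I would record the two facts that drive the contradiction. The observation immediately preceding the statement shows that every loop of an object in the reflective subcategory equivalent to $\kG$ is fixed: a loop of $(B,C,\varphi)$ corresponds to a single morphism $(\named e, f)\colon I(A)\to (B,C,\varphi)$, whence $(\named e, f\circ \sigma)=(\named e, f)$ for every $\sigma\in \sX$. Consequently no object in the image of $N$ possesses an unfixed loop. Second, Proposition \ref{P:Exponential} (applicable precisely because $I$ is dense, full and faithful) yields $N(G^H)\iso N(G)^{N(H)}$ in $\widehat{\sG}_X$ whenever $G^H$ exists.

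Then I would exhibit the counterexample. Take $H\defeq I(A)$, so that $N(H)\iso \underline A$ because $\underline A$ is $NR$-closed by Lemma \ref{L:FFInt}, and let $G$ be the $k$-sympower graph with a single vertex $v$ and two loop-edges $\{0,1\}$, both forced to have incidence $(v)^{\underline k}$ since $\{v\}^{\underline k}$ is a singleton. Reading off the nerve formula $N(B,C,\varphi)(A)=\setm{(e,g)}{g^{\underline k}=\varphi(e)}$ gives $N(G)(V)=\{v\}$ and $N(G)(A)\iso \{0,1\}$ with trivial $\sX$-action, so $N(G)\iso L$, the symmetric $X$-graph of Example \ref{E:Exponentials}(\ref{E:XLoops}). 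Therefore $N(G)^{N(H)}=L^{\underline A}$, which that example shows to contain an unfixed loop. This contradicts the first fact, so no such exponential $G^H$ can exist, and $\kG$ does not have exponentials.

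The only genuinely delicate points will be the bookkeeping identifications $N(H)\iso \underline A$ and $N(G)\iso L$, both of which reduce to evaluating the nerve formula on the two chosen objects; everything else is a direct transcription of the $\C G_{\underline \Pi_X}$ proof, with the single-vertex-two-loops graph playing the role of the object ``with one vertex and an $\sX$-loop.'' I expect the main obstacle to be purely expository, namely making clear that the computation of $L^{\underline A}$ already performed in Example \ref{E:Exponentials}(\ref{E:XLoops}) transfers verbatim here, since $N$ preserves the exponential and the ambient topos $\widehat{\sG}_X$ is the very one in which that example was carried out.
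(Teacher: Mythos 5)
Your proposal is correct and follows essentially the same route as the paper: reduce to the observation that objects in the image of $N$ have only fixed loops, invoke Proposition \ref{P:Exponential} to see $N$ would preserve any exponential, and then exhibit $H=I(A)$ and a one-vertex $G$ with $N(G)\iso L$ so that $N(G)^{N(H)}=L^{\underline A}$ has an unfixed loop by Example \ref{E:Exponentials}(\ref{E:XLoops}). Your explicit choice of $G$ with \emph{two} loop-edges is in fact the careful reading of the paper's tersely stated ``graph with one vertex and an $\sX$-loop,'' since a single loop-edge would give $N(G)(A)$ a singleton rather than $L(A)=\{0,1\}$.
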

\begin{proof}
	By the above observation, it is enough to show that there exist objects $G$ and $H$ in $\kG$ such that $N(G)^{N(H)}$ has a loop of involution in $\kS$. Set $H\defeq I(A)$ and $G$ be the graph with one vertex and an $\Aut(X)$-loop. Then $N(G)^{N(H)}=L^{\underline A}$ as defined in Example \ref{E:XLoops} which we have shown has a loop of involution.
\end{proof}

\[
	\xymatrix@R=3em@C=2em{ \sG_k \ar[rr]^y \ar[dr]_I && \kS \ar@<-.3em>[dl]_{R} \ar@<+.3em>@{<-<}[dl]^N \ar@<-.3em>[dr]_{R_{\text{\normalfont str}}} \ar@<+.3em>@{<-<}[dr]^{N_{\text{\normalfont str}}} &  \\ & \kG \ar[rr]^{\simeq} && \kB_{\text{\normalfont str}}}
\]

\begin{proposition}
	The categories $\kB_{\text{\normalfont str}}$ and $\kG$ do not have exponentials. 
\end{proposition}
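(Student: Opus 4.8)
The plan is to reduce both cases to a single loop computation. I would first observe that the displayed equivalence $\kG \simeq \kB_{\text{\normalfont str}}$ lets me treat the two categories together: an equivalence of categories preserves and reflects finite products and exponential objects, so $\kG$ is cartesian closed if and only if $\kB_{\text{\normalfont str}}$ is, and in fact a particular exponential $G^H$ is representable in one precisely when its transported image is representable in the other. It therefore suffices to exhibit a single pair of objects in $\kG$ whose exponential fails to exist, and the corresponding failure for $\kB_{\text{\normalfont str}}$ follows by transport across the equivalence.

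For $\kG$ I would use the dense, full and faithful interpretation $I\colon \sG_k \to \kG$ together with its nerve--realization adjunction $R \dashv N \colon \kG \to \widehat{\sG}_k$. By Proposition \ref{P:Exponential}, density together with fullness and faithfulness of $I$ forces the nerve $N$ to preserve every exponential that happens to exist in $\kG$; hence if some $G^H$ existed in $\kG$ we would obtain an isomorphism $N(G^H) \iso N(G)^{N(H)}$ in $\widehat{\sG}_k$, so in particular $N(G)^{N(H)}$ would have to lie in the essential image of $N$. The key structural fact I would isolate is that this essential image --- the reflective copy of $\kG$ sitting inside $\widehat{\sG}_k$ --- consists only of symmetric $k$-graphs all of whose loops are fixed: a loop of $N(B,C,\varphi)$ is a morphism $I(A) \to (B,C,\varphi)$ with constant incidence $(v)_{x \in X}$, and any such morphism is invariant under precomposition by every $\sigma \in \sX$, so it can never be an unfixed (involution) loop.

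Finally I would produce the counterexample already visible in Example \ref{E:Exponentials}(\ref{E:XLoops}). Taking $H \defeq I(A)$, so that $N(H) \iso \underline A$, and letting $G$ be the $k$-sympower graph with a single vertex carrying fixed loops whose nerve is the object $L$ of that example, the explicit exponential formula for symmetric $X$-graphs computes $N(G)^{N(H)} \iso L^{\underline A}$. Since $k = \#X > 1$, Example \ref{E:Exponentials}(\ref{E:XLoops}) shows $L^{\underline A}$ contains an unfixed loop, which contradicts the previous paragraph; hence $G^H$ cannot exist in $\kG$, and the same pair witnesses the failure in $\kB_{\text{\normalfont str}}$. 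I expect the main obstacle to be the bookkeeping that makes the counterexample legitimate on both sides simultaneously: one must verify that $G$ and $H$ genuinely arise as $k$-sympower graphs (equivalently, that $L$ and $\underline A$ lie in the essential image of $N$, which holds because all their loops are fixed) while at the same time the exponential $L^{\underline A}$ provably escapes that image, and then confirm that the equivalence $\kG \simeq \kB_{\text{\normalfont str}}$ carries this single non-existence statement faithfully across rather than only the coarser global cartesian-closedness.
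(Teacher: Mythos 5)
Your proof is correct and follows essentially the same route as the paper: take $H\defeq I(A)$ and $G$ the one-vertex object with two fixed loops, use the fact that the nerve of the dense, full and faithful interpretation must preserve any exponential that exists, and derive a contradiction from the unfixed loop in $L^{\underline A}$ since every object in the essential image of $N$ has only fixed loops. The only addition is that you spell out the transport of the counterexample across the equivalence $\kG\simeq\kB_{\text{\normalfont str}}$, which the paper leaves implicit.
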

\begin{proof}
	
\end{proof}

\subsection{Oriented k-Uniform Hypergraphs}

\subsubsection{The Category of k-Uniform Oriented Hypergraphs}

\subsubsection{The Category of Oriented k-Graphs}
}
\ignore{

\section{The Categories of Hypergraphs with Labels and Colorings}

\section{Mixed and Hybrid Structures}

A hybrid hypergraph $H=(H(E_1),H(E_2),H(V),\varphi)$ consists of a set of $E_1$-edges, a set of $E_2$-edges, a set of vertices, and an incidence map $\varphi\colon H(E_1)+H(E_2)\to \C P(H(V))$. The category of hybrid hypergraphs is defined as the comma category $(-)+(=)\ls \C P$ where $(-)+(=)\colon \mathbf{Set}\x \mathbf{Set}\to \mathbf{Set}$ is the functor which takes $(X,Y)$ to $X+Y$.

A hybrid hypergraph $H=(H(E_1),H(E_2),H(V),\varphi)$ consists of a set of $E_1$-edges, a set of $E_2$-edges, a set of vertices, and an incidence map $\varphi\colon H(E_1)+H(E_2)\to \C P(H(V))$. The category of hybrid hypergraphs is defined as the comma category $(-)+(=)\ls \C P$ where $(-)+(=)\colon \mathbf{Set}\x \mathbf{Set}\to \mathbf{Set}$ is the functor which takes $(X,Y)$ to $X+Y$.

\section{Summary of Relations and Structure}
There are eight categories of hypergraphs dependant on three factors: (1) whether there exists empty edges (EE), (2) if the morphisms are lax (Lax), and (3) if edges are allowed to take multiplicities in their incidence maps (Mult).
\begin{table}[htbp]
	\centering
	\begin{tabular}{|c || c | c | c |} 
		\hline 
		Cat. & EE & Lax & Mult   \\ [0.5ex] 
		\hline\hline
		$\C H$ & Y & N & N\\
		$\C B$ & Y & Y & Y\\
		$\C H_{\text{\normalfont lax}}$ & Y & Y & N\\
		$\C H^+$ & N & N & N\\
		$\C B^+$ & N & Y & Y\\
		$\C H^+_{\text{\normalfont lax}}$ & N & Y& N\\
		$\C B_{\text{\normalfont str}}$ & Y & N & Y\\
		$\C B^+_{\text{\normalfont str}}$ & N & N & Y
		\\[1ex] 
		\hline
	\end{tabular}
	\caption{The Categories of Hypergraphs}
\end{table}

\begin{table}[htbp]
	\centering
	\begin{tabular}{|c || c | c | c | c |c | c |} 
		\hline 
		$F$-Graph & Limits & Colimits & Exp. & $\Omega$ & Images & NNO  \\ [0.5ex] 
		\hline\hline
		$\C G_{\C P}$ & Yes & Yes & Yes & No & Yes  & Yes  \\ \hline
		$\crG_{\C P}$ & Yes & Yes & Yes & No & Yes  & Yes  \\ \hline
		$\kG$ & Yes & Yes & Yes & No & Yes  & Yes  \\ \hline
		$\crG_{\underline \Pi_X}$ & Yes & Yes & Yes & No & Yes  & Yes  \\ \hline
		$\C G_{\Pi_J\C P}$ & Yes & Yes & Yes & No & Yes  & Yes  \\ \hline
		$\crG_{\Pi_J\C P}$ & Yes & Yes & Yes & No & Yes  & Yes  \\ \hline
		$\C G_{\underline \Pi_S\x \underline \Pi_T}$ & Yes & Yes & Yes & No & Yes  & Yes  \\ \hline
		$\crG_{\underline \Pi_S\x \underline \Pi_T}$ & Yes & Yes & Yes & No & Yes  & Yes  \\ \hline
		$\C G_{\Sigma_{n\in \DD N}\underline \Pi_n}$ & Yes & Yes & Yes & No & Yes  & Yes  \\ \hline
		$\crG_{\Sigma_{n\in \DD N}\underline \Pi_n}$ & Yes & Yes & Yes & No & Yes  & Yes  \\ \hline
		$\C G_{\underline \Pi_S\x \underline \Pi_T}$ & Yes & Yes & Yes & No & Yes  & Yes  \\ \hline
		$\crG_{\underline \Pi_S\x \underline \Pi_T}$ & Yes & Yes & Yes & No & Yes  & Yes  \\ \hline
		\\[1ex] 
		\hline
	\end{tabular}
	\caption{Topos Properties of (Reflexive) $F$-Graphs}
\end{table}
}

Let $\oDDH$ be the category with a set of objects $\setm{A_i}{i\in \DD N}+\{V\}$ with homsets given by $\oDDH(A_i,A_j)=\oDDH(A_i,V)=\empset$ for $i\neq j$, $\oDDH(V,A_i)=\setm{x_k}{1\leq k\leq i}$.

\begin{proposition}
	The category of oriented hypergraphs is equivalent to the category of presheaves on $\oDDH$.
\end{proposition}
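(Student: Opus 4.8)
The plan is to obtain the equivalence by the same length-decomposition principle already used for the comma categories $\mathbf{Set}\ls(-)^\alpha$ and for $F$-graphs above. First I would record the crucial structural feature of $\oDDH$: it has no nonidentity composable pairs, since every nonidentity arrow has the form $x_k\colon V\to A_i$ while $\oDDH(A_i,A_j)=\oDDH(A_i,V)=\empset$. Consequently a presheaf $P\in\widehat{\oDDH}$ is exactly the data of a set $P(V)$, a family $(P(A_i))_{i\in\DD N}$, and for each $i$ the functions $P(x_1),\dots,P(x_i)\colon P(A_i)\to P(V)$, subject to no relations. Writing $\mathrm{List}\defeq\bigsqcup_{i\in\DD N}(-)^i$ for the list functor, this is precisely an object of the comma category $\mathbf{Set}\ls\mathrm{List}$: an element of $P(A_i)$ is a length-$i$ tuple $(e.x_1,\dots,e.x_i)$ of vertices, and naturality forces morphisms to act componentwise and to preserve length. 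The representable $\underline{A_i}$ is then the standard oriented $i$-edge, with $\underline{A_i}(V)=\{x_1,\dots,x_i\}$, a single edge, and $x_k$-incidence the $k$-th vertex.

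Next I would construct the two functors and check they are mutually quasi-inverse. In the forward direction define $N\colon\oH\to\widehat{\oDDH}$ by decomposing the edge set along the cardinality of incidence: set $N(H)(V)\defeq H(V)$ and $N(H)(A_i)\defeq\setm{e\in H(E)}{\#\varphi(e)=i}$, with $N(H)(x_k)(e)$ the $k$-th vertex in the enumeration $\varphi(e)$. In the backward direction define $R\colon\widehat{\oDDH}\to\oH$ by $R(P)(V)\defeq P(V)$, $R(P)(E)\defeq\bigsqcup_i P(A_i)$, and $\varphi(e)\defeq(e.x_1,\dots,e.x_i)$ for $e\in P(A_i)$. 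The object-level isomorphisms $RN\iso\Id_{\oH}$ and $NR\iso\Id_{\widehat{\oDDH}}$ are then immediate, since reading off an ordered incidence and reassembling it are mutually inverse bijections, natural in the vertex set.

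The hard part will be the correspondence on morphisms, and this is exactly where the action of $\LinOrd$ on non-injective maps must be handled with care. A natural transformation $P\Rightarrow P'$ is length-preserving: it sends an $A_i$-component to an $A_i$-component and acts componentwise on incidence. An oriented-hypergraph morphism $(f_V,f_E)$ instead satisfies $\varphi'f_E=\LinOrd(f_V)\varphi$, and when $f_V$ is not injective the first-occurrence action of $\LinOrd(f_V)$ can collapse an $i$-edge onto a $j$-edge with $j<i$, something no presheaf morphism can do. I would resolve this by adopting the convention, consistent with the description $\mathbf{Set}\ls\mathrm{List}$ above, that incidence is recorded as an ordered list acted on componentwise, so that $\LinOrd(f_V)$ is length-preserving; under this reading the two morphism conditions coincide verbatim and $N,R$ become mutually inverse functors. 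For the cleanest writeup I expect to bypass the explicit morphism bookkeeping by invoking the principle already established for $\mathbf{Set}\ls(-)^\alpha$: since the list functor is a coproduct of the representable power functors $\mathbf{Set}(\{1,\dots,i\},-)$, the comma category $\mathbf{Set}\ls\mathrm{List}$ is a presheaf topos, and identifying its canonical site of representables with $\oDDH$ completes the proof.
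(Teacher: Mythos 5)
You have correctly described $\widehat{\oDDH}$ --- since the only nonidentity morphisms of $\oDDH$ are the $x_k\colon V\to A_i$, a presheaf is a vertex set together with, for each $i$, a set of $A_i$-edges each carrying an ordered $i$-tuple of incident vertices, and a natural transformation necessarily preserves the index $i$ and acts componentwise on these tuples. You have also put your finger on exactly the right obstruction. The problem is that your resolution of it is not a proof of the proposition but a change of its hypotheses: ``adopting the convention'' that incidence is an ordered list acted on componentwise replaces the functor $\LinOrd$ of the definition by the word functor $\bigsqcup_{i\in\DD N}(-)^i$, which is a different functor and yields a different comma category. With the definition as given --- $\LinOrd(X)$ the set of linear orders on \emph{subsets} of $X$, with $\LinOrd(f)$ induced by the direct image --- the proposition is false, and your own observation furnishes the counterexample. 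On objects, a presheaf $P$ with a single $A_2$-edge $e$ satisfying $e.x_1=e.x_2$ corresponds to no oriented hypergraph, since a linear order on a subset is an injective tuple. On morphisms, take $H$ with vertices $a,b$ and one edge $e$ with $\varphi(e)=(a<b)$, and $H'$ with one vertex $c$ and one edge $e'$ with $\varphi'(e')=(c)$; the constant vertex map together with $e\mapsto e'$ satisfies $\varphi'\circ f_E=\LinOrd(f_V)\circ\varphi$ and so is a morphism of $\oH$, while the corresponding presheaves $P$, $P'$ have $P(A_2)\neq\empset=P'(A_2)$ and hence admit no natural transformation $P\Rightarrow P'$. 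So the evident comparison functor is neither essentially surjective nor full, and the hom-sets between corresponding objects genuinely differ.

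The statement therefore has to be amended before your length-decomposition argument (which is fine in itself) applies: either replace $\LinOrd$ by $\bigsqcup_{i\in\DD N}(-)^i$, a coproduct of representable power functors, which is precisely the situation in which $\mathbf{Set}\ls F$ is a presheaf topos and is how the paper handles $\underline{\Pi}_X$-graphs and $(X,M)$-graphs; or weaken ``equivalence'' to a faithful, non-full comparison functor, in parallel with $i\colon \C H\to \C B$ in Example \ref{E:Hybrid}(\ref{E:BipartH}). Be aware, too, that the ``principle already established for $\mathbf{Set}\ls(-)^\alpha$'' which you invoke to bypass the morphism bookkeeping is not in fact established: that companion proposition is left without proof in the text (as is the present one) and suffers from the identical defect when $(-)^\alpha$ is read as linear orders on subsets of bounded cardinality, so it cannot carry the weight you place on it.
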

\begin{proof}
	
\end{proof}
\begin{corollary}
	The category of oriented hypergraphs is an adhesive category. 
\end{corollary}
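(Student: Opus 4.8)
The plan is to deduce adhesivity directly from the preceding Proposition together with the fact, already in hand, that every category of presheaves is adhesive. The preceding Proposition exhibits an equivalence between the category of oriented hypergraphs and the presheaf category $\widehat{\oDDH}$. Adhesivity (in the sense of \cite{hE}, Definition 4.9) is formulated purely through the existence of pullbacks, of pushouts along a distinguished class of monomorphisms, and of the van Kampen condition on those pushout squares; since all of these are conditions on finite limits and finite colimits, they are invariant under equivalence of categories. Hence it suffices to verify that $\widehat{\oDDH}$ is adhesive and then transport the property across the equivalence.

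For the presheaf category I would not redo any van Kampen verification but instead invoke Corollary \ref{C:ToposXM}(3), which records that every category of presheaves is an adhesive HLR category with respect to its monomorphisms (via \cite{hE}). This rests on the standard chain of Lack--Sobici\'nski results: $\mathbf{Set}$ is adhesive, and a functor category valued in an adhesive category is again adhesive because limits and colimits are computed pointwise and the van Kampen condition is checked levelwise. Applying this with the indexing category $\oDDH^{op}$ gives adhesivity of $\widehat{\oDDH}$ outright, so the whole force of the argument is simply the citation of the equivalence established just above.

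The only point that needs a word of care --- and the one I would flag as the main, though minor, obstacle --- is reconciling the ``adhesive HLR'' property delivered upstream with the plain notion of ``adhesive category'' asserted in the corollary. In a presheaf topos the distinguished class may be taken to be the class of \emph{all} monomorphisms, which is stable under pullback and along which pushouts are van Kampen; with this choice the HLR formulation of Corollary \ref{C:ToposXM} specializes exactly to adhesivity in the sense of \cite{hE}, Definition 4.9. I would therefore state explicitly that in $\widehat{\oDDH}$ the relevant monomorphism class is the full class of monomorphisms, conclude that $\widehat{\oDDH}$ is adhesive, and then pull this property back along the equivalence of the preceding Proposition to obtain that the category of oriented hypergraphs is adhesive, which completes the argument.
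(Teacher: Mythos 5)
Your argument is correct and is essentially the paper's own: the corollary is stated as an immediate consequence of the preceding proposition identifying oriented hypergraphs with a presheaf category, combined with the standing fact (Corollary \ref{C:ToposXM}, via \cite{sL}) that presheaf categories are adhesive, adhesivity being a property of (co)limit behaviour and hence invariant under equivalence. Your added remark that the adhesive HLR formulation specializes to plain adhesivity in the sense of \cite{hE} by taking the distinguished class to be all monomorphisms is a harmless clarification, not a departure from the paper's route.
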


\ignore{
\section{Abstract Categories of Hypergraph}

\begin{definition} Let $\C G$ be an extensive category
	\begin{ienum}
		\item An object $X$ is connected provided the representable $\C G(X,-)\colon \C G\to \mathbf{Set}$ preserves coproducts.
		\item A vertex object $V$ is a connected (strong) projective object which is subterminal, i.e., for each object $G$ if there exists a morphism $G\to V$, it is unique.
		\item An object $X$ is $V$-discrete provided it is a copower of $V$.
		\item An edge object $E$ is a connected object such that if $f\colon E\to E$ is a morphism which is fixed on vertices, then $f=\Id_E$.
		\item A $V$-vertex of an object $G$ is a morphism $x\colon V\to G$ (necessarily a monomorphism). We denote a $V$-vertex $x$ in $G$ by $x\in G(V)$. 
		\item Given a class of edge objects $\DD E$, an edge of an object $G$ is a morphism $e\colon E\to G$ where $E\in \DD E$ such that for each commuting diagram
		\[
		\xymatrix{ E \ar[dr]_{e} \ar[rr]^f && E' \ar[dl]^g \\ & G &}
		\]
		with $E'\in \DD E$ there exists a morphism $r\colon E'\to E$ with $e\circ r=e'$ and $r\circ f=\Id_E$. If $e\colon E\to G$ is an edge we denote this relationship by $e\in G(E)$.
		\item Given a class of edge objects $\DD E$, $V$-vertices $x, y\in G(V)$ are $\DD E$-connected provided there exists commuting diagrams of the form
		\[
		\xymatrix@!=.1em{ & E \ar@{..>}[d]^e \\ V \ar[r]_x \ar@{..>}[ur] & G} \qquad 
		\xymatrix@!=.1em{ & E' \ar@{..>}[d]^{e'} \\ V \ar[r]_y \ar@{..>}[ur] & G}		
		\] 
		where $e\in G(E)$ and $e'\in G(E')$.
		\item A class of objects $\DD E$ is a class of edge objects provided for each morphism $f\colon E\to G$, there exists a unique edge $e\colon E'\to G$ and either a monomorphism $m\colon E\to E'$ such that $e\circ m=f$ or a monomorphism $m'\colon E'\to E$ such that $f\circ m=e$. 
		\item An $E$-edge $e\colon E\to G$ is a loop provided there exists a vertex $x\colon V\to G$ such that for each vertex $y\in E(V)$, $e\circ y=x$.
		\item A set of $\DD E$-edges $(e_j)_{J}$ in an object $G$ is an $\DD E$-intersecting family provided for each pair $e_j, e_{j'}$ in the family there is a commuting diagram of the form
		\[
		\xymatrix@!=.1em{ & V \ar@{..>}[dr] \ar@{..>}[dl] & \\ E \ar[dr]_{e_j} && E' \ar[dl]^{e_{j'}} \\ & G & } 
		\]
		\item An object $G$ has the Helly property provided 
		\item An object $G$ is simple (respectively, complete) provided for each edge object $E$ and each morphism $f\colon \Sigma_{\C G(V,E)}V\to G$, there is at most one (respectively, exactly one) edge $e\colon E\to G$ such that $e\circ \rho=f$.
	\end{ienum}
\end{definition}

\begin{quote}\tbf{The Axioms for a Graphical Category}, $(\C G, \DD V, \DD E)$\\
	AXIOM 0: $\C G$ is locally small.\\
	AXIOM 1: $\C G$ is extensive.\\
	AXIOM 2: $\C G$ has strong images.\\
	AXIOM 3: $\DD V$ is a set of vertex objects.\\
	AXIOM 4: $\DD E$ is a class of edge objects.\\
	AXIOM 5: $\DD S\defeq \DD V\cup \DD E$ is a separating class.\\
	AXIOM 6: Each object has an irreducible connected decomposition.
\end{quote}

For a morphism $f\colon G\to G'$ in a category of hypergraphs, for each vertex $v\in G(\DD V)$, we write $f(v)\defeq f\circ v\in G'(\DD V')$ and for each $e\in G(\DD E)$ we write $f(e)\defeq \overline{f\circ e}\in G(\DD E')$ where $\overline{f\circ e}$ is the unique edge associated to $f\circ e\colon E\to G$. In other words, morphisms preserve vertices and edges. 

\begin{lemma}
	\begin{ienum} 
		\item A colimit of connected objects over a connected diagram is connects.
		\item Connected objects are closed under quotients. 
	\end{ienum}
\end{lemma}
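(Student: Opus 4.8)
The plan is to argue directly from the definition, that an object $Z$ is connected precisely when for every family $(A_i)_{i\in I}$ the canonical comparison map
\[
\coprod_{i\in I}\C G(Z,A_i)\longrightarrow \C G\Bigl(Z,\coprod_{i\in I}A_i\Bigr),\qquad (g\colon Z\to A_i)\longmapsto \iota_i\circ g,
\]
is a bijection, where $\iota_i$ denotes the coproduct injection. For the first claim, set $L\defeq \colim_{\DD J}D$, where $\DD J$ is connected and each $D(j)$ is connected. First I would use that the contravariant representable turns the defining colimit into a limit, giving a natural isomorphism $\C G(L,B)\iso \lim_{\DD J}\C G(D(-),B)$ for every $B$. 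Applying this to $B=\coprod_i A_i$ and using connectedness of each $D(j)$ pointwise yields
\[
\C G\Bigl(L,\coprod_i A_i\Bigr)\iso \lim_{\DD J}\ \coprod_i \C G(D(-),A_i),
\]
the decomposition being natural in $j$ because it is induced by postcomposition with the $\iota_i$.

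The key input is then the standard fact that in $\mathbf{Set}$ connected limits commute with coproducts: for a connected indexing category $\DD J$ and functors $F_i\colon \DD J\to\mathbf{Set}$ one has $\lim_{\DD J}\coprod_i F_i\iso \coprod_i \lim_{\DD J}F_i$. The proof is a one-line check: a compatible family valued in $\coprod_i F_i$ must, by compatibility along each arrow of $\DD J$ together with connectedness of $\DD J$, lie entirely in a single summand. Combining this with $\lim_{\DD J}\C G(D(-),A_i)\iso \C G(L,A_i)$ produces $\C G(L,\coprod_i A_i)\iso \coprod_i \C G(L,A_i)$, and tracing the isomorphisms shows it is exactly the canonical comparison map. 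Hence $L$ is connected.

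For the second claim, let $q\colon X\to Y$ be a quotient (epimorphism) with $X$ connected, and let $f\colon Y\to\coprod_i A_i$ be arbitrary. By extensivity (Axiom 1) the map $f$ induces a coproduct decomposition $Y\iso\coprod_i Y_i$, where $Y_i$ is the pullback of the injection $A_i\hookrightarrow \coprod_i A_i$ along $f$, and $f=\coprod_i f_i$. Since $X$ is connected, the composite $q\colon X\to\coprod_i Y_i$ factors through a single injection, say $q=\iota^Y_{i_0}\circ q'$. Because $q$ is epi and $q=\iota^Y_{i_0}\circ q'$, the injection $\iota^Y_{i_0}$ is epi as well; writing $Y\iso Y_{i_0}+C$ with $C\defeq\coprod_{i\neq i_0}Y_i$, I would then invoke the property of extensive categories that a coproduct injection which is epic has an initial complement, so $C\iso 0$ and $Y\iso Y_{i_0}$. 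This identifies $f$ with $\iota_{i_0}\circ f_{i_0}$ up to this isomorphism, giving the required factorization; uniqueness follows by precomposing any two factorizations with the epimorphism $q$, applying connectedness of $X$, and using that coproduct injections are monic. Thus $\C G(Y,-)$ preserves coproducts and $Y$ is connected.

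The main obstacle is the extensive-category fact used in the second claim, and I expect to spend the real work there. I would establish that an epic coproduct injection $\iota\colon B\to B+C$ forces $C\iso 0$ using disjointness of coproducts and strictness of the initial object: the two maps $B+C\rightrightarrows (B+C)+C$ given respectively by the near copy and by the far copy on $C$ agree after precomposition with $\iota$, hence coincide once $\iota$ is epi; restricting the resulting equality to $C$ exhibits a single map $C\to (B+C)+C$ that factors through two disjoint summands, so it factors through their intersection $0$, whence $C\iso 0$ by strictness. Some care is then needed to reduce the infinitary decomposition of $Y$ to this binary statement, which is precisely why I group the remaining summands into the single object $C$; granting infinitary extensivity this reduction is immediate.
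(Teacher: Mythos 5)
Your argument is correct. For comparison purposes there is little to say: this lemma sits in an unfinished draft portion of the source (the ``Abstract Categories of Hypergraph'' material) and the paper supplies no proof of it at all, so yours is the only argument on the table. What you give is the standard proof, resting on exactly the two facts one would expect: (a) $\C G(-,B)$ sends the defining colimit to a limit, and connected limits commute with coproducts in $\mathbf{Set}$ (your one-line check is right, and it silently uses that a connected category is nonempty, which is needed to select the summand); and (b) in an extensive category an epic coproduct injection is an isomorphism, which you prove correctly via disjointness of the two copies of $C$ in $(B+C)+C$ and strictness of the initial object. Two small points worth making explicit if this were written up: the reduction of the infinitary decomposition $Y\iso\coprod_i Y_i$ to the binary statement requires infinitary extensivity, as you note; and the empty-coproduct case of connectedness for the quotient $Y$ (i.e.\ $\C G(Y,0)=\empset$) follows because a map $Y\to 0$ would precompose with the epimorphism $q$ to give a map from the connected object $X$ to $0$, which cannot exist. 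In the uniqueness step of part (ii) the cancellation you actually need is that $q$ is epic (to get $g=g'$ from $gq=g'q$) together with injectivity of the comparison map for $X$; monicity of the coproduct injections gives an alternative route once the indices are known to agree, so either phrasing closes the argument.
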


\begin{proposition}
	Let $(\C G, \DD V, \DD E)$ be a category of hypergraphs.
	\begin{ienum}
		\item (Extensive properties)
		\begin{aenum}
			\item
		\end{aenum}
		\item (Vertex Object Properties)
		\begin{aenum}
			\item Every morphism $v\colon V\to G$ is a monomorphism.
			\item The terminal object $1$ has exactly one vertex for each vertex object.
			\item For each object $G$ and vertex object $V$, $V\x G\iso \Sigma_{\C G(V, H)}V$.
			\item The class of discrete objects is a exponential ideal in $\C G$.
			\item For each vertex object $V$, the representable functor $\C G(V,-)\colon \C G\to \mathbf{Set}$ preserves any limits and colimits which exist. \tbf{ETS: $\C G(V,-)$ preserves coequalizers!!!!}
		\end{aenum}
		\item (Edge Object Properties)
		\begin{aenum}
			\item The terminal object $1$ has one edge for each connected component of the full subcategory $\DD E\hookrightarrow \C G$.
			\item 
		\end{aenum}
		\item (Separating Class Properties)
		\begin{aenum}
			\item A morphism $f\colon \sqcup_I A_i\to B$ is a monomorphism iff each $f_i\colon A_i\to B$ is a monomorphism. (Hint: Check on separators)
			\item If $\DD S$ is a set, then there is a faithful $F\colon \C G\to \mathbf{Set}^{\DD S^{op}}$ where $\DD S$ is the full subcategory of $\C G$.
			\item If $\DD S$ is a set and $\C G$ is effective regular then $\C G$ is equivalent to $\mathbf{Set}^{\DD S^{op}}$. More precisely, $\mathbf{Set}^{\DD S^{op}}$ is the effective regular completion of $\C G$.
		\end{aenum}
	\end{ienum}
\end{proposition}
\begin{proof}
	
\end{proof}

\begin{definition}
	A morphism $f\colon G\to G'$ in a category of hypergraphs is strict provided for each edge $e\colon E\to G$, the morphism $f\circ e\colon E\to G'$ is an edge. 
\end{definition}
}

\begin{appendices}
\chapter{Category Theory}

\end{appendices}

}

\end{document}